\DeclarePairedDelimiter{\floor}{\lfloor}{\rfloor}
\providecommand{\N}{\mathbb{N}}
\providecommand{\F}{\mathbb{F}}
\providecommand{\R}{\mathbb{R}}
\providecommand{\C}{\mathbb{C}}
\providecommand{\Z}{\mathbb{Z}}
\providecommand{\Q}{\mathbb{Q}}
\renewcommand{\vec}[1]{\boldsymbol{#1}}
\newcommand{\paren}[1]{\left( #1 \right)}
\newcommand{\brac}[1]{\left[ #1 \right]}
\newcommand{\abs}[1]{\left\vert#1\right\vert}
\newcommand{\set}[1]{\left\{#1\right\}}
\DeclareMathOperator{\rank}{rank}
\DeclareMathOperator{\im}{im}
\newtheorem{Theorem}{Theorem}
\newtheorem{Lemma}[Theorem]{Lemma}
\newtheorem{Definition}[Theorem]{Definition}
\newtheorem{Proposition}[Theorem]{Proposition}
\newtheorem{Corollary}[Theorem]{Corollary}
\newcounter{cnstcnt}
\newcommand{\newconstant}{%
\refstepcounter{cnstcnt}%
\ensuremath{c_{\thecnstcnt}}}
\newcommand{\oldconstant}[1]{\ensuremath{c_{\ref{#1}}}}
\begin{document}
\title[Potts Lattice Gauge Theory in Codimension Two]{A Sharp Deconfinement Transition for Potts Lattice Gauge Theory in Codimension Two}

\author{Paul Duncan}
\email{paul.duncan@mail.huji.ac.il}
\address{Einstein Institute of Mathematics, Hebrew University of Jerusalem, Jerusalem 91904, Israel}
\author{Benjamin Schweinhart}
\email{bschwei@gmu.edu}
\address{Department of Mathematical Sciences, George Mason University, Fairfax, VA 22030, USA}

\begin{abstract}
In 1983, Aizenman, Chayes, Chayes, Fr\"ohlich, and Russo~\cite{aizenman1983sharp} proved that $2$-dimensional Bernoulli plaquette percolation in $\Z^3$ exhibits a sharp phase transition for the event that a large rectangular loop is ``bounded by a surface of plaquettes.'' We extend this result both to $(d-1)$-dimensional plaquette percolation in $\Z^d,$ and to a dependent model of plaquette percolation called the plaquette random-cluster model. As a consequence, we obtain a sharp phase transition for Wilson loop expectations in $(d-2)$-dimensional $q$-state Potts hyperlattice gauge theory on $\Z^d$ dual to that of the Potts model.  Our proof is unconditional for Ising lattice gauge theory, but relies on a regularity conjecture for the random-cluster model in slabs when $q>2.$ We also further develop the general theory of the $i$-plaquette random cluster model and its relationship with $(i-1)$-dimensional Potts lattice gauge theory.
\end{abstract}

\maketitle

\section{Introduction}\label{sec:intro}

We study models of plaquette percolation on the cubical complex $\Z^d.$ The $2$-dimensional Bernoulli plaquette percolation on $\Z^3$ was defined by Aizenman, Chayes, Chayes, Fr\"ohlich, and Russo~\cite{aizenman1983sharp} to be the random cubical complex that includes all vertices and edges of $\Z^d$ and adds each square (two-dimensional) plaquette independently with probability $p.$ Their main theorem concerns the event that a rectangular loop $\gamma$ is ``bounded by a surface of plaquettes,'' denoted $V_{\gamma}.$  We will see later that there is some subtlety in defining this event. 

\begin{Theorem}[Aizenman, Chayes, Chayes, Fr\"ohlich, Russo~\cite{aizenman1983sharp}]
\label{thm:accfr}
For $2$-dimensional Bernoulli plaquette percolation on $\Z^3$ there are constants $0<\newconstant\label{const:1}(p), \newconstant\label{const:2}(p) <\infty$ so that 
\begin{align*}
  -\frac{\log\paren{\mathbb{P}_p(V_\gamma)}}{\mathrm{Area}(\gamma) } \rightarrow &   \oldconstant{const:1}(p) \qquad && p < 1-p_c(\mathbb{Z}^3)\\
  -\frac{\log\paren{\mathbb{P}_p(V_\gamma)}}{ \mathrm{Per}(\gamma)} \rightarrow &   \oldconstant{const:2}(p) \qquad && p > 1-p_c(\mathbb{Z}^3)
   \,,
\end{align*}

% %   \begin{align*}
% %       \log\paren{\mathbb{P}_p(V_\gamma)} \sim
% %       \begin{cases}
% %       -\oldconstant{const:1}(p) \mathrm{Area}(\gamma) &\qquad p < 1-p_c(\mathbb{Z}^3)\\
% %       -\oldconstant{const:2}(p) \mathrm{Per}(\gamma) &\qquad p > 1-p_c(\mathbb{Z}^3)
% %       \end{cases}\,,
% %   \end{align*}
for rectangular loops $\gamma,$ as both dimensions of $\gamma$ are taken to $\infty.$  
   \end{Theorem}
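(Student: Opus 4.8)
The plan is to recast $V_\gamma$ as an event for the dual bond percolation and then run a subadditivity‑plus‑renormalization argument. In $\Z^3$ every $2$‑plaquette is pierced by a unique edge of the shifted dual lattice, so the present/absent plaquette configuration becomes a closed/open configuration of Bernoulli bond percolation at density $1-p$; note that $1-p\gtrless p_c(\Z^3)$ according to whether $p\lessgtr 1-p_c(\Z^3)$, so the two regimes of the theorem are exactly the supercritical and subcritical dual phases. Fix a rectangle $R$ with $\partial R=\gamma$. Since $H_2(\Z^3;\Z_2)=0$, any $2$‑chain of present plaquettes bounding $\gamma$ has the form $R\triangle\partial w$ for a set of cubes $w$, and it avoids all absent plaquettes precisely when $w$, viewed as a $\Z_2$‑valued function on cubes, is constant across every open dual edge disjoint from $R$ and jumps across every open dual edge crossing $R$. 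Such a $w$ exists if and only if no open dual circuit crosses $R$ an odd number of times, i.e. no open dual circuit links $\gamma$; this is the working description of $V_\gamma$. I will also use that $V_\gamma$ is increasing in the plaquettes (so Harris/FKG applies) and the elementary fact that a circuit linking $\gamma$ has diameter at least its distance to $\gamma$, so short linking circuits must lie near $\gamma$.

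For the existence of the constants I would set $F(m,n)=-\log\mathbb P_p(V_{\gamma_{m,n}})$ for the $m\times n$ rectangular loop $\gamma_{m,n}$. If $\gamma_1,\gamma_2$ bound present $2$‑chains $\sigma_1,\sigma_2$, then $\gamma_1\triangle\gamma_2$ bounds $\sigma_1\triangle\sigma_2\subseteq\sigma_1\cup\sigma_2$, so $V_{\gamma_1}\cap V_{\gamma_2}\subseteq V_{\gamma_1\triangle\gamma_2}$ and Harris gives $\mathbb P(V_{\gamma_1\triangle\gamma_2})\ge\mathbb P(V_{\gamma_1})\mathbb P(V_{\gamma_2})$. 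Gluing rectangles along a shared edge makes $F$ subadditive in each coordinate, and a subadditivity argument in two parameters produces $c_1(p):=\lim F(m,n)/(mn)$ (a priori possibly $0$) together with convergence of $F(m,n)/(m+n)$ on the event $c_1(p)=0$; filling $R$ with present plaquettes gives $\mathbb P_p(V_\gamma)\ge p^{mn}$, hence $c_1(p)<\infty$. It then remains, in each regime, to show that the surviving constant is strictly positive and finite.

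In the area regime ($p<1-p_c$, dual supercritical) the substantive step is the lower bound $F(m,n)\ge c\,mn$. From the working description, $V_\gamma$ forces, for every absent plaquette $f\subseteq R$, that the two cubes straddling $f$ lie in different open clusters of the dual configuration after deleting the dual edges that cross $R$; since those cubes are adjacent across $R$, joining them would require a path around $\gamma$, hence membership in a cluster wrapping $\gamma$. Thus for a positive‑density set of well‑separated plaquettes deep in the bulk of $R$ the local event ``$f$ absent and its two straddling cubes joined by an open dual path not crossing $R$'' is prohibited. A renormalization argument in the spirit of Pisztora and Grimmett--Marstrand shows that in the supercritical phase each such event has probability bounded below and that the events at different bulk plaquettes are sufficiently independent to conclude $\mathbb P_p(V_\gamma)\le e^{-c\,mn}$; combined with the previous paragraph this yields $0<c_1(p)<\infty$ and the area law.

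In the perimeter regime ($p>1-p_c$, dual subcritical) positivity of $c_2(p)$ is cheap: along $\gamma$ one finds $\Theta(m+n)$ disjoint quadruples of plaquettes whose simultaneous absence forms a minimal circuit linking $\gamma$, each of probability $(1-p)^4$ and mutually independent, so $\mathbb P_p(V_\gamma)\le\bigl(1-(1-p)^4\bigr)^{\Theta(m+n)}$. Finiteness of $c_2(p)$ --- equivalently, $c_1(p)=0$ here --- is where subcriticality enters: by sharpness of the phase transition all open dual clusters have exponential tails, so by the diameter‑versus‑distance fact the only obstructions to $V_\gamma$ are short circuits near $\gamma$, and a construction combining a present shell around $\gamma$ with the subcritical cluster estimates gives $\mathbb P_p(V_\gamma)\ge e^{-C(m+n)}$. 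I expect the genuine obstacle to be the supercritical case: a naive union bound over the $e^{O(\mathrm{area})}$ lattice surfaces spanning $\gamma$ only decays for $p$ below the reciprocal of the (large) surface connective constant, far short of $1-p_c(\Z^3)$, so the dual renormalization --- and with it the input of Grimmett--Marstrand needed to reach every $1-p>p_c$ --- is unavoidable; a secondary nuisance is making the two‑parameter subadditivity step yield a genuine limit uniformly in the aspect ratio of $\gamma$.
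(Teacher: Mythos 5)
Your outline tracks the paper's route for this theorem quite closely: pass to the dual bond process at density $1-p$, characterize $V_\gamma$ (with $\Z_2$ coefficients) by the absence of an open dual circuit linking $\gamma$ oddly (this is Proposition~\ref{prop:linkq}/Corollary~\ref{cor:linkG} specialized to $q=2$), get the area constant by FKG plus tiling (Lemma~\ref{lemma:sharpconstant}), and treat the two regimes via the dual supercritical/subcritical phases. However, there are two genuine gaps. First, in the area regime your key step --- ``each such event has probability bounded below and the events at different bulk plaquettes are sufficiently independent'' --- is exactly the hard part, and as stated it would fail: the forbidden events $A_f$ (``$f$ absent and its straddling cubes joined off $R$'') each require a dual connection travelling macroscopically around $\gamma$, so they are far from independent; worse, the complements $\neg A_f$ are increasing, so Harris/FKG gives $\mathbb{P}\paren{\bigcap_f \neg A_f}\ge \prod_f \mathbb{P}\paren{\neg A_f}$, the wrong direction for an upper bound. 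The paper avoids this by an exact reduction of $V_\gamma$ (for the equator of a box) to a dual top--bottom disconnection event (Corollary~\ref{corollary:CrossingJ}), identifying the Wilson loop tension with the dual surface tension (Lemma~\ref{lemma:sharpconstant2}, Proposition~\ref{prop:dualtension}), whose positivity for all supercritical dual $p$ is the slab-percolation/coarse-graining input (Grimmett--Marstrand in the Bernoulli case). Your sketch gestures at this but does not supply the renormalization argument, which cannot be replaced by the independence heuristic you describe.

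Second, and more seriously, the existence of the sharp perimeter constant does not follow from your two-parameter subadditivity. The gluing inequality $F(\gamma_1\triangle\gamma_2)\le F(\gamma_1)+F(\gamma_2)$ normalizes correctly by area but not by perimeter: when two rectangles are glued along a shared side, the shared side is lost, so perimeters are not additive and $F(m,n)/(m+n)$ inherits no subadditive structure; ``convergence of $F(m,n)/(m+n)$ on the event $c_1(p)=0$'' is asserted without proof and is not a consequence of anything you have established. Your perimeter-regime argument only yields matching-order bounds $e^{-C(m+n)}\le \mathbb{P}_p(V_\gamma)\le e^{-c(m+n)}$, whereas the theorem claims an actual limit. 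Both in~\cite{aizenman1983sharp} and in this paper this step is a separate, substantial geometric argument: one localizes $V_\gamma$ to tube events $C_t,\overline{C}_t$ around the faces of $\gamma$, proves a genuine subadditive limit for those tube events (Propositions~\ref{prop:C} and~\ref{prop:barctube}), and then sandwiches $\mathbb{P}_p(V_\gamma)$ between products of tube probabilities using the crossing-surface constructions of Section~\ref{sec:sharpnessperimeter} (Proposition~\ref{prop:implyv}, Theorem~\ref{theorem:sharp_perimeter}). Without an argument of this type, the perimeter-law half of the statement is not proved.
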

As a historical note, this theorem originally relied on a conjecture about the continuity of the critical probability of percolation in slabs that was later proven by Grimmett and Marstrand~\cite{grimmett1990supercritical}. \cite{aizenman1983sharp} also proved partial results in higher dimensions. The $i$-dimensional Bernoulli plaquette percolation on $\Z^d$ is defined analogously; it is the random $i$-complex that adds each $i$-dimensional plaquette independently with probability $p$ to the $(i-1)$-skeleton consisting of all lower-dimensional plaquettes. In~\cite{aizenman1983sharp}, it is demonstrated that there are ``area law'' and ``perimeter law'' regimes at sufficiently extreme values of $p.$ We show that the transition between these two regimes is sharp when $i=d-1.$   

Theorem~\ref{thm:accfr} was motivated by an analogy with Wilson loops in lattice gauge theory: ``It turns out that, at least for the abelian $\Z(2)$ gauge model such a transition [for Wilson loop variables] can be traced exactly to a geometric effect of the type discussed here, albeit in a system of interacting plaquettes~\cite{aizenman1983sharp}\,.'' While efforts towards this end were stymied by the discovery of so-called topological anomalies~\cite{aizenman1984topological} in $q$-state Potts lattice gauge theory (which we discuss below), we showed that the plaquette random-cluster model exhibits the postulated relationship between Wilson loop variables and an event of the form $V_\gamma$ in~\cite{duncan2022topological} for prime $q.$ Here, we extend this to general $q$ and leverage it to prove a sharp phase transition for Wilson loop variables directly analogous to Theorem~\ref{thm:accfr}. As in~\cite{aizenman1983sharp}, our proof uses a dual characterization of the event $V_\gamma$ to reduce the theorem to statements concerning the one-dimensional random cluster model.

Lattice gauge theories are a family of models studied in physics as discretizations of Yang--Mills Theory. They were introduced by Wilson~\cite{wilson1974confinement}, with the special case of Ising lattice gauge theory being defined earlier by Wegner~\cite{wegner1971duality}. Lattice gauge theories on $\Z^d$ assign random spins from a complex matrix group $G$ to the edges of that lattice. When $d=4$ and $G$ is taken to be one of the compact Lie group $U(1), SU(2),$ or $SU(3),$ these systems model the fundamental forces of the standard model of particle physics, and a detailed understanding of them would resolve some of the most important open questions in mathematical physics~\cite{chatterjee2016yang}. However, even the behavior of simpler lattice gauge theories remains poorly understood from the perspective of rigorous mathematics. 

We specialize to the cases $G=\Z(2)$ or $\Z(3)$ is the group of second or third complex roots of unity, and to a separate family of models that they fit into. This family, called $(d-2)$-dimensional $q$-state Potts (hyper)lattice gauge theory, was defined by Kogut et al.~\cite{kogut1980z}. While these models may not be directly physically relevant, they are well-studied in the physics literature (see e.g. ~\cite{mack1979comparison, creutz1979experiments, koide2022non}). In addition, the mathematical properties of Ising lattice gauge theory and other lattice gauge theories with finite abelian gauge groups have been of recently renewed interest~\cite{cao2020wilson, chatterjee2020wilson, forsstrom2020wilson, forsstrom2023free}. %We provide the first rigorous description of nontrivial behavior in a lattice gauge theory at middle temperatures and the first proof of a sharp phase transition for such a system. 

The most important random variables in lattice gauge theory are the Wilson loop variables. Roughly speaking, they measure the product of spins on the edges of a loop $\gamma.$  When $G=SU(3),$ the asymptotics of Wilson loop expectations for rectangular loops $\gamma$ are thought to be related to the phenomenon of quark confinement, hence the terminology ``deconfinement transition''. In particular, they are conjectured to follow an ``area law'' and decay asymptotically in the area of $\gamma$ as its dimensions are taken to $\infty$ for any value of $\beta$. 

Different asymptotics are conjectured for $q$-state Potts lattice gauge theory on $\Z^d$ and --- more generally --- for $k$-dimensional Potts lattice gauge theories which assign spins to $k$-dimensional cells of a cell complex. In particular, it is thought that there is a critical threshold $\beta_c=\beta_c\paren{q,k,d}$ so that Wilson loop expectations for the boundary of a $(k+1)$-dimensional box follow an area law when $\beta<\beta_c$ and decay exponentially in the volume of the box but exhibit a ``perimeter law'' when $\beta>\beta_c$ and decay exponentially in its surface area. The special case of $k=0$ is the sharpness of the phase transition for the classical $q$-state Potts model; the ``area'' of a $1$-dimensional box is the distance between its endpoints and its ``perimeter'' is a constant. This was proven by Aizenman, Barsky, and Fern\'{a}ndez~\cite{aizenman1987phase} when $q=2$ and by Duminil-Copin, Raoufi, and Tassion~\cite{duminil2019sharp} in general. 

Two previous results are known for $k>1.$ Laanait, Messager, and Ruiz~\cite{laanait1989discontinuity} demonstrated the conjecture for sufficiently large $q$ when $k=1$ and $d=4$. In addition, Bricmont, Lebowitz, and Pfister~\cite{bricmont1980surface} proved that the Wilson loop tension (that is, the coefficient of area law decay for Wilson loop variables) for $1$-dimensional Ising lattice gauge theory on $\Z^3$ equals the surface tension of the dual Ising model. As we describe below, this quantity is defined in terms of the exponential decay of a different probability. This, combined with the later theorem of Lebowitz and Pfister~\cite{lebowitz1981surface} on the non-vanishing of the surface tension at criticality demonstrate that Ising lattice gauge theory exhibits area law behavior precisely when the dual Ising model is not subcritical.  Using the plaquette random-cluster model construction, we extend the result of Bricmont, Lebowitz, and Pfister by showing that the Wilson loop tension of $(d-2)$-dimensional $q$-state Potts lattice gauge theory equals the surface tension of the dual random-cluster model. A theorem of Bodineau~\cite{bodineau2005slab} demonstrates that this surface tension is non-zero when the random-cluster model is supercritical in a slab. We also prove that a perimeter law holds when the dual random-cluster model is subcritical.

Our proof begins by reducing the conjecture to a question concerning the stochastic topology of a random cell complex. Stochastic topology is a relatively new field, which studies the topological invariants of random structures. Previous work in that area concentrated on generalizing classical results from random graph theory to higher dimensional cell complexes and on gaining a statistical understanding of noise and signal in the context of topological data analysis (see~\cite{kahle2014topology,bobrowski2018topology, bobrowski2022random} for an overview). Only a few recent papers have addressed connections with statistical physics and percolation theory~\cite{bobrowski2020homological,bobrowski2022homological,duncan2020homological,duncan2022topological,roa2018topological,sarnak2019topologies}. We hope that the current work spurs further interest in the intersection of these fields.

The idea of representing $1$-dimensional Potts lattice gauge theory with a $2$-dimensional cell complex dates back to soon after its introduction ~\cite{ginsparg1980large,maritan1982gauge}. These earlier attempts by physicists were limited by imprecise notions which counted degrees of freedom in terms of ``independent surfaces of plaquettes'' rather than homology, failing to account for the dependence of one-dimensional homology on the coefficient group. Aizenman and Fr\"ohlich discovered that there were ``topological anomalies'' in Potts lattice gauge theory and its Wilson loop variables~\cite{aizenman1984topological}. Specifically, they found that the weight assigned to $1$-cochains consistent with a given plaquette configuration $P$ is not always proportional to $q^{\rank H^1\paren{P;\;\Z}}.$ This is in contrast to the situation for the classical Potts model, which weights $0$-cochains consistent with a graph $P'$ proportionally to $q^{\rank H^0\paren{P';\;\Z}}$ for any value of $q$. In addition, Aizenman and Fr\"ohlich constructed examples of plaquette systems for which a discrete analogue of Stokes' Theorem  for Wilson loop variables fails. After these observations, this project seems to have become dormant. 

The plaquette random-cluster model was introduced as a higher dimensional generalization of the classical random-cluster model by Hiraoka and Shirai~\cite{hiraoka2016tutte}. They demonstrated that it can be coupled with $q$-state Potts lattice gauge theory when $q$ is a prime integer. This extends the well-known coupling of the random-cluster model with the Potts model, which, together with other graphical representations of spin models~\cite{duminil2016graphical}, have been powerful tools in statistical mechanics. In earlier work~\cite{duncan2022topological}, we proved that --- under this coupling --- the Wilson loop expectation for a cycle $\gamma$ equals the probability that $\gamma$ is null-homologous in the plaquette random-cluster model when coefficients are taken in $\Z_q$ (roughly speaking, that $\gamma$ is ``bounded by a surface of plaquettes''). That is, we showed that ``topological anomalies'' noticed in~\cite{aizenman1984topological} can be accounted for by weighting the plaquette random-cluster model by $q^{\rank H^i\paren{P;\;\Z_q}}$ rather than $q^{\rank H^i\paren{P;\;\Z}}.$ We suggested that the case of general $q$ could be handled by replacing $q^{\rank H^i\paren{P;\;\Z_q}}$ with $\abs{H_i\paren{P;\;\Z_q}}$ but deferred the proof to a later paper. Here, we show that this definition results in a coupling that fully accounts for the ``topological anomalies.''\footnote{Note that $\abs{H_i\paren{P;\;\Z_q}}=\abs{H^i\paren{P;\;\Z_q}}$; see Corollary~\ref{cor:UCTC} below.}

%The proof strategy of our main theorem
With the plaquette random-cluster model in hand, the study of the deconfinement transition becomes amenable to the arguments of~\cite{aizenman1983sharp}. In some places, the adaptation of the proofs is complicated both by the dependence between disjoint plaquette events and also because of the extension to higher dimensions. Our results demonstrate that higher-dimensional cellular representations of spin models possess some of the same power of graphical representations of spin models.

\section{Background and Main Results}\label{sec:background}

The $k$-dimensional Potts (hyper)lattice gauge theory assigns random spins in the abelian group $\Z_q$ to the $k$-cells of a cell complex $X$ in a way so that reversing the orientation of a cell multiplies the spin by $-1.$ Following the language of algebraic topology, we call such a spin assignment an $k$-cochain and denote the collection of them by $C^k\paren{X;\;\Z_q}.$ This collection has a natural structure as a $\Z_q$ module. For more detailed topological definitions, see Section~\ref{subsec:homology} in the appendix.

\begin{Definition}[\cite{wegner1971duality, kogut1980z}]
The $k$-dimensional $q$-state Potts lattice gauge theory (or PLGT) on a finite cubical complex $X$ is the measure on $C^k\paren{X;\;\Z_q}$ given by
%\[\mathbb{P}\paren{f=f'}\propto e^{-\beta H\paren{f'}}\]
\[\nu_{X,\beta,q,k}\paren{f} \coloneqq \frac{1}{\mathcal{Z}} e^{-\beta H\paren{f}}\]
where $\beta$ is a parameter called the inverse temperature, $\mathcal{Z} = \mathcal{Z}\paren{X,\beta,q,k}$ is a normalizing constant, and $H$ is the Hamiltonian
\begin{equation}
    \label{eq:hamiltonian_potts}
    H\paren{f}=-\sum_{\sigma}K\paren{\delta f\paren{\sigma},0}\,.
\end{equation}
\end{Definition}

Here, $\delta$ is the coboundary operator satisfying $\delta f\paren{\sigma}=f\paren{\partial \sigma}$ and $K$ is the Kronecker delta function. When $q=2$ or $q=3,$ these measures coincide with the $\Z(2)$ and $\Z(3)$ Euclidean lattice gauge theories mentioned above, up to a rescaling of the parameter $\beta.$ The special case $k=0$ is the classical $q$-state Potts model. When $k>1,$ these models are sometimes called ``hyperlattice'' gauge theories, but we refer to them as lattice gauge theories to keep our language simple. The case $k=1,d=3$ may be of greatest interest, but the same methods we use to prove our main theorem are equally applicable to more general case of $k=d-2,d\geq 3.$  In the same vein, we will use terminology best suited for the special case $k=1.$  In particular, we will refer to the number of $(k-1)$-plaquettes in the boundary of an $k$-dimensional box as its ``perimeter'' and the number of $k$-dimensional plaquettes in its interior as its ``area,'' even though ``surface area'' and ``volume'' might be more appropriate when $k>1.$ 

We also consider Potts lattice gauge theory on subset of the integer lattice $\Z^d$ with boundary conditions. For convenience, we restrict ourselves to boxes $r\subset \Z^d.$ We call the measure defined above Potts lattice gauge theory with free boundary conditions and denote it by  $\nu^{\mathbf{f}}_{r,\beta,q,k}.$ The other boundary conditions of interest in this paper specify that the cochain agrees with specified cocycle $\eta\in  Z^{k}\paren{\partial r;\;\Z_q}$ on $\partial r$ (a cocycle is a cochain $\eta$ so that $\delta \eta=0$). For example, we could choose $\eta$ to assign a constant element of $\Z_q$ to each $k$-face of $\partial r.$ It will turn out that expectations of gauge invariant quantities do not depend on the specific choice of $\eta,$ and agree with a different type of boundary conditions for Potts lattice gauge theory that we call wired boundary conditions. 

Fix $\eta\in  Z^{k}\paren{\partial r;\;\Z_q},$ let $\psi:C^{k}\paren{r;\;\Z_q} \to C^{k}\paren{\partial r;\;\Z_q}$ be the map which restricts a cochain on $r$ to one on $\partial r,$ and set $D_{\eta}\paren{r;\;\Z_q}=\psi^{-1}\paren{\eta}.$ 

\begin{Definition}
The $k$-dimensional $q$-state Potts lattice gauge theory (or PLGT) on a box $r\subset \Z^d$ with boundary conditions $\eta$ is the restriction of  $\nu_{r,\beta,q,k}$ to $D_{\eta}\paren{r;\;\Z_q}.$ That is, it is the Gibbs measure $\nu^{\eta}_{r,\beta,q,k}$ on  $D_{\eta}\paren{r;\;\Z_q}$ induced by the Hamiltonian~(\ref{eq:hamiltonian_potts}). Similarly, the PLGT on $r$ with wired boundary conditions is the restriction of $\nu_{r,\beta,q,k}$ to $\ker\delta\circ \psi.$  It is denoted by  $\nu^{\mathbf{w}}_{r,\beta,q,k}.$
\end{Definition}

\begin{Definition}
    Let $\gamma$ be an $k$-cycle in $Z_{k}\paren{X;\;\Z_q}$. In the PLGT, the Wilson loop variable $W_{\gamma}$ is \[W_{\gamma}\paren{f}=\paren{f\paren{\gamma}}^{\mathbb{C}}\,,\]
where the $\C$ superscript denotes that we are viewing the variable as a complex number by identifying the $\Z_q$ with the multiplicative group of complex $q$-th roots of unity. 
\end{Definition}

The asymptotics of Wilson loop variables for $1$-dimensional lattice gauge theories have been of great interest in both the mathematical and physical literatures. We mention only a few results which are relevant to the cases $G=\Z\paren{2}$ and $G=\Z\paren{3}$; see~\cite{chatterjee2016yang} for a more thorough account. Classically, series expansions arguments were employed to demonstrate the existence of area law and perimeter law regimes at sufficiently extreme temperatures~\cite{osterwalder1978gauge, seiler1982gauge}. Recently, these methods were used to produce a refined understanding of the low temperature asymptotics of lattice gauge theories with with finite gauge groups~\cite{chatterjee2020wilson,cao2020wilson,forsstrom2020wilson}. There has been less specific interest in the PLGT, except for the special case of $q=2$~\cite{chatterjee2020wilson,forsstrom2023free}.  The series expansion techniques should be easily adaptable to show the existence of area law and perimeter law regimes for the $k$-dimensional PLGT at sufficiently extreme temperatures. This result can also be proven using a coupling with the plaquette random-cluster model and a comparison with Bernoulli plaquette percolation~\cite{duncan2022topological} (that reference considers the case of prime $q,$ but the argument extends to the generalized plaquette random-cluster model introduced here). Finally, as mentioned above, \cite{laanait1989discontinuity} proved that the $1$-dimensional PLGT on $\Z_4$ exhibits a sharp deconfinement transition for sufficiently large $q$ and Bricmont, Lebowitz, and Pfister~\cite{bricmont1980surface} demonstrated that the Wilson loop tension in Ising lattice gauge theory on $\Z^3$ coincides with the surface tension of the dual Ising model.

Our main theorem characterizes the asymptotics of Wilson loop variables for the boundaries of $(d-1)$-dimensional boxes in $(d-2)$-dimensional Potts lattice gauge on $\Z^d$ at all but the critical value of $\beta.$ Before stating it, we introduce some notation. For now, we let  $\nu_{\Z^d,\beta,q,d-1}$ denote an infinite volume PLGT on $\Z^d.$ In Section~\ref{subsec:comparisoninfinite} we describe how to construct such measures as a weak limit of finite volume measures using both free, wired, and $\eta$ boundary conditions. Our result holds for any such limiting measure. 

Let $r$ be a an $i$-dimensional box in $\Z^d.$ That is, $r$ is a set of the form $\brac{0,N_1}\times\ldots \brac{0,N_i}\times \set{0}^{d-i}$ or one obtained from it by symmetries of the lattice. When $G$ is the additive (abelian) group of a ring with unity, we can identify $\partial r$ with the chain $\sum_{\sigma \in \partial r} \sigma \in C_{i-1}\paren{\Z^d;\;G},$ where the sum is taken over the (positively oriented) $(i-1)$-plaquettes of $\gamma.$ This is an abuse of notation as, strictly speaking, there is a difference between the set $\partial r$ and the chain $\partial r.$ 

To obtain our sharpest result for the perimeter law regime, we require a minor regularity hypothesis on the boxes considered.  For a box $r,$ let $m\paren{r}$ be its minimum dimension and let $M\paren{r}$ be its maximum dimension. We say that a family of $(d-1)$-dimensional boxes $r_{l}$ is suitable if its  $(d-1)$ dimensions diverge to $\infty$ and if $m\paren{r_{l}}=\omega\paren{log\paren{M\paren{n}}}.$ When $r_{l}$ is suitable, we say that $\gamma_l=\partial r_{l}$ is a \emph{suitable} family of rectangular boundaries. 
%Since we primarily study the PLGT through the coupled PRCM, we write the constants in the following theorem in terms of the coupled parameter $p=1-e^{-\beta}$ (see Section~\ref{sec:coupling}). We will discuss the surface tension constant $\tau_{p,q}$ more when we formulate the transition in terms of the PRCM. 

\begin{Theorem}
  \label{thm:sharpness}
Fix integers $q,d \geq 2$ and set $\nu=\nu_{\Z^d,\beta,q,d-1}.$ There exist constants $0<\newconstant\label{const:plgt1}(\beta,q), \newconstant\label{const:plgt2}(\beta,q)<\infty$ so that, if $\set{\gamma_l}$ is a suitable family of rectangular $(d-1)$-boundaries then
\begin{align*}
    -\frac{\log\paren{\mathbb{E}_{\nu}(W_{\gamma_l})}}{\mathrm{Area}(\gamma) } \rightarrow &   \oldconstant{const:plgt1}(\beta,q) \qquad && \beta < \beta^*\paren{\beta_{\mathrm{surf}}(q)}\\
   -\frac{\log\paren{\mathbb{E}_{\nu}(W_{\gamma_l})}}{ \mathrm{Per}(\gamma)} \rightarrow &   \oldconstant{const:plgt2}(\beta,q) \qquad && \beta > \beta^*\paren{\beta_{c}(q)}
      \,,
\end{align*}
where $\beta_{c}(q)$ is the critical inverse temperature for the Potts model on $\Z^d,$ $\beta_{\mathrm{surf}}(q)=-\log\paren{1-p_{\mathrm{surf}}\paren{q}}$ is the inverse temperature corresponding to the vanishing of the surface tension in the random-cluster model, and 
\[\beta^*\paren{\beta}=\log\paren{\frac{e^{\beta}+q-1}{e^{\beta}-1}}\,.\]
\end{Theorem}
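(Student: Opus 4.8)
The plan is to reduce the Wilson loop asymptotics to the decay of a purely geometric event in the plaquette random-cluster model via the coupling developed earlier in the paper, and then to import the sharpness results for the classical random-cluster model. First I would invoke the coupling between the $(d-2)$-dimensional $q$-state PLGT and the $(d-1)$-plaquette random-cluster model: under this coupling the expectation $\mathbb{E}_\nu(W_{\gamma_l})$ equals the probability $\mathbb{P}(V_{\gamma_l})$ that $\gamma_l$ is null-homologous (with $\Z_q$ coefficients) in the random plaquette complex, where the random-cluster parameters are related to $\beta$ by the standard change of variables --- this is exactly the point of the function $\beta^*$ appearing in the statement, since $\beta^*(\beta')$ is the PLGT inverse temperature dual to random-cluster edge-parameter $\beta'$. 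This converts the theorem into a statement about $-\log\mathbb{P}(V_{\gamma_l})$ for the $(d-1)$-plaquette random-cluster model, which is the generalization of Theorem~\ref{thm:accfr} promised in the abstract.

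Next I would set up the dual description of the event $V_{\gamma_l}$. For $i = d-1$, Alexander (or Poincaré--Lefschetz) duality on $\Z^d$ turns $(d-1)$-plaquette configurations into $1$-dimensional (edge) configurations in the dual lattice, and the event that $\gamma_l = \partial r_l$ is bounded by a surface of plaquettes dualizes to a disconnection/connection event for the dual one-dimensional random-cluster model: roughly, $V_{\gamma_l}$ fails iff there is a dual path (an ``open'' dual edge-path in the complement) linking $\gamma_l$, i.e. crossing the box $r_l$. This is the mechanism by which~\cite{aizenman1983sharp} reduced their theorem to one-dimensional percolation, and here the same reduction runs through the $1$-dimensional random-cluster model that is dual to the $(d-1)$-plaquette random-cluster model. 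I would then split into the two regimes. In the subcritical-dual regime $\beta > \beta^*(\beta_c(q))$ --- equivalently the dual random-cluster model is subcritical --- I would use exponential decay of connectivities (sharpness of the phase transition for the random-cluster model, as established by Duminil-Copin--Raoufi--Tassion and encoded here in $\beta_c(q)$) together with a union bound over the $O(\mathrm{Per}(\gamma_l))$ possible ``attachment sites'' of a linking dual path to get the upper bound $\mathbb{P}(V_{\gamma_l}) \ge 1 - e^{-c\,\mathrm{Per}(\gamma_l)}$, hence $-\log\mathbb{P}(V_{\gamma_l}) \asymp$ a perimeter-order quantity; the matching lower bound and the existence of the limit $\oldconstant{const:plgt2}$ come from a subadditivity (Fekete) argument in the perimeter parameter, which is where the suitability hypothesis $m(r_l) = \omega(\log M(r_l))$ is used to kill boundary corrections. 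In the area-law regime $\beta < \beta^*(\beta_{\mathrm{surf}}(q))$ --- the dual random-cluster model is supercritical in a slab --- I would instead show that with probability bounded below by $e^{-c\,\mathrm{Area}(\gamma_l)}$ there is a ``blocking surface'' of dual open edges spanning $r_l$, using Bodineau's theorem that the slab surface tension is positive throughout this regime (this is the role of $\beta_{\mathrm{surf}}(q)$), and again extract the precise constant $\oldconstant{const:plgt1}$ and the existence of the limit from subadditivity of the surface tension.

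The main obstacle is the area-law direction, specifically controlling the surface tension in slabs for the random-cluster model when $q > 2$: for $q = 2$ (Ising lattice gauge theory) the needed slab results and positivity of surface tension up to the slab-percolation threshold are available unconditionally (Lebowitz--Pfister, Bodineau, and the Grimmett--Marstrand-type slab continuity), but for general $q$ one needs a regularity statement for the random-cluster model in slabs that is not currently known --- this is exactly the ``regularity conjecture for the random-cluster model in slabs'' flagged in the abstract, and the theorem is stated conditionally on it for $q > 2$. A secondary technical point, inherited from the higher-dimensional setting, is that the duality and the union-bound/subadditivity arguments must be carried out with homology rather than naive ``independent surfaces of plaquettes,'' so I would be careful to phrase $V_{\gamma_l}$ in terms of $H_{d-1}(\,\cdot\,;\Z_q)$ and to use the coefficient-group-correct duality (as in the footnote referencing Corollary~\ref{cor:UCTC}); and one must check that the FKG/comparison inequalities for the plaquette random-cluster model are strong enough to run the coarse-graining that Grimmett--Marstrand-type arguments require, which is the content of the general-theory development promised in the abstract.
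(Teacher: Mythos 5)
Your high-level reduction is the same as the paper's: use the coupling to write $\mathbb{E}_\nu(W_{\gamma_l})$ as the PRCM probability of $V_{\gamma_l}$ (Theorem~\ref{thm:comparison}, together with Proposition~\ref{prop:codim1} to pass to $\Q$ coefficients in codimension one), dualize $V_{\gamma_l}$ to a statement about dual loops linking $\gamma_l$ in the one-dimensional random-cluster model (Proposition~\ref{prop:linkq}), identify the Wilson loop tension with the dual surface tension and get positivity from Bodineau in the area regime, and use subcriticality of the dual model (via Duminil-Copin--Raoufi--Tassion) in the perimeter regime. That skeleton is right.

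However, there is a genuine gap in the perimeter-law regime, and it is exactly where the real work of the paper lies. Your claimed bound $\mathbb{P}(V_{\gamma_l})\geq 1-e^{-c\,\mathrm{Per}(\gamma_l)}$ is false: $V_{\gamma_l}$ is precluded whenever a single $(d-2)$-cell of $\gamma_l$ has no adjacent open plaquette, so $\mathbb{P}(V_{\gamma_l})\leq e^{-c\,\mathrm{Per}(\gamma_l)}$ always, and the perimeter law asserts a matching \emph{lower} bound $\mathbb{P}(V_{\gamma_l})\geq e^{-C\,\mathrm{Per}(\gamma_l)}$; a union bound over exponential decay of dual connectivities cannot produce this, since the obstructing dual loops can be short and are abundant. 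The paper instead (i) builds a null-homology of $\gamma$ from the boundaries of the finite dual clusters attached along $\gamma$ and lower-bounds its probability via FKG over $O(\mathrm{Per})$ local disconnection events of uniformly positive probability (Propositions~\ref{prop:axisdisconnect} and~\ref{prop:perimeter}), and (ii) to get the sharp constant, introduces local tube events $C_t$, $\overline{C}_t$ and crossing events whose simultaneous occurrence is shown by a genuinely geometric/homological argument to preclude any linking dual loop (Proposition~\ref{prop:implyv}), runs a submultiplicativity argument under wired boundary conditions (using Lemma~\ref{lemma:wired} as a substitute for independence), and interpolates to the infinite-volume measure via Proposition~\ref{prop:interpolate} --- this interpolation, not a Fekete argument per se, is where the suitability hypothesis $m(r_l)=\omega(\log M(r_l))$ enters. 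Your area-regime sketch also has the trivial and nontrivial directions muddled (the lower bound $e^{-c\,\mathrm{Area}}$ is trivial by opening all plaquettes of the spanning disk; the content is the upper bound, obtained from Corollary~\ref{corollary:CrossingJ} and positivity of the dual surface tension), and a ``blocking surface of dual open edges'' is not a meaningful certificate since the dual objects are one-dimensional. Finally, note the theorem as stated is unconditional: the slab regularity conjecture is only needed to conclude that the two thresholds $\beta^*(\beta_{\mathrm{surf}})$ and $\beta^*(\beta_c)$ coincide.
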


We define $p_{\mathrm{surf}}$ below. As a consequence of a theorem of Bodineau~\cite{bodineau2005slab}, $p_{\mathrm{surf}}\paren{q}\leq p_{\mathrm{slab}}\paren{q},$ where $p_{\mathrm{slab}}\paren{q}$ is the slab threshold for the random-cluster model. The constant $\oldconstant{const:plgt1}(\beta,q)$ may a priori depend on the infinite volume measure $\nu,$ but $\nu$ is known to be unique in the high temperature regime, so $\oldconstant{const:plgt2}(\beta,q)$ does not. The only place we use the assumption that $\gamma_l$ is suitable is the proof of the existence of the sharp constant $ \oldconstant{const:plgt2}(\beta,q) $ in the perimeter law regime.  Note that the special case of $d=2$ is sharpness of the phase transition for the planar Potts model, a result due to Beffara and Duminil-Copin~\cite{beffara2012self}.

It is a conjecture of Pisztora that $\beta_{c}(q)=\beta_{\mathrm{slab}}(q)$ for all $q$~\cite{pisztora1996surface}. Also, Lebowitz and Pfister~\cite{lebowitz1981surface} proved that $\beta_{\mathrm{surf}}(2)=\beta_c(2)$ for all $d.$

%Theorem~\ref{thm:sharpness} is a special case of a more general conjecture for Wilson loop variables in $(i-1)$-dimensional Potts lattice gauge theory on $\Z^d.$ That is, that there is a sharp phase transition from an area law to a perimeter law in the same sense as above. This conjecture was resolved in the case $i=1$ --- where Potts lattice gauge theory is the Potts model, $\gamma$ is the boundary of a path whose ``perimeter'' is 2 and whose area is its length --- by  Duminil-Copin, Raoufi, and Tassion~\cite{duminil2019sharp}. Also, Laanait, Messager, and Ruiz proved that such a transition occurs for sufficiently large $q$ when $i=2$ and $d=4$~\cite{laanait1989discontinuity}.

We prove Theorem~\ref{thm:sharpness} using a cellular representation of the Potts lattice gauge theory. The plaquette random-cluster model (or PRCM) with coefficients in a field $\F$ was defined in~\cite{hiraoka2016tutte} to be the random $i$-dimensional subcomplex of a cell complex $X$ so that
\begin{equation}\label{eqn:fieldrcm}
    \mathbb{P}\paren{P}\propto p^{\abs{P}} \paren{1-p}^{\abs{X^{\paren{i}}}-\abs{P}} q^{\mathbf{b}_{i-1}\paren{X;\;\F}}
\end{equation}
where $\abs{X^{\paren{i}}}$ denotes the number of $i$-cells of $X,$ $\abs{P}$ denotes the number of $i$-cells of $P,$ and ${\mathbf{b}_{i-1}\paren{X;\;\F}}$ is the rank of the $(i-1)$-homology group $H_{i-1}\paren{P;\;\F}.$ 
\cite{hiraoka2016tutte} and \cite{duncan2022topological} show a number of results about these models, and in particular that they are coupled with the $(i-1)$-dimensional $q$-state PLGT when $q$ is a prime integer and $\F=\Z_q.$  Here, we extend the definition of the PRCM to produce a model coupled with the $q$-state PLGT even when $q$ is non-prime. This definition is equivalent to the one we suggested in~\cite{duncan2022topological} and deferred to later study.

\begin{Definition}
Let $X$ be a finite $d$-dimensional cell complex, $i<d$ and $p\in\brac{0,1}.$ The $i$-dimensional plaquette random-cluster model on $X$ with coefficients in a finite abelian group group $G$ is the random $i$-complex $P$ that includes the $(i-1)$-skeleton of $X$ and is distributed as follows. 
\begin{align}
\label{eqn:rcmdefinition}
    \tilde{\mu}_{X,p,G,i}\paren{P} \coloneqq \frac{1}{Z}p^{\abs{P}}\paren{1-p}^{\abs{X^{\paren{i}}} - \abs{P}}\abs{H^{i-1}\paren{P;\;G}}\,,
\end{align}
where $Z=Z\paren{X,p,G,i}$ is a normalizing constant and $H^{i-1}\paren{P;\;G}$ is the reduced cohomology of $P$ with coefficients in $G.$ %$\abs{X^{\paren{i}}}$ is the number of $i$-cells of $X$, and $\abs{P}$ is the number of $i$-cells of $P.$ 
\end{Definition}

\begin{figure}[t]
    \centering
    \includegraphics[width=\textwidth]{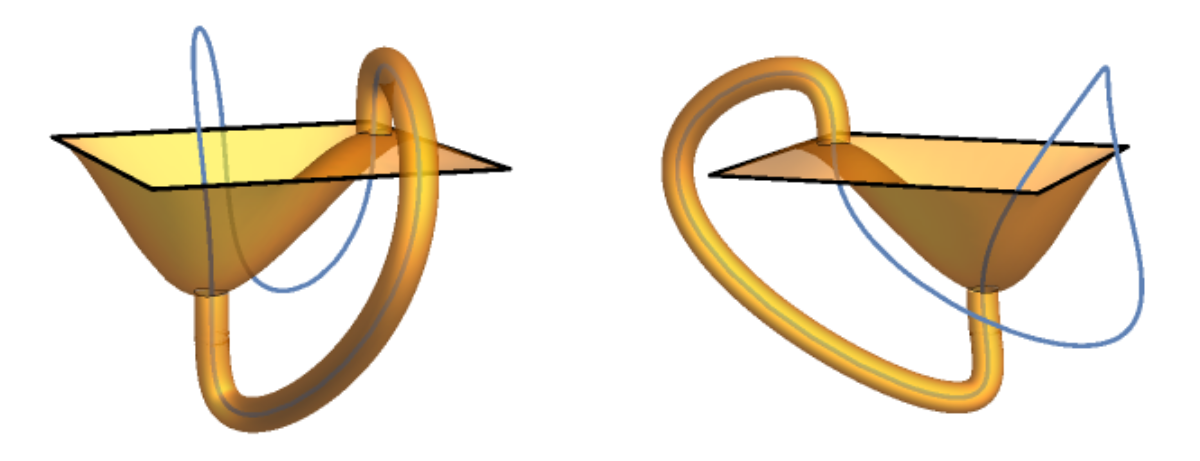}
    \caption{A non-orientable plaquette surface (shown in orange) realizing the event $V_{\gamma}\paren{2}$ but not the events $V_{\gamma}\paren{q}$ for odd $q.$ $\gamma$ is depicted by the thick black line. Note that the dual blue loop has linking number $2$ with $\gamma$; we will see later that $V_{\gamma}\paren{q}$ occurs if and only if there is a dual loop whose link number with $\gamma$ is not $0$ modulo $q.$ This figure was inspired by Figure 1 of~\cite{aizenman1983sharp} and was created in Mathematica using the CurveTube custom function included with the textbook~\cite{gray2006modern}.}
    \label{fig:vgamma2}
\end{figure}

When $X=r$ is a box in $\Z^d,$ we call this random cell complex the plaquette random-cluster model with free boundary conditions and denote it by $\tilde{\mu}^{\mathbf{f}}_{r,p,G,i}.$ By convention, we do not include the $i$-plaquettes in the boundary of $r,$ and instead write $\overline{r}$ for the full induced subcomplex. We also define the PRCM with wired boundary conditions by replacing the term $\abs{H^{i-1}\paren{P;\;G}}$ in (\ref{eqn:rcmdefinition}) with $\abs{H^{i-1}\paren{P\cup \partial r;\;G}}.$ This has same effect as adding all the $i$-plaquettes in the boundary of $r.$ This measure is denoted by  $\tilde{\mu}^{\mathbf{w}}_{r,p,G,i}\paren{P}.$

%In the case of $G = \Z_q,$ we write $\mu_{X,p,q,i} \coloneqq \tilde{\mu}_{X,p,\Z_q,i}$ 
We prove that $\tilde{\mu}_{X,p,\Z_q,i}$ is coupled with the $(i-1)$-dimensional $q$-state PLGT (Proposition~\ref{prop:couple}) in a way so that a Wilson loop expectation equals the probability that the loop is null-homologous.  This generalizes one of the main theorems of~\cite{duncan2022topological}, which covers the case when $q$ is a prime integer. Until we specialize to the $(d-1)$-dimensional random-cluster model, we will follow the convention of~\cite{duncan2022topological} and reserve $i$ for the dimension of the random-cluster model.

We would like to relate the Wilson loop expectation $W_{\gamma}$ to the probability that $\gamma$ is ``bounded by a surface of plaquettes.'' To do so, we must account for the dependence on both $q$ and also on the boundary conditions. Set $V^{\mathrm{fin}}_{\gamma}\paren{G}$ be the event that $\gamma \in B_{i-1}\paren{P;\;G}$ and $V^{\mathrm{inf}}_{\gamma}\paren{G}$ to be the event that 
union of $V^{\mathrm{fin}}_{\gamma}\paren{G}$ and the event that $\gamma$ is ``homologous to infinity'' in the sense that it is homologous to a cycle in the boundary of the cube $\brac{-n,n}^d$ for arbitrarily large $n.$  When $G=\Z_q$ we simply write $V^{\mathrm{fin}}_{\gamma}\paren{q}$ and $V^{\mathrm{inf}}_{\gamma}\paren{q}$ for these events. To further simplify notation, we use $V^{\mathrm{fin}}_{\gamma}\paren{1}$ and $V^{\mathrm{inf}}_{\gamma}\paren{1}$ when $G=\Z.$ When $i=d-1,$ the event $\neg V_{\gamma}\paren{q}$ can be characterized in terms of the existence of dual loop whose linking number with $\gamma$ is non-zero modulo $q;$ this is Proposition~\ref{prop:linkq} below.

See Figures~\ref{fig:vgamma2} and~\ref{fig:vgamma}. 

\begin{Theorem}
\label{thm:comparison}
Let $0<i<d-1,$ let $\gamma$ be an $(i-1)$-cycle in $\Z^d,$ $q\in\N+1,$ and $\paren{\#_1,\#_2}\in \set{\paren{\mathbf{f},\mathrm{fin}}, \paren{\mathbf{w},\mathrm{inf}}}.$ 
Then
\[\mathbb{E}_{\nu}\paren{W_{\gamma}}=\tilde{\mu}\paren{V_{\gamma}}\,,\]
where $\nu=\nu_{\Z^d,\beta,q,i-1}^{\#_1}$ is the PLGT with boundary conditions $\#_1,$ $\tilde{\mu}=\tilde{\mu}^{\#_1}_{\Z^d,1-e^{-\beta},\Z_q,i}$ is the corresponding random-cluster model, and $V_{\gamma}=V^{\#_2}\paren{q}.$
\end{Theorem}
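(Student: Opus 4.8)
The strategy is to pass through the Edwards--Sokal-type coupling between the PLGT $\nu_{\Z^d,\beta,q,i-1}^{\#_1}$ and the plaquette random-cluster model $\tilde\mu^{\#_1}_{\Z^d,1-e^{-\beta},\Z_q,i}$, which is Proposition~\ref{prop:couple}. Since both measures on $\Z^d$ are constructed as weak limits of finite-volume measures (Section~\ref{subsec:comparisoninfinite}), I would first prove the identity on a finite box $r$ with the appropriate boundary condition $\#_1\in\{\mathbf f,\mathbf w\}$, and then take $r\uparrow\Z^d$. The finite-volume claim is: for a fixed $(i-1)$-cycle $\gamma$ supported in $r$,
\[
\mathbb{E}_{\nu^{\#_1}_{r,\beta,q,i-1}}\paren{W_\gamma}=\tilde\mu^{\#_1}_{r,1-e^{-\beta},\Z_q,i}\paren{V^{\#_2}_\gamma(q)\cap\{\gamma\subset P\}}\,,
\]
where the event on the right is read inside the box. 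The heart of the matter is a conditional computation: condition the coupled pair on the plaquette configuration $P$; then the PLGT spin $f\in C^{i-1}(r;\Z_q)$ is \emph{uniform} on the affine space of cochains $f$ whose coboundary $\delta f$ vanishes on every $i$-plaquette of $P$ (together with whatever boundary constraint $\#_1$ imposes), because the Hamiltonian is constant on that set and the coupling weight was chosen precisely to cancel the partition-function ratio. On this uniform space, $\mathbb{E}(W_\gamma\mid P)=\mathbb{E}(f(\gamma)^{\mathbb{C}}\mid P)$ is a character sum over a finite $\Z_q$-module, hence equals $1$ if the linear functional $f\mapsto f(\gamma)$ is identically zero on that module and $0$ otherwise; averaging over $P$ then yields exactly $\tilde\mu(\{P:\ f(\gamma)\equiv 0\ \forall f\text{ admissible}\})$.

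The remaining step is the \emph{homological translation}: the functional $f\mapsto f(\gamma)$ vanishes on all cochains killed by $P$ precisely when $\gamma$ is a boundary in $P$ with $\Z_q$ coefficients (for free b.c.\ this is $\gamma\in B_{i-1}(P;\Z_q)$, i.e.\ $V^{\mathrm{fin}}_\gamma(q)$; for wired b.c.\ one includes $\partial r$, yielding the "homologous to infinity" clause $V^{\mathrm{inf}}_\gamma(q)$ in the limit). This is the standard duality/annihilator argument: a cycle pairs trivially with every cocycle annihilating $\partial P$ iff it lies in the image of $\partial$, which over a field is immediate from linear algebra and over $\Z_q$ requires the universal coefficient / Pontryagin-duality bookkeeping for the finite module $H^{i-1}(P;\Z_q)$ (cf.\ the footnote's Corollary~\ref{cor:UCTC} and the identity $\abs{H_i(P;\Z_q)}=\abs{H^i(P;\Z_q)}$), which is exactly why the non-prime case needed the $\abs{H^{i-1}(P;G)}$ normalization rather than a power of $q$. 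I would quote Proposition~\ref{prop:couple} for the coupling itself and Proposition~\ref{prop:linkq} (or its proof) if a dual-loop reformulation is cleaner for the wired/infinite case.

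Finally, for the passage to infinite volume: both sides are monotone (or at least well-controlled) along an exhaustion by boxes $r_n=[-n,n]^d$, the events $V^{\mathrm{fin}}_\gamma(q)$ increase and their limit together with the "homologous into $\partial[-n,n]^d$ for arbitrarily large $n$" tail event is by definition $V^{\mathrm{inf}}_\gamma(q)$, and the Wilson-loop variable $W_\gamma$ is bounded and local, so $\mathbb{E}_{\nu_n}(W_\gamma)\to\mathbb{E}_\nu(W_\gamma)$ along the weakly convergent subsequence defining $\nu$. Matching the free PLGT limit with $V^{\mathrm{fin}}$ and the wired PLGT limit with $V^{\mathrm{inf}}$ requires checking that conditioning on the wired b.c.\ in the finite box corresponds under the coupling to adjoining $\partial r$ to the complex, which is built into the definitions of $\nu^{\mathbf w}$ and $\tilde\mu^{\mathbf w}$ above. \textbf{The main obstacle} I anticipate is not the coupling or the limit but the clean statement and proof of the homological translation over the non-field coefficient ring $\Z_q$: one must verify that "$f(\gamma)=0$ for every cochain $f$ with $\delta f|_P=0$" is genuinely equivalent to $\gamma\in B_{i-1}(P;\Z_q)$ (and its wired analogue), handling torsion correctly, rather than the weaker field-coefficient statement — this is precisely the subtlety that derailed the early physics literature on "topological anomalies," so it deserves a careful, self-contained argument via the universal coefficient theorem for the finite complex $P$.
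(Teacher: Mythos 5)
Your proposal is correct and follows essentially the same route as the paper: the finite-volume coupling of Proposition~\ref{prop:couple} (and its wired analogue, Proposition~\ref{prop:couplewired}), the conditional character-sum computation together with the universal-coefficient translation of ``$f(\gamma)=0$ for all admissible cocycles'' into $\gamma\in B_{i-1}(P;\Z_q)$ via Corollary~\ref{cor:UCTC} (Propositions~\ref{prop:comparisonbody} and~\ref{prop:comparisonwired}), and then exhaustion by boxes plus weak convergence and locality of $W_\gamma$ to match free boundary conditions with $V^{\mathrm{fin}}_\gamma$ and wired with $V^{\mathrm{inf}}_\gamma$ (Propositions~\ref{prop:comparisoninfinite} and~\ref{prop:comparisoninfinitewired}). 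Only minor remarks: the extra event $\{\gamma\subset P\}$ in your finite-volume identity is automatic since $P$ contains the full $(i-1)$-skeleton, and the wired infinite-volume step is carried out in the paper via a decreasing coupling together with the identity $V^{\mathrm{inf}}_\gamma=\bigcap_N V''(N)$, which is exactly the bookkeeping you flag.
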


% \begin{Theorem}
% \label{thm:comparison}
% Let $0<i<d-1,$ let $\gamma$ be an $(i-1)$-cycle in $\Z^d$ and let $q\in{2,3,4,\ldots}.$
% Then
% \[\mathbb{E}_{\nu}\paren{W_{\gamma}}=\tilde{\mu}\paren{V_{\gamma}}\,,\]
% where the tuple $\paren{\nu, \tilde{\mu}, V_{\gamma}}$ is either \[\paren{\nu_{\Z^d,\beta,q,i-1}^{\mathbf{f}}, \tilde{\mu}^{\mathbf{f}}_{\Z^d,1-e^{-\beta},\Z_q,i}, V_{\gamma}^{\mathrm{fin}}\paren{q}}\] or 
% \[\paren{\nu_{\Z^d,\beta,q,i-1}^{\mathbf{w}}, \tilde{\mu}^{\mathbf{w}}_{\Z^d,1-e^{-\beta},\Z_q,i}, V_{\gamma}^{\mathrm{inf}}\paren{q}}\,.\]
% \end{Theorem}

We note that the analogue of this theorem for finite cell complexes (Proposition~\ref{prop:comparisonbody} below) would fail if $V_{\gamma}\paren{q}$ is replaced with $V_{\gamma}\paren{1}.$  This was one of the ``topological anomalies'' documented by Aizenman and Fr\"ohlich; see the examples in Section 4.1 of~\cite{aizenman1984topological}. Their construction can be described as follows: Let $r$ be a rectangle in $\Z^3$ and let $\gamma = \partial r.$ Choose a tube $T$ of width one that intersects $r$ $k$ times, each time in a single plaquette $\sigma_j$. Let $P$ include all plaquettes in the exterior of $T.$ Then the chains $\brac{\partial\sigma_j}$ are homologous in $H_{1}\paren{P;\Z}$ and $\brac{\gamma}=k\brac{\sigma_1}\neq 0$ in that group. In particular, $V_{\gamma}\paren{1}$ occurs but $V_{\gamma}\paren{q}$ does not. In addition, conditional on $P,$ \[W_{\gamma} = \paren{W_{\sigma_1}}^k\,.\]
 Thus, if $q\mid k,$ $W_{\gamma}$ is uniformly distributed on a subgroup of $\Z\paren{q}$  and 
 \[\mathbb{E}\paren{W_{\gamma}\middle | P}=1 \neq I_{V_{\gamma}\paren{1}}\]
 but 
  \[\mathbb{E}\paren{W_{\gamma}\middle | P}=I_{V_{\gamma}\paren{q}}\,.\]

\begin{figure}[t]
    \centering
    \includegraphics[height=1.75in]{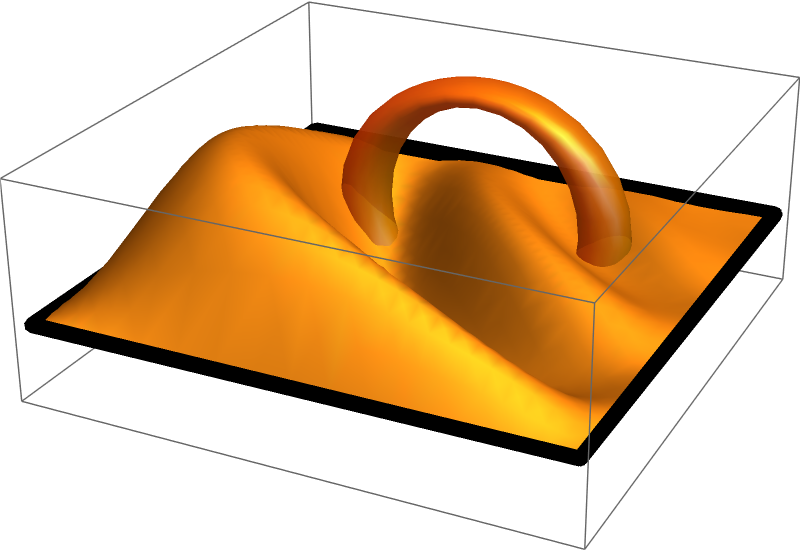}
    \qquad
    \includegraphics[height=1.75in]{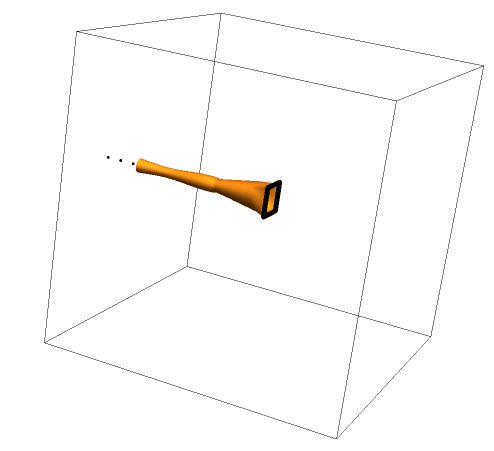}
    \caption{Illustrations of the events $V_{\gamma}^{\mathrm{fin}}$ and $V_{\gamma}^{\mathrm{inf}}.$ }
    \label{fig:vgamma}
\end{figure}

For the special case of the PRCM in codimension one ($i=d-1$), we have that 
\[H^{d-i-1}\paren{P;\;G}\cong G^{\mathrm{rank}\paren{H_{d-i-1}\paren{P;\;\Q}}}\]
so the law in (\ref{eqn:rcmdefinition}) takes the same form as in the classical random-cluster model (see Proposition~\ref{prop:codim1} below). Let $\mu_{X,p,q,i}$ be the PRCM with coefficients in $\Q$ as defined in (\ref{eqn:fieldrcm}). Then we can place the models $\tilde{\mu}_{X,p,G,d-1}$ inside the larger family of measures
$\mu_{X,p,q,i}$ where $q$ can take on any positive real value. In this case $\tilde{\mu}_{X,p,G,d-1}$ coincides with $\mu_{X,p,\abs{G},d-1}.$  

Let $r$ be a $d$-dimensional box in $\Z^d.$ By a small adaptation of Theorem 18 in~\cite{duncan2022topological},  $\mu_{r,p,q,d-1}$ is dual to the classical ($1$-dimensional) random-cluster model on a dual box with appropriate boundary conditions. A similar duality relation holds for general $i,$ but we do not require it here. We will include a proof in a separate manuscript.

Suppose $\gamma=\partial r$ and let $V_{\gamma}$ be the event $V_{\gamma}^{\mathrm{fin}}\paren{m}$ or $V_{\gamma}^{\mathrm{inf}}\paren{m}.$ Then the probability of the event $V_{\gamma}$ bounded below by the probability that all plaquettes contained in $r$ are included, which decays exponentially in the area of $\gamma.$ On the other hand, $V_{\gamma}$ is precluded if one of $\gamma$'s constituent $(d-2)$-plaquettes is not adjacent to a $(d-1)$-plaquette of $P$. It is not difficult to show that the probability that there are no such ``isolated'' $(d-2)$-cells decays exponentially in the perimeter of $\gamma.$ 

%The transition between the two regimes is related to a quantity known as the surface tension, which has been defined in slightly different ways in different contexts. Traditionally, the surface tension is defined in terms of the decay rate of the complement of a box crossing event in the classical RCM. We instead take the dual perspective and consider a version based on a plaquette filling event, as well as a perimeter law analogue. 

%\begin{Definition}
%    Let $r_N = \brac{-N,N}^{i}\times\set{0}^{d-i}$ and $\gamma_N = \partial r_N.$ Then for an infinite volume PRCM $\mu_{\Z^d,p,q,i},$ the Wilson loop tension is defined as
%    \[\tau_{p,q}\coloneqq \lim_{N\to\infty} \frac{-\log\paren{\mu_{\Z^d,p,q,i}\paren{V_{\gamma_N}}}}{\mathrm{Area}\paren{\gamma_N}}\,\]
%    and the Wilson boundary tension is defined as
%    \[\upsilon_{p,q}\coloneqq \lim_{N\to\infty} \frac{-\log\paren{\mu_{\Z^d,p,q,i}\paren{V_{\gamma_N}}}}{\mathrm{Per}\paren{\gamma_N}}\,.\]
%\end{Definition} 

We now recall the definition of surface tension in the classical random-cluster model.
Let $\partial^+ \Lambda_N=\partial\Lambda_N\cap\set{\vec{e}_{d}>0}$ and $\partial^- \Lambda_N=\partial\Lambda_N\cap\set{\vec{e}_{d}<0}.$ 
\begin{Definition}
    The surface tension constant for the random-cluster model with parameter $p,q$ is defined by 
    $$\tau_{p,q}=\lim_{N\rightarrow\infty} \frac{-\log\paren{\mu^{\mathbf{w}}_{\Lambda_N,p,q,1}\paren{\partial^+\Lambda_{N}\xleftrightarrow[Q]{} \partial^-\Lambda_N}}}{\paren{2N}^{d-1}}\,.$$
\end{Definition}

We also write 
\[p_{\mathrm{surf}}\paren{q} \coloneqq \inf \sup\set{p : \tau_{p,q} > 0}\,\]
and 
\[\beta_{\mathrm{surf}}\paren{q} = -\log\paren{1-p_{\mathrm{surf}}\paren{q}}.\]

For the Ising model, this coincides with a notion of surface tension defined using mixed boundary conditions~\cite{bodineau1999wulff}. Bodineau~\cite{bodineau2005slab} studied a modified definition of surface tension which instead considers free boundary conditions and connections between the top and bottom faces instead of hemispheres. He proved that this modified quantity is non-zero precisely when the RCM is supercritical in a sufficiently thick slab. Since free boundary conditions and smaller target sets both make connections less likely, it follows that $p_{\mathrm{surf}} \leq p_{\mathrm{slab}}.$

%We return to the relationship between the Wilson loop tension and the classical versions of surface tension in Section~\ref{sec:arealaw}.

%The following theorem establishes that there is a sharp transition between these two regimes (assuming the continuity of slab percolation thresholds). 
Here, $\mu_{\Z^d,p,q,d-1}$ will denote any infinite volume random-cluster measure obtained as a weak limit of finite volume measures (see Section~\ref{sec:RCMboundary}).

\begin{Theorem}
\label{thm:sharpnessRCM}
  Let $\mu_{\Z^d,p} = \mu_{\Z^d,p,q,d-1}$
  %be the measure of the $(d-1)$-dimensional plaquette random-cluster model on $\Z^d$ with parameters
  where $p\in\brac{0,1}$ and $q\in[1,\infty).$ Also, and $V_{\gamma}=V_{\gamma}^{\mathrm{fin}}\paren{m}$ or $V_{\gamma}^{\mathrm{inf}}\paren{m}$ for some $m\in \N.$ Then 
  there exist constants and $0<\newconstant\label{const:prcm1}(p,q), \newconstant\label{const:prcm2}(p,q)<\infty$ so that 
  for any suitable family of rectangular $(d-1)$-boundaries $\set{\gamma_l},$ 
  %\[-\frac{\log\paren{\mu_{\Z^d,p,q}\paren{V_{\gamma_l}}}}{\mathrm{Area}(\gamma) } \rightarrow   \oldconstant{prcm1}\]
  %and
  %\[-\frac{\log\paren{\mu_{\Z^d,p,q}\paren{V_{\gamma_l}}}}{ \mathrm{Per}(\gamma)} \rightarrow   \upsilon_{p,q}\]
  %Moreover, if $p < p^*\paren{p_{\mathrm{slab}}(q)}$, $\tau_{p,q} > 0$ and if $p > p^*\paren{p_{c}(q)},$ $\upsilon_{p,q} < \infty,$
\begin{align*}
    -\frac{\log\paren{\mu_{\Z^d,p,q}\paren{V_{\gamma_l}}}}{\mathrm{Area}(\gamma_l)} \rightarrow &   \oldconstant{const:prcm1} \qquad &&  p < p^*\paren{p_{\mathrm{surf}}(q)}\\
    -\frac{\log\paren{\mu_{\Z^d,p,q}\paren{V_{\gamma_l}}}}{ \mathrm{Per}(\gamma_l)} \rightarrow &   \oldconstant{const:prcm2} \qquad && p > p^*\paren{p_{c}(q)}
      \,,
\end{align*}
where $p_c\paren{q}$ is the critical probability for the classical (one-dimensional) random-cluster model on $\Z^d$ and 
\[p^*=p^*(p)= \frac{\paren{1-p}q}{\paren{1-p}q + p}\,.\]
\end{Theorem}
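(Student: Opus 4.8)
The plan is to reduce both regimes to statements about the classical one-dimensional random-cluster model on the dual box via the duality relation mentioned above (the adaptation of Theorem~18 of~\cite{duncan2022topological}), then port over the two halves of the argument of~\cite{aizenman1983sharp}. First I would set up the dual picture: for $\gamma=\partial r$ with $r$ a $(d-1)$-box, the complement of the event $V_\gamma$ is, by Proposition~\ref{prop:linkq} (in the field case $q$ plays the role of $\abs{G}$ and linking-number-mod-$q$ becomes an honest linking number in $\Q$), equivalent to the existence of a dual open path that ``winds around'' $\gamma$ — concretely a dual open connection between the two hemispheres (or top/bottom faces) of a dual annular region threaded by $r$. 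Under the duality $\mu_{r,p,q,d-1}\leftrightarrow\mu^{\mathrm{dual}}_{r^*,p^*,q,1}$ with $p^*$ as in the statement, this makes $\mu_{\Z^d,p,q}(V_{\gamma_l})$ comparable to a one-dimensional random-cluster crossing probability in a slab of thickness $m(r_l)$ and cross-sectional dimensions those of $r_l$.

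For the perimeter-law regime $p>p^*(p_c(q))$, i.e. the dual parameter is subcritical, I would show $-\log\mu(V_{\gamma_l})\asymp\mathrm{Per}(\gamma_l)$ as follows. The upper bound on $-\log\mu(V_{\gamma_l})$ (lower bound on probability) comes from the simple observation already sketched in the excerpt: forcing every $(d-2)$-plaquette of $\gamma$ to be non-isolated, or more robustly forcing a thin tube of $(d-1)$-plaquettes around $\gamma$ and a bounding configuration, costs $e^{-O(\mathrm{Per})}$; subadditivity of $-\log$ of this probability along $\gamma$ together with exponential decay of dual connectivities gives existence of the limiting constant $\oldconstant{const:prcm2}$. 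The matching lower bound uses that in the subcritical dual RCM, dual connections decay exponentially, so a union bound over the $O(\mathrm{Per})$ ``entry points'' near $\gamma$ where a dual circuit could link $\gamma$ shows $\mu(\neg V_{\gamma_l})$, hence a suitable fraction of the mass, is not concentrated where it would be needed — more precisely one exhibits a renewal/subadditive structure in the perimeter parameter and invokes the suitability hypothesis $m(r_l)=\omega(\log M(r_l))$ to guarantee the slab is thick enough that the finite-thickness dual connectivity is genuinely exponentially small (this is where Grimmett–Marstrand-type input, in the form of exponential decay of truncated connectivities in thick subcritical slabs, enters). Existence of the sharp constant follows from a standard Fekete/subadditivity argument along a one-dimensional family, which is the only place suitability is used.

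For the area-law regime $p<p^*(p_{\mathrm{surf}}(q))$, the dual parameter exceeds $p_{\mathrm{surf}}(q)$, so the dual RCM has strictly positive surface tension $\tau_{p^*,q}>0$ by definition of $p_{\mathrm{surf}}$. Here the event $\neg V_{\gamma_l}$ is essentially the event that the dual RCM contains an open ``sheet'' separating the two hemispheres across the region spanned by $r_l$; its probability is $e^{-\tau_{p^*,q}\,\mathrm{Area}(\gamma_l)(1+o(1))}$ by the definition of surface tension together with standard Pisztora-type coarse-graining/comparison arguments to pass from the wired-cube definition of $\tau$ to the geometry of $r_l$ and to control boundary effects. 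Dually, $\mu(V_{\gamma_l})=\mu(\neg(\text{dual separating sheet}))\to$ decays like $e^{-c\,\mathrm{Area}}$; more carefully, $-\log\mu(V_{\gamma_l})/\mathrm{Area}(\gamma_l)\to\oldconstant{const:prcm1}$, and one identifies $\oldconstant{const:prcm1}$ with the appropriate surface-tension-type constant. The lower bound on $\mu(V_{\gamma_l})$ is the cheap ``fill $r$ with plaquettes'' estimate giving $e^{-O(\mathrm{Area})}$; the upper bound is the surface-tension estimate, and positivity of $\tau_{p^*,q}$ is exactly what makes the two match at the right rate.

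The main obstacle I expect is the perimeter-regime lower bound: converting subcriticality of the dual RCM (a statement about $\Z^d$) into exponential decay of the relevant linking/crossing event in a slab of bounded thickness $m(r_l)$, uniformly enough to extract a sharp constant. This requires either the Grimmett–Marstrand machinery or Bodineau's slab results on the dual side, plus a careful renewal argument handling the non-product structure of the RCM along $\gamma$; and it is precisely the analogue of the step in~\cite{aizenman1983sharp} that originally needed the (then-conjectural) continuity of $p_c$ in slabs, which is why the statement is phrased with $p_{\mathrm{surf}}$ rather than $p_{\mathrm{slab}}$ and why the analogous PLGT theorem (Theorem~\ref{thm:sharpness}) is only unconditional for $q=2$.
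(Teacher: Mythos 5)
Your high-level skeleton (duality with the classical RCM, the linking-number characterization of $\neg V_{\gamma}$, identification of the area-law constant with the dual surface tension, subadditivity for the perimeter constant, suitability entering through boundary-condition comparisons) does match the paper, but the two places where the real work happens are missing or inverted, and this is a genuine gap rather than a stylistic difference. In the perimeter regime the quantities to match are an upper bound $\mu\paren{V_{\gamma_l}}\leq e^{-(c+o(1))\mathrm{Per}\paren{\gamma_l}}$ and a lower bound with the same $c$; your ``matching lower bound'' paragraph instead argues that $\mu\paren{\neg V_{\gamma_l}}$ is small, which is backwards --- in this regime $\mu\paren{V_{\gamma_l}}$ is exponentially small and $\mu\paren{\neg V_{\gamma_l}}$ is close to $1$. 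The sharp constant in the paper is not produced by a generic Fekete argument along $\gamma$: it is defined (Proposition~\ref{prop:C}) as the limit of $-\log \mu^{\mathbf{w}}_{\Lambda_n,p,q}\paren{C_{\Lambda_n}}/\paren{2n}^{d-2}$ for the explicit local tube event $C_t$ (a plaquette chain in a tube around a $(d-2)$-face $s$ whose boundary is $\rho_s$ up to a chain on $\partial t$). The upper bound then comes from the fact that $V_{\gamma}$ forces translates of $C_{\Lambda_n}$ in disjoint cubes placed along $\gamma$, and the substitute for independence --- the obstacle you flag but do not resolve --- is Lemma~\ref{lemma:wired}: under wired boundary conditions the PRCM factorizes over disjoint boxes (a Mayer--Vietoris computation), so these probabilities multiply. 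The matching lower bound is not ``a thin tube of plaquettes around $\gamma$'': it is the construction of Proposition~\ref{prop:implyv}, where the crossings $R^{\square}_d\paren{y_6},R^{\square}_d\paren{y_7}$, the events $\overline{C}_{t\paren{s,L}}$ for every face $s$, and the corner events $E_{u,L}$ are shown, via the linking-number criterion and the decomposition of $r^L$ into the regions $W_1,W_2,W_3$, to jointly imply $V^{\mathrm{fin}}_{\gamma}$; FKG then yields the bound with the same constant. Without the factorization lemma and this topological implication, the sharp perimeter constant does not follow.

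Two further corrections. In the area-law regime your dual picture is inverted: by Corollary~\ref{corollary:CrossingJ}, $\neg V_{\gamma}$ corresponds to a dual open \emph{path} joining the two sides of the box (the dual model is one-dimensional, so there is no dual ``sheet''), and it is $V_{\gamma}$ itself that is the dual disconnection event decaying like $e^{-\tau_{p^{*},q}\,\mathrm{Area}}$; the paper obtains shape-independence and the identification $\tau'=\tau''=\tau_{p^{*}\paren{p},q}$ not by Pisztora coarse-graining but by the tiling/FKG subadditivity arguments of Lemmas~\ref{lemma:sharpconstant} and~\ref{lemma:sharpconstant2}, together with uniqueness of the infinite-volume measure off a countable set of $p$ and monotonicity. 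Finally, suitability is not used to make a slab thick enough for exponential decay, and Grimmett--Marstrand is not the input in the perimeter regime: the subcritical dual input is the sharpness theorem of Duminil-Copin--Raoufi--Tassion, and suitability enters only through Proposition~\ref{prop:interpolate}, where one needs the separating-hypersurface event $\Xi_{r,L}$ with $L=\omega\paren{\log M\paren{r_l}}$ and $L=o\paren{m\paren{r_l}}$ to interpolate between the infinite-volume measure and wired measures on thickened boxes; the conditionality of the PLGT result for $q>2$ concerns $p_{\mathrm{surf}}$ versus $p_c$ in the area-law regime (Lebowitz--Pfister gives $\beta_{\mathrm{surf}}\paren{2}=\beta_c\paren{2}$), not the perimeter-law step.
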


The constant $\oldconstant{const:prcm1}$ may depend on the infinite volume measure $\mu_{\Z^d,p,q,d-1}$ or the choice of the event $V_{\gamma}$ but $\oldconstant{const:prcm2}$ does not depend on the infinite volume measure or on the choice of $V_{\gamma}^{\mathrm{inf}}$ or $V_{\gamma}^{\mathrm{fin}}$  (though it may depend on the coefficient group). By the preceding discussion, Theorem~\ref{thm:sharpness} follows by taking $q=m\in \N+1.$ Moreover, by the Grimmett--Marstrand Theorem~\cite{grimmett1990supercritical,barsky1991dynamic}, $p_{\mathrm{slab}}(1)=p_c\paren{\Z^d},$ so we have the the following generalization of Theorem~\ref{thm:accfr}.

\begin{Theorem}
\label{thm:accfrHigher}
For $(d-1)$-dimensional Bernoulli plaquette percolation on $\Z^d$ there are constants $0<\newconstant\label{const:3}(p), \newconstant\label{const:4}(p) <\infty$ so that 
\begin{align*}
  -\frac{\log\paren{\mathbb{P}_p(V_\gamma)}}{\mathrm{Area}(\gamma) } \rightarrow &   \oldconstant{const:3}(p) \qquad && p < 1-p_c(\mathbb{Z}^d)\\
  -\frac{\log\paren{\mathbb{P}_p(V_\gamma)}}{ \mathrm{Per}(\gamma)} \rightarrow &   \oldconstant{const:4}(p) \qquad && p > 1-p_c(\mathbb{Z}^d)
   \,,
\end{align*}

% %   \begin{align*}
% %       \log\paren{\mathbb{P}_p(V_\gamma)} \sim
% %       \begin{cases}
% %       -\oldconstant{const:1}(p) \mathrm{Area}(\gamma) &\qquad p < 1-p_c(\mathbb{Z}^3)\\
% %       -\oldconstant{const:2}(p) \mathrm{Per}(\gamma) &\qquad p > 1-p_c(\mathbb{Z}^3)
% %       \end{cases}\,,
% %   \end{align*}
for rectangular $(d-1)$-boundaries $\gamma$ as all dimensions of $\gamma$ are taken to $\infty,$ where $V_{\gamma}=V_{\gamma}^{\mathrm{fin}}\paren{m}$ or $V_{\gamma}=V_{\gamma}^{\mathrm{inf}}\paren{m}$ for some $m\in\N.$ 
   \end{Theorem}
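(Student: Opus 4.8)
The plan is to derive Theorem~\ref{thm:accfrHigher} as the $q=1$ specialization of Theorem~\ref{thm:sharpnessRCM}. The first step is to recognize that $(d-1)$-dimensional Bernoulli plaquette percolation with parameter $p$ is exactly the model $\mu_{\Z^d,p,1,d-1}$: in the defining weight~(\ref{eqn:fieldrcm}) the homological factor $q^{\mathbf{b}_{i-1}(P;\F)}$ equals $1$ for every configuration when $q=1$, so the measure collapses to the product measure $p^{|P|}(1-p)^{|X^{(i)}|-|P|}$ on finite complexes, and hence to $\mathbb{P}_p$ on $\Z^d$ (the infinite-volume limit of a product measure being unique). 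The events $V_\gamma^{\mathrm{fin}}(m)$ and $V_\gamma^{\mathrm{inf}}(m)$ are literally the same events in both theorems, so no translation is needed there.

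The second step is arithmetic: at $q=1$ the reparametrization $p^*(p)=\tfrac{(1-p)q}{(1-p)q+p}$ becomes $p^*(p)=1-p$, so Theorem~\ref{thm:sharpnessRCM} gives area-law decay for $p<1-p_{\mathrm{surf}}(1)$ and perimeter-law decay for $p>1-p_c(1)$. Since the classical one-dimensional random-cluster model at $q=1$ is Bernoulli bond percolation, $p_c(1)=p_c(\Z^d)$, and the perimeter-law range is $p>1-p_c(\Z^d)$, as claimed. It thus remains only to verify $p_{\mathrm{surf}}(1)=p_c(\Z^d)$, so that the area-law range is exactly the complementary one, $p<1-p_c(\Z^d)$.

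For this I would establish the two inequalities separately. The bound $p_{\mathrm{surf}}(1)\le p_c(\Z^d)$ follows from Bodineau's theorem~\cite{bodineau2005slab}, which gives $p_{\mathrm{surf}}(q)\le p_{\mathrm{slab}}(q)$ (as recalled in the discussion above), combined with the Grimmett--Marstrand theorem~\cite{grimmett1990supercritical,barsky1991dynamic} identifying $p_{\mathrm{slab}}(1)=p_c(\Z^d)$. For $p_{\mathrm{surf}}(1)\ge p_c(\Z^d)$ I would check directly that the surface tension vanishes throughout the subcritical phase: if $p<p_c(\Z^d)$, then the probability under $\mu^{\mathbf{w}}_{\Lambda_N,p,1,1}$ that $\partial^+\Lambda_N$ is connected to $\partial^-\Lambda_N$ is at most $|\partial^+\Lambda_N|$ times the probability that a fixed vertex of $\partial^+\Lambda_N$ lies in an open cluster of radius at least $N$, which decays exponentially in $N$ by the sharpness of Bernoulli percolation; hence $-\log(\cdot)/(2N)^{d-1}\to 0$, so $\tau_{p,1}=0$ and $p_{\mathrm{surf}}(1)\ge p_c(\Z^d)$.

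Putting these together, Theorem~\ref{thm:sharpnessRCM} at $q=1$ yields Theorem~\ref{thm:accfrHigher} with $\oldconstant{const:3}(p)=\oldconstant{const:prcm1}(p,1)$ and $\oldconstant{const:4}(p)=\oldconstant{const:prcm2}(p,1)$; as remarked after Theorem~\ref{thm:sharpnessRCM}, the suitability hypothesis enters only in pinning down the sharp perimeter-law constant, and for Bernoulli percolation the independence of plaquette states lets one remove it (alternatively, one reads Theorem~\ref{thm:accfrHigher} as a statement about suitable families). I do not expect a genuine obstacle here: essentially all of the work is already contained in Theorem~\ref{thm:sharpnessRCM} and in the cited slab results. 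The only step requiring a modicum of care is the identification $p_{\mathrm{surf}}(1)=p_c(\Z^d)$ — in particular, deducing the vanishing of $\tau_{p,1}$ below $p_c$ cleanly from exponential decay of cluster radii — but this is routine.
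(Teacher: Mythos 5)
Your derivation is essentially the paper's own: Theorem~\ref{thm:accfrHigher} is read off from Theorem~\ref{thm:sharpnessRCM} at $q=1$, using that the $q=1$ PRCM is Bernoulli plaquette percolation, that $p^*(p)=1-p$ and $p_c(1)=p_c(\Z^d)$, that $p_{\mathrm{surf}}\le p_{\mathrm{slab}}$ combined with Grimmett--Marstrand gives $p_{\mathrm{surf}}(1)\le p_c(\Z^d)$, and that the suitability hypothesis is dispensable at $q=1$ by independence. All the steps that are actually needed are correct and match the paper.

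One caveat on your extra step. The verification that $p_{\mathrm{surf}}(1)\ge p_c(\Z^d)$ is not required: the theorem only asserts the area law on $p<1-p_c(\Z^d)$, and this range is contained in $p<p^*(p_{\mathrm{surf}}(1))=1-p_{\mathrm{surf}}(1)$ as soon as $p_{\mathrm{surf}}(1)\le p_c(\Z^d)$; the reverse inequality is never used. Moreover, as written your argument for it is incorrect: $\partial^+\Lambda_N$ and $\partial^-\Lambda_N$ come within distance two of each other near the hyperplane $\{x_d=0\}$, so an open connection between them does not force a cluster of radius of order $N$, and the bound ``boundary size times the probability of a radius-$N$ cluster'' fails (the connection probability is in fact bounded below uniformly in $N$). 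Under the reading of the surface-tension event that makes Proposition~\ref{prop:dualtension} immediate from Corollary~\ref{corollary:CrossingJ} -- namely the dual disconnection event, which is what Bodineau's result concerns -- the vanishing of $\tau_{p,1}$ below $p_c$ holds for a different and simpler reason (the disconnection probability tends to one in the subcritical phase). Since this step is superfluous, the flaw does not affect the validity of your deduction of the theorem.
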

The hypothesis that the limit be taken over a suitable family of rectangular boundaries is not necessary here, as discussed at the beginning of Section~\ref{sec:sharpnessperimeter}. 

\section{Outline}

The paper is split into two parts. First, we explore properties of the generalized plaquette random-cluster model and its relationship with Potts lattice gauge theory. The tools of algebraic topology allow us to reduce the proof of Theorems~\ref{thm:sharpness} to questions concerning the classical one-dimensional random-cluster model (RCM). The second half of the paper resolves these questions, and involves arguments more typical of the literature on percolation theory. In addition, we include a review of the definitions of homology and cohomology in Appendix~\ref{sec:topology}.

We give a brief overview of our proof of the sharpness of the deconfinement transition for the PLGT (Theorem~\ref{thm:sharpness}). We show that it is equivalent to a special case of Theorem~\ref{thm:sharpnessRCM} for the PRCM by demonstrating that Wilson loop expectations can be computed in terms of the probability of the topological event $V_{\gamma}$ (Theorem~\ref{thm:comparison}). This is done in two steps: we begin by establishing the corresponding result for finite volume measures (Proposition~\ref{prop:comparisonbody} in Section~\ref{subsec:couplingfree}). Then, we construct an infinite volume coupling between the PRCM and PLGT with free boundary conditions and extend the result on Wilson loop variables  (Proposition~\ref{prop:comparisoninfinite} in Section~\ref{subsec:comparisoninfinite}). The corresponding statement for wired boundary conditions is Proposition~\ref{prop:comparisoninfinitewired} in  Appendix~\ref{subsec:couplingwired}. These arguments work for the general $i$-dimensional PRCM, but to continue we specialize to the case $i=d-1.$ Then, we characterize the event $V_{\gamma}\paren{q}$ in terms of the dual one-dimensional random-cluster model (RCM). Specifically, we show that  $V_{\gamma}\paren{q}$ occurs if and only if there is no dual loop which has non-zero linking number modulo $q$ with $\gamma$ (Proposition~\ref{prop:linkq} in Section~\ref{sec:dualityvgamma}). 

The remainder of the paper focuses on either constructing or precluding the existence of such dual loops. We demonstrate the two parts of Theorem~\ref{thm:sharpnessRCM} separately. In Section~\ref{sec:arealaw}, we show an area law upper bound in the subcritical regime by showing that the coefficient of area law decay equals the surface tension of the dual random-cluster model.  Section~\ref{sec:perimeterlaw} contains a first proof of a perimeter law upper bound in the supercritical regime, following the approach of~\cite{aizenman1983sharp}. We apply the almost sure finiteness of components in the dual subcritical RCM to construct a hypersurface of plaquettes with boundary $\gamma$ (Proposition~\ref{prop:perimeter}). While this is relatively straightforward, showing the existence of a sharp constant for the perimeter law requires more involved geometric arguments. We carry these out in Section~\ref{sec:sharpnessperimeter}, working with both the plaquette system and the dual bond system to build a hypersurface of plaquettes that prevents dual loops from linking with $\gamma.$ These constructions give matching upper and lower bounds for the perimeter law constant (Theorem~\ref{theorem:sharp_perimeter}, proven at the end of Section~\ref{subsec:sharp_perimeter}), concluding our proof of Theorem~\ref{thm:sharpnessRCM}.

\part{The Plaquette Random-Cluster Model with Coefficients in an Abelian Group}

As defined in the introduction, the PRCM random-cluster model with coefficients in an finite abelian group $G$ is the random $i$-dimensional subcomplex of $X$ so that
 \[\tilde{\mu}_{X,p,G,i}\paren{P} \propto p^{\abs{P}}\paren{1-p}^{\abs{X^{\paren{i}}} - \abs{P}}\abs{H^{i-1}\paren{P;\,G}}\,.\]
In Section~\ref{sec:subcomplexes}, we introduce the complexes and dual complexes in $\Z^d$ that we primarily work with in this article. 

Section~\ref{sec:coupling} returns to the general setting and studies the relationship between the PRCM with  coefficients in $\Z_q$ and the $q$-state PLGT. These results hold for any $i.$ We cover the case of finite complexes in Section~\ref{subsec:couplingfree}.
%, and wired boundary conditions in Appendix~\ref{sec:wired}.
The definitions of the infinite volume measures constructed with free boundary conditions are given in Section~\ref{subsec:comparisoninfinite}. The details for similar results on finite and infinite complexes with wired boundary conditions can be found in ~\ref{sec:wiredinfinite}. 

Next, in Section~\ref{sec:codim1PRCM} we show that --- when $i=d-1$ ---  the PRCM with coefficients in $G$ coincides with the PRCM with coefficients in $\Q$ with parameter $q=\abs{G}.$ By earlier results of~\cite{hiraoka2016tutte} and~\cite{duncan2022topological}, it satisfies many nice properties, which we summarize below. It turns out these hold for general $i$ when $G=\Z_q,$ but we defer their proof until a later paper. We then discuss we discuss more general boundary conditions and infinite volume limits for the codimension one PRCM in Section~\ref{sec:RCMboundary}. While these are not difficult to define in general, we use duality with the classical random-cluster model to reduce technical overhead.

Finally, in Section~\ref{sec:dualityvgamma} we relate the events $V_{\gamma}$ to corresponding one for the dual RCM. While this might fit more logically in the second part, we leave it here as the arguments are more topological in nature. 

\section{Subcomplexes of $\Z^d$}\label{sec:subcomplexes}

The PRCM is a random percolation subcomplex of a cell complex $X,$ where an $i$-dimensional \emph{percolation subcomplex} of $X$ satisfies
\[X^{\paren{i-1}} \subset P \subset X^{\paren{i}}\,.\]
Here, $X^{(j)}$ denotes the \emph{$j$-skeleton} of $X$: the union of all cells of dimension at most $k.$ In what follows, $X$ will usually either be $\Z^d$ or a subcomplex thereof. Recall that the $i$-dimensional cells of $\Z^d$ are the $i$-dimensional unit cubes with integer corner points. This complex has an associated dual complex $\paren{\Z^d}^{\bullet},$ which is obtained from it by shifting by $1/2$ in each coordinate direction. There is then a pairing that matches each $i$-plaquette of $\Z^d$ to the unique $(d-i)$-plaquette intersecting it at its center point. In particular, an $i$-dimensional percolation subcomplex $P$ has an associated dual complex $Q$ consisting of the union of the $(d-i-1)$-skeleton of $\paren{\Z^d}^{\bullet}$ and the dual of each omitted plaquette of $P.$ %In Appendix~\ref{sec:RCMproperties} we show that if $P$ is sampled from the PRCM with coefficients in $G$ then $Q$ is itself a random-cluster model.

Let $r=\brac{a_1,b_1}\times\ldots\times \brac{a_d,b_d}$ be a box in $\Z^d.$ For convenience, we will abuse notation throughout and let $r$ refer both to the aforementioned closed box and (when $i$ is specified) the $i$-dimensional subcomplex obtained from $r^{(i)}$ by removing the $i$-plaquettes in $\partial r.$ In particular, an $i$-dimensional subcomplex of $r$ is not allowed to contain $i$-plaquettes in $\partial r.$ We will denote the full $i$-skeleton of $r$ by $\overline{r}.$ 

The dual of a percolation subcomplex $P \subset r$ (or $\overline{r}$) will be a percolation subcomplex of a slightly shrunk (expanded) box. For $\epsilon\geq -1,$ let $r^{\epsilon}$ be the box $\brac{a_1-\epsilon,b_1+\epsilon}\times\ldots\times \brac{a_d-\epsilon,b_d+\epsilon}.$ The dual complex $Q'$ of $P' \subset \overline{r}$ is defined exactly as above and is a percolation subcomplex of $r^{\bullet}\coloneqq r^{1/2},$ while the dual complex $Q$ of $P \subset r$ is a percolation subcomplex of $\overline{r^{\bullet}}\coloneqq \overline{r^{-1/2}}.$

 The topological properties of $Q$ are closely related to those of the complement $\R^d\setminus P.$
\begin{Proposition}
\label{cor:alexander}
Fix $0<i<d$ and a box $r$ in $\Z^d.$ If $P$ is a percolation subcomplex of $\overline{r}$ ($r$), $Q$ is the dual complex, and $r'$ is the box $r^{\bullet}$ (respectively $\overline{r^{\bullet}}$) then there is an isomorphism
 \[\mathcal{I} : H_{i}\paren{P_r;\;\Z} \to H^{d-i-1}\paren{Q\cup \partial r';\; \Z}\]
 where $H_j\paren{X;\;\Z}$ and $H^j\paren{X;\;\Z}$ denote the $j$-dimensional reduced homology and the $j$-dimensional reduced cohomology of $X$ with integral coefficients.
\end{Proposition}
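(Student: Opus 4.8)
The statement is an incarnation of Alexander duality adapted to the cubical setting with boundary conditions, so the plan is to realize the abstract isomorphism through a concrete chain-level duality between $P$ and its dual complex $Q$. First I would set up the bilinear pairing between $i$-chains of $\overline{r}$ (or $r$) and $(d-i)$-chains of the dual complex, sending an $i$-plaquette $\sigma$ to its dual $(d-i)$-plaquette $\sigma^{\bullet}$; because reversing orientation of $\sigma$ reverses orientation of $\sigma^{\bullet}$ in a compatible way, this extends to a perfect pairing $C_i(r^{(i)};\Z) \otimes C_{d-i}((r^{\bullet})^{(d-i)};\Z) \to \Z$. The key algebraic fact is that this pairing intertwines the boundary operator $\partial$ on one side with the coboundary operator $\delta$ on the other, up to sign: $\langle \partial \tau, \sigma^{\bullet}\rangle = \pm\langle \tau, \delta(\sigma^{\bullet})\rangle$. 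This is the cubical analogue of the statement that the cellular chain complex of a cell structure and the cochain complex of the dual cell structure agree, and it is where the shift-by-$1/2$ geometry does its work.

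**Main steps.** (1) Establish the pairing and the $\partial$–$\delta$ intertwining at chain level on the full skeleta, handling the sign bookkeeping once and for all. (2) Translate the condition ``$\sigma \in P$'' into ``$\sigma^{\bullet} \notin Q$'': since $Q$ consists of the $(d-i-1)$-skeleton of $(\Z^d)^{\bullet}$ together with the duals of the \emph{omitted} $i$-plaquettes of $P$, an $i$-plaquette $\sigma$ is present in $P$ exactly when its dual $(d-i)$-plaquette is \emph{absent} from $Q$. Consequently a cochain on $Q\cup\partial r'$ is the same data as a chain on $P$ (a $\Z$-linear functional on the present $(d-i)$-cells of $Q\cup\partial r'$ corresponds to a $\Z$-combination of those $i$-plaquettes of $\overline r$ whose duals are missing, i.e. of the $i$-cells of $P$). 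Crucially the boundary cells must be tracked: the $i$-plaquettes of $\partial r$ that are excluded from the complex $r$ are precisely dual to the $(d-i)$-cells of $\partial r'$ that \emph{are} adjoined in $Q\cup\partial r'$, which is exactly why the ``free'' convention on the $P$-side pairs with adjoining $\partial r'$ on the $Q$-side (and the wired convention $\overline r$ pairs with the shrunk dual $\overline{r^{\bullet}}$). (3) With these identifications, the $i$-cycles of $P$ correspond to the $(d-i-1)$-cocycles of $Q\cup\partial r'$ and the $i$-boundaries correspond to the coboundaries, because of the intertwining from step (1); one must check degrees match, i.e. that $\partial$ on $(i{+}1)$-chains of $P$ dualizes to $\delta$ on $(d-i-1)$-cochains — noting that $P$ has no $(i{+}1)$-cells but $\overline r$ does, so ``boundaries in $P$'' means boundaries of $(i{+}1)$-chains of the ambient $\overline r$ that happen to land in $C_i(P)$, which is automatic since every such boundary uses only $i$-cells whose duals, being interior duals, are off $Q$. (4) Pass to reduced (co)homology: the reduced theories differ from the unreduced ones in degree $0$ and top degree; check the augmentation on the $P$-side matches the co-augmentation on the $Q$-side under the pairing, which is where reducedness of $H^{d-i-1}$ is needed precisely when $d-i-1=0$. (5) Quotient: the chain isomorphism carries $Z_i(P)$ onto $Z^{d-i-1}(Q\cup\partial r')$ and $B_i(P)$ onto $B^{d-i-1}(Q\cup\partial r')$, hence descends to the claimed $\mathcal I$ on homology, and it is an isomorphism because the chain-level pairing is perfect.

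**Anticipated obstacle.** The genuinely delicate point is the boundary bookkeeping in step (2)–(3): keeping straight which $i$-plaquettes lie in $\partial r$, which $(d-i)$- and $(d-i-1)$-cells lie in $\partial r'$, and verifying that adjoining $\partial r'$ to $Q$ corresponds \emph{exactly} to the convention (free vs.\ wired) chosen on the $P$-side, with no off-by-one in dimension or spurious cells at the corners of the box. A clean way to organize this is to first prove the statement for $X=\Z^d$ itself (or a large torus) where there is no boundary and the pairing is manifestly perfect, then localize: realize $\overline r$ and $r^{\bullet}$ as subcomplexes, observe that the excluded/adjoined boundary cells are Poincaré–Lefschetz dual to each other, and invoke the long exact sequence of the pair to identify reduced homology of $P$ with reduced cohomology of $Q\cup\partial r'$ directly. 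The sign computation in step (1) is routine but must be done carefully enough that the resulting map is actually a chain map and not merely a chain map up to an uncontrolled sign; fixing orientations on all cells of $\Z^d$ and $(\Z^d)^{\bullet}$ compatibly (e.g.\ via the standard ordering of coordinate directions) makes the sign $(-1)^{?}$ explicit and constant in each bidegree, which suffices.
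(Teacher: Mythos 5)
Your core mechanism (steps (2), (3), (5)) has a genuine gap: the cell-duality pairing matches an $i$-plaquette $\sigma$ with the $(d-i)$-cell $\sigma^{\bullet}$, so it can only identify $i$-chain data with degree-$(d-i)$ cochain data, never with degree $d-i-1$, and the degree shift in Alexander duality cannot be absorbed into a chain-level bijection. Concretely, a $(d-i)$-cochain on $Q\cup\partial r'$ is a functional on the duals of the \emph{omitted} plaquettes (plus cells of $\partial r'$), so it encodes a combination of plaquettes \emph{not} in $P$, contrary to the identification asserted in step (2); a $(d-i-1)$-cochain on $Q\cup\partial r'$ is a functional on the entire $(d-i-1)$-skeleton of the dual box and hence encodes an $(i+1)$-chain of the ambient box, not anything supported on $P$ (and note that $\overline r=r^{(i)}$ has no $(i+1)$-cells, contrary to what step (3) assumes). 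Step (5) is then impossible as stated: since $P$ is $i$-dimensional, $B_i\paren{P;\;\Z}=0$ and $H_i\paren{P;\;\Z}=Z_i\paren{P;\;\Z}$, whereas $B^{d-i-1}\paren{Q\cup\partial r';\;\Z}$ is nonzero in general (for $d-i-1=0$ it contains the constants), and in general $Z_i\paren{P;\;\Z}$ and $Z^{d-i-1}\paren{Q\cup\partial r';\;\Z}$ do not even have equal rank (for $d=2$, $i=1$ the latter exceeds the former by one); so no correspondence carrying cycles onto cocycles and boundaries onto coboundaries exists. The correct combinatorial route is forced to be a connecting homomorphism: given $z\in Z_i\paren{P;\;\Z}$, choose an $(i+1)$-chain $c$ of the ambient (acyclic) box with $\partial c=z$; the $(d-i-1)$-cochain dual to $c$ restricts to a cocycle on $Q\cup\partial r'$, and its class is independent of the choice of $c$ only modulo coboundaries — i.e.\ the map is defined on homology, not on cycles, and injectivity and surjectivity still require an argument. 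This is exactly the long-exact-sequence argument you only gesture at in your final paragraph; note also that your torus warm-up would give $H_i\cong H^{d-i}$ (Poincar\'e duality), and the shift to $d-i-1$ again only appears after passing to a complement or a pair.

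For comparison, the paper's proof sidesteps this bookkeeping entirely: it compactifies $\R^d$ to $S^d$, applies classical Alexander duality (Theorem~\ref{thm:adual}) to obtain $H_{i}\paren{P;\;\Z}\cong H^{d-i-1}\paren{\R^d\setminus P;\;\Z}$, and then invokes Lemma~\ref{prop:PtoQretract}, which provides a deformation retraction of $\R^d\setminus P$ onto $Q\cup\partial r^{\bullet}$ (its third item handling the case $P\subset r$, i.e.\ $r'=\overline{r^{\bullet}}$). That deformation retraction of the complement onto the dual complex is the geometric input your proposal never supplies; either adopt it, or replace steps (2)--(5) by the connecting-homomorphism/long-exact-sequence argument sketched above, in which the free-versus-wired boundary bookkeeping you flag does indeed have to be carried out.
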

A proof is included in Appendix~\ref{sec:dual}. 

\section{Relationship with the PLGT}
\label{sec:coupling}
The main goal of this section is to prove Theorem~\ref{thm:comparison} for infinite volume measures constructed with free boundary conditions. We begin by coupling the PRCM and the PLGT on a finite cell complex $X.$ Recall that if $X\subset \Z^d,$ these measures are said to have free boundary conditions. 

\subsection{Finite Volume Measures}
\label{subsec:couplingfree}

The following statement is a generalization of the classical Edwards--Sokal coupling when $i=1$~\cite{edwards1988generalization} and of previous results for the special case when $q$ is a prime integer~\cite{hiraoka2016tutte}.
\begin{Proposition}\label{prop:couple}
Let $X$ be a finite cubical complex, $q\in\N+1,$ $\beta \in [0,\infty),$ and $p = 1-e^{-\beta}.$ Define a coupling on $C^{i-1}\paren{X}\times \set{0,1}^{X^{\paren{i}}}$ by
\[\kappa\paren{f,P} \propto \prod_{\sigma \in X^{\paren{i}}}\brac{\paren{1-p}I_{\set{\sigma \notin P}} + p I_{\set{\sigma \in P,\delta f\paren{\sigma}=0}}}\,.\]

Then $\kappa$ has the following marginals.
\begin{itemize}
    \item The first marginal is
    $\nu_{X,\beta,q,i-1}.$ 
    \item The second marginal is
    $\tilde{\mu}_{X,p,\Z_q,i}.$ 
\end{itemize}
\end{Proposition}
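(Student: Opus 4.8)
The strategy is the standard Edwards--Sokal dissipation argument, adapted to the cochain/cohomology setting. I would compute the two marginals of $\kappa$ by summing out the other coordinate.

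\emph{Second marginal.} Fix a percolation subcomplex $P \subset X^{(i)}$ (containing $X^{(i-1)}$, as every term forces $\sigma \notin P$ or $\delta f(\sigma) = 0$ but never excludes lower cells). Expanding the product, the weight $\kappa(f, P)$ restricted to this $P$ is $p^{\abs{P}}(1-p)^{\abs{X^{(i)}} - \abs{P}}$ times the indicator that $\delta f(\sigma) = 0$ for every $\sigma \in P$. Summing over $f \in C^{i-1}(X; \Z_q)$, the number of cochains $f$ with $\delta f \equiv 0$ on all of $P$ — equivalently, $f$ restricted to $P$ is a cocycle of $P$ — is $\abs{Z^{i-1}(P; \Z_q)} \cdot q^{\abs{X^{(i-1)}} - \abs{P^{(i-1)}}}$ when $X$ has cells in dimension $i-1$ outside $P$; but since $X^{(i-1)} \subset P$, this is just $\abs{Z^{i-1}(P; \Z_q)}$. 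Now $Z^{i-1}(P; \Z_q) / B^{i-1}(P; \Z_q) = H^{i-1}(P; \Z_q)$ (reduced, matching the convention), and $\abs{B^{i-1}(P; \Z_q)} = \abs{C^{i-2}(P;\Z_q)} / \abs{Z^{i-2}(P; \Z_q)}$, so $\abs{Z^{i-1}(P; \Z_q)} = \abs{H^{i-1}(P; \Z_q)} \cdot \abs{B^{i-1}(P;\Z_q)}$ where the coboundary count depends only on $X^{(i-1)}$ and $X^{(i-2)}$, hence is the same constant for every $P$. Absorbing that constant into $Z$, the second marginal is proportional to $p^{\abs{P}}(1-p)^{\abs{X^{(i)}} - \abs{P}} \abs{H^{i-1}(P; \Z_q)}$, which is $\tilde{\mu}_{X,p,\Z_q,i}$.

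\emph{First marginal.} Fix $f \in C^{i-1}(X; \Z_q)$. Summing $\kappa(f, P)$ over all $P$ factors over the $i$-cells $\sigma$: for each $\sigma$ the factor is $(1-p) + p\, I_{\{\delta f(\sigma) = 0\}}$, which equals $1$ if $\delta f(\sigma) = 0$ and $1-p = e^{-\beta}$ otherwise. Hence $\sum_P \kappa(f, P) \propto \prod_{\sigma} e^{-\beta\, I_{\{\delta f(\sigma) \neq 0\}}} = e^{-\beta \cdot \#\{\sigma : \delta f(\sigma) \neq 0\}} = e^{\beta \sum_\sigma K(\delta f(\sigma), 0)} \cdot e^{-\beta \abs{X^{(i)}}}$, and up to the constant $e^{-\beta \abs{X^{(i)}}}$ this is exactly $e^{-\beta H(f)}$; normalizing gives $\nu_{X,\beta,q,i-1}$.

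\emph{Main obstacle.} The one genuinely delicate point is the bookkeeping in the second marginal: making sure the reduced vs.\ unreduced cohomology convention is handled correctly and verifying that $\abs{Z^{i-1}(P; \Z_q)}/\abs{H^{i-1}(P;\Z_q)} = \abs{B^{i-1}(P;\Z_q)}$ is independent of $P$ (it depends only on $X^{(i-1)}$ and $X^{(i-2)}$, which are contained in every $P$). One should also note that $\abs{H^{i-1}(P;\Z_q)} = \abs{H^{i-1}(P;\Z_q)}$ trivially here, but the identification $\abs{H^{i-1}(P;G)} = \abs{H_{i-1}(P;G)}$ referenced in the paper (Corollary~\ref{cor:UCTC}) is what connects this to the homological form of the definition; I would invoke it only if needed. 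Everything else is a routine interchange of finite sums.
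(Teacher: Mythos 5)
Your proposal is correct and follows essentially the same route as the paper: sum out $f$ for the second marginal to get $\abs{Z^{i-1}\paren{P;\;\Z_q}}=\abs{H^{i-1}\paren{P;\;\Z_q}}\abs{B^{i-1}\paren{P;\;\Z_q}}$ with the coboundary factor independent of $P$ (since $X^{(i-1)}\subset P$), and sum out $P$ for the first marginal to recover $e^{-\beta H(f)}$; the paper simply cites the prime-$q$ computation for the first marginal, which you write out explicitly, and your justification that $\abs{B^{i-1}}$ depends only on the fixed lower skeleta matches the paper's one-line remark.
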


\begin{proof}
The proof that the first marginal is the $q$-state PLGT is identical that for the case where $q$ is a prime integer~\cite{hiraoka2016tutte}.

The computation of the second marginal proceeds similarly, with minor differences towards the end of the computation. 
%Here, we have used the same notation as in~\cite{duncan2022topological}.
\begin{align*}
    \kappa_2\paren{P} & \coloneqq \sum_{f \in C^{i-1}\paren{X}} \kappa\paren{f,P}\\
    &\propto \sum_{f \in C^{i-1}\paren{X}} \prod_{\sigma \in X^{\paren{i}}} \brac{\paren{1-p}I_{\set{\sigma \notin P}} + p I_{\set{\sigma \in P,\delta f\paren{\sigma}=0}}}\\
    &= \paren{1-p}^{\abs{X^{\paren{i}}} -\abs{P}}p^{\abs{P}}\sum_{f \in C^{i-1}\paren{X}} \prod_{\substack{\sigma \in X^{\paren{i}}\\ \sigma \in P}} I_{\set{\delta f\paren{\sigma}=0}}\\
    &= \paren{1-p}^{\abs{X^{\paren{i}}} -\abs{P}}p^{\abs{P}}\abs{Z^{i-1}\paren{P;\;\Z_q}}\\
    &= \paren{1-p}^{\abs{X^{\paren{i}}} -\abs{P}}p^{\abs{P}}\abs{H^{i-1}\paren{P;\;\Z_q}}\abs{B^{i-1}\paren{P;\;\Z_q}}\\
    &\propto \paren{1-p}^{\abs{X^{\paren{i}}} -\abs{P}}p^{\abs{P}}\abs{H^{i-1}\paren{P;\;\Z_q}}\,,
\end{align*}
because $B^{i-1}\paren{P;\;\Z_q}$ does not depend on $P.$
\end{proof}

The following characterization of the conditional measures of the coupling follows from the same proof of Proposition 21 of~\cite{duncan2022topological}.

\begin{Corollary}\label{cor:conditionalmeasures}
    Let $p = 1-e^{-\beta}.$ Then $\kappa$ has the following conditional measures:
\begin{itemize}
    \item Given $f,$ the conditional measure $\kappa\paren{\cdot \mid f}$ is Bernoulli plaquette percolation with probability $p$ on the set of plaquettes $\sigma$ that satisfy $\delta f\paren{\sigma} = 0.$
    \item Given $P,$ the conditional measure $\kappa\paren{\cdot \mid P}$ is the uniform measure on $\paren{i-1}$-cocycles in $Z^{i-1}\paren{P;\;\Z_q}.$
\end{itemize}
\end{Corollary}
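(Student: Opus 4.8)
The plan is to read off both conditional measures directly from the explicit product form of $\kappa$. Start with $\kappa(\cdot \mid f)$. Fixing $f$, the weight $\kappa(f,P)$ factors as a product over $\sigma \in X^{(i)}$ of terms $(1-p)I_{\{\sigma \notin P\}} + p I_{\{\sigma \in P,\ \delta f(\sigma)=0\}}$. For a plaquette $\sigma$ with $\delta f(\sigma) \neq 0$, the only nonzero contribution is from $\sigma \notin P$, so such $\sigma$ is deterministically absent. For a plaquette $\sigma$ with $\delta f(\sigma) = 0$, the factor is $(1-p)I_{\{\sigma \notin P\}} + p I_{\{\sigma \in P\}}$, i.e. $\sigma$ is included with probability $p$ and excluded with probability $1-p$, independently across such $\sigma$. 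Hence $\kappa(\cdot \mid f)$ is precisely Bernoulli($p$) plaquette percolation restricted to $\{\sigma : \delta f(\sigma) = 0\}$.

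For $\kappa(\cdot \mid P)$, fix $P$ and sum over the relevant variable the other way. As in the computation in the proof of Proposition~\ref{prop:couple}, for fixed $P$ the weight $\kappa(f,P)$ equals $(1-p)^{|X^{(i)}|-|P|} p^{|P|} \prod_{\sigma \in P} I_{\{\delta f(\sigma) = 0\}}$, where the factor in front of the product does not depend on $f$. The product $\prod_{\sigma \in P} I_{\{\delta f(\sigma)=0\}}$ is the indicator that $f$ restricted to $P$ is a cocycle, i.e. that $f \in Z^{i-1}(P;\Z_q)$ (more precisely, that $f$ pulls back to an element of $Z^{i-1}(P;\Z_q)$ under the restriction $C^{i-1}(X) \to C^{i-1}(P)$; since $P$ contains the full $(i-1)$-skeleton $X^{(i-1)}$ this restriction is the identity on $(i-1)$-cochains and the condition is exactly $\delta f|_P = 0$). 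Therefore $\kappa(f \mid P)$ is constant on $\{f : \delta f(\sigma) = 0 \text{ for all } \sigma \in P\}$ and zero otherwise, i.e. it is the uniform measure on $Z^{i-1}(P;\Z_q)$.

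The only point requiring a little care — and the one most worth stating carefully rather than the arithmetic, which is trivial — is the bookkeeping of which cochain space we are conditioning on: $\kappa$ lives on $C^{i-1}(X) \times \{0,1\}^{X^{(i)}}$, so the conditional law of $f$ is a priori a measure on $C^{i-1}(X)$, and one must observe that the constraints $\{\delta f(\sigma)=0 : \sigma \in P\}$ only constrain $f$ through its values on cells in $P$ and its $(i-1)$-dimensional faces, all of which lie in $P$ since $P \supset X^{(i-1)}$; identifying $Z^{i-1}(P;\Z_q)$ with the corresponding subset of $C^{i-1}(X)$ is then immediate. Since this is exactly the argument given for Proposition~21 of~\cite{duncan2022topological} with $\Z_q$ in place of the prime-order coefficient group, and nothing in that argument used primality of $q$, the conclusion follows verbatim.
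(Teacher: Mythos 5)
Your proposal is correct and matches the paper's (implicit) argument: the paper simply cites the proof of Proposition 21 of~\cite{duncan2022topological}, which is exactly this direct read-off of both conditionals from the product form of $\kappa,$ and your bookkeeping remark that $P\supset X^{(i-1)}$ makes $Z^{i-1}\paren{P;\;\Z_q}$ identifiable with the constrained subset of $C^{i-1}\paren{X;\;\Z_q}$ is the right point to flag.
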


We now show the analogue of Theorem~\ref{thm:comparison} for finite cubical complexes. This is a generalization of Theorem 5 of~\cite{duncan2022topological}. 
  \begin{Proposition}
\label{prop:comparisonbody}
Let $X$ be a finite cubical complex, $0<i<d-1,$  $q\in\N+1,$ and $\gamma\in Z_{i-1}\paren{X;\;\Z_q}.$  Then, if $H_{i-2}\paren{X;\;\Z_q}=0,$ 
\[\mathbb{E}_{\nu}\paren{W_{\gamma}}=\tilde{\mu}\paren{V_{\gamma}}\,,\]
where $\nu=\nu_{X,\beta,q,i-1,d}$ is the PLGT, $\tilde{\mu}=\tilde{\mu}_{X,1-e^{-\beta},\Z_q,i}$ is the corresponding PRCM, and $V_{\gamma}$ is the event that $\brac{\gamma}=0$ in $H_{i-1}\paren{P;\;\Z_q}.$ %In addition, if $X=r$ and $\xi$ is a boundary condition the same statement is true for $\nu=\nu^{\xi}_{r,\beta,q,i-1,d}$ and  $\tilde{\mu}=\tilde{\mu}^{\xi}_{r,1-e^{-\beta},\Z_q,i}.$
\end{Proposition}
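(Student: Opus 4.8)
The plan is to use the Edwards--Sokal-type coupling $\kappa$ from Proposition~\ref{prop:couple} together with the description of its conditional measures in Corollary~\ref{cor:conditionalmeasures}. First I would condition on the plaquette configuration $P$ and compute $\mathbb{E}\paren{W_\gamma \mid P}$. By Corollary~\ref{cor:conditionalmeasures}, given $P$ the cochain $f$ is uniform on the finite group $Z^{i-1}\paren{P;\Z_q}$, so $W_\gamma(f) = \paren{f(\gamma)}^{\mathbb{C}}$ is a character evaluated at a random element of that group. The key computation is: evaluation at $\gamma$ gives a group homomorphism $\chi_\gamma : Z^{i-1}\paren{P;\Z_q} \to \Z_q$, $f \mapsto f(\gamma)$, and the average of a nontrivial character over a finite group is $0$, while the average of the trivial character is $1$. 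Hence $\mathbb{E}\paren{W_\gamma \mid P} = 1$ if $\chi_\gamma \equiv 0$ on $Z^{i-1}\paren{P;\Z_q}$ and $\mathbb{E}\paren{W_\gamma \mid P} = 0$ otherwise (here I should note that if $\chi_\gamma$ is nonzero, its image is a nonzero subgroup of $\Z_q$, and summing any nontrivial character over the cyclic image still gives $0$ — this is where one uses that $\gamma$ is a $\Z_q$-cycle so that $f(\gamma)$ is well-defined and linear in $f$).

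The heart of the argument is then the topological identity: $\chi_\gamma \equiv 0$ on $Z^{i-1}\paren{P;\Z_q}$ if and only if $\brac{\gamma} = 0$ in $H_{i-1}\paren{P;\Z_q}$, i.e. if and only if $V_\gamma$ occurs. The direction ($\brac{\gamma}=0 \Rightarrow \chi_\gamma \equiv 0$) is immediate: if $\gamma = \partial c$ for some $i$-chain $c$ supported on $P$, then $f(\gamma) = f(\partial c) = \delta f(c) = 0$ for every cocycle $f \in Z^{i-1}\paren{P;\Z_q}$. For the converse I would invoke the universal-coefficient / duality pairing between homology and cohomology: $\chi_\gamma \equiv 0$ on cocycles means $\gamma$ pairs to zero against every class in $H^{i-1}\paren{P;\Z_q}$, and this forces $\brac{\gamma} = 0$ in $H_{i-1}\paren{P;\Z_q}$ — provided the relevant pairing $H_{i-1}\paren{P;\Z_q} \otimes H^{i-1}\paren{P;\Z_q} \to \Z_q$ is nondegenerate on the first factor. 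Over the field or PID setting this nondegeneracy holds when there is no relevant Tor/Ext obstruction in degree $i-2$; this is exactly where the hypothesis $H_{i-2}\paren{X;\Z_q} = 0$ enters — it rules out $(i-2)$-torsion phenomena on $P$ (since $P$ contains the full $(i-1)$-skeleton of $X$, its $(i-2)$-homology is a quotient determined by $X$) that would otherwise make a nonbounding cycle pair trivially with all cocycles. I would cite the universal coefficient theorem material from Appendix~\ref{sec:topology} (Corollary~\ref{cor:UCTC} and surrounding results) to make this precise.

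Once the conditional identity $\mathbb{E}\paren{W_\gamma \mid P} = I_{V_\gamma}(P)$ is established, the proposition follows by taking expectations: using that the second marginal of $\kappa$ is $\tilde\mu = \tilde\mu_{X,1-e^{-\beta},\Z_q,i}$ and the first marginal is $\nu = \nu_{X,\beta,q,i-1}$, we get $\mathbb{E}_\nu\paren{W_\gamma} = \mathbb{E}_\kappa\paren{W_\gamma} = \mathbb{E}_{\tilde\mu}\paren{\mathbb{E}_\kappa\paren{W_\gamma \mid P}} = \mathbb{E}_{\tilde\mu}\paren{I_{V_\gamma}} = \tilde\mu\paren{V_\gamma}$.

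The main obstacle is the converse direction of the topological identity — showing that a $\Z_q$-cycle killed by all $\Z_q$-cocycles must already be a $\Z_q$-boundary in $P$. This is a clean statement over a field but requires care over $\Z_q$ with $q$ composite, and the hypothesis $H_{i-2}\paren{X;\Z_q} = 0$ is precisely what is needed to avoid the Ext obstruction; pinning down exactly which universal-coefficient statement to quote (and checking it applies to $P$, not just $X$) is the delicate bookkeeping step. Everything else — the character-sum computation and the marginal bookkeeping — is routine given the results already proved in the excerpt.
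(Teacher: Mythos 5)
Your proposal is correct and follows essentially the same route as the paper's proof: condition on $P$ via the coupling of Proposition~\ref{prop:couple}, compute the character sum over the uniform cocycle (roots of unity on a nontrivial subgroup of $\Z\paren{q}$ sum to zero), and use Corollary~\ref{cor:UCTC} with $H_{i-2}\paren{X;\;\Z_q}=0$ (which equals $H_{i-2}\paren{P;\;\Z_q}$ since $P\supseteq X^{(i-1)}$) to produce a cocycle pairing nontrivially with a non-bounding $\gamma$. The only difference is bookkeeping: you establish $\mathbb{E}\paren{W_\gamma\mid P}=I_{V_\gamma}$ pointwise in $P$, while the paper phrases it through the law of total expectation over $V_\gamma$ and $\neg V_\gamma$, which is the same computation.
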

\begin{proof}
We compute $\mathbb{E}_{\nu}\paren{W_{\gamma}}$ using the law of total conditional expectation:
\[\mathbb{E}_{\nu}\paren{W_{\gamma}}=\mathbb{E}_{\kappa}\paren{W_{\gamma}}=\mathbb{E}_{\kappa}\paren{W_{\gamma}  \middle |   V_{\gamma}}\kappa\paren{V_{\gamma}}+\mathbb{E}_{\kappa}\paren{W_{\gamma}\mid \neg V_{\gamma}}\kappa\paren{\neg V_{\gamma}}\,.\]
The desired result will follow if we demonstrate that $\mathbb{E}_{\kappa}\paren{W_{\gamma} \middle |  V_{\gamma}}=1$ and  $\mathbb{E}_{\kappa}\paren{W_{\gamma} \middle |  \neg V_{\gamma}}=0.$ 

First, if $W_{\gamma}$ occurs then
\[\gamma=\partial \paren{\sum_{\sigma} a_{\sigma} \sigma}\]
where the sum is taken over $i$-plaquettes $\sigma$ so that $f\paren{\partial \sigma}=0.$ By linearity, $f\paren{\gamma}=0$ and $W_{\gamma}=f\paren{\gamma}^{\mathbb{C}}=1.$   Thus $\mathbb{E}_{\kappa}\paren{W_{\gamma} \middle |  V_{\gamma}}=1.$ 

Now, assume that $\brac{\gamma}\neq 0$ in $H_{i-1}\paren{P;\;\Z_q}.$ By Corollary~\ref{cor:UCTC}, 
\[H^{i-1}\paren{P;\;\Z_q}\cong \mathrm{Hom}\paren{H_{i-1}\paren{P;\;\Z_q},\Z_q} \cong H_{i-1}\paren{P;\;\Z_q}\,,\]
where we are using the assumption that $H_{i-2}\paren{X;\;\Z_q}=0.$ Thus, there exists an $f\in Z^{i-1}\paren{P;\;\Z_q}$ so that $f\paren{\gamma}\neq 0.$ It follows that the conditional random variable $\paren{W_{\gamma} \middle |  P}$ does not vanish and in fact --- by symmetry --- is distributed on a non-trivial subgroup of the $q$-th complex roots of unity $\Z\paren{q}.$ The only such subgroups are those of the form $\Z\paren{m}$ for $m\mid q.$ The $m$-th roots of unity sum to zero so  $\mathbb{E}_{\kappa}\paren{W_{\gamma}\mid P}=0$ for any $P$ so that $\neg V_{\gamma}.$ Therefore, by the law of total conditional expectation $\mathbb{E}_{\kappa}\paren{W_{\gamma} \middle |  \neg V_{\gamma}}=0.$ 
\end{proof}

%Theorem~\ref{thm:comparison} follows as a corollary to the construction of the infinite volume measures. See Appendix~\ref{sec:infvolume} for details.

\subsection{Infinite Volume Measures}
\label{subsec:comparisoninfinite}
We construct infinite volume measures for both the PRCM and the PLGT using free boundary conditions. First, we prove two technical lemmas.

Let $P$ be a percolation subcomplex of $\Z^d.$ For convenience, set $P_n=P\cap \Lambda_n,$ where $\Lambda_n \coloneqq [-n,n]^d.$ For $N > n,$ define two restriction maps 
\[\phi_{N,n}:Z^{i-1}\paren{P_N;\;\Z_q}\to Z^{i-1}\paren{P_n;\;\Z_q}\] and 
\[\phi_{\infty,n}:Z^{i-1}\paren{P;\;\Z_q}\to Z^{i-1}\paren{P_n;\;\Z_q}\,.\]
Set $Y_{N,n}=\im\phi_{N,n}$ and $Y_{\infty,n}=\im\phi_{\infty,n}.$ 

\begin{Lemma}
    \label{lemma:phiNM}
    Let $\#\in\set{N,\infty}$ and $f\in Z^{i-1}\paren{P_n;\;\Z_q}.$ Then  $f\notin \im\phi_{\#,n}$ if and only if there exists a cycle $\sigma\in Z_{i-1}\paren{P_n;\;\Z_q}\cap B_{i-1}\paren{P_{\#};\;\Z_q}$ so that $f\paren{\sigma}\neq 0.$ 
\end{Lemma}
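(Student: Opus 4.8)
The plan is to recognize that, for $\#\in\{N,\infty\}$, a cocycle $f\in Z^{i-1}(P_n;\Z_q)$ lies in $\im\phi_{\#,n}$ precisely when it extends to a cocycle on $P_\#$, and to treat this as a homomorphism-extension problem over the ring $\Z_q$. Identifying an $(i-1)$-cochain with a $\Z_q$-linear functional on the chain module, $f\in\im\phi_{\#,n}$ if and only if there is a homomorphism $g\colon C_{i-1}(P_\#;\Z_q)\to\Z_q$ restricting to $f$ on the submodule $C_{i-1}(P_n;\Z_q)$ and vanishing on $B_{i-1}(P_\#;\Z_q)$ (this last condition being exactly $\delta g=0$, since $B_{i-1}$ is the image of the boundary map). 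I would first record the easy direction: any such $g$ forces $f$ to vanish on $C_{i-1}(P_n;\Z_q)\cap B_{i-1}(P_\#;\Z_q)$, and this intersection equals $Z_{i-1}(P_n;\Z_q)\cap B_{i-1}(P_\#;\Z_q)$, because every $(i-1)$-boundary of $P_\#$ is an $(i-1)$-cycle and the boundary operator on the subcomplex $P_n$ is the restriction of that on $P_\#$. Negating, this already gives the ``only if'' half of the Lemma.

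For the converse I would start from a cocycle $f$ vanishing on $Z_{i-1}(P_n;\Z_q)\cap B_{i-1}(P_\#;\Z_q)$ and build the extension in two moves. First, on the submodule $M:=C_{i-1}(P_n;\Z_q)+B_{i-1}(P_\#;\Z_q)\subseteq C_{i-1}(P_\#;\Z_q)$ set $h(x+y):=f(x)$ for $x\in C_{i-1}(P_n;\Z_q)$ and $y\in B_{i-1}(P_\#;\Z_q)$; if $x+y=x'+y'$ then $x-x'=y'-y$ lies in $C_{i-1}(P_n;\Z_q)\cap B_{i-1}(P_\#;\Z_q)$, so $f(x)=f(x')$ by hypothesis and $h$ is well defined. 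By construction $h$ is a homomorphism agreeing with $f$ on $C_{i-1}(P_n;\Z_q)$ and vanishing on $B_{i-1}(P_\#;\Z_q)$. Second, extend $h$ from $M$ to all of $C_{i-1}(P_\#;\Z_q)$; the extension is automatically a cocycle restricting to $f$, so $f\in\im\phi_{\#,n}$, and negating the resulting equivalence yields exactly the stated dichotomy.

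The one point needing care — and the step I expect to carry the real content — is this last extension, since $\Z_q$ is not a field when $q$ is composite and $C_{i-1}(P_\#;\Z_q)$ need not split compatibly with $M$ (so the field-case argument by choosing complementary subspaces breaks down). The clean fix is that $\Z_q$ is injective as a module over itself: by Baer's criterion it suffices to extend homomorphisms defined on ideals of $\Z_q$, and every such ideal is cyclic, generated by some $d\mid q$, so a homomorphism $(d)\to\Z_q$ sends $d$ to a multiple $dm$ and extends via $1\mapsto m$. With self-injectivity of $\Z_q$ in hand, $h\colon M\to\Z_q$ extends to $C_{i-1}(P_\#;\Z_q)$ by the usual extension property of injective modules (a Zorn's lemma argument is needed when $\#=\infty$, where $C_{i-1}(P)$ has infinite rank, but nothing else changes). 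Everything else in the argument is formal, and the two cases $\#=N$ and $\#=\infty$ are handled uniformly by this reasoning.
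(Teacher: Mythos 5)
Your proposal is correct, but it takes a genuinely different route from the paper. The paper's proof first observes that membership of $f$ in $\im\phi_{\#,n}$ depends only on the cohomology class $\brac{f}$ (by extending a cochain $g_n$ with $\delta g_n=f_n-f_n'$ by zero outside $\Lambda_n$), and then characterizes the image of the induced map $\phi_{\#,n}^*$ on $H^{i-1}$ via the long exact sequence of the pair $\paren{P_{\#},P_n}$ and its connecting map, citing Hatcher. You instead work entirely at the chain/cochain level: you reformulate $f\in\im\phi_{\#,n}$ as an extension problem for the $\Z_q$-linear functional $f$ subject to vanishing on $B_{i-1}\paren{P_{\#};\;\Z_q}$, note correctly that $C_{i-1}\paren{P_n;\;\Z_q}\cap B_{i-1}\paren{P_{\#};\;\Z_q}=Z_{i-1}\paren{P_n;\;\Z_q}\cap B_{i-1}\paren{P_{\#};\;\Z_q}$ because $P_n$ is a subcomplex and boundaries are cycles, and then produce the extension from $C_{i-1}\paren{P_n;\;\Z_q}+B_{i-1}\paren{P_{\#};\;\Z_q}$ to all of $C_{i-1}\paren{P_{\#};\;\Z_q}$ using self-injectivity of $\Z_q$ (Baer's criterion, with Zorn when $\#=\infty$). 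This is the right fix at the one genuinely delicate point, since for composite $q$ the submodule need not be a direct summand and the field-style complement argument fails. What your approach buys is a self-contained argument, uniform in $\#=N$ and $\#=\infty$, that makes explicit exactly which property of the coefficient ring is used (self-injectivity of $\Z_q$; the same argument would break for $\Z$ coefficients, consistent with the coefficient-dependence discussed elsewhere in the paper); what the paper's approach buys is brevity modulo a standard reference and the placement of the statement within the familiar long-exact-sequence framework, at the cost of a short detour through cohomology classes that your argument does not need.
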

\begin{proof}
We begin by showing that the statement is equivalent to an analogous one for cohomology classes. Suppose that $\brac{f_n}=\brac{f_n'}\in H^{i-1}\paren{P_n;\;\Z_q}.$ Then there exists a $g_n\in C^{i-2}\paren{P_n;\;\Z_q}$ so that $\delta g_n=f_n-f_n'.$ We may extend $g_n$ to obtain a cochain $g_{\#}\in C^{i-2}\paren{P_{\#};\;\Z_q}$ which vanishes on  $(i-2)$-plaquettes outside of $\Lambda_n.$ Then $f_n-f_n'=\phi_{\#,n}\paren{\delta g_\#}.$ It follows $f_n\in \im\phi_{\#,n}\iff  f_n' \in \im \phi_{\#,n}.$ 

Let $\phi_{\#,n}^*:H^{i-1}\paren{P_{\#};\;\Z_q}\to H^{i-1}\paren{P_n;\;\Z_q}$ be the induced map on cohomology. It suffices to demonstrate the corresponding characterization of  $\im \phi_{\#,n}^*.$ This follows directly from the definition of the long exact sequence of the pair $(P_{\#},P_n)$ and the associated boundary map (see page 199 of~\cite{hatcher2002algebraic}; more detail is given for the homological analogue on page 115). 

\end{proof}

\begin{Lemma}
Fix $n\in \N.$ 
    \label{lemma:cylinder}
    \begin{itemize} 
    \item $Y_{N,n}=Y_{\infty,n}$ for all sufficiently large $N.$
    \item For $N$ sufficiently large as in the previous statement, the pushforward by $\phi_{N,n}$ of the uniform measure on $Z^{i-1}\paren{P_N;\;\Z_q}$ is the uniform measure on $Y_{\infty,n}.$ 
    \item For all $n<N,$ $\phi_{N,n}\paren{Y_{\infty,N}}=Y_{\infty,n}$ and the pushforward of the uniform measure on $Y_{\infty,N}$ by $\phi_{N,n}$ is the uniform measure on $Y_{\infty,n}.$ 
    \end{itemize}
\end{Lemma}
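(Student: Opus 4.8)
The plan is to establish the three bullets in turn, using Lemma~\ref{lemma:phiNM} to recast membership in the images $Y_{\#,n}$ as a vanishing condition on a \emph{finite} set of cycles, and then invoking only elementary facts about homomorphisms of finite abelian groups for the two statements about uniform measures. The one genuinely substantive point will be the stabilization in the first bullet; the rest is bookkeeping.

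\textbf{First bullet.} By Lemma~\ref{lemma:phiNM} (applied with $\#\in\set{N,\infty}$, understanding $P_\infty$ to mean $P$), a cocycle $f\in Z^{i-1}\paren{P_n;\;\Z_q}$ lies in $Y_{\#,n}$ if and only if $f\paren{\sigma}=0$ for every $\sigma\in Z_{i-1}\paren{P_n;\;\Z_q}\cap B_{i-1}\paren{P_{\#};\;\Z_q}$. Hence it suffices to show that $Z_{i-1}\paren{P_n;\;\Z_q}\cap B_{i-1}\paren{P_N;\;\Z_q}=Z_{i-1}\paren{P_n;\;\Z_q}\cap B_{i-1}\paren{P;\;\Z_q}$ once $N$ is large. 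The inclusion $\subseteq$ is automatic since $P_N\subseteq P$. For the reverse inclusion, I would observe that every element of the right-hand set is the boundary of some finitely supported $i$-chain in $P$, hence of an $i$-chain supported in $\Lambda_{N}$ for some $N$; since $Z_{i-1}\paren{P_n;\;\Z_q}$ is a finite group, only finitely many cycles arise this way, and taking $N_0$ to be the maximum of the corresponding finitely many values of $N$ (and $N_0>n$) yields the displayed equality, and therefore $Y_{N,n}=Y_{\infty,n}$, for all $N\geq N_0$.

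\textbf{Second bullet.} The restriction map $\phi_{N,n}$ is $\Z_q$-linear, so it is a homomorphism of finite abelian groups with image $Y_{N,n}=Y_{\infty,n}$ for $N\geq N_0$. I would then use the elementary fact that for any surjective homomorphism $\phi\colon A\twoheadrightarrow B$ of finite abelian groups every fiber is a coset of $\ker\phi$, hence has cardinality $\abs{A}/\abs{B}$, so the pushforward of the uniform measure on $A$ is the uniform measure on $B$; applying this with $A=Z^{i-1}\paren{P_N;\;\Z_q}$ and $B=Y_{\infty,n}$ gives the claim. (The $N_0$ here must be at least the one from the first bullet.)

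\textbf{Third bullet.} Since restriction of cochains composes, $\phi_{\infty,n}=\phi_{N,n}\circ\phi_{\infty,N}$, so
\[\phi_{N,n}\paren{Y_{\infty,N}}=\phi_{N,n}\paren{\im\phi_{\infty,N}}=\im\phi_{\infty,n}=Y_{\infty,n}\,.\]
Thus $\phi_{N,n}$ restricts to a surjective homomorphism from the finite abelian group $Y_{\infty,N}$ onto $Y_{\infty,n}$, and the same coset-counting argument shows the pushforward of the uniform measure on $Y_{\infty,N}$ by $\phi_{N,n}$ is the uniform measure on $Y_{\infty,n}$. The main obstacle, such as it is, remains the finiteness/stabilization observation in the first bullet; once that is in place nothing else requires more than linear algebra over $\Z_q$.
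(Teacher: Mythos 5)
Your proposal is correct and follows essentially the same route as the paper: the first bullet is the same stabilization argument (finitely many obstructing cycles from Lemma~\ref{lemma:phiNM}, each bounded by a finite chain supported in a large enough cube), and the measure statements reduce to elementary facts about surjections of finite abelian groups. The only cosmetic difference is that you push forward uniform measure by counting cosets of the kernel, whereas the paper phrases this via a kernel--image decomposition; your version is, if anything, slightly more careful since it avoids any splitting claim over $\Z_q.$
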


\begin{proof}
The first statement follows quickly from the previous lemma. By construction, for each $f\in Z^{i-1}\paren{P_n;\;\Z_q}\setminus \im \phi_{\infty,n}$ we can find a chain $\sigma \in C_{i}\paren{P;\;\Z_q}$ so that $\partial \sigma\in Z_{i-1}\paren{P_n;\;\Z_q}$ and $f\paren{\partial\sigma}\neq 0.$ By definition, $\sigma$ is finite so it is supported on all sufficiently large cubes in $\Z^d.$ In fact, $Z^{i-1}\paren{P_n;\;\Z_q}$ is itself finite so we may choose a cube $\Lambda_N$ large enough to support obstructing boundaries for all such $f.$ Then $Y_{N,n}=Y_{\infty,n}.$   

To show the second statement, we use the decomposition 
\[Z^{i-1}\paren{P_n;\;\Z_q}\cong \ker \phi_{N,n}\oplus \im \phi_{N,n}\,.\]
In particular, a uniform element of a direct sum can be obtained by taking a uniform element of each summand, and so the pushforward is simply a uniform element of the second summand.

The third statement follows from a similar argument applied to the decomposition
\[Y_{\infty,N} = \ker\paren{\phi_{N,n}\mid_{Y_{\infty,N}}} \oplus \im\paren{\phi_{N,n}\mid_{Y_{\infty,N}}}\]
where we used that $\phi_{N,n}\paren{Y_{\infty,N}}=Y_{\infty,n}.$
\end{proof}

The same proof goes through in a slightly more general setting, where $r \subset \Lambda_N$ is a $d$-dimensional box. Let $\phi_{N,r} : Z^{i-1}\paren{P \cap \Lambda_N;\;\Z_q} \to Z^{i-1}\paren{P \cap r;\;\Z_q}$ and $\phi_{\infty,r} : Z^{i-1}\paren{P;\;\Z_q} \to Z^{i-1}\paren{P \cap r;\;\Z_q}$ be the restriction maps. Set $Y_{\infty, r} = \im \paren{\phi_{\infty,r}}$ and set $Y_{N,r} = \im \paren{\phi_{N,r}}.$ We only require the analogue of the third bullet point above. 

\begin{Corollary}\label{cor:boxcylinder}
$\phi_{N,r}\paren{Y_{\infty,N}}=Y_{\infty,r}$ and the pushforward of the uniform measure on $Y_{\infty,N}$ by $\phi_{N,r}$ is the uniform measure on $Y_{\infty,r}.$ 
\end{Corollary}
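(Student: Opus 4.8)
The plan is to observe that Corollary~\ref{cor:boxcylinder} is exactly the statement of the third bullet point of Lemma~\ref{lemma:cylinder}, but with the nested cubes $\Lambda_n \subset \Lambda_N$ replaced by a box $r \subset \Lambda_N$, so the proof should be a direct transcription of the argument already given, checking that nothing in it used the cubical shape of the smaller region. First I would note that $\Lambda_N$ is itself a $d$-dimensional box, so the only structural fact being used is that $r$ is a finite subcomplex contained in $\Lambda_N$ on which restriction of cochains makes sense; all the objects $Z^{i-1}(P \cap r;\,\Z_q)$, $Z_{i-1}(P \cap r;\,\Z_q)$, $B_{i-1}(P_N;\,\Z_q)$ and the restriction maps $\phi_{N,r}, \phi_{\infty,r}$ are defined verbatim.

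Next I would point out that Lemma~\ref{lemma:phiNM} goes through with $P_n$ replaced by $P \cap r$: its proof only used the long exact sequence of the pair $(P_\#, P_n)$, and $(P_N, P \cap r)$ is an equally valid pair of complexes, so we get that $f \in Z^{i-1}(P \cap r;\,\Z_q) \setminus \im\phi_{\infty,r}$ if and only if there is a cycle in $Z_{i-1}(P\cap r;\,\Z_q) \cap B_{i-1}(P;\,\Z_q)$ on which $f$ does not vanish, and likewise with $P$ replaced by $P_N$ provided $N$ is large enough to support the relevant finite chains. The finiteness argument establishing $Y_{N,r} = Y_{\infty,r}$ for large $N$ (analogue of the first bullet of Lemma~\ref{lemma:cylinder}) is identical: $Z^{i-1}(P \cap r;\,\Z_q)$ is finite, each $f$ outside $\im\phi_{\infty,r}$ has an obstructing boundary supported on some finite cube, and we take $\Lambda_N$ large enough to contain all of them simultaneously. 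Since the statement of the corollary is phrased for a \emph{fixed} pair $r \subset \Lambda_N$ with $N$ large, this is what we need.

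Finally, for the surjectivity and measure-pushforward claims I would reuse the direct-sum decomposition argument verbatim: writing $Z^{i-1}(P \cap r;\,\Z_q) \cong \ker\phi_{N,r} \oplus \im\phi_{N,r}$ (any splitting of the surjection, e.g. a $\Z_q$-linear section), the uniform measure on the domain pushes forward to the uniform measure on $\im\phi_{N,r}$, and then further restricting to $Y_{\infty,N} \subset Z^{i-1}(P_N;\,\Z_q)$ and using the decomposition $Y_{\infty,N} = \ker(\phi_{N,r}|_{Y_{\infty,N}}) \oplus \im(\phi_{N,r}|_{Y_{\infty,N}})$ together with $\phi_{N,r}(Y_{\infty,N}) = Y_{\infty,r}$ gives the claim. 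The identity $\phi_{N,r}(Y_{\infty,N}) = Y_{\infty,r}$ itself follows since $\phi_{\infty,r} = \phi_{N,r} \circ \phi_{\infty,N}$ on the level of sets of cocycles (restriction is transitive), combined with $Y_{N,r} = Y_{\infty,r}$. I do not anticipate a genuine obstacle here; the only thing to be careful about is to state explicitly that "$N$ sufficiently large" is inherited from the first bullet of Lemma~\ref{lemma:cylinder} applied to $r$ in place of $\Lambda_n$, and that the direct-sum splittings are non-canonical but that only the uniform measures — which are splitting-independent — appear in the conclusion. Accordingly I would keep the write-up to a single short paragraph reading roughly: ``The proof is identical to that of the third bullet point of Lemma~\ref{lemma:cylinder}, replacing $\Lambda_n$ by $r$ throughout; Lemma~\ref{lemma:phiNM} and the finiteness argument apply verbatim since $(P_N, P\cap r)$ is a pair of finite complexes, and $\phi_{N,r}(Y_{\infty,N}) = Y_{\infty,r}$ because restriction of cocycles is transitive and $Y_{N,r} = Y_{\infty,r}$ for $N$ large.''
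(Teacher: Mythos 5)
Your proposal is correct and follows essentially the same route as the paper, which justifies Corollary~\ref{cor:boxcylinder} simply by observing that the proof of the third bullet point of Lemma~\ref{lemma:cylinder} (together with Lemma~\ref{lemma:phiNM}) carries over verbatim when $\Lambda_n$ is replaced by the box $r.$ One small simplification: the appeal to $Y_{N,r}=Y_{\infty,r}$ and hence to ``$N$ sufficiently large'' is unnecessary, since transitivity of restriction, $\phi_{\infty,r}=\phi_{N,r}\circ\phi_{\infty,N},$ already gives $\phi_{N,r}\paren{Y_{\infty,N}}=Y_{\infty,r}$ for every $N$ with $r\subset\Lambda_N,$ and the pushforward claim then only needs that the fibers of the surjective homomorphism $\phi_{N,r}\mid_{Y_{\infty,N}}$ are cosets of its kernel and hence of equal size.
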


We now state the main result of this section.

\begin{Proposition}\label{prop:comparisoninfinite}
Let $0<i<d-1,$ $q\in\N+1,$ $\beta \in \paren{0,\infty}$ and $p = 1-e^{-\beta}.$  
The weak limits
\[\mu_{\Z^d,p}^{\mathbf{f}}=\lim_{N\rightarrow \infty} \mu^{\mathbf{f}}_{\Lambda_N,p,q,d-1}\]
and
\[\nu_{\Z^d}^{\mathbf{f}}=\lim_{n\rightarrow \infty} \nu_{\Lambda_N,\beta,q,d-1}^{\mathbf{f}}\]
exist and are translation invariant.
Moreover, if $\gamma$ is a $(i-1)$-cycle in $\Z^d$ then
\[\mathbb{E}_{\nu_{\Z^d}^{\mathbf{f}}}\paren{W_{\gamma}}=\mu_{\Z^d,p}^{\mathbf{f}}\paren{V_{\gamma}^{\mathrm{fin}}}\,.\]
\end{Proposition}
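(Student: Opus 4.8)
The plan is to assemble a single infinite-volume coupling $\kappa$ of $\mu^{\mathbf{f}}_{\Z^d,p}$ with $\nu^{\mathbf{f}}_{\Z^d}$ as the weak limit of the finite-volume couplings $\kappa_{\Lambda_N}$ of Proposition~\ref{prop:couple}, and then to read off all three assertions from it. I would take for granted that the plaquette random-cluster limit $\mu^{\mathbf{f}}_{\Z^d,p}=\lim_N\mu^{\mathbf{f}}_{\Lambda_N}$ exists and is translation invariant (for $i=d-1$ this is the classical random-cluster model by Proposition~\ref{prop:codim1}, hence standard; in general one invokes the corresponding fact for the PRCM). Given a percolation subcomplex $P\subseteq\Z^d$, Lemma~\ref{lemma:cylinder} says the uniform measures on the groups $Y_{\infty,n}(P)$, $n\in\N$, form a consistent family under the restriction maps $\phi_{N,n}$, so by Kolmogorov extension there is a measure on $C^{i-1}(\Z^d;\Z_q)$ whose restriction to each window $\Lambda_n$ is uniform on $Y_{\infty,n}(P)$. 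Define $\kappa$ by drawing $P\sim\mu^{\mathbf{f}}_{\Z^d,p}$ and then $f$ from this conditional law, and let $\nu^{\mathbf{f}}_{\Z^d}$ be the $f$-marginal of $\kappa$.

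The heart of the argument is to show $\kappa_{\Lambda_N}\Rightarrow\kappa$; projecting onto each coordinate then gives $\nu^{\mathbf{f}}_{\Lambda_N}\Rightarrow\nu^{\mathbf{f}}_{\Z^d}$, so the PLGT limit exists, and translation invariance is inherited from $\mu^{\mathbf{f}}_{\Z^d,p}$ together with the equivariance of the construction. To prove the convergence, fix a window $\Lambda_n$ and a bounded cylinder function $h$ of $(P,f)$ supported on $\Lambda_n$, and condition on $P$: by Corollary~\ref{cor:conditionalmeasures}, $\kappa_{\Lambda_N}(\cdot\mid P)$ is uniform on $Z^{i-1}(P\cap\Lambda_N;\Z_q)$, so its image under $\phi_{N,n}$ — which is the restriction of $f$ to the $(i-1)$-plaquettes of $\Lambda_n$ — is uniform on $Y_{N,n}(P)$, because a surjection of finite abelian groups pushes the uniform measure forward to the uniform measure (this is the splitting argument in the proof of Lemma~\ref{lemma:cylinder}). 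By the first part of that lemma, $Y_{N,n}(P)=Y_{\infty,n}(P)$ once $N\geq N_0(P,n)$, so the conditional law of the window stabilizes to that of $\kappa$. Combining this pointwise stabilization with $\mu^{\mathbf{f}}_{\Lambda_N}\Rightarrow\mu^{\mathbf{f}}_{\Z^d,p}$ through a dominated-convergence argument (compare $\mathbb{E}_{\kappa_{\Lambda_N}}(h)$ with $\mathbb{E}_{\kappa_{\Lambda_M}}(h)$ for fixed large $M$, bounding the difference on the event $\{N_0(\cdot,n)>M\}$, whose probability tends to $0$) yields $\mathbb{E}_{\kappa_{\Lambda_N}}(h)\to\mathbb{E}_\kappa(h)$. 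Corollary~\ref{cor:boxcylinder} lets one replace $\Lambda_n$ by a general box where needed.

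For the Wilson loop identity, $W_\gamma$ is a bounded function of $f$ depending only on the finitely many $(i-1)$-plaquettes in $\gamma$, so $\mathbb{E}_{\nu^{\mathbf{f}}_{\Z^d}}(W_\gamma)=\mathbb{E}_\kappa(W_\gamma)=\mathbb{E}_{\mu^{\mathbf{f}}_{\Z^d,p}}\big[\mathbb{E}_\kappa(W_\gamma\mid P)\big]$, and I would show $\mathbb{E}_\kappa(W_\gamma\mid P)=I_{V^{\mathrm{fin}}_\gamma}(P)$. On $V^{\mathrm{fin}}_\gamma$ we have $\gamma=\partial c$ for some finite $c\in C_i(P;\Z_q)$, so any cocycle on $P$ — hence any $g\in Y_{\infty,n}(P)$, viewed as the restriction of such a cocycle to $\Lambda_n\supseteq\gamma$ — satisfies $g(\gamma)=(\delta g)(c)=0$, so $W_\gamma\equiv1$. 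On $\neg V^{\mathrm{fin}}_\gamma$ we have $[\gamma]\neq0$ in $H_{i-1}(P;\Z_q)$; since $H_{i-2}(\Z^d;\Z_q)=0$ and $P$ contains the $(i-1)$-skeleton, $H_{i-2}(P;\Z_q)=0$, so Corollary~\ref{cor:UCTC} makes the evaluation pairing $H^{i-1}(P;\Z_q)\times H_{i-1}(P;\Z_q)\to\Z_q$ nondegenerate in the homology variable and produces a cocycle $g$ on $P$ with $g(\gamma)\neq0$; restricting $g$ to $\Lambda_n$ shows the homomorphism $Y_{\infty,n}(P)\to\Z_q$, $g\mapsto g(\gamma)$, has image a nontrivial cyclic subgroup $\Z_m$ with $m\mid q$, $m>1$. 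Since the window marginal of $f$ is uniform on $Y_{\infty,n}(P)$, $f(\gamma)$ is uniform on $\Z_m$, so $W_\gamma=(f(\gamma))^{\mathbb{C}}$ is uniform on the $m$-th roots of unity and averages to zero. Integrating $I_{V^{\mathrm{fin}}_\gamma}$ against $\mu^{\mathbf{f}}_{\Z^d,p}$ gives the stated equality.

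The step I expect to be the main obstacle is the passage to the limit in the coupling: the conditional law of $f$ given $P$ is not a local function of $P$ but only a locally stabilizing one, so the pointwise stabilization $Y_{N,n}(P)=Y_{\infty,n}(P)$ from Lemma~\ref{lemma:cylinder} must be interleaved carefully with the weak convergence of the random-cluster marginals, and one must keep track of the boundary conventions (the finite-volume measures omit the $i$-plaquettes in $\partial\Lambda_N$). The purely topological input in the third paragraph — producing a cocycle that does not vanish on a cycle which is non-null-homologous mod $q$ — is the other delicate point, but it is essentially the content of Corollary~\ref{cor:UCTC} and was already carried out for finite complexes in Proposition~\ref{prop:comparisonbody}.
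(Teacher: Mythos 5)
Your construction of the infinite-volume coupling is the same as the paper's: given $P$, the window marginals of $f$ are uniform on $Y_{\infty,n}\paren{P}$, with consistency supplied by the third item of Lemma~\ref{lemma:cylinder} (you extend via Kolmogorov conditionally on $P$, the paper via Carath\'eodory on cylinder sets of the product space --- a cosmetic difference), and the convergence of the PLGT marginal uses the same ingredients (monotone coupling of the finite-volume PRCMs, stabilization $Y_{N,n}=Y_{\infty,n}$, bounded convergence). Where you genuinely diverge is the Wilson loop identity: the paper deduces it by passing to the limit in the finite-volume identity of Proposition~\ref{prop:comparisonbody}, using $V\paren{N}\nearrow V_{\gamma}^{\mathrm{fin}}$ under the monotone coupling together with weak convergence, whereas you compute $\mathbb{E}_{\kappa}\paren{W_{\gamma}\mid P}=I_{V_{\gamma}^{\mathrm{fin}}}$ directly in infinite volume. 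Your route is arguably more transparent (it exhibits the conditional structure of the limit coupling explicitly), but it costs you two checks the paper avoids: (i) the topological input --- that a class which is nonzero in $H_{i-1}\paren{P;\;\Z_q}$ admits a cocycle on the \emph{infinite} complex $P$ evaluating nontrivially on it --- must be verified for infinite complexes, which does go through (e.g.\ because $\Z_q$ is self-injective, so $H^{i-1}\paren{P;\;\Z_q}\to\mathrm{Hom}\paren{H_{i-1}\paren{P;\;\Z_q},\Z_q}$ is onto), but is not literally the finite-complex statement used in Proposition~\ref{prop:comparisonbody}; and (ii) in your convergence step, "conditioning $\kappa_{\Lambda_N}$ on $P$" is not meaningful as written, since the plaquette marginal of $\kappa_{\Lambda_N}$ is $\mu^{\mathbf{f}}_{\Lambda_N}$ and $P\cap\Lambda_N$ for $P\sim\mu^{\mathbf{f}}_{\Z^d,p}$ does \emph{not} have this law --- you need the monotone coupling $P\paren{N}\nearrow P$ (as the paper uses) and then the stabilization of the image of $Z^{i-1}\paren{P\paren{N};\;\Z_q}$ under restriction, not just of $Y_{N,n}\paren{P}$; this is repaired by noting that the finitely many obstructing boundaries witnessing $Z^{i-1}\paren{P_n;\;\Z_q}\setminus Y_{\infty,n}\paren{P}$ are eventually contained in $P\paren{N}$, so the imprecision is of the same kind the paper itself tolerates and is not a fatal gap.
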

 Note that the last statement is Theorem~\ref{thm:comparison} for free boundary conditions.
\begin{proof}

The weak limit of $\mu^{\mathbf{f}}_{\Lambda_N,p,q,d-1}$ exists and is translation invariant by the same logic as for the case $i=1.$ The proof uses the FKG inequality and a standard monotonicity argument; see Theorem 4.19 of~\cite{grimmett1999percolation}. In fact, we may couple the PRCMs with $P\paren{1}\subset P\paren{2}\subset \ldots$ with $P\paren{N} \sim \mu^{\mathbf{f}}_{\Lambda_N,p,q,d-1}$ and $P=\cup_{N} P\paren{N} \sim \mu^{\mathbf{f}}_{\Z^d,p}.$ %By Corollary~\ref{cor:conditionalmeasures}, 

We will construct a coupling whose first marginal is $\mu^{\mathbf{f}}_{\Z^d,p}$ and whose second marginal is the weak limit of $\nu_{\Lambda_N,\beta,q,d-1}^{\mathbf{f}}$ as $N\to\infty.$ Let $\Omega=\set{0,1}^{\paren{\Z^d}^{(i)}}$ and $\Sigma=C^{i-1}\paren{\Z^d;\;\Z_q}.$ The $\sigma$-algebra on $\Omega$ is generated by cylinder events of the form
\[\mathcal{K}\paren{P_n}\coloneqq \set{P\subset \Z^d: P\cap \Lambda_n=P_n}\]
for a percolation subcomplex $P_n$ of $\Lambda_n.$ 
 Similarly, the cylinder events for $\Sigma$ are
\[\mathcal{L}\paren{f_n}\coloneqq \set{f\in C^{i-1}\paren{\Z^d;\;\Z_q}:f\mid_{\Lambda_n}=f_n}\]
 where $f_n\in C^{i-1}\paren{\Lambda_n;\;\Z_q}.$ 
Then the $\sigma$-algebra on $\Omega \times \Sigma$ is generated by the products $\mathcal{K}\paren{P_n} \times \mathcal{L}\paren{f_n}.$

We define a coupling $\kappa_{\beta,q}^{\mathbf{f}}$ on $\Omega\times \Sigma$ by first specifying it on cylinder events.  Set
\begin{align*}
    \kappa_{\beta,q}^{\mathbf{f}}\paren{\mathcal{K}\paren{P_n}\times \mathcal{L}\paren{f_n}} = \sum_{H\subseteq Z^{i-1}\paren{P_n;\;\Z_q}}\frac{I_{\set{f_n \in H}}}{\abs{H}}\mu_{\Z^d,p}\paren{Y_{\infty,n}=H \cap \mathcal{K}\paren{P_n}}\,,
\end{align*}
where we note that $\set{Y_{\infty,n}=H}$ is measurable because $Y_{\infty,n} = \bigcap_{N>n} Y_{N,n}.$
In words, given $P_n$ we can sample $f_n$ by revealing $Y_{\infty,n}$ and selecting a uniform random element therein.

We now need to check that our partial definition of $\kappa$ extends to a measure on $\Omega \times \Sigma.$ By Carath\'{e}odory's extension theorem for semi-rings, the only remaining requirement is countable additivity on cylinder sets. 

As $\Omega \times \Sigma$ is a product of countably many finite spaces, no cylinder set is an infinite countable disjoint union of cylinder sets (see Chapter 8.6 of~\cite{lindstrom2017spaces}). Thus, it suffices to check finite additivity. We claim that it is enough to show that 
\begin{equation}
    \label{eq:consistent}
\kappa_{\beta,q}^{\mathbf{f}}\paren{\mathcal{K}\paren{P_n}\times \mathcal{L}\paren{f_n}}=\sum_{\paren{P_N,f_N}\in \mathcal{R}\paren{P_n,f_n,N}}\kappa_{\beta,q}^{\mathbf{f}}\paren{\mathcal{K}\paren{P_N},\mathcal{L}\paren{f_N}}
\end{equation}
when $n>N,$ where
\[\mathcal{R}\paren{P_n,f_n,N}=\set{\paren{P_n,f_n}:P_N\cap \Lambda_n=P_n, \phi_{N,n}\paren{f_N}=f_n}\,.\]
Suppose that
\[\mathcal{K}\paren{P_n} \times \mathcal{L}\paren{f_n} = \bigsqcup_{k=1}^l C_k\]
where each $C_k$ is a cylinder event for a a cube $\Lambda_{N_k}.$ Then, if we choose $N=\max_{k}N_k,$ we can use (\ref{eq:consistent}) to reduce the statement 
\begin{equation}
\label{eq:cons1}
\kappa_{\beta,q}^{\mathbf{f}}\paren{\mathcal{K}\paren{P_n} \times \mathcal{L}\paren{f_n}} \stackrel{?}{=} \sum_{k=1}^l  \kappa_{\beta,q}^{\mathbf{f}}\paren{C_k}
\end{equation}
to one of the form
\begin{equation}
\label{eq:cons2}
\kappa_{\beta,q}^{\mathbf{f}}\paren{\mathcal{K}\paren{P_n} \times \mathcal{L}\paren{f_n}} \stackrel{?}{=} \sum_{l=1}^L  \kappa_{\beta,q}^{\mathbf{f}}\paren{\mathcal{K}\paren{P_N^l}\cap \mathcal{L}\paren{f_N^l}}
\end{equation}
where the $P_N^l$ are subcomplexes of $\Lambda_N$ and the $f_N^l$ are cochains in $C^{i-1}\paren{\Lambda_n;\;\Z_q}$ satisfying 
\[\mathcal{K}\paren{P_n} \times \mathcal{L}\paren{f_n} = \bigsqcup_{l=1}^L \mathcal{K}\paren{P_N^l}\cap \mathcal{L}\paren{f_N^l}\,.\]
Now, the only way to achieve this decomposition is if $P_N^l\cap \Lambda_n=P_n$ and $f_N^l\mid_{\Lambda_n}=f_n$ for each $l,$ and the pairs $\paren{P_N^l,f_N^l}$ cover all possible plaquette/spin combinations in the annulus $\Lambda_N\setminus \Lambda_n.$ That is, we must have 
\[\set{\paren{P_N^l,f_N^l}}_{l=1}^L=\mathcal{R}\paren{P_n,f_n,N}\]
and then (\ref{eq:cons1}) and (\ref{eq:cons2}) follow from (\ref{eq:consistent}).

Now, we demonstrate (\ref{eq:consistent}).
\begin{align*}
   &\sum_{\paren{P_N,f_N}\in \mathcal{R}\paren{P_n,f_n,N}}\kappa_{\beta,q}^{\mathbf{f}}\paren{\mathcal{K}\paren{P_N},\mathcal{L}\paren{f_N}}\\
     =&\sum_{\substack{\paren{P_N,f_N}\in \mathcal{R}\paren{P_n,f_n,N}\\H\subseteq Z^{i-1}\paren{P_N;\;\Z_q}}}\frac{I_{\set{f_N \in H}}}{\abs{H}}\mu_{\Z^d,p}\paren{\set{Y_{\infty,N}=H} \cap \mathcal{K}\paren{P_N}}\\
    =&\sum_{\substack{P_N: P_N\cap \Lambda_n = P_n\\H\subseteq Z^{i-1}\paren{P_N;\;\Z_q}}}\frac{\abs{\set{f_N\in H: f_N \mid_{\Lambda_n} = f_n}}}{\abs{H}}\mu_{\Z^d,p}\paren{\set{Y_{\infty,N}=H} \cap \mathcal{K}\paren{P_N}}\\
     =&\sum_{\substack{P_N: P_N\cap \Lambda_n = P_n\\H'\subseteq Z^{i-1}\paren{P_n;\;\Z_q}}}\frac{I_{\set{f_n\in H'}}}{\abs{H'}}\mu_{\Z^d,p}\paren{\phi_{N,n}\paren{\set{Y_{\infty,N}}=H'} \cap \mathcal{K}\paren{P_N}}\\
     =&\sum_{H'\subseteq C^{i-1}\paren{\Lambda_n;\;\Z_q}}\frac{I_{\set{f_n\in H'}}}{\abs{H'}}\sum_{P_N: P_N\cap \Lambda_n = P_n}\mu_{\Z^d,p}\paren{\set{Y_{\infty,n}=H'} \cap \mathcal{K}\paren{P_N}}\\
     =&\sum_{H'\subseteq C^{i-1}\paren{\Lambda_n;\;\Z_q}}\frac{I_{\set{f_n\in H'}}}{\abs{H'}}\mu_{\Z^d,p}\paren{\set{Y_{\infty,n}=H'}\cap \mathcal{K}\paren{P_n}}\\
     =& \kappa_{\beta,q}^{\mathbf{f}}\paren{\mathcal{K}\paren{P_n}\times \mathcal{L}\paren{f_n}}\,,
 \end{align*}
 where we used the third bullet point in Lemma~\ref{lemma:cylinder} to move from the third to the fourth line of the computation. Thus, we have defined a measure $\kappa_{\beta,q}^{\mathbf{f}}$ on $\Omega\times \Sigma.$ Notice that a similar computation can be used to prove translation invariance. That is, if $\Lambda_n'$ is a translate of $\Lambda_n,$ $P_n'$ and $f_n'$ are the corresponding shifts of $P_n$ and $f_n,$ and $N$ is sufficiently large then
\begin{align*}
  &\kappa_{\beta,q}^{\mathbf{f}}\paren{\mathcal{K}\paren{P_n'}\times \mathcal{L}\paren{f_n'}}\\
  =&\sum_{\substack{P_N: P_N\cap \Lambda'_{n} = P_{n}\\H'\subseteq Z^{i-1}\paren{P_{n'};\;\Z_q}}}\frac{I_{\set{f'_n\in H'}}}{\abs{H'}}\mu_{\Z^d,p}\paren{\phi_{N,\Lambda_{n}'}\paren{\set{Y_{\infty,N}}=H'} \cap \mathcal{K}\paren{P_N}}\\
  =&  \sum_{\substack{P_N: P_N\cap \Lambda_n = P_n\\H'\subseteq Z^{i-1}\paren{P_n;\;\Z_q}}}\frac{I_{\set{f_n\in H'}}}{\abs{H'}}\mu_{\Z^d,p}\paren{\phi_{N,n}\paren{\set{Y_{\infty,N}}=H'} \cap \mathcal{K}\paren{P_N}}\\
  =&\kappa_{\beta,q}^{\mathbf{f}}\paren{\mathcal{K}\paren{P_n}\times \mathcal{L}\paren{f_n}}
  \end{align*}
 where we used the translation invariance of $\mu_{\Z^d,p}$ and Corollary~\ref{cor:boxcylinder}.

Next, we verify that the marginals are as claimed. That the first one is $\mu_{\Z^d,p}^{\mathbf{f}}$ is immediate from the definition. For the second marginal, consider the conditional distribution obtained by restricting to $\Lambda_n.$ 

By definition, it assigns to $f_n\in C^{i-1}\paren{\Lambda_n;\;\Z_q}$ the probability 
\begin{equation}\label{eq:marginalinf}    \mathbb{E}_{\mu_{\Z^d,p}^{\mathbf{f}}}\brac{\sum_{H\subseteq Z^{i-1}\paren{P_n;\;\Z_q}}\frac{I_{\set{f_n \in H, Y_{\infty,n} = H}}}{\abs{H}}}\,.
\end{equation}
On the other hand, by Corollary~\ref{cor:conditionalmeasures}, the restriction of $\nu_{\Lambda_N,\beta,q,d-1}^{\mathbf{f}}$ to $\Lambda_n$ gives the probability as
\begin{equation}\label{eq:marginaln}
    \mathbb{E}_{\mu_{\Z^d,p}^{\mathbf{f}}} \brac{\sum_{H\subseteq Z^{i-1}\paren{P(N)\cap \Lambda_n;\;\Z_q}}\frac{I_{\set{f_n \in H, Y_{N,n} = H}}}{\abs{H}}}\,.
\end{equation}
Fix $P.$ We may choose $N$ large enough so that $P\paren{N}\cap \Lambda_n=P_n$ and $Y_{N,n}=Y_{\infty,n},$ where the first claim follows because $P\paren{N}\nearrow P$ and the second is the first item of Lemma~\ref{lemma:cylinder}. Thus, the inner term of (\ref{eq:marginaln}) converges to the inner term of (\ref{eq:marginalinf}) pointwise as as function of $P$ as $N \to \infty.$ Therefore, by bounded convergence theorem, the second marginal of $\kappa_{\beta,q}^{\mathbf{f}}$ is the weak limit of the measures $\nu_{\Lambda_N}^{\mathbf{f}}.$ 

%Let $V_{\gamma}^{\mathrm{fin}}\paren{n}$
We now demonstrate Theorem~\ref{thm:comparison} for free boundary conditions. Let $V\paren{N}$ be the event that $\gamma$ is a boundary in $P\paren{N}.$ Then $P\paren{N}\nearrow P$ so $V\paren{N}\nearrow V_{\gamma}^{\mathrm{fin}}$ and
\[\mu_{\Z^d,p}^{\mathbf{f}}\paren{V_{\gamma}^{\mathrm{fin}}} = \lim_{N \to \infty} \mu_{\Lambda_N,p}^{\mathbf{f}}\paren{V\paren{N}} = \lim_{N \to \infty} \mathbb{E}_{\nu_{\Lambda_N}^{\mathbf{f}}}\paren{W_{\gamma}} = \mathbb{E}_{\nu_{\Z^d}^{\mathbf{f}}}\paren{W_{\gamma}}\,,\]
where the second equality follows from Proposition~\ref{prop:comparisonbody} and the third is implied by weak convergence.

\end{proof}

\section{The PRCM in Codimension One}\label{sec:codim1PRCM}

\subsection{Basic Properties}
\label{subsec:codim1}
Recall that the PRCM with coefficients in the rational numbers $\mathbb{Q}$ is the random $i$-dimensional percolation subcomplex of $X$ satisfying 
 \[\mu_{X,p,q,i}\paren{P} \propto p^{\abs{P}}\paren{1-p}^{\abs{X^{\paren{i}}} - \abs{P}}q^{\mathbf{b}_{i-1}\paren{P;\Q}}\,.\]

\begin{Proposition}\label{prop:codim1}
Let $r$ be a box in $\Z^d.$ Then $(d-1)$-dimensional PRCM $\tilde{\mu}_{r,p,G,d-1}$ on $r$ with coefficients in a finite abelian group $G$ is equal in distribution to the PRCM $\mu_{r,p,\abs{G},d-1}$ with coefficients in $\Q.$ In particular, 
\[\tilde{\mu}_{r,p,\Z_q,d-1} \,{\buildrel d \over =}\, \mu_{r,p,q,d-1}\,.\]
\end{Proposition}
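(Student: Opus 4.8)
The plan is to reduce the identity of measures to a cellwise identity of weights. Both $\tilde{\mu}_{r,p,G,d-1}$ and $\mu_{r,p,\abs{G},d-1}$ are laws on the finite set of $(d-1)$-dimensional percolation subcomplexes $P$ of $r$, and each assigns to $P$ a probability proportional to $p^{\abs{P}}\paren{1-p}^{\abs{r^{\paren{d-1}}}-\abs{P}}$ times a weight: the weight is $\abs{H^{d-2}\paren{P;\,G}}$ for $\tilde{\mu}$ and $\abs{G}^{\mathbf{b}_{d-2}\paren{P;\,\Q}}$ for $\mu$. It therefore suffices to prove
\[\abs{H^{d-2}\paren{P;\,G}}=\abs{G}^{\mathbf{b}_{d-2}\paren{P;\,\Q}}\]
for every such $P$: the normalizing constants then agree as well, and the displayed special case is this identity with $G=\Z_q.$ (The same computation, applied to $P\cup\partial r\subset\overline{r},$ handles wired boundary conditions.)

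First I would apply the universal coefficient theorem for cohomology, which gives a splitting
\[H^{d-2}\paren{P;\,G}\cong\mathrm{Hom}\paren{H_{d-2}\paren{P;\,\Z},\,G}\oplus\mathrm{Ext}^1_{\Z}\paren{H_{d-3}\paren{P;\,\Z},\,G}\,.\]
Since $P$ contains the full $(d-2)$-skeleton of the contractible box $r$ and differs from it only in cells of dimension $d-1,$ we have $H_{d-3}\paren{P;\,\Z}\cong H_{d-3}\paren{r^{\paren{d-2}};\,\Z}\cong H_{d-3}\paren{r;\,\Z}=0$ in reduced homology, so the $\mathrm{Ext}$ term vanishes. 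It then remains to identify $\mathrm{Hom}\paren{H_{d-2}\paren{P;\,\Z},\,G}$ with $G^{\mathbf{b}_{d-2}\paren{P;\,\Q}},$ and for this it is enough to know that $H_{d-2}\paren{P;\,\Z}$ is torsion-free: a finitely generated torsion-free abelian group is $\Z^{b}$ with $b=\rank H_{d-2}\paren{P;\,\Z}=\mathbf{b}_{d-2}\paren{P;\,\Q},$ and $\mathrm{Hom}\paren{\Z^b,G}\cong G^b.$

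The single point at which codimension one is used is the torsion-freeness of $H_{d-2}\paren{P;\,\Z},$ and I expect this to be the main obstacle. I would obtain it from Alexander duality: viewing the finite complex $P$ as a compact, locally contractible subspace of $\R^d\subset S^d,$ Alexander duality gives $H_{d-2}\paren{P;\,\Z}\cong H^{1}\paren{S^d\setminus P;\,\Z},$ and the first integral cohomology of any space is torsion-free --- by the universal coefficient theorem $\mathrm{Hom}\paren{H_1,\Z}$ is torsion-free while $\mathrm{Ext}^1_{\Z}\paren{H_0,\Z}=0$ because $H_0$ is free. In the language of the rest of the paper this reflects the fact that, when $i=d-1,$ the dual complex $Q$ is one-dimensional, so that the degree-shifted analogue of Proposition~\ref{cor:alexander} identifies $H_{d-2}\paren{P;\,\Z}$ with a cohomology group of the graph $Q,$ which is necessarily free. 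This yields the cellwise identity and hence the proposition; note that $d=2$ recovers the classical random-cluster model, where the identity is just the coefficient-independence of the number of connected components. Apart from the duality input, all steps are routine homological algebra.
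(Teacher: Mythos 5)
Your proposal is correct and takes essentially the same route as the paper: both reduce the statement to the cellwise identity $\abs{H^{d-2}\paren{P;\,G}}=\abs{G}^{\mathbf{b}_{d-2}\paren{P;\,\Q}}$ and derive it from the universal coefficient theorems together with the vanishing of $H_{d-3}\paren{P;\,\Z}$ and the torsion-freeness of $H_{d-2}\paren{P;\,\Z}$ (Proposition~\ref{prop:torsion}, obtained from Alexander duality), the only organizational difference being that you handle a general finite abelian $G$ in a single application of the cohomology universal coefficient theorem, whereas Proposition~\ref{prop:codim1topology} passes through $\Z_q$-homology first. The one point to flag is that the isomorphism you invoke, $H_{d-2}\paren{P;\,\Z}\cong H^{1}\paren{S^d\setminus P;\,\Z}$, is the swapped (homology-of-$P$, cohomology-of-complement) form of Alexander duality rather than the literal statement of Theorem~\ref{thm:adual}; this form is standard for compact, locally contractible subsets of $S^d$ and is exactly what underlies Proposition~\ref{prop:torsion}, so it is a presentational rather than a substantive difference.
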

\begin{proof}
This follows from the fact that, if $P$ is a percolation subcomplex of $r$ then
\[H^{d-2}\paren{P;\;\Z_q}\cong \Z_q^{\mathbf{b}_{d-2}\paren{P;\;\Q}}\,.\]
This is Proposition~\ref{prop:codim1topology} in the appendix. The proof uses several standard tools from algebraic topology, including the universal coefficients theorems for homology and cohomology and Alexander duality. See Section~\ref{subsec:homologyprops}.
\end{proof}

We review some properties of the PRCM with coefficients in $\Q.$ First, it satisfies the FKG inequality.

\begin{Theorem}[\cite{hiraoka2016tutte}]\label{thm:FKG}
Let $p\in(0,1),$ and $q\geq 1,$ $i\in \N,$ and $X$ a finite cubical complex. Then $\mu_{X,p}=\mu_{X,p,q,i}$ satisfies the FKG inequality. That is, if $A$ and $B$ are increasing events then 
\begin{equation*}
    \mu_{X,p}\paren{A\cap B} \geq \mu_{X,p}\paren{A}\mu_{X,p}\paren{B}.
\end{equation*}
\end{Theorem}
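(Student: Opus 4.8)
The plan is to derive the FKG inequality from the (stronger) FKG lattice condition via the standard Holley--FKG machinery. First I would observe that, because $p\in(0,1)$ and $q\geq 1$, the measure $\mu_{X,p}$ is strictly positive on the configuration space $\set{0,1}^{X^{\paren{i}}}$, which is a finite distributive lattice once we order percolation subcomplexes by inclusion, with join $P_1\cup P_2$ and meet $P_1\cap P_2$. By Holley's inequality it then suffices to verify the lattice (log-supermodularity) condition
\[\mu_{X,p}\paren{P_1\cup P_2}\,\mu_{X,p}\paren{P_1\cap P_2}\geq \mu_{X,p}\paren{P_1}\,\mu_{X,p}\paren{P_2}\]
for all $P_1,P_2$. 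Taking logarithms, the Bernoulli factor contributes $\abs{P}\log p+\paren{\abs{X^{\paren{i}}}-\abs{P}}\log\paren{1-p}$, which is affine in $\abs{P}$, and $\abs{P_1\cup P_2}+\abs{P_1\cap P_2}=\abs{P_1}+\abs{P_2}$, so this part is modular; since $\log q\geq 0$, everything reduces to showing that the Betti number $\mathbf{b}_{i-1}\paren{\cdot\,;\Q}$ is supermodular,
\[\mathbf{b}_{i-1}\paren{P_1\cup P_2;\Q}+\mathbf{b}_{i-1}\paren{P_1\cap P_2;\Q}\geq \mathbf{b}_{i-1}\paren{P_1;\Q}+\mathbf{b}_{i-1}\paren{P_2;\Q}\,.\]

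For this inequality I would use the reduced Mayer--Vietoris sequence associated to the subcomplexes $P_1,P_2$ of $P_1\cup P_2$, whose intersection is $P_1\cap P_2$. The point is that all four complexes contain the common $\paren{i-1}$-skeleton $X^{\paren{i-1}}$ and coincide in dimensions $\leq i-1$, so the inclusion-induced maps on $H_{i-2}\paren{\cdot\,;\Q}$ are isomorphisms; hence $H_{i-2}\paren{P_1\cap P_2;\Q}\to H_{i-2}\paren{P_1;\Q}\oplus H_{i-2}\paren{P_2;\Q}$ is injective and the connecting homomorphism $H_{i-1}\paren{P_1\cup P_2;\Q}\to H_{i-2}\paren{P_1\cap P_2;\Q}$ vanishes. (When $i=1$ this is automatic, with $\tilde H_{-1}=0$, recovering the classical supermodularity of the number of connected components.) What remains is the exact segment
\[H_i\paren{P_1\cup P_2;\Q}\xrightarrow{\ \partial\ }H_{i-1}\paren{P_1\cap P_2;\Q}\to H_{i-1}\paren{P_1;\Q}\oplus H_{i-1}\paren{P_2;\Q}\to H_{i-1}\paren{P_1\cup P_2;\Q}\to 0\,,\]
and since these are finite-dimensional $\Q$-vector spaces, alternating dimensions along it yields
\[\mathbf{b}_{i-1}\paren{P_1\cup P_2;\Q}+\mathbf{b}_{i-1}\paren{P_1\cap P_2;\Q}=\mathbf{b}_{i-1}\paren{P_1;\Q}+\mathbf{b}_{i-1}\paren{P_2;\Q}+\rank\partial\,,\]
which is at least $\mathbf{b}_{i-1}\paren{P_1;\Q}+\mathbf{b}_{i-1}\paren{P_2;\Q}$ because $\rank\partial\geq 0$. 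This gives supermodularity, hence the FKG lattice condition, hence the theorem.

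The main obstacle I anticipate is the topological bookkeeping in the Mayer--Vietoris step --- justifying that the connecting homomorphism vanishes (which is precisely where the shared $\paren{i-1}$-skeleton is used) and handling reduced versus unreduced homology carefully, especially in the boundary case $i=1$. The remaining ingredients --- the reduction to the lattice condition, strict positivity, modularity of the Bernoulli part, and the passage from the lattice condition to FKG for increasing events --- are entirely standard; indeed, since the theorem is due to Hiraoka and Shirai, one may alternatively simply cite the proof in~\cite{hiraoka2016tutte}.
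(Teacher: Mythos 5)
Your argument is correct. Note, however, that the paper does not prove this statement at all: it is imported wholesale from Hiraoka--Shirai, so the relevant comparison is with the proof in~\cite{hiraoka2016tutte}. There, as here, everything reduces via Holley/FKG to the lattice condition, and since the Bernoulli factor is modular the whole content is the supermodularity of $P\mapsto\mathbf{b}_{i-1}\paren{P;\,\Q}$. The standard route exploits that all percolation subcomplexes share the full $(i-1)$-skeleton, writes $\mathbf{b}_{i-1}\paren{P;\,\Q}=\dim Z_{i-1}\paren{X;\,\Q}-\rank\paren{\partial_i\vert_P}$ (up to the fixed reduced-homology shift), and invokes submodularity of the column-rank function of the fixed boundary matrix, i.e.\ the rank function of the cellular matroid --- which is what ties the model to the Tutte polynomial in~\cite{hiraoka2016tutte}. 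You instead get supermodularity topologically, from the reduced Mayer--Vietoris sequence, using the shared $(i-1)$-skeleton to show the connecting map into $H_{i-2}\paren{P_1\cap P_2;\,\Q}$ vanishes and then counting dimensions, with the surplus being $\rank\partial$ for the connecting map out of $H_i\paren{P_1\cup P_2;\,\Q}$. The two arguments are essentially equivalent (your $\rank\partial$ is exactly the rank defect in the matroid computation), but yours is self-contained and makes visible why adding $i$-cells only affects homology in degrees $i$ and $i-1$, at the cost of the exact-sequence bookkeeping you flag; the matroid-rank route is linear-algebraically elementary and generalizes verbatim to any field of coefficients. Your handling of reduced homology and of the boundary case $i=1$ (where $\tilde H_{-1}=0$ makes the connecting map vanish trivially, recovering supermodularity of the component count) is also fine; just make sure to state that reduced Mayer--Vietoris is applied with $P_1\cap P_2\supseteq X^{(i-1)}$ nonempty.
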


Next, we have the following duality relation.

\begin{Theorem}[\cite{duncan2022topological}]\label{thm:duality}
\begin{equation*}
   \mu_{r,p,q,i}\paren{P^{\bullet}} = \mu_{\overline{r^{\bullet}},p^*,q,d-i}^{\mathbf{w}}\paren{P}\,.
\end{equation*}
\end{Theorem}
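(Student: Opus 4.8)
The plan is to push the classical ``Euler's formula $+$ planar duality'' proof of random-cluster duality through the topology of Section~\ref{sec:subcomplexes}, with Alexander duality (Proposition~\ref{cor:alexander}) replacing the planar-duality input. Throughout, let $P \mapsto P^{\bullet}$ denote the central-point pairing; it is a bijection between $(d-i)$-dimensional percolation subcomplexes $P$ of $\overline{r^{\bullet}}$ and $i$-dimensional percolation subcomplexes $P^{\bullet}$ of $r$, under which a $(d-i)$-plaquette lies in $P$ exactly when its dual $i$-plaquette is omitted from $P^{\bullet}$. Hence $(P^{\bullet})^{\bullet} = P$ and $\abs{P} + \abs{P^{\bullet}} = N$, where $N$ is the common number of $(d-i)$-cells of $\overline{r^{\bullet}}$ and of $i$-cells of the complex $r$. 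Since $\mu_{r,p,q,i}$ and $\mu^{\mathbf{w}}_{\overline{r^{\bullet}},p^{*},q,d-i}$ are probability measures on finite sets identified by this bijection, it suffices to show that $\mu_{r,p,q,i}(P^{\bullet})$ and $\mu^{\mathbf{w}}_{\overline{r^{\bullet}},p^{*},q,d-i}(P)$ are proportional with a constant independent of $P$.

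First I would record a bookkeeping identity for the exponent $\mathbf{b}_{i-1}(P^{\bullet};\Q)$. The complex $P^{\bullet}$ is the fixed complex $r^{(i-1)}$ with $\abs{P^{\bullet}}$ extra $i$-cells attached, and attaching $i$-cells affects homology only in degrees $i-1$ and $i$; thus $\mathbf{b}_j(P^{\bullet};\Q)$ is independent of $P^{\bullet}$ for every $j\le i-2$. Equating the two computations of the reduced Euler characteristic of $P^{\bullet}$ --- from its cell counts and from its Betti numbers --- then gives $\mathbf{b}_{i-1}(P^{\bullet};\Q) = \mathbf{b}_i(P^{\bullet};\Q) - \abs{P^{\bullet}} + C_1$, with $C_1$ depending only on $r$ and $i$. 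Plugging this into the PRCM weight $p^{\abs{P^{\bullet}}}(1-p)^{N - \abs{P^{\bullet}}} q^{\mathbf{b}_{i-1}(P^{\bullet};\Q)}$ and discarding $P^{\bullet}$-independent factors yields
\[
\mu_{r,p,q,i}(P^{\bullet}) \;\propto\; \left(\frac{p}{q(1-p)}\right)^{\abs{P^{\bullet}}} q^{\,\mathbf{b}_i(P^{\bullet};\Q)} .
\]

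The essential step is to re-express $\mathbf{b}_i(P^{\bullet};\Q)$ in terms of $P$. Applying Proposition~\ref{cor:alexander} to the percolation subcomplex $P^{\bullet}$ of $r$, whose dual complex is $P \subset \overline{r^{\bullet}}$, furnishes an isomorphism $H_i(P^{\bullet};\Z) \cong H^{d-i-1}(P \cup \partial\overline{r^{\bullet}};\Z)$, hence $\mathbf{b}_i(P^{\bullet};\Q) = \mathbf{b}_{d-i-1}(P \cup \partial\overline{r^{\bullet}};\Q)$ by the universal coefficient theorem. The point is that $P \cup \partial\overline{r^{\bullet}}$ is exactly the complex whose $(d-i-1)$st Betti number defines the \emph{wired} weight of $P$; so, after substituting $\abs{P^{\bullet}} = N - \abs{P}$ to trade open plaquettes of $P^{\bullet}$ for closed plaquettes of $P$ and absorbing the resulting $P$-independent factor, I would obtain
\[
\mu_{r,p,q,i}(P^{\bullet}) \;\propto\; \left(\frac{q(1-p)}{p}\right)^{\abs{P}} q^{\,\mathbf{b}_{d-i-1}(P\cup\partial\overline{r^{\bullet}};\Q)} ,
\]
whereas by definition
\[
\mu^{\mathbf{w}}_{\overline{r^{\bullet}},p^{*},q,d-i}(P) \;\propto\; \left(\frac{p^{*}}{1-p^{*}}\right)^{\abs{P}} q^{\,\mathbf{b}_{d-i-1}(P\cup\partial\overline{r^{\bullet}};\Q)} .
\]
Since $p^{*}/(1-p^{*}) = q(1-p)/p$ by the definition of $p^{*}$, the two right-hand sides are proportional with the same constant, and equality of the measures follows because each is a probability measure on the same finite set.

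The one substantive ingredient is Proposition~\ref{cor:alexander}; everything else is bookkeeping (the Euler characteristic identity, the universal coefficient theorem, and the count $\abs{P^{\bullet}} = N - \abs{P}$). I expect the main care to go into matching boundary conventions: verifying that the dual of a boundary-free subcomplex of $r$ is a subcomplex of the \emph{full} skeleton $\overline{r^{\bullet}}$, and that adjoining $\partial\overline{r^{\bullet}}$ on the Alexander-dual side corresponds precisely to the wired modification of the PRCM. Both of these are built into the constructions of Section~\ref{sec:subcomplexes} and the statement of Proposition~\ref{cor:alexander}. No new idea is needed for general $i$ beyond the $i = d-1$ case obtained from Theorem~18 of~\cite{duncan2022topological}; only the dimension appearing in the Alexander duality step changes.
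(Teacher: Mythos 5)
Your argument is correct and is essentially the paper's own proof: the paper defers to Theorem 18 of~\cite{duncan2022topological}, whose argument is exactly this Euler-characteristic bookkeeping combined with the Alexander duality input (here Proposition~\ref{cor:alexander}) to trade $\mathbf{b}_{i-1}(P^{\bullet};\Q)$ for $\mathbf{b}_{d-i-1}(P\cup\partial\overline{r^{\bullet}};\Q)$ and then match weights via $p^{*}/(1-p^{*})=q(1-p)/p$. No substantive difference from the paper's route.
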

\begin{proof}
    The proof is nearly identical to that of Theorem 18 in~\cite{duncan2022topological} but uses Proposition~\ref{cor:alexander} instead of Theorem 14 of that paper. %Also, see the proof of the more general Theorem~\ref{theorem:duality} in the appendix.  
\end{proof}

\subsection{Boundary Conditions and Infinite Volume Measures}\label{sec:RCMboundary}

The duality between the $(d-1)$- and $1$-dimensional random-cluster models allows us to take a shortcut to defining more general boundary conditions for random-cluster measures on finite subsets of $\Z^d.$ We do not go into too much detail on this topic, as it has been proven that the resulting infinite volume measures are unique except at at most countably many values of $p$~\cite{grimmett1995stochastic}.

First, we recall boundary conditions for the classical random-cluster model on a graph. A boundary condition on a subgraph induced by a vertex set $S$ is a percolation subcomplex $\xi$ on $\paren{\Z^d \setminus S} \cup \partial S.$  Let $P^{\xi}$ be the union of $P$ and the edges of $\xi.$ The idea is to define a random-cluster measure on $S$ with the additional edges of $P^{\xi}$ added for the purpose of counting connected components. Of course, $P^{\xi}$ will have infinitely many connected components in general, but finitely many of them are connected to $S.$ 

More precisely, there is a corresponding random-cluster measure on $S$ with boundary condition $\xi$ written as $\mu_{S,p,q,1}^{\xi}\paren{P},$ where the term $\mathbf{b}_0\paren{P}$ counting the number of connected components of $P$ in $S$ is replaced by the number of connected components of $P^{\xi}$ that intersect $S.$ The free and wired boundary conditions discussed previously can be thought of as the extremal cases of $\xi$ containing no edges or all possible edges, respectively.
%The extremal cases of $\xi$ containing all closed or all open edges are called free and wired boundary conditions respectively, and we write $\tilde{\mu}_{S,p,\Z_q,1}^{\mathbf{f}}$ and $\tilde{\mu}_{S,p,\Z_q,1}^{\mathbf{w}}$ for the associated measures. 

We define boundary conditions for the $(d-1)$-dimensional PRCM on a box $r\subset \Z^d$ by duality. 
%Take a configuration $\xi$ of the $(d-1)$-plaquettes not contained in the induced subcomplex of $S$ and let $P^{\xi}$ be the union of the open plaquettes of $\xi$ and the $(i-1)$-skeleton of $\Z^d.$ 
Let $\xi$ be a boundary condition and denote by $\xi^{\bullet}$ the dual configuration of edges of the dual lattice corresponding to the plaquettes not included in $\xi.$

\begin{Definition}\label{defn:boundaryconditions}
The measure $\tilde{\mu}_{r,p,\Z_q,1}^{\xi}$ is defined by
\[\mu_{r,p,q,d-1}^{\xi}\paren{P}\coloneqq \mu_{\overline{r^{\bullet}},p^*,q,1}^{\xi^\bullet}\paren{Q}\,.\]
\end{Definition}

This is equivalent to setting 
\[\mu_{r,p,q,d-1}^{\xi}\paren{P}\propto p^{\abs{P}}\paren{1-p}^{\abs{X^{\paren{i}}} - \abs{P}}q^{\rank \phi^*}\]
where $\phi^*:H^{d-1}\paren{P^{\xi};\;\Q}\rightarrow H^{d-1}\paren{P;\; \Q}$ is the map on cohomology induced by the inclusion $P\hookrightarrow P^{\xi}.$ A proof of this fact and the definition of an analogous notion for the PRCM with coefficients in an abelian group will be contained in another paper. Alternatively, the PRCM with boundary conditions can be obtained as the restriction to $r$ of the PRCM on a sufficiently large cube $\Lambda,$ conditioned on the states of the plaquettes in $\Lambda \setminus r.$ 

The free boundary conditions (denoted by $\mathbf{f}$) contain no $(d-1)$-plaquettes of $\Z^d\setminus r$ and the wired boundary conditions (denoted by $\mathbf{w}$) contain all $(d-1)$-plaquettes in $\partial r$ (this has the same effect as taking all complementary plaquettes, but is more convenient). By construction, duality maps the PRCM with free boundary conditions to the classical random-cluster model with wired boundary conditions and vice versa. Also, as a consequence of Theorem~\ref{thm:duality}, the PRCM on a box with free boundary conditions coincides with the PRCM on the finite complex $r$ defined above. 

\begin{Proposition}
Let $\set{\xi_n}$ be a sequence of boundary conditions for the $(d-1)$-dimensional PRCM on the cube $\Lambda_n.$ The weak limit 
\[\lim_{n\rightarrow \infty} \mu_{\Lambda_n,p,q,d-1}^{\xi_n}\]
exists if and only if the dual weak limit
\[\lim_{n\rightarrow \infty} \mu_{\overline{\Lambda_n^{1/2}},p^*,q,1}^{\xi_n^{\bullet}}\]
does and the resulting infinite volume measures are dual.
\end{Proposition}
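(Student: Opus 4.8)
The statement is essentially a tautology given Definition~\ref{defn:boundaryconditions}, which declares $\mu_{\Lambda_n,p,q,d-1}^{\xi_n}\paren{P}=\mu_{\overline{\Lambda_n^{1/2}},p^*,q,1}^{\xi_n^{\bullet}}\paren{Q}$, where $Q$ is the dual configuration of $P$. So the content of the proposition is purely about promoting a pushforward identity of finite-volume measures to an identity of the weak limits. First I would fix notation: let $\Phi_n$ denote the duality bijection sending a $(d-1)$-dimensional percolation subcomplex $P$ of $\Lambda_n$ (with boundary condition $\xi_n$) to its dual $1$-dimensional configuration $Q=P^{\bullet}$ on $\overline{\Lambda_n^{1/2}}$; by Proposition~\ref{cor:alexander} (or rather the construction in Section~\ref{sec:subcomplexes}) this is a measure-theoretic isomorphism, and Definition~\ref{defn:boundaryconditions} says precisely $(\Phi_n)_*\mu_{\Lambda_n,p,q,d-1}^{\xi_n}=\mu_{\overline{\Lambda_n^{1/2}},p^*,q,1}^{\xi_n^{\bullet}}$.

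\textbf{Key steps.} (1) Observe that the duality maps $\Phi_n$ are compatible with restriction: the state of a $(d-1)$-plaquette $\sigma$ of $\Z^d$ is (up to complementation) the state of its dual edge $\sigma^{\bullet}$, and this correspondence does not depend on $n$. Hence for any finite set $S$ of $(d-1)$-plaquettes, the cylinder event $\{P\cap S = P_S\}$ corresponds under $\Phi_n$ to the cylinder event $\{Q\cap S^{\bullet}=(P_S)^{\bullet}\}$ for all sufficiently large $n$, where $S^{\bullet}$ is the finite set of dual edges and complementation is applied plaquette-by-plaquette. (2) Weak convergence of a sequence of measures on $\{0,1\}^{(\Z^d)^{(d-1)}}$ is equivalent to convergence of probabilities of all cylinder events (this is the standard fact used, e.g., in the proof of Proposition~\ref{prop:comparisoninfinite}, since the space is a countable product of finite spaces). (3) Combining (1) and (2): for a fixed cylinder event $A$ on the plaquette side and its dual cylinder event $A^{\bullet}$ on the edge side, $\mu_{\Lambda_n,p,q,d-1}^{\xi_n}(A)=\mu_{\overline{\Lambda_n^{1/2}},p^*,q,1}^{\xi_n^{\bullet}}(A^{\bullet})$ for all large $n$. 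Therefore the left-hand sequence converges (i.e.\ the limit exists as a weak limit) if and only if the right-hand sequence converges, and when they do the limiting probabilities satisfy $\mu(A)=\mu^{\bullet}(A^{\bullet})$ for every cylinder event. (4) Conclude that the two infinite-volume measures are related by the (global) duality bijection $\Phi_{\infty}:\{0,1\}^{(\Z^d)^{(d-1)}}\to\{0,1\}^{((\Z^d)^{\bullet})^{(1)}}$ obtained as the consistent limit of the $\Phi_n$, i.e.\ they are dual in the claimed sense.

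\textbf{Main obstacle.} There is no serious analytic obstacle; the only point requiring a little care is step (1), namely checking that the finite-volume dualities glue into a single global bijection $\Phi_{\infty}$ that is simultaneously compatible with all the $\Phi_n$ on overlapping cylinder $\sigma$-algebras, and that the complementation bookkeeping (a plaquette is present iff its dual edge is \emph{absent}, plus the convention about plaquettes in $\partial\Lambda_n$ versus the wired/free boundary of the dual box) is handled uniformly in $n$. This is already implicit in Section~\ref{sec:subcomplexes} and in the proof of Theorem~\ref{thm:duality}, so I would simply invoke that construction and remark that it is translation-compatible and stabilizes on any fixed finite collection of plaquettes as $n\to\infty$. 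Once that is in place, the proof is a two-line argument: cylinder events pair up under duality, weak convergence is convergence on cylinder events, hence one limit exists iff the other does and they are dual.
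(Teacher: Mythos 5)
Your proposal is correct and follows the same route as the paper, which simply observes that the statement is immediate from Definition~\ref{defn:boundaryconditions} since the finite-volume measures are defined as pushforwards of one another under the duality bijection. Your elaboration via cylinder events and convergence on a countable product of finite spaces is just a careful unpacking of that same observation.
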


\begin{proof}
    This is immediate from Definition~\ref{defn:boundaryconditions}.
\end{proof}

The following two propositions are corollaries of the analogous, well-known results for the dual classical random-cluster model.

\begin{Proposition}\label{prop:FKGboundary}
Let $p\in\paren{0,1}, q\geq 1, d\in \N.$ Also, fix a box $r$ in $\Z^d$ with boundary conditions $\xi.$  Then $\mu_r^{\xi}=\mu^{\xi}_{r,p,q,d-1}$ satisfies the FKG inequality. That is, if $A$ and $B$ are increasing events then 
\begin{equation*}
    \mu_{r,p}^{\xi}\paren{A\cap B} \geq \mu_{r,p}^{\xi}\paren{A}\mu_{r,p}^{\xi}\paren{B}.
\end{equation*}
\end{Proposition}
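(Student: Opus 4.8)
The plan is to deduce the FKG inequality for $\mu_{r,p,q,d-1}^{\xi}$ directly from the corresponding property of the dual classical random-cluster measure $\mu_{\overline{r^{\bullet}},p^*,q,1}^{\xi^{\bullet}}$ via Definition~\ref{defn:boundaryconditions}. The well-known fact that the classical random-cluster model with any boundary conditions satisfies the FKG inequality (when $q \geq 1$) is exactly the statement we want to transport; the only subtlety is that duality reverses the partial order, so we must track how increasing events behave under the map $P \mapsto Q = P^{\bullet}$.

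First I would recall the bijection between percolation subcomplexes: duality sends a $(d-1)$-dimensional percolation subcomplex $P$ of $r$ to the $1$-dimensional percolation subcomplex $Q$ of $\overline{r^{\bullet}}$ consisting of the duals of the omitted $(d-1)$-plaquettes of $P$. Thus an $i$-plaquette $\sigma$ lies in $P$ if and only if its dual edge $\sigma^{\bullet}$ does \emph{not} lie in $Q$. Consequently, the map $P \mapsto Q$ is order-reversing: if $P_1 \subseteq P_2$ then $Q_2 \subseteq Q_1$. In particular, if $A$ is an increasing event for $P$-configurations, then its image $A^{\bullet} \coloneqq \{Q : P \in A\}$ is a \emph{decreasing} event for $Q$-configurations, and similarly increasing events map to decreasing ones and vice versa.

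Next I would argue as follows. The classical random-cluster model $\mu^{\xi^{\bullet}} \coloneqq \mu_{\overline{r^{\bullet}},p^*,q,1}^{\xi^{\bullet}}$ satisfies FKG since $q \geq 1$ (this is the standard result, applicable to arbitrary boundary conditions, since imposing a boundary condition amounts to conditioning on the states of edges outside the region, which preserves the FKG lattice condition). The FKG inequality for $\mu^{\xi^{\bullet}}$ holds for decreasing events as well: if $C, D$ are both decreasing, then their complements are increasing, and a short inclusion--exclusion manipulation of $\mu^{\xi^{\bullet}}(C \cap D) \geq \mu^{\xi^{\bullet}}(C)\mu^{\xi^{\bullet}}(D)$ follows from the version for increasing events. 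Now, given increasing events $A, B$ for $\mu_{r,p}^{\xi}$, the events $A^{\bullet}, B^{\bullet}$ are decreasing for $\mu^{\xi^{\bullet}}$, and Definition~\ref{defn:boundaryconditions} gives
\[
\mu_{r,p}^{\xi}(A \cap B) = \mu^{\xi^{\bullet}}(A^{\bullet} \cap B^{\bullet}) \geq \mu^{\xi^{\bullet}}(A^{\bullet})\,\mu^{\xi^{\bullet}}(B^{\bullet}) = \mu_{r,p}^{\xi}(A)\,\mu_{r,p}^{\xi}(B)\,,
\]
which is the desired inequality.

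The main obstacle, such as it is, is bookkeeping rather than conceptual: one must make sure that (i) the FKG inequality for the classical RCM is genuinely available with the particular boundary condition $\xi^{\bullet}$ (which follows because boundary conditions are conditionings, and FKG is stable under conditioning on increasing or decreasing events, or more robustly because the conditioned measure still satisfies the lattice/strong-FKG condition for $q \geq 1$), and (ii) the duality map $A \mapsto A^{\bullet}$ really does send the measurable increasing events of interest to measurable decreasing events — this is immediate from the order-reversing plaquette-edge correspondence described above. No delicate estimates are needed; the content is entirely in correctly invoking Theorem~\ref{thm:duality}/Definition~\ref{defn:boundaryconditions} together with the classical FKG inequality and the order-reversal under duality.
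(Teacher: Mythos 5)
Your proposal is correct and matches the paper's approach: the paper states this proposition as an immediate corollary of the well-known FKG inequality for the dual classical random-cluster model, transported through Definition~\ref{defn:boundaryconditions}, which is exactly your argument (with the order-reversal/decreasing-events bookkeeping made explicit, and your inclusion--exclusion step for decreasing events is the standard and correct way to handle it).
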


\begin{Proposition}\label{prop:extremal}
 Let $r$ be a box in $\Z^d$ and let $\xi$ be any boundary conditions. Then    
 \[\mu_{r,p,q,d-1}^{\mathbf{f}} \leq_{\mathrm{st}}\mu_{r,p,q,d-1}^{\xi}  \leq_{\mathrm{st}}\mu_{r,p,q,d-1}^{\mathbf{w}}\,.\]

In addition, if $\mu_{\Z_d,p,q,d-1}$ is any infinite volume plaquette random-cluster measure obtained as a weak limit of finite volume measures

 \[\mu_{r,p,q,d-1}^{\mathbf{f}} \leq_{\mathrm{st}} \mu_{\Z_d,p,q,d-1}^{\mathbf{f}}   \leq_{\mathrm{st}}\mu_{\Z_d,p,q,d-1}  \leq_{\mathrm{st}} \mu_{\Z_d,p,q,d-1}^{\mathbf{w}}   \leq_{\mathrm{st}} \mu_{r,p,q,d-1}^{\mathbf{w}}\,.\]
\end{Proposition}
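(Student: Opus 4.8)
The plan is to transport everything to the dual classical random-cluster model via Definition~\ref{defn:boundaryconditions}, where the analogous statements are standard, and then pull the stochastic inequalities back through the duality map. The key point is that the duality $P \mapsto P^{\bullet}$ reverses the partial order on configurations: a $(d-1)$-plaquette is present in $P$ precisely when its dual edge is absent in $Q = P^{\bullet}$. Hence $\mu_{r,p,q,d-1}^{\xi} \leqst \mu_{r,p,q,d-1}^{\xi'}$ holds if and only if $\mu_{\overline{r^{\bullet}},p^*,q,1}^{(\xi')^{\bullet}} \leqst \mu_{\overline{r^{\bullet}},p^*,q,1}^{\xi^{\bullet}}$ in the classical model, since an increasing event for $P$ corresponds to a decreasing event for $Q$, which in turn is an increasing event in the reversed (``complement'') order; one should state this order-reversal lemma cleanly once and invoke it each time.

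First I would record that the free and wired boundary conditions are dual to wired and free respectively (already observed in the text just before the proposition), so that under duality the chain $\mathbf{f} \leqst \xi \leqst \mathbf{w}$ for the PRCM becomes exactly the classical comparison $\mu_{\overline{r^{\bullet}},p^*,q,1}^{\mathbf{f}} \leqst \mu_{\overline{r^{\bullet}},p^*,q,1}^{\xi^{\bullet}} \leqst \mu_{\overline{r^{\bullet}},p^*,q,1}^{\mathbf{w}}$, which is Theorem~(4.10) of Grimmett's monograph (comparison of boundary conditions for the random-cluster model with $q \geq 1$). This gives the first displayed chain immediately. For the second chain, note that $\mu_{\Z^d,p,q,d-1}$ is by hypothesis a weak limit of finite-volume measures $\mu_{\Lambda_n,p,q,d-1}^{\xi_n}$; by Definition~\ref{defn:boundaryconditions} and the preceding Proposition the dual measures $\mu_{\overline{\Lambda_n^{1/2}},p^*,q,1}^{\xi_n^{\bullet}}$ converge to a dual infinite-volume classical measure $\mu_{\Z^d,p^*,q,1}$. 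The standard theory for the classical model (again Grimmett, comparison inequalities passing to the limit) gives $\mu_{\Lambda,p^*,q,1}^{\mathbf{f}} \leqst \mu_{\Z^d,p^*,q,1}^{\mathbf{f}} \leqst \mu_{\Z^d,p^*,q,1} \leqst \mu_{\Z^d,p^*,q,1}^{\mathbf{w}} \leqst \mu_{\Lambda,p^*,q,1}^{\mathbf{w}}$ for any finite box $\Lambda$ (the outer inequalities because restricting from $\Z^d$ to a box with free resp. wired boundary is a decrease resp. increase, and the inner ones because $\mathbf{f}$ and $\mathbf{w}$ are the extremal limit measures). Applying the order-reversal correspondence and using that $(\overline{\Lambda^{1/2}})$ duality interchanges $\mathbf{f} \leftrightarrow \mathbf{w}$ and reverses each $\leqst$ converts this five-term chain into exactly the asserted chain for the PRCM, with the box $r$ playing the role of $\Lambda$.

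One bookkeeping subtlety to handle carefully: stochastic domination must be checked with respect to a common configuration space, so when comparing a measure on a small box $r$ with one on $\Z^d$ one implicitly extends the box measure by pairing it with the boundary condition configuration (free = empty, wired = the plaquettes of $\partial r$), and verifies that the natural coupling arising from the dual classical comparison respects this extension. I expect this step — making the coupling/extension precise across the box-to-infinite-volume interface so that all five measures genuinely live on the same space and the reversed partial order is applied consistently — to be the main (though routine) obstacle; everything else is a direct citation of the classical random-cluster comparison inequalities composed with the order-reversing duality bijection. Since we are explicitly permitted to defer the general-boundary-condition details to a companion paper and the classical results are standard, I would keep the proof short: state the order-reversal lemma, cite the classical comparisons, and note that the result follows by duality.
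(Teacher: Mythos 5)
Your proposal is correct and follows essentially the same route as the paper, which states this proposition without a detailed proof, remarking only that it is a corollary (via the order-reversing duality of Definition~\ref{defn:boundaryconditions}, which swaps free and wired) of the well-known comparison-of-boundary-conditions and monotonicity-in-volume results for the classical one-dimensional random-cluster model. Your explicit treatment of the order reversal, the passage of duality to the weak limit, and the restriction/extension bookkeeping is exactly the intended argument, so no further comparison is needed.
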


Compare this statement with Corollary~\ref{cor:vgammacontain}.

\section{Duality and $V_{\gamma}$}
\label{sec:dualityvgamma}

We can use Alexander duality to characterize the events $V_{\gamma}$ in terms of the dual RCM. We begin by describing this relationship for homology with integer coefficients. In this case, $\brac{\gamma}=0$ in $H_i\paren{P;\;\Z}$  if and only if $\mathcal{I}\paren{\gamma}=0$ in $H^{d-i}\paren{Q;\;\Z}.$  When $i=d-1$ we obtain a more precise statements using linking numbers. 

Fix $k_1,k_2$ so that $d=k_1+k_2+1.$ Let $\gamma_1 \in Z_{k_1}\paren{S^d;\;\Z}$ and let $\gamma_2$ be an oriented embedding of $S^{k_2}$ into $S^d\setminus \gamma_1.$ Define the \emph{linking number} $l\paren{\gamma_1,\gamma_2}$ to be $k$ if $\gamma_1$ is homologous to $k$ times the generator of $H_{k_1}\paren{S^d \setminus \gamma_2;\;\Z}\cong\Z.$ This is equivalent to setting \[l\paren{\gamma_1,\gamma_2}=\mathcal{I}\paren{\brac{\gamma_2}}\paren{\brac{\gamma_1}}\]
where 
\[\mathcal{I}:H_{k_2}\paren{\gamma_2}\rightarrow H^{k_1}\paren{S^d\setminus \gamma_2}\]
is the Alexander duality isomorphism. This is true because both notions define isomorphisms from $H_{k_1}\paren{S^d\setminus \gamma_2;\;\Z}$ to $\Z,$ so they send generators of the former group to $\pm 1$ (the ambiguity in sign is resolved by choosing the generator of $H_{k_1}\paren{S^d \setminus \gamma_2;\;\Z}$ appropriately). For more on this and other definitions of the linking number, see Chapter 5 of~\cite{rolfsen2003knots}. 

We require a standard property of linking numbers, namely that if $\gamma_1$ is an oriented embedding of $S^{k_1}$ into $S^d$ then the linking number is either symmetric or anti-symmetric:
\[l\paren{\gamma_1,\gamma_2}=\paren{-1}^{k_1 k_2 +1}l\paren{\gamma_2,\gamma_1}\,.\]
This has the following corollary.

\begin{Corollary}\label{cor:link3}
    Let $\gamma_1$ and $\gamma_2$ be disjoint, oriented embeddings of $S^{k_1}$ and $S^{k_2}$ into $S^d,$ respectively. If either $\gamma_1$ is contractible in the complement of $\gamma_2$ or $\gamma_2$ is contractible in the complement of $\gamma_1$ then $l\paren{\gamma_1,\gamma_2}=0.$ 
\end{Corollary}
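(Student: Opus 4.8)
The plan is to reduce the statement to the two displayed facts about linking numbers that were just recalled: first, that the linking number can be computed via the Alexander duality isomorphism as $l(\gamma_1,\gamma_2)=\mathcal I([\gamma_2])([\gamma_1])$, and second, the (anti)symmetry relation $l(\gamma_1,\gamma_2)=(-1)^{k_1k_2+1}l(\gamma_2,\gamma_1)$. The key observation is that each of these two facts handles exactly one of the two hypotheses, so the proof is essentially two short parallel arguments joined by the symmetry relation.

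First I would treat the case where $\gamma_2$ is contractible in $S^d\setminus\gamma_1$. Using $l(\gamma_1,\gamma_2)=\mathcal I([\gamma_2])([\gamma_1])$ where $\mathcal I\colon H_{k_2}(\gamma_2)\to H^{k_1}(S^d\setminus\gamma_2)$, I want to instead view things from the perspective of $\gamma_1$: the equivalent description is that $l(\gamma_1,\gamma_2)$ equals the image of $[\gamma_1]$ under the map $H_{k_1}(S^d\setminus\gamma_2)\to\Z$ determined by the chosen generator, i.e. $[\gamma_1]$ counted as a multiple of the generator of $H_{k_1}(S^d\setminus\gamma_2;\Z)\cong\Z$. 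The cleanest route, though, is to use the symmetric statement directly: $l(\gamma_1,\gamma_2)$ is the coefficient expressing $[\gamma_2]$ as a multiple of the generator of $H_{k_2}(S^d\setminus\gamma_1;\Z)$ — wait, that coefficient is $l(\gamma_2,\gamma_1)$. So: if $\gamma_2$ is contractible in $S^d\setminus\gamma_1$, then $[\gamma_2]=0$ in $H_{k_2}(S^d\setminus\gamma_1;\Z)$ (a contractible embedded sphere bounds a singular disk, hence is null-homologous), so $l(\gamma_2,\gamma_1)=0$; by the (anti)symmetry relation, $l(\gamma_1,\gamma_2)=(-1)^{k_1k_2+1}l(\gamma_2,\gamma_1)=0$.

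Second, if instead $\gamma_1$ is contractible in $S^d\setminus\gamma_2$, the same reasoning applies with the roles of the two spheres swapped: $[\gamma_1]=0$ in $H_{k_1}(S^d\setminus\gamma_2;\Z)$, and since $l(\gamma_1,\gamma_2)$ is by the recalled definition the coefficient of $[\gamma_1]$ as a multiple of the generator of $H_{k_1}(S^d\setminus\gamma_2;\Z)\cong\Z$ (equivalently $\mathcal I([\gamma_2])$ evaluated on the zero class), we get $l(\gamma_1,\gamma_2)=0$ immediately, with no need to invoke symmetry.

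I do not anticipate a serious obstacle here; the only point requiring a word of care is the passage from "contractible in the complement" to "null-homologous in the complement" — a contraction is a map of a disk $D^{k_j+1}$ into the complement restricting to $\gamma_j$ on the boundary, which exhibits the fundamental class of the embedded sphere as a boundary in singular homology, hence zero. One should also note that the complement $S^d\setminus\gamma_j$ of an embedded $k_j$-sphere has $H_{k_i}(\,\cdot\,;\Z)\cong\Z$ by Alexander duality (with $k_1+k_2+1=d$), so the phrase "the generator" makes sense; this is exactly the isomorphism already invoked in the paragraph preceding the corollary, so nothing new is needed.
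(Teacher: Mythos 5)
Your proposal is correct and matches the intended argument: the paper states this corollary as an immediate consequence of the definition of $l\paren{\gamma_1,\gamma_2}$ via the class of $\gamma_1$ in $H_{k_1}\paren{S^d\setminus\gamma_2;\;\Z}$ together with the (anti)symmetry relation, which is exactly the two-case argument you give (direct from the definition when $\gamma_1$ is contractible in the complement of $\gamma_2$, and via $l\paren{\gamma_2,\gamma_1}=0$ plus symmetry in the other case). Your remark that contractibility of the embedded sphere implies it is null-homologous in the complement is the right small point to flag, and your justification is fine.
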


Next, we prove that --- in codimension one --- the homology class of a $(d-2)$-cycle is determined by linking numbers. Recall that a basis for a free $\Z$-module is a linearly independent generating set.
\begin{Proposition}\label{prop:link1}
Let $P$ be a $(d-1)$-dimensional percolation subcomplex of a box $\overline{r} \subset \Z^d$. There are simple cycles $\alpha_1,\ldots,\alpha_n$ of  $Q'\coloneqq =Q\cup \partial r^{\bullet}$ so that the homomorphism $L:H_{d-1}\paren{P;\;\Z}\rightarrow \Z^n$ defined by
\[L\paren{\brac{\gamma}}=\paren{l\paren{\gamma,\alpha_1},\ldots,l\paren{\gamma,\alpha_n}}\]
is an isomorphism.
\end{Proposition}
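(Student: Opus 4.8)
The plan is to exploit the Alexander duality isomorphism $\mathcal{I}:H_{d-1}(P;\Z)\to H^{0}(Q';\Z)$ from Proposition~\ref{cor:alexander} (with $i=d-1$, so $d-i-1=0$), together with the identification of the linking number $l(\gamma,\alpha)$ with the evaluation of $\mathcal{I}([\gamma])$ on the class of a dual loop $\alpha$. First I would observe that $H^{0}(Q';\Z)$ — reduced cohomology in degree zero — is a finitely generated free $\Z$-module, since $Q'=Q\cup\partial r^{\bullet}$ is a finite complex; write $n$ for its rank. Dually, $\widetilde H_{0}(Q';\Z)\cong\Z^n$, and a basis is obtained by picking one vertex $v_j$ in each of the $n+1$ connected components of $Q'$ and taking the classes $[v_j]-[v_0]$, $j=1,\ldots,n$. (Here I should note that $Q'$ is connected as a $1$-complex precisely when $H^0$ vanishes, and in general the components correspond to a basis of $\widetilde H_0$.) Since $Q'$ is a subcomplex of the dual lattice, I can upgrade each generator $[v_j]-[v_0]$ to an actual embedded simple closed curve $\alpha_j$ in $S^d\setminus P$: connect $v_j$ to $v_0$ by an arc through the complementary region $\R^d\setminus P$ (which is path-connected after one-point compactification, or one can work directly in a large cube) and close it up; homologically $[\alpha_j]$ pushes forward to $[v_j]-[v_0]$ under the inclusion of the $0$-skeleton, using that $\widetilde H_0$ only sees the component structure. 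The curves can be taken disjoint by general position since $d\ge 3$ when $i=d-1<d-1$ is false — actually $i=d-1$ here, so one must be slightly careful, but the $\alpha_j$ live in the dual complex $Q$ which is $(d-(d-1)-1)=0$-dimensional plus added dual plaquettes, i.e. curves, and disjointness of finitely many curves in $S^d$, $d\ge 2$, is arranged by perturbation.

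Next I would assemble the map $L$. For $[\gamma]\in H_{d-1}(P;\Z)$, the identity $l(\gamma,\alpha_j)=\mathcal{I}([\gamma])([\alpha_j])$ — which follows from the discussion preceding the proposition relating the two definitions of linking number, applied after the standard reduction from the lattice to $S^d$ via Alexander duality (Proposition~\ref{cor:alexander}) — shows that $L([\gamma])=\bigl(\mathcal{I}([\gamma])([\alpha_1]),\ldots,\mathcal{I}([\gamma])([\alpha_n])\bigr)$. Since $\{[\alpha_j]\}$ was chosen so that its image under the natural map to $\widetilde H_0(Q';\Z)$ is a basis, and $\mathcal{I}([\gamma])\in H^0(Q';\Z)=\mathrm{Hom}(\widetilde H_0(Q';\Z),\Z)$ is evaluated on exactly these classes, the composite $H_{d-1}(P;\Z)\xrightarrow{\mathcal{I}}H^0(Q';\Z)=\mathrm{Hom}(\Z^n,\Z)\xrightarrow{\mathrm{ev}}\Z^n$ is the composition of the isomorphism $\mathcal{I}$ with the evaluation-against-a-basis isomorphism. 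Hence $L$ is an isomorphism. One small point to check is that evaluation $H^0(Q';\Z)\to\mathrm{Hom}(\widetilde H_0(Q';\Z),\Z)$ is itself an isomorphism: this is the universal coefficients theorem for cohomology, and the $\mathrm{Ext}$ term vanishes because $\widetilde H_{-1}=0$, so in fact it is canonically an iso with no torsion obstruction.

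The main obstacle I anticipate is the bookkeeping in passing from the cubical/lattice picture to the sphere picture and back: Proposition~\ref{cor:alexander} gives the duality isomorphism $\mathcal{I}:H_{d-1}(P;\Z)\to H^{0}(Q\cup\partial r^{\bullet};\Z)$ for the correct boxes, but I must make sure the simple cycles $\alpha_j$ genuinely lie in $Q'=Q\cup\partial r^{\bullet}$ (not merely in some abstract complement), that they are simple and pairwise disjoint as honest cellular curves, and that the evaluation of $\mathcal{I}([\gamma])$ on $[\alpha_j]$ matches $l(\gamma,\alpha_j)$ with consistent orientation conventions — the sign ambiguity flagged in the paragraph before Corollary~\ref{cor:link3}. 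The cleanest route is probably to first fix a basis of $H^0(Q';\Z)$ abstractly, realize each dual basis class in $\widetilde H_0(Q';\Z)$ by a single vertex difference, then connect those vertices by disjoint simple paths within the $1$-skeleton of the dual complex (possible after choosing a spanning structure on $Q'$), closing each path into a loop; the only real work is verifying these loops are simple and disjoint, which is a routine general-position/spanning-tree argument, and checking the linking-number identity, which is exactly the content of the already-quoted relation $l(\gamma_1,\gamma_2)=\mathcal{I}([\gamma_2])([\gamma_1])$ specialized to the case at hand.
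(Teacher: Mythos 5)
Your proposal dualizes in the wrong degree, and the construction it rests on does not cohere. The map $L$ pairs homology classes of $P$ against simple loops of $Q'$ via linking numbers, and in $S^d$ the linking number $l\paren{\gamma,\alpha}$ of a cycle $\gamma$ with an embedded circle $\alpha$ is only defined when $\dim\gamma=d-2$ (the paper's definition requires $k_1+k_2+1=d$). So the proposition is really a statement about $H_{d-2}\paren{P;\;\Z}$ --- the ``$H_{d-1}$'' in the statement is a typo, as the sentence preceding it and Corollary~\ref{cor:link2} and Proposition~\ref{prop:linkq} make clear --- and the relevant Alexander dual is $H^{1}\paren{Q';\;\Z},$ not $H^{0}\paren{Q';\;\Z}.$ Starting from $\mathcal{I}:H_{d-1}\paren{P;\;\Z}\to H^{0}\paren{Q';\;\Z}$ therefore cannot produce the stated map: a class in $H^{0}\paren{Q';\;\Z}$ pairs with differences of vertices (reduced $0$-cycles), i.e.\ with separation data, and has no evaluation against loops, while $l\paren{\gamma,\alpha_j}$ is not even defined when $\brac{\gamma}\in H_{d-1}\paren{P;\;\Z}$ and $\alpha_j$ is a circle. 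The identity $l\paren{\gamma,\alpha_j}=\mathcal{I}\paren{\brac{\gamma}}\paren{\brac{\alpha_j}}$ that your argument hinges on is dimensionally meaningless in your setup.

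The step where you ``upgrade'' the generators $\brac{v_j}-\brac{v_0}$ of reduced $H_0\paren{Q';\;\Z}$ to simple closed curves also fails concretely: a closed loop has trivial class in reduced $H_0,$ so it cannot represent $\brac{v_j}-\brac{v_0};$ moreover, since $\R^d\setminus P$ deformation retracts onto $Q'$ (Lemma~\ref{prop:PtoQretract}), vertices lying in distinct components of $Q'$ lie in distinct components of the complement, so no connecting arc through $\R^d\setminus P$ exists at all --- adding the point at infinity only enlarges the unbounded component and does not make $S^d\setminus P$ path-connected. The paper's proof instead works in degree $d-2$: it produces a basis of simple cycles $\alpha_1,\ldots,\alpha_n$ of the graph $Q\cup T'$ (spanning-tree argument, with $T'$ a spanning tree of the one-skeleton of $\partial r^{\bullet}$), observes that the dual classes $\brac{\alpha_1^*},\ldots,\brac{\alpha_n^*}$ form a basis of $H^{1}\paren{Q';\;\Z},$ and then uses naturality of Alexander duality for the inclusions $P\hookrightarrow S^d\setminus\alpha_j$ and $\alpha_j\hookrightarrow Q'$ to identify $L$ with the composition of the isomorphism $\mathcal{I}:H_{d-2}\paren{P;\;\Z}\to H^{1}\paren{Q';\;\Z}$ with evaluation against that basis. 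Your outline could be repaired along those lines (replace $H^0$ by $H^1,$ vertex differences by a cycle basis of the dual graph), but as written the approach does not prove the proposition.
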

\begin{proof}

First, we find a basis for $H^1\paren{Q';\;\Z}.$ Let $T'$ be a spanning tree for the one-skeleton of $\partial r^{\bullet}$ and let $Q''=Q\cup T'.$   The inclusion $i:Q''\hookrightarrow Q'$ induces an isomorphism $i_*:H_1\paren{Q'';\;\Z}\rightarrow H_1\paren{Q';\;\Z}$ (adding $T'$ to $Q$ has the same effect on homology as merging all vertices in $\partial r^{\bullet}$).  As $Q''$ is a simple graph, $Z_1\paren{Q'';\Z}=H_1\paren{Q'';\;\Z}$ has a basis of simple cycles. We may construct such a basis by finding a minimum spanning tree $T$ for $Q''$  and choosing a simple cycle $\alpha_j$ for each edge of $Q''\setminus T.$  $H_0\paren{Q';\;\Z}$ is a free $\Z$-module so the duals $\brac{\alpha_1^*},\ldots,\brac{\alpha_n^*}$ form a basis for $H^1\paren{Q';\;\Z}$ by Corollary~\ref{cor:UCTC}. 

For each $j\in \set{1,\ldots,n}$ we define three maps on (co)homology. Let $\phi_j:H_{d-2}\paren{P;\;\Z}\rightarrow H_{d-2}\paren{S^d\setminus \alpha_j}$ and  $\psi_j:H^1\paren{Q';\;\Z}\rightarrow H^1\paren{\alpha_j}$ be the maps induced by the inclusions $P\hookrightarrow S^d\setminus \alpha_j$ and $\alpha_j\hookrightarrow Q',$ respectively. Also, denote by $\mathcal{I}_j:H_{d-2}\paren{S^d\setminus \alpha_j}\rightarrow H^{1}\paren{\alpha_j;\;\Z}$ the Alexander duality isomorphism. Alexander duality is functorial, so the following diagram commutes in the sense that  $\mathcal{I}_j \circ \phi_j=\psi_j\circ \mathcal{I}.$ 

\[\begin{tikzcd}
H_{d-2}\paren{P;\;\Z} \arrow{r}{\phi_j} \arrow[swap]{d}{\mathcal{I}} & H_{d-2}\paren{S^d\setminus \alpha_j;\;\Z} \arrow{d}{\mathcal{I}_j} \\
H^1\paren{Q';\;\Z} \arrow{r}{\psi_j} & H^1\paren{\alpha_j;\;\Z}
\end{tikzcd}
\]

Our next step is to combine the horizontal maps in the previous diagram from different values of $i.$ Before doing so, note that if we choose a generator of $H_{d-2}\paren{S^d\setminus \alpha_j;\;\Z}$ (say $\mathcal{I}^{-1}\paren{\brac{\alpha_j^*}}$), we obtain an isomorphism $L_j: H_{d-2}\paren{S^d\setminus \alpha_j;\;\Z}\rightarrow \Z$ by sending $\brac{\gamma}$ to the linking number $l\paren{\gamma,\alpha_j}.$ Consider the commutative diagram.

\[\begin{tikzcd}
H_{d-2}\paren{P;\;\Z} \arrow{r}{\oplus_j \phi_j} \arrow[swap]{d}{\mathcal{I}} & \oplus_j H_{d-2}\paren{S^d\setminus \alpha_j;\;\Z} \arrow{r}{\oplus_j L_j} \arrow{d}{\oplus_j\mathcal{I}_j} & \Z^n \\
H^1\paren{Q';\;\Z} \arrow{r}{\oplus_j \psi_j} & \oplus_j H^1\paren{\alpha_j;\;\Z}
\end{tikzcd}
\]

The map $\oplus_j \psi_j$ is an isomorphism because the cohomology classes $\brac{\alpha_j^*}$ are a basis for $H^1\paren{Q';\;\Z}.$ The downward maps are also isomorphisms, so $\oplus_j\phi_j$ is as well, by commutativity of the diagram. Finally, we may conclude that that $L=\paren{\oplus_j L_j}\circ \paren{\phi_j}$ is an isomorphism, because each $L_j$ is an isomorphism.

\end{proof}

We have the following immediate corollary. 
\begin{Corollary}\label{cor:link2}
Assume the same hypotheses as in the previous proposition, and $\gamma_1$ be an oriented embedding of $S^{d-2}$ in $P.$ Then $\brac{\gamma_1}\neq 0$ in $H_{d-2}\paren{P;\;\Z}$ if and only if there exists a simple, oriented loop $\gamma_2$ in $Q'$ so that $l\paren{\gamma_1,\gamma_2}\neq 0.$ 
\end{Corollary}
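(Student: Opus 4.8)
The plan is to read off both implications from Proposition~\ref{prop:link1} and the homological definition of the linking number; there is no serious additional work to do. Let $\alpha_1,\ldots,\alpha_n$ be the simple cycles of $Q'$ produced by Proposition~\ref{prop:link1}, so that $L\paren{\brac{\gamma}}=\paren{l\paren{\gamma,\alpha_1},\ldots,l\paren{\gamma,\alpha_n}}$ defines an isomorphism $H_{d-2}\paren{P;\;\Z}\to\Z^n$, and in particular an injection.

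For the forward direction, suppose $\brac{\gamma_1}\neq 0$ in $H_{d-2}\paren{P;\;\Z}$. Injectivity of $L$ gives $L\paren{\brac{\gamma_1}}\neq 0$, so $l\paren{\gamma_1,\alpha_j}\neq 0$ for some $j$. Taking $\gamma_2=\alpha_j$, which is a simple oriented loop in $Q'$ by construction, proves this implication.

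For the converse, suppose $\gamma_2$ is a simple oriented loop in $Q'$ with $l\paren{\gamma_1,\gamma_2}\neq 0$. Fix the standard embedding $\R^d\hookrightarrow S^d$ as the one-point compactification. Since the dual complex $Q'$ is geometrically disjoint from $P$, we have $\gamma_1\subseteq P\subseteq S^d\setminus\gamma_2$, so $\gamma_1$ and $\gamma_2$ are disjoint and $l\paren{\gamma_1,\gamma_2}$ is defined. The inclusion $P\hookrightarrow S^d\setminus\gamma_2$ induces a homomorphism $H_{d-2}\paren{P;\;\Z}\to H_{d-2}\paren{S^d\setminus\gamma_2;\;\Z}$ sending $\brac{\gamma_1}$ to the class of $\gamma_1$ in the target; by definition $l\paren{\gamma_1,\gamma_2}$ is exactly the integer expressing that class as a multiple of the generator of $H_{d-2}\paren{S^d\setminus\gamma_2;\;\Z}\cong\Z$. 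If $\brac{\gamma_1}$ were zero in $H_{d-2}\paren{P;\;\Z}$ it would map to zero, forcing $l\paren{\gamma_1,\gamma_2}=0$, contradicting our assumption. Hence $\brac{\gamma_1}\neq 0$ in $H_{d-2}\paren{P;\;\Z}$.

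The only point requiring care is the bookkeeping that makes $l\paren{\gamma_1,\gamma_2}$ well defined and compatible with inclusion-induced maps, namely the disjointness of $P$ and $Q'$ and the passage from $\R^d$ to $S^d$; I do not expect a genuine obstacle, since all of the substantive content is already contained in Proposition~\ref{prop:link1}.
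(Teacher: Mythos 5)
Your proof is correct and is essentially the deduction the paper intends: it states the corollary as immediate from Proposition~\ref{prop:link1}, and you supply exactly that — injectivity of $L$ gives the forward direction (take $\gamma_2=\alpha_j$), while the converse is the observation that $l\paren{\cdot,\gamma_2}$ factors through $H_{d-2}\paren{P;\;\Z}$ via the inclusion $P\hookrightarrow S^d\setminus\gamma_2$, so a null-homologous $\gamma_1$ links trivially with every loop of $Q'.$
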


Next, we find an analogous criterion for homology with coefficients in $\Z_q.$ 

\begin{Proposition}\label{prop:linkq}
Assume the same hypotheses as above. Then $\brac{\gamma_1}\neq 0$ in $H_{d-2}\paren{P;\;\Z_q}$ if and only if there exists a simple, oriented loop $\gamma_2$ in $Q'$ so that $l\paren{\gamma_1,\gamma_2}\not\equiv 0 \pmod{q}.$ 
\end{Proposition}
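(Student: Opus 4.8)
The plan is to deduce the mod-$q$ statement from the integral one (Corollary~\ref{cor:link2} and Proposition~\ref{prop:link1}) by applying the universal coefficients theorem to the isomorphism $L$ constructed there. Concretely, Proposition~\ref{prop:link1} gives an isomorphism $L\colon H_{d-2}(P;\Z)\to\Z^n$ whose coordinates are linking numbers $[\gamma]\mapsto(l(\gamma,\alpha_1),\dots,l(\gamma,\alpha_n))$ against an explicit family of simple cycles $\alpha_1,\dots,\alpha_n$ of $Q'$. Since $H_{d-2}(P;\Z)$ is therefore free, the universal coefficients theorem (Corollary~\ref{cor:UCTC}) gives $H_{d-2}(P;\Z_q)\cong H_{d-2}(P;\Z)\otimes\Z_q$, so the reduction map $H_{d-2}(P;\Z)\to H_{d-2}(P;\Z_q)$ fits into a commuting square with $L$ and the mod-$q$ reduction $\Z^n\to\Z_q^n$, yielding an isomorphism $\bar L\colon H_{d-2}(P;\Z_q)\to\Z_q^n$ with coordinates $[\gamma]\mapsto(l(\gamma,\alpha_j)\bmod q)_j$. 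I would first note that a class $[\gamma_1]$ (for $\gamma_1$ an oriented embedded $S^{d-2}$ in $P$, hence an integral cycle reducible mod $q$) is nonzero in $H_{d-2}(P;\Z_q)$ iff $\bar L([\gamma_1])\neq 0$, i.e.\ iff $l(\gamma_1,\alpha_j)\not\equiv 0\pmod q$ for some $j$.

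That already produces a simple oriented loop $\gamma_2=\alpha_j$ in $Q'$ with $l(\gamma_1,\gamma_2)\not\equiv0\pmod q$, giving the forward implication. For the converse I would argue that if \emph{any} simple oriented loop $\gamma_2$ in $Q'$ has $l(\gamma_1,\gamma_2)\not\equiv 0\pmod q$, then $[\gamma_1]\neq 0$ in $H_{d-2}(P;\Z_q)$; equivalently, the contrapositive: if $[\gamma_1]=0$ in $H_{d-2}(P;\Z_q)$ then $l(\gamma_1,\gamma_2)\equiv 0\pmod q$ for every simple loop $\gamma_2$ in $Q'$. Here I would invoke the functoriality of Alexander duality exactly as in the proof of Proposition~\ref{prop:link1}: for a fixed simple loop $\gamma_2$ in $Q'$, with $\Z_q$ coefficients throughout, linking number mod $q$ equals $\mathcal{I}([\gamma_2])([\gamma_1])$ where $\mathcal{I}\colon H_{d-2}(S^d\setminus\gamma_2;\Z_q)\to H^1(\gamma_2;\Z_q)$ is Alexander duality and $[\gamma_1]$ is pushed forward along $P\hookrightarrow S^d\setminus\gamma_2$. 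If $[\gamma_1]=0$ in $H_{d-2}(P;\Z_q)$ then its image in $H_{d-2}(S^d\setminus\gamma_2;\Z_q)$ vanishes, hence $l(\gamma_1,\gamma_2)\equiv 0\pmod q$. (The identity ``mod-$q$ linking number $=$ the $\Z_q$-Alexander-duality pairing'' follows because reduction mod $q$ commutes with the connecting/duality maps, using naturality of the universal coefficients sequence; alternatively one simply reduces the integral identity $l(\gamma_1,\gamma_2)=\mathcal I([\gamma_2])([\gamma_1])$ mod $q$ and checks compatibility with the coefficient change.)

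The one point requiring care is that an embedded $S^{d-2}$ in $P$ and a simple loop in $Q'$ need not be disjoint subsets of $S^d$ a priori, whereas the linking number is only defined for disjoint submanifolds; but $P$ and $Q'=Q\cup\partial r^\bullet$ live in the lattice and its shifted dual and are automatically disjoint in $S^d$ (each $i$-plaquette of $\Z^d$ meets only its own dual $(d-i)$-plaquette, which is absent from $Q$ precisely when the plaquette is present in $P$), so any cycle in $P$ is disjoint from any cycle in $Q'$. This is the same geometric setup already used implicitly in Corollary~\ref{cor:link2}, so I would just cite Section~\ref{sec:subcomplexes}. I expect the main obstacle to be organizing the coefficient-change bookkeeping cleanly — i.e.\ making precise that the mod-$q$ reduction of everything in the proof of Proposition~\ref{prop:link1} stays commutative, so that one gets a genuine mod-$q$ analogue of the isomorphism $L$ rather than merely a map — but since $H_{d-2}(P;\Z)$ is free this is routine, and the cleanest writeup is probably: (i) observe freeness from Proposition~\ref{prop:link1}; (ii) tensor the commuting diagram in its proof with $\Z_q$, which preserves exactness and isomorphisms since everything is free; (iii) read off the mod-$q$ characterization.
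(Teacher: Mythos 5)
Your argument is correct and is in substance the same as the paper's: both deduce the mod-$q$ statement from Proposition~\ref{prop:link1} by a change of coefficients. The paper organizes the bookkeeping with the exact sequence (\ref{eq:bockstein}), $0\to H_{d-2}\paren{P;\;\Z}\xrightarrow{\times q}H_{d-2}\paren{P;\;\Z}\to H_{d-2}\paren{P;\;\Z_q}\to 0$: if $\brac{\gamma_1}=0$ mod $q$ then $\brac{\gamma_1}=q\brac{\gamma_3}$ integrally, so every linking number is divisible by $q$; conversely, if $l\paren{\gamma_1,\alpha_j}=qb_j$ for all $j$ then $\brac{\gamma_1}=qL^{-1}\paren{b_1,\ldots,b_n}$, hence vanishes mod $q$. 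Your packaging --- tensoring $L$ with $\Z_q$ to obtain $\bar L$, and handling an arbitrary simple loop $\gamma_2$ by naturality of the coefficient reduction and Alexander duality --- is equally valid and yields nothing essentially different. One justification does need to be corrected: the identification $H_{d-2}\paren{P;\;\Z_q}\cong H_{d-2}\paren{P;\;\Z}\otimes\Z_q$ does \emph{not} follow from freeness of $H_{d-2}\paren{P;\;\Z}$; by the universal coefficients theorem for homology the obstruction is $\mathrm{Tor}\paren{H_{d-3}\paren{P;\;\Z},\Z_q}$, which vanishes here because $P$ contains the full $(d-2)$-skeleton of a box, so $H_{d-3}\paren{P;\;\Z}=0$ --- exactly the fact the paper invokes to get exactness of (\ref{eq:bockstein}). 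The same point recurs in your converse step, where you use that the reduction of the integral generator generates $H_{d-2}\paren{S^d\setminus\gamma_2;\;\Z_q}\cong\Z_q$: there you need $H_{d-3}\paren{S^d\setminus\gamma_2;\;\Z}$ to be torsion-free, which holds since it is isomorphic to $H^{2}\paren{\gamma_2;\;\Z}=0$ by Alexander duality for $\gamma_2\cong S^1$. With these justifications repaired, both directions of your proof go through.
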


\begin{proof}
We can relate homology with $\Z$ and $\Z_q$ coefficients using the sequence
\begin{equation}
\label{eq:bockstein}
0\rightarrow  H_{d-2}\paren{P;\;\Z} \xrightarrow[]{ \times q} H_{d-2}\paren{P;\;\Z}\xrightarrow[]{\pmod{q}} H_{d-2}\paren{P;\;\Z_q} \rightarrow 0
\end{equation}
which is exact in the sense that the image of each map is the kernel of the next. Exactness follows from the Universal Coefficient Theorem for Homology (Theorem 3A.3 in~\cite{hatcher2002algebraic}) using the properties of $\otimes \Z_q,$ and the fact that  $\mathrm{Tor}\paren{H_{d-3}\paren{P;\;\Z}}=0$ (as $H_{d-3}\paren{P;\;\Z}=0$). More detail on this topics is included in Section~\ref{subsec:homologyprops} in the appendix. (Equivalently, the Bockstein homomorphism $ H_{d-2}\paren{P;\;\Z_q}\to H_{d-2}\paren{P;\;\Z}$ vanishes; see the beginning of Chapter 10 of~\cite{mccleary2001user}).

In words, exactness of the sequence is equivalent to the statement that $\brac{\gamma_1}=0$ in  $H_{d-2}\paren{P;\;\Z_q}$ if and only if there exists a $\gamma_3\in Z_{d-2}\paren{P;\;\Z}$ so that $\brac{\gamma_1}=q\brac{\gamma_3}$ in $H_{d-2}\paren{P;\;\Z}.$ Thus, if $\brac{\gamma_1}=0$ in  $H_{d-2}\paren{P;\;\Z_q}$ then 
\[l\paren{\gamma_1,\gamma_2}=l\paren{q\gamma_3,\gamma_2}=q l\paren{\gamma_3,\gamma_2} \equiv 0 \pmod{q}\]
for all $\gamma_2\in  Z_1\paren{Q';\;\Z}.$ On the other hand, if  $l\paren{\gamma_1,\gamma_2}\equiv 0 \pmod{q}$ for all simple closed loops $\gamma_2$ in $Q'$ then there are integers $b_1,\ldots,b_n$ so that $l\paren{\gamma_1,\alpha_i}=q b_i$ for $i=1,\ldots,n,$ for the simple cycles $\alpha_1,\ldots,\alpha_n$ constructed in Proposition~\ref{prop:link1}. In particular, we have that $\brac{\gamma_1}=q L^{-1}\paren{b_1,\ldots,b_n}$ in $H_{d-2}\paren{P;\;\Z}$ so so $\brac{\gamma_1}=0$ in  $H_{d-2}\paren{P;\;\Z_q}.$
 \end{proof}

 We apply these results using the following two statements. The first is true for any $i,$ but we state it for $i=d-1.$
 
\begin{Corollary}\label{cor:vgammacontain}
Let $m\in \N$ and let $\gamma$ be the boundary of a $(d-1)$-dimensional box $r'$ of $\Z^d.$ Then
    \[V_{\gamma}^{\mathrm{fin}}\paren{1}\subset V_{\gamma}^{\mathrm{fin}}\paren{m} \subset V_{\gamma}^{\mathrm{inf}}\paren{m}\,,\]
    where we recall the notation $V_{\gamma}^{\mathrm{fin}}\paren{1}=V_{\gamma}^{\mathrm{fin}}\paren{\Z}.$ 
\end{Corollary}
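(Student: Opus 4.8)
The plan is to prove the two inclusions $V_{\gamma}^{\mathrm{fin}}(1)\subset V_{\gamma}^{\mathrm{fin}}(m)$ and $V_{\gamma}^{\mathrm{fin}}(m)\subset V_{\gamma}^{\mathrm{inf}}(m)$ separately. The second inclusion is essentially a tautology: by the definition given in the excerpt, $V_{\gamma}^{\mathrm{inf}}(G)$ is the union of $V_{\gamma}^{\mathrm{fin}}(G)$ with the event that $\gamma$ is ``homologous to infinity,'' so $V_{\gamma}^{\mathrm{fin}}(m)\subset V_{\gamma}^{\mathrm{inf}}(m)$ holds by construction. I would simply remark this and spend no further effort on it.

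The substance is the first inclusion. Here I would argue pointwise on configurations $P$. Suppose $P$ realizes $V_{\gamma}^{\mathrm{fin}}(1)$, i.e.\ $\gamma\in B_{d-2}(P;\;\Z)$, so there is an integral $(d-1)$-chain $c\in C_{d-1}(P;\;\Z)$ with $\partial c=\gamma$. Reducing coefficients modulo $m$ via the natural chain map $C_\bullet(P;\;\Z)\to C_\bullet(P;\;\Z_m)$, which commutes with $\partial$, the image $\bar c$ satisfies $\partial\bar c=\bar\gamma$. But $\gamma=\partial r'$ has integer (in fact $\{0,\pm1\}$) coefficients, so $\bar\gamma$ is exactly the image of $\gamma$ under reduction mod $m$, i.e.\ it \emph{is} $\gamma$ viewed as a $\Z_m$-chain (this is the harmless abuse of notation flagged in the excerpt, identifying $\partial r'$ as a chain over any ring with unity). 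Hence $\gamma\in B_{d-2}(P;\;\Z_m)$, which is precisely the event $V_{\gamma}^{\mathrm{fin}}(m)$. So the first inclusion follows from functoriality of the boundary map under the coefficient-reduction homomorphism $\Z\to\Z_m$.

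The only point requiring a little care — and the place I would expect a referee to push back — is making sure the two ``$\gamma$''s are genuinely compatible: the cycle appearing in $V_{\gamma}^{\mathrm{fin}}(1)$ lives in $Z_{d-2}(P;\;\Z)$ while the one in $V_{\gamma}^{\mathrm{fin}}(m)$ lives in $Z_{d-2}(P;\;\Z_m)$, and the statement only makes sense because $\gamma=\partial r'$ for a genuine box $r'$, so it has a canonical representative over every coefficient ring obtained by reducing its $\{0,\pm1\}$ coefficients. I would state this identification explicitly at the start. Beyond that, no slab regularity, no Alexander duality, and no linking-number machinery is needed for this corollary — those enter only when one wants the \emph{reverse} containments or the dual-loop characterization, which is not claimed here. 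The main (very mild) obstacle is thus purely bookkeeping about coefficient systems; the mathematical content is a one-line naturality argument.
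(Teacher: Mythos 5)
Your proof is correct and follows essentially the same route as the paper: the second inclusion is by definition of $V_{\gamma}^{\mathrm{inf}}\paren{m}$, and the first is the mod-$m$ coefficient reduction, which the paper phrases as an immediate consequence of the exact sequence (\ref{eq:bockstein}) while you carry it out at the chain level. The difference is purely cosmetic, and your explicit remark about identifying $\gamma=\partial r'$ as a chain over either coefficient ring is exactly the bookkeeping the paper handles with its stated abuse of notation.
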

\begin{proof}
The first containment is an immediate consequence of (\ref{eq:bockstein}), and the second follows from the definition of $V_{\gamma}^{\mathrm{inf}}\paren{m}.$ 
\end{proof}

\begin{Corollary}\label{cor:linkG}

Let $P$ be a $(d-1)$-dimensional percolation subcomplex of $\Z^d$ and let $\gamma_1=\partial r'$ be the boundary of a $(d-1)$-dimensional box $r'$ of $\Z^d.$ 
\begin{itemize}
    \item If there exists a simple loop $\gamma_2$ of $Q$ so that $l\paren{\gamma_1,\gamma_2}=\pm 1$ then $V_{\gamma_2}^{\mathrm{inf}}\paren{m}$ does not occur for any $m\in \N.$ 
    \item If there is a box $r$ containing $\gamma_1$ so that  $l\paren{\gamma_1,\gamma_2} =0$ for all simple closed loops $\gamma_2$ of $Q'\coloneqq \paren{Q\cap r^{\bullet}} \cup \partial r^{\bullet}$ then  $V_{\gamma}^{\mathrm{fin}}\paren{m}$ occurs for every $m\in \N.$ 
\end{itemize}

\end{Corollary}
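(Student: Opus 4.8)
The plan is to translate both claims into statements about linking numbers, via Corollary~\ref{cor:link2} and Proposition~\ref{prop:linkq}, and to exploit that a loop $\gamma_2$ lying in the dual complex $Q$ is geometrically disjoint from every $(d-1)$-chain supported on $P$ --- and, once a large enough cube is fixed, from $\partial\Lambda_n$ as well. Consequently $l\paren{\cdot,\gamma_2}$ annihilates the boundary of any such chain; this, together with the linearity of $l\paren{\cdot,\gamma_2}$, is immediate from its description $l\paren{\cdot,\gamma_2}=\mathcal{I}\paren{\brac{\gamma_2}}\paren{\cdot}$ as the evaluation of an Alexander-dual cohomology class.

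For the first bullet I would argue by contradiction. Suppose $\gamma_2$ is a simple loop of $Q$ with $l\paren{\gamma_1,\gamma_2}=\pm1$ while $V^{\mathrm{inf}}_{\gamma_1}\paren{m}$ occurs, and fix $n$ large enough that $\Lambda_n$ contains $\gamma_1$ and $\gamma_2$ in its interior. If $V^{\mathrm{fin}}_{\gamma_1}\paren{m}$ holds, then the Bockstein sequence~(\ref{eq:bockstein}) gives $\gamma_1=m\gamma_3+\partial c$ for some $\gamma_3\in Z_{d-2}\paren{P;\;\Z}$ and some $(d-1)$-chain $c$ in $P$ (for $\Z$ coefficients one has directly $\gamma_1=\partial c$); since $c$ and $\gamma_3$ avoid $\gamma_2$, this forces $l\paren{\gamma_1,\gamma_2}=m\,l\paren{\gamma_3,\gamma_2}\equiv 0\pmod m$, contradicting $l\paren{\gamma_1,\gamma_2}=\pm1$. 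If instead $\gamma_1$ is homologous to infinity, then for some $n'\geq n$ I would write $\gamma_1=\zeta+mw+\partial c$, with $\zeta$ a cycle supported on $\partial\Lambda_{n'}$ and $w,c$ chains supported on $P\cup\partial\Lambda_{n'}$; since $\tilde H_{d-2}\paren{\partial\Lambda_{n'};\;\Z}\cong\tilde H_{d-2}\paren{S^{d-1};\;\Z}=0$, the cycle $\zeta$ bounds inside $\partial\Lambda_{n'}$, which is disjoint from $\gamma_2$, so $l\paren{\zeta,\gamma_2}=0$, and $l\paren{\partial c,\gamma_2}=0$ as before; hence $l\paren{\gamma_1,\gamma_2}=m\,l\paren{w,\gamma_2}\equiv 0\pmod m$, again a contradiction.

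For the second bullet I would let $P_r$ be the restriction of $P$ to $\overline r$, so that its dual complex is $Q\cap r^{\bullet}$ and $Q'=\paren{Q\cap r^{\bullet}}\cup\partial r^{\bullet}$ is precisely the complex attached to $P_r$ in Proposition~\ref{prop:link1}. As $r$ contains $\gamma_1=\partial r'$ and $P_r$ contains the full $(d-2)$-skeleton of $\overline r$, $\gamma_1$ is an embedded $(d-2)$-sphere in $P_r$, and the hypothesis says exactly that no simple loop of $Q'$ links $\gamma_1$. Corollary~\ref{cor:link2} then gives $\brac{\gamma_1}=0$ in $H_{d-2}\paren{P_r;\;\Z}$, say $\gamma_1=\partial c$ with $c$ an integral $(d-1)$-chain in $P_r$; reducing $c$ modulo $m$ gives $\brac{\gamma_1}=0$ in $H_{d-2}\paren{P_r;\;\Z_m}$, and since each $(d-1)$-cell of $P_r$ is a $(d-1)$-cell of $P$, pushing forward along $P_r\hookrightarrow P$ yields $\brac{\gamma_1}=0$ in $H_{d-2}\paren{P;\;\Z_m}$, i.e.\ $V^{\mathrm{fin}}_{\gamma_1}\paren{m}$ occurs. (When $d=2$ the same argument runs with reduced $H_0$ and reduces to the classical statement that the endpoints of $r'$ lie in one cluster of $P$.)

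I expect the main obstacle to be the homologous-to-infinity case of the first bullet: one must extract the decomposition $\gamma_1=\zeta+mw+\partial c$ from the precise meaning of $V^{\mathrm{inf}}$, check that the chains $c,w$ can be chosen disjoint from $\gamma_2$ once $n$ is large, and use the vanishing of $\tilde H_{d-2}$ of a sphere to discard the boundary cycle $\zeta$. The remaining steps --- linearity of the linking number, reduction modulo $m$, and the bookkeeping matching $P$ with $P_r$ and $Q$ with $Q\cap r^{\bullet}$ --- are routine.
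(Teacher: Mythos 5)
Your argument is correct, but it is organized differently from the paper's, at least for the first bullet. The paper's proof is a two-line reduction: if $V^{\mathrm{inf}}_{\gamma_1}\paren{m}$ occurs, then for all large $n$ one has $\brac{\gamma_1}=0$ in $H_{d-2}\paren{\paren{P\cap\Lambda_n}\cup\partial\Lambda_n;\;\Z_m}$ (the boundary sphere is simply adjoined to the complex, so its trivial $(d-2)$-homology absorbs the ``homologous to infinity'' case automatically), and then Proposition~\ref{prop:linkq} (or Corollary~\ref{cor:link3} for $\Z$ coefficients), applied to this boxed complex and its dual $Q'_n$, forces $l\paren{\gamma_1,\gamma_2}\equiv 0\pmod m$ once $n$ is large enough that $\gamma_2\subset Q'_n$. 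You instead unfold the content of Proposition~\ref{prop:linkq} directly in infinite volume: the Bockstein lift $\gamma_1=m\gamma_3+\partial c$, disjointness of $P$ (and of the $(d-2)$-skeleton) from the dual loop $\gamma_2$, and linearity of $l\paren{\cdot,\gamma_2}$ as evaluation of the Alexander-dual class. Both routes rest on exactly the same ingredients (the exact sequence~(\ref{eq:bockstein}), $P\cap\gamma_2=\emptyset$, and $\tilde H_{d-2}\paren{S^{d-1}}=0$); what the paper's packaging buys is that the homologous-to-infinity case needs no separate bookkeeping, whereas what your version buys is that you never need to check that $\gamma_2$ survives as a loop of the dual complex of the boxed configuration. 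The one point in your sketch requiring care is the decomposition $\gamma_1=\zeta+mw+\partial c$: an integral lift of the $\Z_m$-homology does not automatically make $\zeta$ and $w$ integral cycles, so $l\paren{\zeta,\gamma_2}$ and $l\paren{w,\gamma_2}$ are not defined termwise as written. The fix is the observation you already make: since $\tilde H_{d-2}\paren{\partial\Lambda_{n'};\;\Z_m}=0$, first bound $\zeta$ inside $\partial\Lambda_{n'}$ over $\Z_m$ and absorb it into the boundary term, so that the lift takes the clean form $\gamma_1=\partial e+mw$ with $w$ an honest integral cycle supported on the $(d-2)$-skeleton, disjoint from $\gamma_2$; with that adjustment (and with $n'$ chosen so large that $\gamma_2$ misses $\partial\Lambda_{n'}$), your computation goes through. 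Your second bullet coincides with the paper's proof: Corollary~\ref{cor:link2} gives $\brac{\gamma_1}=0$ in $H_{d-2}\paren{P_r;\;\Z}$, and reducing the bounding chain mod $m$ and pushing forward along $P_r\hookrightarrow P$ is just an inline proof of the containment $V^{\mathrm{fin}}_{\gamma}\paren{\Z}\subset V^{\mathrm{fin}}_{\gamma}\paren{m}$ recorded in Corollary~\ref{cor:vgammacontain}.
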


\begin{proof}
For the first statement, suppose that $V_{\gamma}^{\mathrm{inf}}\paren{m}$ occurs and let $\gamma_2$ be a simple loop of $Q.$ Let $\Lambda_n \coloneqq \brac{-n,n}^d$ and denote by $P_n$ the percolation subcomplex $\paren{P\cap \Lambda_n} \cup \partial \Lambda_n.$  We have that $\brac{\gamma}=0$ in $H_{d-2}\paren{P_n;\; G}$ for all sufficiently large $n.$ By choosing $n$ large enough so that $\gamma_2\subset \Lambda_n,$ we can conclude that $l\paren{\gamma_1,\gamma_2}\neq \pm 1$ by either Proposition~\ref{prop:linkq} or Corollary~\ref{cor:link3}. 

The second statement follows from Corollary~\ref{cor:link2} and Corollary~\ref{cor:vgammacontain}. 
\end{proof}
By standard results, Corollaries~\ref{cor:vgammacontain} and~\ref{cor:linkG} (and in fact Theorem~\ref{thm:sharpnessRCM}) hold when homology coefficients are taken in the additive group $G$ of a ring with unity. That is, we may state them for the events $V_{\gamma}^{\mathrm{inf}}\paren{G}$ and $V_{\gamma}^{\mathrm{fin}}\paren{G}.$

Finally, we consider the case where $\gamma$ is the ``equator'' of a box and $P$ is a percolation subcomplex of the same box, which will be used in the proof of the area law. Here, the dual criterion for $V_{\gamma}$ does not depend on the coefficients. For example, observe that a non-orientable surface in $\R^3$ whose boundary is contained in a three-dimensional box must leave the box. Let $\Lambda=\Lambda_0\times\brac{-N,N}$ be a $d$-dimensional box, let $\gamma=\partial \Lambda_0\times\set{0},$ and let $\partial^+ \Lambda=\partial\Lambda\cap\set{\vec{e}_{d}>0}$ and $\partial^- \Lambda=\partial\Lambda\cap\set{\vec{e}_{d}<0}.$

\begin{Corollary}\label{corollary:CrossingJ}
 Let $P$ be a percolation subcomplex of $\overline{\Lambda}$ and let  $Q$ be its dual. Then
 $$\neg V_{\gamma} \iff \partial\Lambda^+\xleftrightarrow[Q]{} \partial\Lambda^{-}$$
  where $V_\gamma$ is any of the events $V_{\gamma}\paren{m}.$
 \end{Corollary}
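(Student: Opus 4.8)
The plan is to use the Alexander-duality characterization of $V_\gamma$ developed in this section together with the geometric observation that a $(d-2)$-cycle bounding inside $\Lambda$ is ``unlinked'' from any dual loop which must cross $\Lambda$ from top to bottom. First I would set up the dual picture: $Q$ is a $1$-dimensional percolation subcomplex of $\overline{\Lambda^\bullet}$, and a dual connection $\partial\Lambda^+ \xleftrightarrow[Q]{} \partial\Lambda^-$ is (up to working with $Q' = Q \cup \partial\Lambda^\bullet$, as in Proposition~\ref{prop:link1}) the same data as a simple loop $\gamma_2$ in $Q'$ that ``goes around'' $\gamma$, i.e.\ whose linking number with $\gamma = \partial\Lambda_0 \times \{0\}$ is $\pm 1$. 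This is the crux of the equivalence, so I would prove the two implications separately through linking numbers.

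For the forward direction ($\neg V_\gamma \Rightarrow$ crossing): if $[\gamma]\neq 0$ in $H_{d-2}(P;\,\Z_q)$ (or $\Z$), then by Corollary~\ref{cor:link2} / Proposition~\ref{prop:linkq} there is a simple oriented loop $\gamma_2$ in $Q' = (Q\cap \Lambda^\bullet)\cup \partial\Lambda^\bullet$ with $l(\gamma,\gamma_2)\not\equiv 0$. I would then argue that the portion of $\gamma_2$ lying in $Q$ itself must contain an arc connecting $\partial\Lambda^+$ to $\partial\Lambda^-$: since $\gamma$ is the equator, any loop in $\R^d$ linking it nontrivially with $\gamma$ must pass through both the ``upper'' and ``lower'' halves of $\Lambda$, and the only part of $\gamma_2$ that can sit in $\partial\Lambda^\bullet$ is cut by the equatorial level set, so the loop is forced to realize a top–bottom crossing inside $Q$. (Here I would use that adding the boundary tree $T'$ of $\partial\Lambda^\bullet$ to $Q$ merges boundary vertices but does not create new top-to-bottom connectivity through the equator, exactly as in the proof of Proposition~\ref{prop:link1}.) Conversely, if $\partial\Lambda^+ \xleftrightarrow[Q]{} \partial\Lambda^-$ via a dual path, close it up into a simple loop $\gamma_2$ through $\partial\Lambda^\bullet$; then $\gamma_2$ is a meridian of the equatorial slab and $l(\gamma,\gamma_2) = \pm 1$, so by Corollary~\ref{cor:link2} (integer coefficients) $[\gamma]\neq 0$ in $H_{d-2}(P;\,\Z)$, and by the first containment of Corollary~\ref{cor:vgammacontain} also $[\gamma]\neq 0$ in $H_{d-2}(P;\,\Z_q)$ — hence $\neg V_\gamma(m)$ for every $m$. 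This last point also explains why the equivalence is coefficient-independent: one direction is detected by an integral linking number $=\pm 1$ (which is nonzero mod every $q$), and the other produces a linking number $\equiv 0$ for \emph{all} dual loops, which kills the class over $\Z$ and a fortiori over $\Z_q$.

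The main obstacle I expect is the geometric/combinatorial step in the forward direction: carefully showing that a loop $\gamma_2$ in $Q'$ with nonzero linking number with the equator $\gamma$ must contain a sub-arc of $Q$ realizing an honest $\partial\Lambda^+$-to-$\partial\Lambda^-$ crossing, rather than ``cheating'' by running through the auxiliary boundary complex $\partial\Lambda^\bullet$. The clean way to handle this is to note that $S^d \setminus \gamma$ deformation retracts in a way that makes $H_1(S^d\setminus\gamma;\,\Z)\cong\Z$ generated by a small meridian circle of the equatorial disk, and a cycle in the ``upper'' closed half $\overline{\Lambda^+}\cup(\text{exterior})$ is null-homologous there; so any loop with nonzero linking number cannot be pushed entirely into the upper or lower half, forcing a genuine crossing of the equatorial hyperplane within $Q$, which (since $Q$ meets $\partial\Lambda$ only in $\partial\Lambda^+\cup\partial\Lambda^-$) yields the connection. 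Everything else is bookkeeping with the already-established duality isomorphisms and Corollaries~\ref{cor:link2}, \ref{cor:linkG}, and~\ref{cor:vgammacontain}.
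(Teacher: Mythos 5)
Your overall route is the same as the paper's: the ``if'' direction is proved by closing a top--bottom dual crossing into a loop of linking number $\pm 1$ with the equator $\gamma,$ and the ``only if'' direction by extracting, via Proposition~\ref{prop:linkq}/Corollary~\ref{cor:link2}, a dual loop with nonzero linking number and observing that such a loop must contain a segment inside $\Lambda$ running from $\partial^+\Lambda$ to $\partial^-\Lambda.$ Your Seifert-disk discussion (and the worry about the loop ``cheating'' through $\partial\Lambda^{\bullet}$) is a more detailed version of the one sentence the paper devotes to this step, and is fine.

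There is, however, one step that is wrong as written. In the converse direction you conclude $\brac{\gamma}\neq 0$ in $H_{d-2}\paren{P;\;\Z_q}$ from $\brac{\gamma}\neq 0$ in $H_{d-2}\paren{P;\;\Z}$ ``by the first containment of Corollary~\ref{cor:vgammacontain}.'' That containment, $V_{\gamma}^{\mathrm{fin}}\paren{1}\subset V_{\gamma}^{\mathrm{fin}}\paren{m},$ goes the other way: an integral null-homology reduces mod $m$ to a $\Z_m$ null-homology, so nonvanishing over $\Z$ does \emph{not} imply nonvanishing over $\Z_m.$ Indeed the Aizenman--Fr\"ohlich tube example recalled after Theorem~\ref{thm:comparison} is exactly a class that is nonzero over $\Z$ but vanishes over $\Z_q$ when $q$ divides the number of intersections, so this implication cannot be salvaged in general. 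The repair is the one you already state parenthetically at the end: the loop you construct has linking number $\pm 1$ with $\gamma,$ and $\pm 1\not\equiv 0\pmod{q}$ for every $q,$ so Proposition~\ref{prop:linkq} (applied for each $q,$ with the loop living in $Q'=Q\cup\partial\Lambda^{\bullet}$), together with Corollary~\ref{cor:link2} for integer coefficients, gives $\neg V_{\gamma}\paren{m}$ for every $m$ directly, with no detour through the containment of Corollary~\ref{cor:vgammacontain}. With that substitution your argument is correct and coincides with the paper's proof.
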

 \begin{proof}
 For the if direction, note that we can construct a loop whose linking number with $\gamma$ is one by taking any path connecting $\partial^+\Lambda$ with $\partial^-\Lambda$ and completing it to a loop in $\Z^d\setminus \Lambda.$ The only if direction holds because any loop linked with $V_{\gamma}$ must have a segment contained in $\Lambda$ that enters $\partial^+\Lambda$ and departs from $\partial^-\Lambda$ or vice versa.   
 \end{proof}

\part{Proof of the Deconfinement Transition}
We now proceed to the proof of Theorem~\ref{thm:sharpnessRCM} for the PRCM with coefficients in $\Q.$ As a consequence of Theorem~\ref{thm:comparison} and Proposition~\ref{prop:codim1}, this suffices to demonstrate Theorem~\ref{thm:sharpness} on the deconfinement transition in Potts lattice gauge theory. The proof is divided into three parts.

In Section~\ref{sec:arealaw}, we show that an area law upper bound holds for $V_{\gamma}$ when $ p< p^*\paren{p_{\mathrm{surf}}(q)},$ meeting the trivial lower bound found by including all plaquettes in a minimal null-homology. Our technique is similar to proofs in~\cite{aizenman1983sharp} and~\cite{bricmont1980surface}. We write a loop $\gamma$ as an approximate sum of many translated copies of a smaller loop $\gamma'$ that form a ``tiling'' of $\gamma.$ This both shows that the area law constant is well defined and allows us to the Wilson loop tension to the surface tension by comparing events with different boundary conditions.

We provide two proofs of the perimeter law for the supercritical PRCM, in Sections~\ref{sec:perimeterlaw} and~\ref{sec:sharpnessperimeter}. First, we construct a a null-homology for $\gamma$ as the boundary of a union of components in the dual RCM. This provides a perimeter law lower bound, complementing the obvious upper bound. Our other proof is substantially more complex, but has the advantage of demonstrating the existence of a sharp constant in the exponent of the perimeter law. Towards that end, we build a hypersurface of plaquettes in the PRCM which precludes the existence of a dual loop linking with $\gamma.$ Another application of Corollary~\ref{cor:linkG} then yields the desired result. 

\section{The Area Law Regime}\label{sec:arealaw}

We begin by comparing two notions of surface tension for the general $i$-dimensional plaquette random-cluster model on $\Z^d$ with the classical notion for the random-cluster model. For an infinite volume PRCM $\mu^{\xi}_{\Z^d,p,q}$ and a choice of  $V_{\gamma} = V_{\gamma}^{\mathrm{fin}}\paren{q'}$ or $V_{\gamma}^{\mathrm{inf}}\paren{q'}$ define the Wilson loop tension as 
\[\tau'=\tau_{p,q,\xi}'\coloneqq \lim_{N\to\infty} \frac{-\log\paren{\mu^{\xi}_{\Z^d,p,q}\paren{V_{\gamma_N}}}}{\mathrm{Area}\paren{\gamma_N}}\,,\]
where $r_N=\brac{-N,N}^{i}\times\set{0}^{d-i}$ and $\gamma_N=\partial r_N.$
Our first result shows that this limit exists, and agrees with the limiting area law constant for any sequence of $i$-dimensional hyperrectangular boundaries whose dimensions diverge to $\infty.$ The proof is the same as that of Proposition 2.4 of~\cite{aizenman1983sharp}.

\begin{Lemma}\label{lemma:sharpconstant}
 $\tau'$ is well-defined and satisfies
\[\lim_{l\rightarrow\infty} \frac{-\log\paren{\mu^{\xi}_{\Z^d,p,q}\paren{V_{\gamma_l}}}}{\mathrm{Area}(\gamma_l)} = \tau'\] 
for any sequence  $\set{\gamma_l}$ of hyperrectangular $(i-1)$-boundaries whose dimensions diverge with $l.$ 
\end{Lemma}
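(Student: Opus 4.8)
The plan is to prove existence of the limit $\tau'$ via a subadditivity / superadditivity argument on the probabilities $\mu^\xi_{\mathbb{Z}^d,p,q}(V_{\gamma_N})$, exactly along the lines of Proposition 2.4 of~\cite{aizenman1983sharp}. The key structural input is that a large hyperrectangular boundary can be written as an (approximate) sum of translated copies of smaller hyperrectangular boundaries that tile it, together with the FKG inequality (Proposition~\ref{prop:FKGboundary}) to decouple the corresponding events. Concretely, if $r$ is an $i$-box that decomposes into a grid of sub-boxes $r = \bigcup_j r_j$, then a null-homology (or homology-to-infinity) for each $\partial r_j$ can be glued — the interior faces shared by adjacent sub-boxes cancel in homology — to produce one for $\partial r$; hence $\bigcap_j V_{\gamma_j}\subseteq V_\gamma$ for suitable translates $\gamma_j$ of the sub-boundaries. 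Since each $V_{\gamma_j}$ is an increasing event, FKG gives
\[
\mu^\xi_{\mathbb{Z}^d,p,q}(V_\gamma)\;\geq\;\prod_j \mu^\xi_{\mathbb{Z}^d,p,q}(V_{\gamma_j})\,,
\]
which, after taking $-\log$, is the superadditivity of $-\log\mu^\xi(V_{\gamma_N})$ in the dimensions of the box relative to $\mathrm{Area}$.

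The next step is to run the standard Fekete-type argument on the multi-indexed sequence $a(N_1,\dots,N_i) := -\log\mu^\xi_{\mathbb{Z}^d,p,q}(V_{\gamma_{N_1,\dots,N_i}})$. First I would establish the result for cubes $r_N = [-N,N]^i\times\{0\}^{d-i}$: the tiling of $[-MN,MN]^i$ by $M^i$ copies of $[-N,N]^i$ gives $a(MN)\le M^i a(N)$ (up to boundary corrections of lower order — the faces on the seams contribute to $\mathrm{Area}$ only along $(i-1)$-dimensional slices), so $a(N)/\mathrm{Area}(r_N) = a(N)/(2N)^i$ converges to $\tau' := \inf_N a(N)/(2N)^i$, which is finite (bounded above by including all interior plaquettes, decaying like the area) and nonnegative. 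Then I would upgrade from cubes to general hyperrectangles: given $\gamma_l$ with all dimensions $N_1^{(l)},\dots,N_i^{(l)}\to\infty$, sandwich $r_l$ between two grids of cubes of a fixed small side length $K$ — an inscribed union of $\approx \prod_a N_a^{(l)}/K^i$ disjoint $K$-cubes gives, via FKG and the tiling/gluing, $a(\gamma_l)\le \big(\prod_a N_a^{(l)}/K^i\big)(a(K) + o(1))$, hence $\limsup_l a(\gamma_l)/\mathrm{Area}(\gamma_l)\le a(K)/(2K)^i$; letting $K\to\infty$ gives $\le\tau'$. For the matching lower bound, a circumscribing argument (or directly: subadditivity of $a$ under concatenation of boxes in each coordinate direction, which follows because any null-homology for the big box restricts, via the dual linking characterization of Corollary~\ref{cor:linkG}, to control on the pieces) yields $\liminf_l a(\gamma_l)/\mathrm{Area}(\gamma_l)\ge\tau'$.

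For the lower bound I would actually prefer to argue in the dual picture. By Corollary~\ref{cor:linkG} (and Corollary~\ref{corollary:CrossingJ} in the relevant cases), $\neg V_\gamma$ is equivalent to the existence of a dual loop linking $\gamma$, i.e. to a dual crossing of the slab spanned by $r$; concatenating two boxes sharing a face, a dual loop linking one of the two pieces can be completed to one linking the union, so $\mu^\xi(\neg V_{\gamma\cup\gamma'})\ge c\,\mu^\xi(\neg V_\gamma)\mu^\xi(\neg V_{\gamma'})$ is not quite what's needed; rather, the cleaner route is the one in~\cite{aizenman1983sharp}: bound $\mu^\xi(V_\gamma)$ from above by the probability that a single designated seam-slice of plaquettes carries a partial null-homology, reducing the big-box estimate to a product of small-box estimates and giving subadditivity of $a$ up to $o(\mathrm{Area})$ corrections. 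Combined with the superadditivity from the first paragraph, Fekete's lemma then pins the limit.

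\textbf{The main obstacle} I expect is the bookkeeping in the gluing/tiling step: making precise that the homology classes (with $\mathbb{Z}$ or $\mathbb{Z}_q$ coefficients, in the relevant $V_\gamma^{\mathrm{fin}}$ or $V_\gamma^{\mathrm{inf}}$ version) of the sub-boundaries literally add to that of the big boundary, and that the seam faces contribute only a lower-order term to $\mathrm{Area}$ — together with handling the boundary-condition dependence ($\xi$ is fixed throughout, so the events are all increasing with respect to the same measure, which is what FKG needs, but one must check the infinite-volume measure $\mu^\xi_{\mathbb{Z}^d,p,q}$ is genuinely translation invariant, or restrict to the translation-invariant weak limits for which this was established). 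The analytic Fekete argument itself is routine once superadditivity and the lower-order error control are in hand; the geometry of the hyperrectangle-to-cube comparison in codimension $>1$ is the only place where care beyond~\cite{aizenman1983sharp} is required, and it is handled by the same inscribe/circumscribe scheme as in dimension three.
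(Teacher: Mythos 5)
Your first half is the paper's argument: write $\partial r(\gamma_l)=\sum_j\partial r_j+\sum_{\sigma\in T}\partial\sigma$ for an approximate tiling of $r(\gamma_l)$ by translates of a smaller box, so that $V_{\gamma_l}$ is implied by the corresponding translates of the smaller event together with $o(\mathrm{Area}(\gamma_l))$ extra plaquettes, and then decouple with FKG. The gap is in how you close the argument. Anchoring $\tau'$ at cubes via Fekete leaves you needing $\liminf_l \frac{-\log\mu^{\xi}(V_{\gamma_l})}{\mathrm{Area}(\gamma_l)}\ge\tau'$ for general hyperrectangles, and none of the three justifications you sketch for that step works as stated. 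A null-homology of the big boundary does \emph{not} restrict to null-homologies of the sub-boundaries: the spanning surface may avoid the region over a sub-box entirely (an edge of $\gamma_j$ need not even lie in any plaquette of $P$), and what a restriction actually produces is a surface with extra boundary on the walls of a tube over the sub-box --- precisely the events $C_t$ of Section 7, whose control is the hard part of the \emph{perimeter}-law argument and is not available here. The dual-loop concatenation inequality is, as you note yourself, not the inequality you need. And the ``seam-slice'' upper bound on $\mu^{\xi}(V_\gamma)$ is not the mechanism of Proposition 2.4 of Aizenman et al.\ and you give no argument for it; indeed no upper bound on $\mu^{\xi}(V_\gamma)$ in terms of products of small-box probabilities is needed anywhere in this lemma.

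The repair is simply to use your one tiling/FKG inequality symmetrically, which is what the paper does: let both $\{\gamma_l\}$ and $\{\gamma_k'\}$ range over arbitrary families whose dimensions diverge, tile $r(\gamma_l)$ by translates of $r(\gamma_k')$ up to an $o(\mathrm{Area}(\gamma_l))$ defect, and conclude that the limsup along any such family is at most the liminf along any other; interchanging the roles of the two families forces the limit to exist and to be independent of the sequence in one stroke. Equivalently, for your scheme: tile a huge cube by translates of the fixed rectangle $r_l$ to get $a(N')/(2N')^i\le a(\gamma_l)/\mathrm{Area}(\gamma_l)+o(1)$, hence $\tau'\le a(\gamma_l)/\mathrm{Area}(\gamma_l)$ for every $l$. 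With that replacement your outline becomes the paper's proof. Your caveat about translation invariance of $\mu^{\xi}_{\Z^d,p,q}$ is well taken --- the comparison uses translates of the small-box event at a common probability, so one should restrict to translation-invariant weak limits (as established for the free and wired limits) or otherwise account for the translates.
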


The proof is included below. Next, we demonstrate that $\tau'\paren{p,q,\mathbf{f}}$ coincides with a different notion of Wilson loop tension defined using the PRCM with free boundary conditions on a box whose ``equator'' is $\gamma.$  Set $\Lambda_{N}=\brac{-N,N}^d,$ $\gamma_N=\partial\brac{-N,N}^{i}\times\set{0}^{d-i},$ and 
\[\tau''=\tau_{p,q}''\coloneqq\lim_{N\to\infty} \frac{-\log\paren{\mu_{\Lambda_{N},p,q}^{\mathrm{f}}\paren{V_{\gamma_N}}}}{\paren{2N}^i}\,.\]

\begin{Lemma}\label{lemma:sharpconstant2}
Let $p$ be such that there is a unique infinite volume PRCM. Then
\[\lim_{l\rightarrow\infty} \frac{-\log\paren{\mu^{\mathbf{f}}_{\Lambda_l,p,q}\paren{V_{\gamma_l}}}}{\mathrm{Area}(\gamma_l)} =\tau_{p,q,\mathbf{f}}'\]
In particular, $\tau_{p,q}''$ is well-defined and equals $\tau_{p,q}'.$ 
\end{Lemma}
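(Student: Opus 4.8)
The plan is to show that the free-boundary-condition Wilson loop tension $\tau''_{p,q}$ (defined via a box whose ``equator'' is $\gamma_N$) agrees with the infinite-volume free-boundary tension $\tau'_{p,q,\mathbf{f}}$, under the hypothesis that the infinite-volume PRCM is unique at $p$. The key point is that Corollary~\ref{corollary:CrossingJ} lets us rewrite $\neg V_{\gamma_N}$ for a percolation subcomplex of the box $\overline{\Lambda}$ as a dual crossing event $\partial\Lambda^+ \xleftrightarrow[Q]{} \partial\Lambda^-$, but more importantly the \emph{finite-volume} event $V_{\gamma_l}$ that $\gamma_l$ bounds in $P\cap\Lambda_l$ is monotone: $V_{\gamma_l}$ is a decreasing event in the dual bond percolation $Q$ (adding dual edges can only create a linking loop), equivalently an increasing event in the plaquette system $P$. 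So stochastic domination tools apply.

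First I would set up the comparison via Proposition~\ref{prop:extremal}: for any box $\Lambda_l$, we have $\mu^{\mathbf{f}}_{\Lambda_l,p,q,d-1} \leq_{\mathrm{st}} \mu^{\mathbf{f}}_{\Z^d,p,q,d-1}$, so since $V_{\gamma_l}$ (the finite version, $\gamma_l$ bounding inside $\Lambda_l$) is increasing in $P$,
\[
\mu^{\mathbf{f}}_{\Lambda_l,p,q}(V_{\gamma_l}) \leq \mu^{\mathbf{f}}_{\Z^d,p,q}(V^{\mathrm{fin}}_{\gamma_l})\,,
\]
which gives $\tau''_{p,q} \geq \tau'_{p,q,\mathbf{f}}$ after taking $-\log$ and normalizing by $\mathrm{Area}(\gamma_l)$ and using Lemma~\ref{lemma:sharpconstant} for the right-hand side. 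The genuinely substantive direction is the reverse inequality $\tau''_{p,q} \leq \tau'_{p,q,\mathbf{f}}$, i.e.\ that the small-box free-boundary probabilities do not decay strictly faster. Here I would exploit uniqueness of the infinite-volume measure: uniqueness implies $\mu^{\mathbf{f}}_{\Z^d,p,q} = \mu^{\mathbf{w}}_{\Z^d,p,q}$, and by Proposition~\ref{prop:extremal} again this is sandwiched between $\mu^{\mathbf{f}}_{\Lambda_l,p,q}$ and $\mu^{\mathbf{w}}_{\Lambda_l,p,q}$ for every $l$. To transfer a bound on $V^{\mathrm{fin}}_{\gamma_l}$ under $\mu^{\mathbf{f}}_{\Z^d}$ back to $\mu^{\mathbf{f}}_{\Lambda_L}$ for a somewhat larger box $\Lambda_L$, I would use the fact that $V^{\mathrm{fin}}_{\gamma_l}$ occurs as soon as $\gamma_l$ bounds inside \emph{some} finite region; by the coupling $P(N)\nearrow P$ from Proposition~\ref{prop:comparisoninfinite}, on the event $V^{\mathrm{fin}}_{\gamma_l}$ the bounding chain is supported in some $\Lambda_L$, and then a finite-energy / monotonicity argument comparing $\mu^{\mathbf{f}}_{\Z^d}$ restricted to $\Lambda_L$ with $\mu^{\mathbf{f}}_{\Lambda_L}$ (again via uniqueness, so that $\mu^{\mathbf{f}}_{\Z^d}$ conditioned outside $\Lambda_L$ dominates $\mu^{\mathbf{f}}_{\Lambda_L}$) yields the needed inequality up to lower-order corrections that wash out under the area normalization. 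Since the paper says ``the proof is the same as that of Proposition~2.4 of~\cite{aizenman1983sharp},'' I would follow their subadditivity/tiling argument: write $\gamma_l$ as an approximate sum of translated copies of a fixed smaller loop $\gamma'$, use the FKG inequality (Proposition~\ref{prop:FKGboundary}) to bound $\mu(V_{\gamma_l})$ below by a product over the tiles of $\mu(V_{\gamma'})$-type events, and conclude both the existence of the limit and its independence of the box size by a Fekete-type lemma.

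The main obstacle I expect is the reverse inequality and, specifically, making rigorous the passage from the infinite-volume free measure back to a finite box — one needs uniqueness of the infinite-volume measure precisely to know that conditioning $\mu^{\mathbf{f}}_{\Z^d}$ on the configuration outside $\Lambda_L$ produces a measure dominating (or comparable to) $\mu^{\mathbf{f}}_{\Lambda_L}$, and to control the boundary effects when $\gamma_l$'s bounding surface is forced to live in a larger box $\Lambda_L$ with $L$ growing only polynomially (or even just boundedly faster) in $l$, so that the $(2L)^d$-type corrections are $o(\mathrm{Area}(\gamma_l))$. The tiling step also requires care that the translated copies of $\gamma'$ genuinely sum to something homologous to $\gamma_l$ modulo the relevant coefficient group, but since we are working with $V^{\mathrm{fin}}_{\gamma}(q')$ and the events are increasing, an approximate (rather than exact) tiling suffices, with the mismatch absorbed by FKG and a bounded-energy correction. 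Once both inequalities are in hand, combining them with Lemma~\ref{lemma:sharpconstant} gives $\tau''_{p,q} = \tau'_{p,q,\mathbf{f}}$ and the stated limit along arbitrary hyperrectangular families.
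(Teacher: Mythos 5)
Your easy direction is exactly the paper's: extremality of free boundary conditions gives $\mu^{\mathbf{f}}_{\Lambda_l,p,q}\paren{V_{\gamma_l}}\leq\mu^{\mathbf{f}}_{\Z^d,p,q}\paren{V^{\mathrm{fin}}_{\gamma_l}}$, hence $\liminf_l \frac{-\log\mu^{\mathbf{f}}_{\Lambda_l,p,q}\paren{V_{\gamma_l}}}{\mathrm{Area}\paren{\gamma_l}}\geq\tau'_{p,q,\mathbf{f}}$ via Lemma~\ref{lemma:sharpconstant}. The gap is in the reverse direction. Every comparison you invoke there --- the sandwich $\mu^{\mathbf{f}}_{\Lambda_l}\leq_{\mathrm{st}}\mu_{\Z^d}\leq_{\mathrm{st}}\mu^{\mathbf{w}}_{\Lambda_l}$, or ``$\mu^{\mathbf{f}}_{\Z^d}$ conditioned outside $\Lambda_L$ dominates $\mu^{\mathbf{f}}_{\Lambda_L}$'' --- dominates the finite free measure by something larger, so for the increasing event $V_{\gamma}$ it only reproduces the upper bound $\mu^{\mathbf{f}}_{\Lambda_l}\paren{V_{\gamma_l}}\leq\mu_{\Z^d}\paren{V_{\gamma_l}}$, i.e.\ the easy inequality again; it cannot produce the needed lower bound $\mu^{\mathbf{f}}_{\Lambda_l}\paren{V_{\gamma_l}}\geq e^{-\paren{\tau'_{p,q,\mathbf{f}}+o(1)}\mathrm{Area}\paren{\gamma_l}}$. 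Uniqueness (so that $\mathbf{f}=\mathbf{w}$ in infinite volume) is not the engine either --- the paper's argument never passes through wired measures --- and the remark that on $V^{\mathrm{fin}}_{\gamma_l}$ the bounding chain lies in ``some'' $\Lambda_L$ gives no quantitative control, since that $L$ is random and unbounded, and a ``finite-energy'' correction over a region of size $L^d$ is not obviously $o\paren{\mathrm{Area}\paren{\gamma_l}}$.

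What closes the argument, and is the actual content of the paper's proof, is a tile-level comparison in the opposite direction, obtained from monotonicity of free measures in the volume rather than from domination of the finite box by infinite volume: tile the equatorial rectangle $r\paren{\gamma_N}$ by translates $r_1,\dots,r_m$ of $r\paren{\gamma_k}$ kept at distance at least $D=D\paren{k,\epsilon}$ from $\partial\Lambda_N$ (the leftover width-$D$ strip has $o\paren{\mathrm{Area}\paren{\gamma_N}}$ plaquettes). For each such tile, $\mu^{\mathbf{f}}_{\Lambda_N}\paren{V_{\partial r_j}}\geq\mu^{\mathbf{f}}_{B}\paren{\partial r_j \text{ bounds in } B}$ for the box $B$ of inradius $D$ around $r_j$ (free measures increase with the volume), and by translation invariance and the monotone convergence $\mu^{\mathbf{f}}_{\Lambda_D}\nearrow\mu^{\mathbf{f}}_{\Z^d}$ this is at least $e^{-\epsilon}\mu^{\mathbf{f}}_{\Z^d}\paren{V^{\mathrm{fin}}_{\gamma_k}}$ once $D$ is large, uniformly in $N$. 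FKG over the tiles plus a constant cost per leftover plaquette then give $-\log\mu^{\mathbf{f}}_{\Lambda_N}\paren{V_{\gamma_N}}\leq -m\log\mu^{\mathbf{f}}_{\Z^d}\paren{V^{\mathrm{fin}}_{\gamma_k}}+m\epsilon+o\paren{\mathrm{Area}\paren{\gamma_N}}$, and comparing with Lemma~\ref{lemma:sharpconstant} yields the $\limsup$ bound. Your closing paragraph gestures at this tiling, but without the tile-level inequality relating $\mu^{\mathbf{f}}_{\Lambda_N}$ of an interior tile event to $\mu^{\mathbf{f}}_{\Z^d}\paren{V^{\mathrm{fin}}_{\gamma_k}}$, the tiling only produces finite-box tile probabilities and never connects the finite-box rate to $\tau'_{p,q,\mathbf{f}}$.
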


Finally, we specialize to the case of $i=d-1$ and show that $\tau_{p,q}'$ coincides with the surface tension $\tau_{p^*\paren{p},q}$ of the dual random-cluster model as defined in Section~\ref{sec:background}.  This generalizes the result of Bricmont, Lebowitz, and Pfister~\cite{bricmont1980surface} for the case $q=2,d=3,$  and implies the area law in our main theorem.

\begin{Proposition}\label{prop:dualtension}
Let $p$ be such that there is a unique infinite volume PRCM. Then
$$\tau_{p,q}''=\tau_{p^*\paren{p},q}\,.$$
\end{Proposition}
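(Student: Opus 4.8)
The plan is to establish the two inequalities $\tau_{p,q}''\geq\tau_{p^*,q}$ and $\tau_{p,q}''\leq\tau_{p^*,q}$ separately, exploiting Corollary~\ref{corollary:CrossingJ}, the duality Theorem~\ref{thm:duality}, and the relationship between free/wired boundary conditions for the PRCM and wired/free boundary conditions for the classical RCM. By Lemma~\ref{lemma:sharpconstant2} we already know $\tau_{p,q}''=\tau_{p,q}'$ (under the uniqueness hypothesis), so it suffices to compute $\tau_{p,q}''$. The starting point is Corollary~\ref{corollary:CrossingJ}, which identifies $\neg V_{\gamma_N}$ for the equatorial loop of the box $\Lambda_N=\Lambda_0\times[-N,N]$ with the dual crossing event $\partial^+\Lambda_N\xleftrightarrow[Q]{}\partial^-\Lambda_N$. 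Thus
\[
\mu_{\Lambda_N,p,q}^{\mathbf{f}}\paren{V_{\gamma_N}}=1-\mu_{\Lambda_N,p,q}^{\mathbf{f}}\paren{\partial^+\Lambda_N\xleftrightarrow[Q]{}\partial^-\Lambda_N}\,,
\]
and by Theorem~\ref{thm:duality} the dual complex $Q$ of a free-boundary PRCM on the box is a wired-boundary classical RCM on the dual box $\overline{\Lambda_N^{\bullet}}$ at the dual parameter $p^*$. This nearly matches the definition of $\tau_{p^*,q}$, up to the discrepancies between a hemispherical target set and a top-bottom target set, between the boxes $\Lambda_N$ and $\Lambda_0\times[-N,N]$, and between the exact combinatorial normalizations $(2N)^{d-1}$ versus $(2N+1)^{d-1}$, etc.

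For the lower bound on $\mu_{\Lambda_N,p,q}^{\mathbf{f}}(V_{\gamma_N})$ (equivalently an upper bound on the dual crossing probability, hence $\tau_{p,q}''\geq\tau_{p^*,q}$ after taking logs and dividing), the plan is to dominate the top-bottom crossing event inside the cube by a hemisphere-to-hemisphere crossing in a comparable or slightly larger cube. Restricting the target set can only decrease the probability, and by FKG (Theorem~\ref{thm:FKG}, or rather its dual classical analogue) and a standard \say{gluing} of crossings in adjacent slabs one passes from one geometry to the other at the cost of a constant number of extra crossing events, which is absorbed in the limit after normalizing by $(2N)^{d-1}$. For the matching upper bound on $\mu_{\Lambda_N,p,q}^{\mathbf{f}}(V_{\gamma_N})$ (equivalently a lower bound on the dual crossing probability, hence $\tau_{p,q}''\leq\tau_{p^*,q}$), one argues in reverse: a hemispherical crossing of the dual RCM forces, with the help of monotonicity in boundary conditions (Proposition~\ref{prop:extremal}) and a reflection/subadditivity argument as in~\cite{aizenman1983sharp,bricmont1980surface}, a top-bottom crossing of a comparable cube; alternatively one compares the two surface-tension-type quantities directly using the fact — recorded before Definition of $p_{\mathrm{surf}}$ — that for the RCM the hemisphere definition and the mixed-boundary/top-bottom definition have the same exponential rate (cf.~\cite{bodineau1999wulff,bodineau2005slab}). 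In either direction, the key technical input is that changing the shape of the target set or the boundary conditions of the cube alters the relevant probability by at most $e^{o(N^{d-1})}$, which follows from planar/FKG gluing arguments together with the observation that the relevant interfaces have area $O(N^{d-1})$.

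The main obstacle I expect is bookkeeping the geometric and boundary-condition discrepancies carefully enough that the error terms are genuinely $o(N^{d-1})$ and do not contaminate the sharp constant: one must verify that wiring the dual box's boundary (which is what free boundary conditions on the primal PRCM dualize to) matches the wired boundary conditions used in the definition of $\tau_{p,q}$, that the hemisphere target $\partial^+\Lambda_N$ versus top-face target can be interchanged by a bounded number of FKG gluings, and that passing between the cube $\Lambda_N$ and the box $\Lambda_0\times[-N,N]$ (with $\Lambda_0$ fixed or growing appropriately) does not change the rate. These are all standard in the random-cluster literature, so I would cite the reflection-positivity/subadditivity machinery of~\cite{aizenman1983sharp} and the surface-tension identifications of~\cite{bricmont1980surface,bodineau1999wulff} rather than redo them, and spend the bulk of the written proof on the duality translation via Theorem~\ref{thm:duality} and Corollary~\ref{corollary:CrossingJ}, which is the genuinely new ingredient here. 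A secondary subtlety is the uniqueness-of-infinite-volume-measure hypothesis: it is needed to invoke Lemma~\ref{lemma:sharpconstant2} so that $\tau_{p,q}''=\tau_{p,q}'$, and to ensure that the free-boundary finite-volume PRCMs converge to the same object whose surface tension we are computing; I would state explicitly that this is where that assumption enters.
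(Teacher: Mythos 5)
Your core chain --- Corollary~\ref{corollary:CrossingJ} to identify $V_{\gamma_N}$ with the complement of the dual hemisphere-crossing event, Theorem~\ref{thm:duality} (equivalently Definition~\ref{defn:boundaryconditions}) to identify the dual of the free-boundary PRCM on $\Lambda_N$ with the wired classical RCM at $p^*$, and Lemma~\ref{lemma:sharpconstant2} (which is indeed where the uniqueness hypothesis enters) --- is exactly the paper's proof, which consists of precisely these citations. Where you diverge is in perceiving geometric mismatches that are not there: the surface tension $\tau_{p,q}$ is defined on the same cube $\Lambda_N=[-N,N]^d$, with wired boundary conditions, with the same hemispheres $\partial^{\pm}\Lambda_N$, and with the same normalization $(2N)^{d-1}$, and Corollary~\ref{corollary:CrossingJ} applied with $\Lambda_0=[-N,N]^{d-1}$ concerns exactly the equatorial loop $\gamma_N$ appearing in the definition of $\tau''$. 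So there is no hemisphere-versus-top/bottom discrepancy and no box-shape discrepancy to bridge; the only residual mismatch is the half-unit shift between $\Lambda_N$ and the dual box $\overline{\Lambda_N^{\bullet}}$, which does not affect the exponential rate. This matters because the machinery you propose to import to close the (illusory) gaps is partly unavailable: the claim that the hemisphere-based and top/bottom-based surface tensions have the same exponential rate for general $q$ is not established in~\cite{bodineau1999wulff,bodineau2005slab} and is not used in the paper --- Bodineau's result is invoked only for the one-sided monotonicity giving $p_{\mathrm{surf}}\le p_{\mathrm{slab}}$. Had your argument genuinely needed that equality, or the reflection/subadditivity gluing you sketch, it would have a gap; as it stands those steps can simply be deleted, and what remains (the duality translation plus Lemma~\ref{lemma:sharpconstant2}) is the paper's argument.
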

\begin{proof}
    This follows immediately from Lemma~\ref{lemma:sharpconstant2} and Corollary~\ref{corollary:CrossingJ}.
\end{proof}

\begin{proof}[Proof of Lemma~\ref{lemma:sharpconstant}]
We proceed as in Proposition 2.4 of~\cite{aizenman1983sharp}. For a loop $\gamma,$ let $r\paren{\gamma}$ be the box whose boundary is the support of $\gamma.$ Let 
\[\mathcal{E} \coloneqq \set{\set{\gamma_l} : m\paren{r\paren{\gamma_l}} \xrightarrow{l \to \infty} \infty}\]
where we recall that $m\paren{r}$ is the minimum of the $i$ dimensions of $r,$ 
%and let 
%\[\tau_{p,q} \coloneqq \liminf_{\set{\gamma_l} \in \mathcal{E}} \frac{-\log\paren{\mu_{\Z^d,p,q}\paren{V_{\gamma_l}}}}{\mathrm{Area}(\gamma_l)}\,.\]
and let $\set{\gamma_k'} \in \mathcal{E}.$ We may tile $r\paren{\gamma_l}$ with $m\coloneqq \floor{\frac{\mathrm{Area}(\gamma_l)}{\mathrm{Area}(\gamma_k')}}$ translates of $r\paren{\gamma_k'}$ (call them $r_1,\ldots,r_m$) with the exception of $o\paren{\mathrm{Area}(\gamma_l)}$ $i$-plaquettes (call the set of such plaquettes $T.$) Notice that
\[\partial r\paren{\gamma}=\sum_{j=1}^m \partial r_m+\sum_{\sigma \in T} \partial \sigma\]
when the chains are oriented appropriately. It follows that $V_{\gamma_l}$ is implied by at most $\floor{\frac{\mathrm{Area}(\gamma_l)}{\mathrm{Area}(\gamma_k')}}$ translates of $V_{\gamma_k'}$ together with $o\paren{\mathrm{Area}(\gamma_l)}$ additional plaquettes. Then by the FKG inequality we have
\begin{align*}
    &\limsup_{\set{\gamma_l} \in \mathcal{E}} \frac{-\log\paren{\mu_{\Z^d,p,q}\paren{V_{\gamma_l}}}}{\mathrm{Area}(\gamma_l)}\\
    &\qquad\leq \liminf_{\set{\gamma_k'} \in \mathcal{E}} \limsup_{\set{\gamma_l} \in \mathcal{E}}  \frac{1}{\mathrm{Area}(\gamma_l)}\floor{\frac{\mathrm{Area}(\gamma_l)}{\mathrm{Area}(\gamma_k')}} \paren{-\log\paren{\mu_{\Z^d,p,q}\paren{V_{\gamma_k'}}} + o\paren{\mathrm{Area}(\gamma_l)}}\\
    &\qquad= \liminf_{\set{\gamma_l} \in \mathcal{E}} \frac{-\log\paren{\mu_{\Z^d,p,q}\paren{V_{\gamma_l}}}}{\mathrm{Area}(\gamma_l)} + o\paren{1}\,.
\end{align*}
%We then need to show that this follows from the observation that for $l$ fixed and $l' \to \infty$ the event $V_{\gamma_{l'}}$ can be ensured using $\floor{\mathrm{Area}\paren{l'}/\mathrm{Area}\paren{l}}$ translates of $V_{\gamma_l}$ and $o\paren{\mathrm{Area}\paren{\gamma_{l'}}}$ additional open plaquettes.
\end{proof}

\begin{proof}[Proof of Lemma~\ref{lemma:sharpconstant2}]
By extremality of free boundary conditions
$$\mu^{\bf{f}}_{\Lambda_N,p,q}\paren{V_{\gamma_N}^{\Lambda}}\leq \mu^{\mathbf{f}}_{\Z^d,p,q}\paren{V_{\gamma_N}}$$
so
$$\liminf_{l\rightarrow\infty} \frac{-\log\paren{\mu^{\bf{f}}_{\Lambda_N,p,q}\paren{V_{\gamma_N}}}}{\mathrm{Area}(\gamma_N}\geq \tau_{p,q}'\,.$$ 

Since  
\[\mu^{\mathbf{f}}_{\Z^d,p,q} = \lim_{N \to \infty} \mu^{\mathbf{f}}_{\Lambda_N,p,q}\]
as a monotone limit, for any $\epsilon>0$ there exists a $D = D\paren{k}>0$ so that if $\Lambda$ is any sufficiently large box and $\gamma_k'$ is a translate of $\gamma_k$ at least distance $D$ away from the boundary of $\Lambda,$

$$0<\log\paren{\mu^{\mathbf{f}}_{\Z^d,p,q}\paren{V_{\gamma_k}}}-\log\paren{\mu^{\mathbf{f}}_{\Lambda,p,q}\paren{V_{\gamma_k'}}} <\epsilon\,.$$
 
We may tile $r\paren{\gamma_N}$ with $m=m\paren{k}$ translates of $r\paren{\gamma_k}$ at least distance $D$ away from the boundary of $\Lambda_N$ (call them $r_1,\ldots,r_m$) with the exception of $o\paren{\mathrm{Area}(\gamma_N)}$ $i$-plaquettes (call the set of such plaquettes $T.$) Notice that
\[\partial r\paren{\gamma}=\sum_{j=1}^m \partial r_m+\sum_{\sigma \in T} \partial \sigma\]
when the chains are oriented appropriately. It follows that $V_{\gamma_N}$ is implied by $m$ translates of $V_{\gamma_k}$ together with $o\paren{\mathrm{Area}(\gamma_N)}$ additional plaquettes. 

It follows from the FKG inequality that
\begin{align*}
-\log\paren{\mu^{\mathbf{f}}_{\Lambda_N,p,q}\paren{V_{\gamma_N}}}&\leq \sum_{j=1}^m -\log\paren{\mu^{\mathbf{f}}_{\Lambda_N,p,q}\paren{V_{\partial r_j}}}+o\paren{\mathrm{Area}(\gamma_N)}\\
&\leq -m\log\paren{\mu^{\mathbf{f}}_{\Z^d,p,q}\paren{V_{\gamma_k}}}+m\epsilon+o\paren{\mathrm{Area}\paren{\gamma_N}}\,.
\end{align*}

Now fix $\epsilon'>0$ and let $k$ be large enough so that 
\[\abs{\frac{m\mathrm{Area}\paren{\gamma_k}}{\mathrm{Area}\paren{\gamma_k}}\frac{-\log\paren{\mu^{\mathbf{f}}_{\Z^d,p,q}\paren{V_{\gamma_k}}}}{\mathrm{Area}(\gamma_k)} - \tau_{p,q,\mathbf{f}}'} \leq \epsilon'\]
for all sufficiently large $N.$ 
Then
\begin{align*}
    \frac{-\log\paren{\mu^{\mathbf{f}}_{\Lambda_N,p,q}\paren{V_{\gamma_N}}}}{\mathrm{Area}(\gamma_N)}
   & \leq \frac{m\mathrm{Area}\paren{\gamma_k}}{\mathrm{Area}\paren{\gamma_N}}\frac{-\log\paren{\mu^{\mathbf{f}}_{\Z^d,p,q}\paren{V_{\gamma_k}}}}{\mathrm{Area}(\gamma_k)}+\epsilon+o\paren{1}\\
    &\leq  \tau_{p,q,\mathbf{f}}' +\epsilon + \epsilon'+o\paren{1}\,.
\end{align*}
Since $\epsilon$ and $\epsilon'$ were arbitrary, it follows that 
 \[\limsup_{N\rightarrow\infty} \frac{-\log\paren{\mu^{\mathbf{f}}_{\Lambda_N,p,q}\paren{V_{\gamma_N}}}}{\mathrm{Area}(\gamma_N)}\leq  \tau_{p,q,\mathbf{f}}' \,,\]
 and so
 \[\lim_{l\rightarrow\infty} \frac{-\log\paren{\mu^{\mathbf{f}}_{\Lambda_N,p,q}\paren{V_{\gamma_N}}}}{\mathrm{Area}(\gamma_N)}=  \tau_{p,q,\mathbf{f}}'\,.\]
   
%We then need to show that this follows from the observation that for $l$ fixed and $l' \to \infty$ the event $V_{\gamma_{l'}}$ can be ensured using $\floor{\mathrm{Area}\paren{l'}/\mathrm{Area}\paren{l}}$ translates of $V_{\gamma_l}$ and $o\paren{\mathrm{Area}\paren{\gamma_{l'}}}$ additional open plaquettes.
\end{proof}

\begin{proof}[Proof of the Area Law]
Let $p<p^*\paren{p_{\mathrm{surf}}(q)}$ and let $\mu^{\xi}_{\Z^d,p,q}$ be an infinite volume PRCM measure. By the result of~\cite{grimmett1995stochastic} we may choose $p<p'<p^*\paren{p_{\mathrm{surf}}(q)}$ be such that the infinite volume random cluster model with parameters $p^*\paren{p'}$ and $q$ is unique. Then by Lemma~\ref{lemma:sharpconstant2} and Proposition~\ref{prop:dualtension}, 
\[\tau_{p',q,\xi}' = \tau_{p,q}'' = \tau_{p^*\paren{p'},q}\,.\]
In particular, since $p'<p^*\paren{p_{\mathrm{surf}}(q)},$ we have
\[\tau_{p,q,\xi}' \geq \tau_{p',q,\xi}' > 0\,.\]
Finally, applying Lemma~\ref{lemma:sharpconstant} shows that taking $\oldconstant{const:prcm1} = \tau_{p,q,\xi}'$ gives the desired result.
\end{proof}

\section{The Perimeter Law Regime}\label{sec:perimeterlaw}
We show that, in the supercritical regime, a perimeter law holds for $(d-2)$-cycles obtained as the boundaries of connected, hyperplanar regions of $\Z^d.$ For a set $X$ that is the union of $i$-dimensional plaquettes, write $\rho_X=\sum_{\sigma \in X} \sigma$ where the sum is taken over the (positively oriented) plaquettes $\sigma$ that compose $X.$ In this section, $\gamma$ will be a $(d-2)$-dimensional cycle of the form $\rho_{\partial X}$ where $X$ is a connected union of $(d-1)$-dimensional plaquettes $\set{\sigma_1,\ldots,\sigma_N}$ contained in a hyperplane of $\Z^d.$  We may assume without loss of generality that $\gamma$ is contained in  $\set{x_d=0}.$

 The proof of the perimeter law is not substantially different than that for independent plaquette percolation~\cite{aizenman1983sharp}, but we include it here for completeness. We provide more detail in the proof of the key geometric argument --- phrasing it in the language of homology --- which is a good warm-up for what follows. Complementarily to the area law section, it is enough to give a perimeter law bound for the PRCM with free boundary conditions. For convenience, we will denote the dual (wired) random-cluster measure by $\mu^{\bullet,\mathbf{w}}_{\Z^d,p^*} = \mu^{\mathbf{w}}_{\Z^d,p^*,q,1}.$

We require the following exponential decay result for the supercritical random-cluster model.

\begin{Theorem}[Duminil-Copin, Raoufi, Tassion~\cite{duminil2019sharp}]\label{thm:expdecay}
Fix $d \geq 2$ and $q \geq 1.$ Let $\theta(p^*) = \mu^{\bullet,\mathbf{w}}_{\Z^d,p^*}\paren{0 \leftrightarrow \infty}$ and let $p_c = p_c\paren{\Z^d,q}.$ Then 
\begin{itemize}
    \item there exists a $c > 0$ so that $\theta\paren{p^*} \geq c\paren{p^*-p_c}$ for any $p^* \geq p_c$ sufficiently close to $p_c;$
    \item for any $p^* < p_c,$ there exists a $b_{p^*}$ so that for every $n \geq 0,$
    \[ \mu^{\bullet,\mathbf{w}}_{\Lambda_n,p^*}\paren{0 \leftrightarrow \partial \Lambda_n} \leq \exp\paren{-b_{p^*} n}\,.\]
\end{itemize}
\end{Theorem}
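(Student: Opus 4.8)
The statement is the sharpness theorem of Duminil-Copin, Raoufi, and Tassion, and the plan is to reproduce their decision-tree argument in the random-cluster setting. Fix $q\geq 1$ and write $\theta_n(p)=\mu^{\bullet,\mathbf{w}}_{\Lambda_n,p}\paren{0\leftrightarrow\partial\Lambda_n}$ and $\Sigma_n(p)=\sum_{k=0}^{n-1}\theta_k(p)$, $\Sigma(p)=\sum_{k\geq0}\theta_k(p)\in[0,\infty]$. Both conclusions will follow from a single differential inequality relating $\theta_n'(p)$ to $\Sigma_n(p)$, once one defines the threshold appropriately. The only structural inputs needed are that the random-cluster measure is positively associated (FKG) and has the domain-Markov property --- conditioning on the states of a set of edges leaves a random-cluster measure on the complement, monotone in the boundary condition --- both of which hold precisely because $q\geq 1$.

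The first and main step is an OSSS-type inequality for this dependent measure. Following Duminil-Copin--Raoufi--Tassion, the O'Donnell--Saks--Schramm--Servedio inequality extends from product measures to monotonic measures: for an increasing event $A$ depending on finitely many edges and any randomized decision tree $\mathbf{T}$, one has $\mathrm{Var}_p(\mathbf{1}_A)\leq\sum_e\delta_e(\mathbf{T})\,\mathrm{Inf}_e(A)$, where $\delta_e$ is the revealment of the edge $e$ and $\mathrm{Inf}_e(A)$ its influence. I would apply this inside $\Lambda_n$ with $A=\{0\leftrightarrow\partial\Lambda_n\}$ and the algorithm that explores the cluster of $\partial\Lambda_m$ for a uniformly chosen $m\in\{1,\dots,n\}$ (an inward exploration). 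The point of this choice is that the revealment of every edge is then bounded by $\tfrac{C}{n}\Sigma_n(p)$, since an edge at distance $\approx k$ from the chosen sphere is revealed only when it lies in a cluster reaching that sphere, and averaging over $m$ costs a factor $1/n$. Controlling the revealment by the averaged connectivity $\Sigma_n(p)/n$, rather than having it be trivially $O(1)$, is where the real work lies.

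Next I combine this with $\mathrm{Var}_p(\mathbf{1}_A)=\theta_n(1-\theta_n)$ and a Margulis--Russo-type inequality $\theta_n'(p)\geq c_p\sum_e\mathrm{Inf}_e(A)$ --- valid for the random-cluster model with $c_p$ bounded away from $0$ on compact subintervals of $(0,1)$, because the conditional one-edge probabilities are bounded away from $0$ and $1$. This yields $\theta_n'(p)\geq\dfrac{c_p\,n}{\Sigma_n(p)}\,\theta_n(p)\bigl(1-\theta_n(p)\bigr)$, equivalently $\tfrac{d}{dp}\log\theta_n(p)\geq\tfrac{c_p n}{\Sigma_n(p)}\bigl(1-\theta_n(p)\bigr)$. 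Setting $p_c=p_c(\Z^d,q):=\sup\{p:\Sigma(p)<\infty\}$ (one checks this agrees with the usual percolation threshold, the inequality $p_c\leq$ threshold being trivial and the reverse following from the integration below), the subcritical case is immediate: for $p^*<p_c$ pick $p^*<p_1<p_c$, so $S:=\Sigma(p_1)<\infty$ and $\theta_n\to 0$; integrating $\tfrac{d}{dp}\log\theta_n\geq\tfrac{c\,n}{2S}$ over $[p^*,p_1]$ (using $\Sigma_n(s)\leq S$ and $1-\theta_n\geq\tfrac12$ for large $n$) gives $\theta_n(p^*)\leq\theta_n(p_1)e^{-cn(p_1-p^*)/(2S)}\leq e^{-b_{p^*}n}$, which is the second bullet.

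For the supercritical bullet, when $\theta(p)=\lim_n\theta_n(p)>0$ one has $\Sigma_n(p)/n\to\theta(p)$ by Cesàro, so $n/\Sigma_n(p)\to1/\theta(p)$ and the differential inequality passes to the limit as $\theta'(p)\geq c_p\bigl(1-\theta(p)\bigr)$; integrating from $p_c$ and using $\theta\geq 0$ yields $\theta(p^*)\geq c\,(p^*-p_c)$ for $p^*$ close to $p_c$, after the standard identification of $\lim_n\theta_n(p^*)$ with $\mu^{\bullet,\mathbf{w}}_{\Z^d,p^*}\paren{0\leftrightarrow\infty}$. (That $\theta(p)>0$ whenever $p>p_c$ --- i.e. that one cannot have $\Sigma(p)=\infty$ while $\theta_n(p)\to0$ --- is forced by the same integration run between two points of $(p_c,p)$: there $\Sigma_n=o(n)$, so the exponent becomes superlinear and $\theta_n$ would decay exponentially, contradicting $\Sigma=\infty$.) The remaining bookkeeping --- finiteness of the events involved, passing between finite boxes and infinite volume, uniform bounds on $c_p$ --- is routine; the genuine obstacle is setting up and exploiting the OSSS/Russo machinery in the non-product, domain-Markov setting and engineering the exploration algorithm to produce the revealment bound $\Sigma_n(p)/n$.
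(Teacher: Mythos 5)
The paper does not prove Theorem~\ref{thm:expdecay}: it is imported verbatim as a black box from Duminil-Copin--Raoufi--Tassion~\cite{duminil2019sharp}, so there is no internal proof to compare against, and your sketch is exactly their argument (OSSS for monotonic measures, the inward exploration algorithm with revealment bounded by $\Sigma_n(p)/n$, a Russo-type inequality, and integration of the resulting differential inequality $\theta_n'\geq c_p\,n\,\theta_n(1-\theta_n)/\Sigma_n$). The outline is accurate, but the genuinely hard ingredients you yourself flag --- the extension of OSSS beyond product measures and the revealment bound for the random-cluster exploration --- are precisely the content of the cited work and remain asserted rather than proved in your sketch, so it should be read as a correct summary of~\cite{duminil2019sharp} rather than an independent proof.
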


Here, this theorem will be applied via the following corollary.

\begin{Corollary}\label{cor:C0}
Let $\mathcal{C}_0$ be the component of the origin in the classical random-cluster model on $\Z^d.$ Then, if $p^*< p_c\paren{\Z^d,q},$ 
\[\mathbb{E}_{\mu^{\bullet,\mathbf{w}}_{\Z^d,p^*}}\paren{\abs{\mathcal{C}_0}}<\infty\,.\]
\end{Corollary}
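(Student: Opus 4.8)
The plan is to deduce the finiteness of $\mathbb{E}\paren{\abs{\mathcal{C}_0}}$ from the exponential decay of connection probabilities in the subcritical regime, which is the second bullet of Theorem~\ref{thm:expdecay}. First I would write
\[
\mathbb{E}_{\mu^{\bullet,\mathbf{w}}_{\Z^d,p^*}}\paren{\abs{\mathcal{C}_0}} = \sum_{x\in\Z^d} \mu^{\bullet,\mathbf{w}}_{\Z^d,p^*}\paren{0 \leftrightarrow x}\,,
\]
using Tonelli's theorem to interchange the expectation and the sum over sites. The goal is then to bound each term $\mu^{\bullet,\mathbf{w}}_{\Z^d,p^*}\paren{0\leftrightarrow x}$ by something summable in $x$.

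The key step is to pass from the stated finite-volume bound $\mu^{\bullet,\mathbf{w}}_{\Lambda_n,p^*}\paren{0\leftrightarrow\partial\Lambda_n}\leq \exp\paren{-b_{p^*}n}$ to an infinite-volume bound on $\mu^{\bullet,\mathbf{w}}_{\Z^d,p^*}\paren{0\leftrightarrow x}$. For a site $x$ with $\|x\|_\infty = n$, the event $\set{0\leftrightarrow x}$ implies $\set{0\leftrightarrow\partial\Lambda_n}$, and by monotonicity of the wired measures (the infinite-volume wired measure is the decreasing limit of finite-volume wired measures, and $\set{0\leftrightarrow\partial\Lambda_m}$ is an increasing event decreasing in $m$), one gets $\mu^{\bullet,\mathbf{w}}_{\Z^d,p^*}\paren{0\leftrightarrow x}\leq \mu^{\bullet,\mathbf{w}}_{\Z^d,p^*}\paren{0\leftrightarrow\partial\Lambda_n}\leq \limsup_m \mu^{\bullet,\mathbf{w}}_{\Lambda_m,p^*}\paren{0\leftrightarrow\partial\Lambda_n}$. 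Since for $m>n$ the event $\set{0\leftrightarrow\partial\Lambda_n}$ is implied by nothing stronger, I would instead simply note $\mu^{\bullet,\mathbf{w}}_{\Z^d,p^*}\paren{0\leftrightarrow\partial\Lambda_n}\le \mu^{\bullet,\mathbf{w}}_{\Lambda_n,p^*}\paren{0\leftrightarrow\partial\Lambda_n}$ by the stochastic domination $\mu^{\bullet,\mathbf{w}}_{\Z^d,p^*}\leqst \mu^{\bullet,\mathbf{w}}_{\Lambda_n,p^*}$ (the wired finite-volume measure dominates the infinite-volume one), applied to the increasing event $\set{0\leftrightarrow\partial\Lambda_n}$. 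This yields $\mu^{\bullet,\mathbf{w}}_{\Z^d,p^*}\paren{0\leftrightarrow x}\leq \exp\paren{-b_{p^*}\|x\|_\infty}$.

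Finally I would conclude by a routine summation: the number of $x\in\Z^d$ with $\|x\|_\infty = n$ is at most $C_d n^{d-1}$, so
\[
\mathbb{E}_{\mu^{\bullet,\mathbf{w}}_{\Z^d,p^*}}\paren{\abs{\mathcal{C}_0}} \leq \sum_{n\geq 0} C_d\,(n+1)^{d-1} e^{-b_{p^*} n} < \infty\,,
\]
since $b_{p^*}>0$. The main obstacle, such as it is, is making the first step rigorous: justifying that the exponential decay bound, stated for the finite-volume wired measures $\mu^{\bullet,\mathbf{w}}_{\Lambda_n,p^*}$, transfers to the infinite-volume wired measure $\mu^{\bullet,\mathbf{w}}_{\Z^d,p^*}$. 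This is handled by the stochastic ordering in Proposition~\ref{prop:extremal} (its classical analogue, which is what is actually being invoked here since $\mu^{\bullet,\mathbf{w}}$ is the ordinary random-cluster model), applied to the increasing crossing event; everything else is elementary.
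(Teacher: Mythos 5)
Your argument is correct, but it takes a different route from the paper: the paper's entire proof is a citation of Theorem 5.86 in Grimmett's random-cluster book (exponential decay of the radius implies control of the cluster volume, hence finite mean), whereas you give a self-contained derivation from ingredients already stated in the paper. Concretely, you expand $\mathbb{E}\abs{\mathcal{C}_0}=\sum_x \mu^{\bullet,\mathbf{w}}_{\Z^d,p^*}\paren{0\leftrightarrow x}$ by Tonelli, observe that $\set{0\leftrightarrow x}$ with $\norm{x}_\infty=n$ forces $\set{0\leftrightarrow\partial\Lambda_n}$ (an increasing event depending only on edges of $\Lambda_n$, since any path from $0$ to $\partial\Lambda_n$ can be stopped at its first visit to $\partial\Lambda_n$), transfer to finite volume via the stochastic domination $\mu^{\bullet,\mathbf{w}}_{\Z^d,p^*}\leqst\mu^{\bullet,\mathbf{w}}_{\Lambda_n,p^*}$ (extremality of wired boundary conditions, the classical analogue of Proposition~\ref{prop:extremal}), invoke the second bullet of Theorem~\ref{thm:expdecay}, and sum the resulting bound $e^{-b_{p^*}\norm{x}_\infty}$ against the polynomial count of sites on $\partial\Lambda_n$. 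This is exactly the standard proof of finite susceptibility from exponential decay of the radius, and every step is sound; the only blemish is the garbled intermediate sentence about $\limsup_m$ and events ``implied by nothing stronger,'' which you correctly abandon in favor of the clean domination argument. What your route buys is that the corollary follows from Theorem~\ref{thm:expdecay} plus boundary-condition monotonicity alone, without outsourcing to a stronger volume-tail estimate; what the paper's citation buys is brevity and, implicitly, the stronger statement that $\abs{\mathcal{C}_0}$ has exponential tails, which is more than is needed here.
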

\begin{proof}
 See Theorem 5.86 in~\cite{grimmett2006random}. 
 \end{proof}

Let $p^* < p_c\paren{\Z^d,q}.$  We start by demonstrating that the positive $\vec{e}_d$-axis is disconnected from the hyperplane $W=\set{\vec{e}_d=-1/2}$ with positive probability in $Q$. Let $K = \set{\paren{1/2,\ldots,1/2,1/2+z}:z\in \Z^{\geq 0}}$ and $K_h = K \cap \{x_d \geq 1/2 + h\}.$ Denote by $F_h$ the event that $W$ is connected to $K_h$ in $Q.$ 

\begin{Proposition}\label{prop:axisdisconnect}
    If $p^* < p_c\paren{\Z^d,q}$ then $\mu^{\bullet,\mathbf{w}}_{\Z^d,p^*}\paren{F_0} < 1.$
\end{Proposition}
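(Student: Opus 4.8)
The plan is to show that the probability that the half-axis $K$ is connected in $Q$ to the hyperplane $W$ cannot be $1$, by exhibiting a positive-probability configuration on which no such connection exists. Since $Q$ is the dual complex of the $(d-1)$-dimensional PRCM $P$ with free boundary conditions, its distribution is that of the classical $1$-dimensional random-cluster model on the dual lattice with parameter $p^*$ and cluster weight $q$, by Theorem~\ref{thm:duality} (together with Proposition~\ref{prop:codim1}). The hypothesis $p^* < p_c\paren{\Z^d,q}$ places us in the subcritical regime, where Corollary~\ref{cor:C0} gives $\mathbb{E}\paren{\abs{\mathcal{C}_0}}<\infty$; in particular the component of any fixed vertex is almost surely finite, and each individual vertex is isolated (belongs to no open edge) with positive probability, since conditioning on one edge being closed has positive probability and these events are positively correlated via FKG.

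First I would reduce $F_0$ to a connection event between two explicit finite/infinite pieces of $Q$: $F_0$ occurs iff some vertex of $K$ lies in the same $Q$-component as some vertex of $W$. Next I would isolate the "bottom" vertex $v_0 = \paren{1/2,\ldots,1/2,1/2}$ of $K$ (the one closest to $W$) by an argument on its incident dual edges: with positive probability all edges of the dual lattice incident to $v_0$ are closed in $Q$. On this event $v_0$ is a singleton component, so in particular $v_0$ is not connected to $W$. This alone does not give $\neg F_0$, since higher vertices of $K$ could still connect to $W$ around $v_0$; to handle that I would instead cut $K$ away from the rest of the lattice by closing a bounded \emph{separating set} of dual edges — concretely, all dual edges joining a vertex of $K$ to a vertex not in $K$ that lie within bounded distance of $v_0$, together with the edges of $K$ below some height — arranged so that on this event the component of any $K_h$ vertex either stays inside $K$ (hence never reaches $W$) or is routed entirely through a finite region disjoint from $W$. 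Because only finitely many dual edges are involved, and each is closed with probability bounded below (uniformly, by monotonicity and the subcritical bound of Theorem~\ref{thm:expdecay}, or simply by finite-energy-type comparison with Bernoulli percolation), FKG gives this configuration positive probability, so $\mu^{\bullet,\mathbf{w}}_{\Z^d,p^*}\paren{\neg F_0}>0$.

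The cleanest version of the separating-set step uses the subcritical component bound directly: with probability one the $Q$-component of $v_0$ is finite, so with probability bounded below it is contained in $\Lambda_m \setminus W$ for some fixed $m$ (choosing $m$ large enough that $\mathbb{P}(\abs{\mathcal{C}_{v_0}} \le m,\ \mathcal{C}_{v_0}\cap W = \emptyset)>0$, which holds since $\mathbb{E}\abs{\mathcal{C}_0}<\infty$ forces $\mathbb{P}(\mathcal{C}_{v_0}$ finite and missing the hyperplane$) > 0$ — indeed one may simply close all edges of $Q$ with at least one endpoint in the finite box $\Lambda_m$ adjacent to $v_0$, an event of positive probability by FKG, which simultaneously isolates a neighborhood of $v_0$). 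Combining: on a positive-probability event the portion of $K$ near $W$ is fenced off inside a finite box that avoids $W$, and the only way out of that box along $K$ is upward, away from $W$; hence $W \not\leftrightarrow K$ in $Q$, i.e. $\neg F_0$.

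The main obstacle is the geometry of the separating argument: one must choose the finite set of dual edges to close so that it genuinely disconnects \emph{all} of $K$ (not just $v_0$) from $W$ in $Q$, while keeping the set finite so that FKG yields positive probability. I expect the slick route is not to separate all of $K$ by hand but to exploit that $W\leftrightarrow K$ would force a dual path from $W$ to $v_0$ (since $K$ is a half-line, any $Q$-path reaching $K$ from the $W$ side must either hit $v_0$ or cross the "collar" of bounded-length dual edges surrounding the initial segment of $K$), so closing the finitely many collar edges around the first, say, one or two vertices of $K$ suffices. Making this "any path to $K$ from below passes through a bounded collar" claim precise — essentially a planar-type separation in the relevant $2$-dimensional section containing the $\vec e_d$-axis, extended by the product structure in the remaining directions — is the one place requiring care; everything else is a routine FKG-plus-finite-energy positivity estimate.
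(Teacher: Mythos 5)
There is a genuine gap, and it is exactly at the point you flagged as "the one place requiring care." Your plan is to close a \emph{finite} collection of dual edges (a collar around the initial segment of $K$, or the edge boundary of a finite box containing $v_0$) and conclude $\neg F_0$ deterministically on that event. But no finite set of closed edges can disconnect the infinite half-line $K$ from the infinite hyperplane $W$: a path from $W$ may travel far sideways, rise above any prescribed box, and attach to $K$ at a high vertex without ever approaching $v_0$ or the collar. The "planar-type separation" claim that any $Q$-path from $W$ to $K$ must hit $v_0$ or a bounded collar is false in every dimension $d\geq 2$; any set separating $K$ from $W$ in $\R^d\setminus(K\cup W)$ is necessarily unbounded, so the finite-energy-plus-FKG step, applied alone, cannot yield $\neg F_0$. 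Likewise, the almost-sure finiteness of $\mathcal{C}_{v_0}$ controls only connections through $v_0$, not connections from $W$ to $K_h$ for large $h$, so the "cleanest version" in your third paragraph is a non sequitur.

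The missing idea — and the way the paper handles the unbounded part of $K$ — is probabilistic rather than geometric: by translation invariance, $\sum_{v\in K}\sum_{w\in W}\mu^{\bullet,\mathbf{w}}_{\Z^d,p^*}\paren{v\leftrightarrow w}\leq \mathbb{E}\paren{\abs{\mathcal{C}_0}}<\infty$ (Corollary~\ref{cor:C0}), hence $\mu^{\bullet,\mathbf{w}}_{\Z^d,p^*}\paren{F_h}$ is bounded by a tail of a convergent series and tends to $0$ as $h\to\infty$. One fixes $h$ with $\mu^{\bullet,\mathbf{w}}_{\Z^d,p^*}\paren{F_h}<1/2$, and only then does one close the (finitely many) edges in the edge boundary of the finite bottom segment $K\setminus K_h$, an event of probability bounded below by finite energy. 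Since $F_0\subseteq F_h\cup\set{K\setminus K_h\leftrightarrow W}$, the two decreasing events $\neg F_h$ and $\neg\paren{K\setminus K_h\leftrightarrow W}$ combine via FKG to give $\mu^{\bullet,\mathbf{w}}_{\Z^d,p^*}\paren{\neg F_0}>0$. You have all the ingredients (Corollary~\ref{cor:C0}, FKG, finite energy) in hand, but without the estimate $\mu^{\bullet,\mathbf{w}}_{\Z^d,p^*}\paren{F_h}\to 0$ there is no way to dispose of connections from $W$ to the upper, infinite part of the axis, and your argument does not close.
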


\begin{proof}
By translation invariance, we have that
    \[\sum_{v \in K}\sum_{w \in W} \mu^{\bullet,\mathbf{w}}_{\Z^d,p^*}(v \xleftrightarrow[]{} w) = \mathbb{E}_{\mu^{\bullet,\mathbf{w}}_{\Z^d,p^*}}\paren{\abs{\mathcal{C}_0}} < \infty\,,\]
    using Corollary~\ref{cor:C0}. Therefore
    \[\mu^{\bullet,\mathbf{w}}_{\Z^d,p^*}\paren{F_h} \leq \sum_{v \in K_h}\sum_{w \in W} \mu^{\bullet,\mathbf{w}}_{\Z^d,p^*}(v \xleftrightarrow[]{} w) \xrightarrow[]{h \to \infty} 0\,,\]
    since the sum is the tail of a convergent series. Let $h$ be large enough so that $\mu^{\bullet,\mathbf{w}}_{\Z^d,p^*}\paren{F_h} < 1/2.$ Notice that if the entire edge boundary of $K \setminus K_h$ is omitted, then $K \setminus K_h$ is disconnected from $W.$ Thus
    \[\mu^{\bullet,\mathbf{w}}_{\Z^d,p^*}\paren{K \setminus K_h \xleftrightarrow[]{} W} \leq 1-\paren{\frac{p}{q}}^{4h+2}\,.\]
    Then, by the FKG inequality,
    \begin{align*}
        \mu^{\bullet,\mathbf{w}}_{\Z^d,p^*}\paren{\neg F_0} &\geq \mu^{\bullet,\mathbf{w}}_{\Z^d,p^*}\paren{\neg F_h \bigcap \neg\paren{K \setminus K_h \xleftrightarrow[]{} W}}\\
        &\geq \mu^{\bullet,\mathbf{w}}_{\Z^d,p^*}\paren{\neg F_h} \mu^{\bullet,\mathbf{w}}_{\Z^d,p^*}\paren{\neg \paren{K \setminus K_h \xleftrightarrow[]{} W}}\\
        &\geq \frac{1}{2}\paren{\frac{p}{q}}^{4h+2} > 0\,.
    \end{align*}
\end{proof}

Let $\sigma_{i_1},\ldots,\sigma_{i_M}$ be the subset of the $(d-1)$-plaquettes $\sigma_i$ which share a $(d-2)$-dimensional face with $\gamma.$ For each $1 \leq i \leq N,$ let $a_i$ be the center of $\sigma_i$ and let $F^i=F_0+a_i$ (that is, $F^i=\set{K+a_i  \xleftrightarrow[]{} W}$).

\begin{figure}
     \centering
     \begin{subfigure}[b]{0.45\textwidth}
         \centering
         \includegraphics[width=\textwidth]{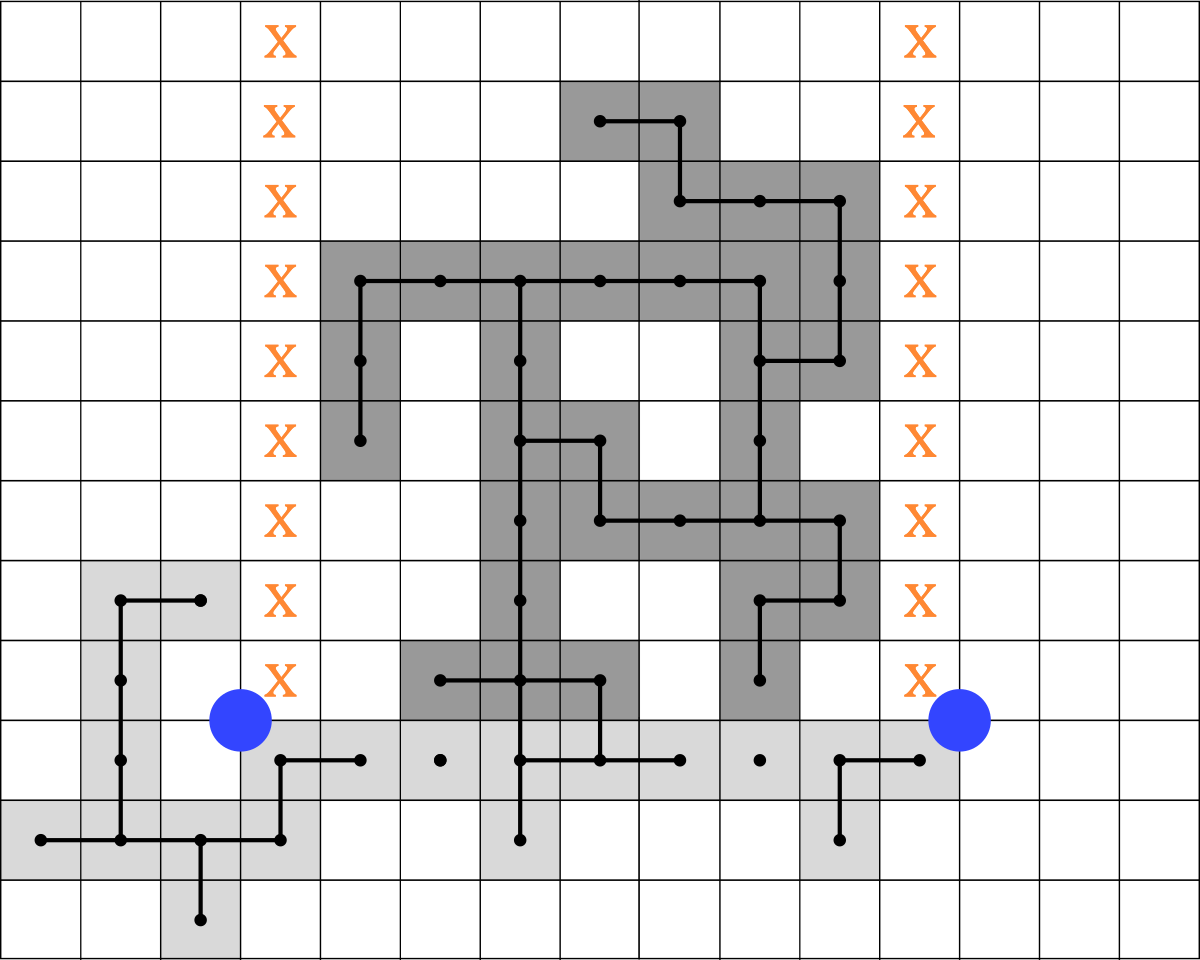}
         \subcaption[]{}
         \label{fig:perimeter1}
     \end{subfigure}
     \hfill
     \begin{subfigure}[b]{0.45\textwidth}
         \centering
         \includegraphics[width=\textwidth]{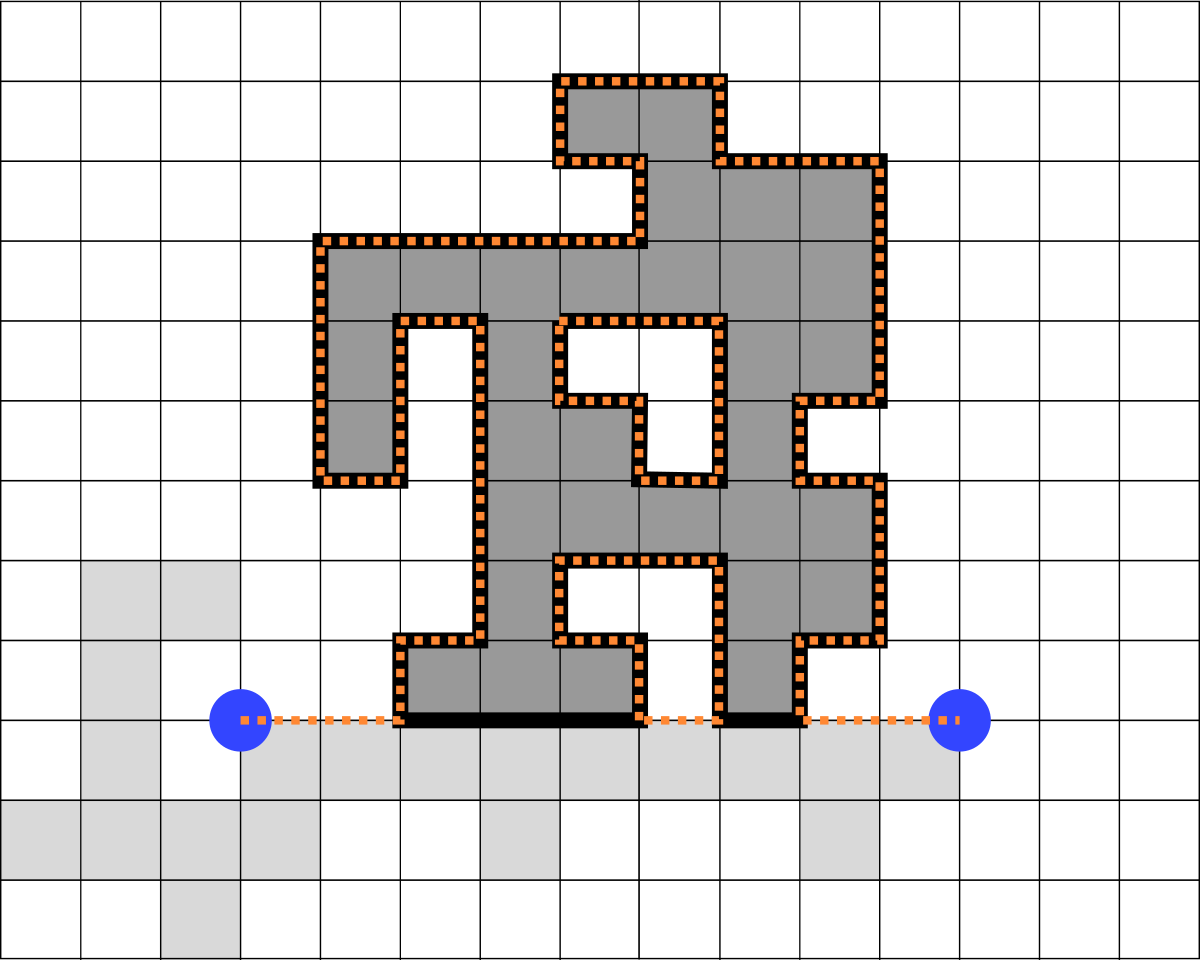}
        \subcaption[]{}

         \label{fig:perimeter2}
     \end{subfigure}
            \caption{The construction in the proof of Lemma~\ref{lemma:perimetertopological} for the case $d=2$ (or a cross-section of it in higher dimensions). In both figures $\gamma$ is shown by the large blue dots, and  the cubes $\tau_1,\ldots,\tau_J$ are colored gray with the subset $\tau_{j_1},\ldots, \tau_{j_K}$ given a darker shade. In (A), $C$ is depicted by the small black dots connected by bonds. Note that the cubes marked with the orange $X$'s are excluded from $c$ by the event $\bigcap_{j \leq M} \neg F^{i_j}.$ (B) shows the support of the chains $\alpha_1$  (the dotted orange paths) and $\alpha_2$ (the thick black paths). This figure was adapted from Figure 3 of~\cite{aizenman1983sharp}.}
        \label{fig:perimeter}
\end{figure}

\begin{Lemma}\label{lemma:perimetertopological}
\[\bigcap_{j \leq M} \neg F^{i_j} \subset V_{\gamma}^{\mathrm{fin}}\paren{\Z}\]
\end{Lemma}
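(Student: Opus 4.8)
The plan is to pass to the dual and produce, on the event $\bigcap_{j\le M}\neg F^{i_j}$, an explicit \emph{finite} $(d-1)$-chain $S$ with $\partial S=\gamma$ all of whose plaquettes lie in $P$; this is precisely the event $V_\gamma^{\mathrm{fin}}(\Z)$. Since $\gamma=\partial\rho_X$, every $(d-1)$-chain with boundary $\gamma$ has the form $S=\rho_X+\partial A$ for a unique finite $d$-chain $A$ (using $H_{d-1}(\Z^d;\Z)=H_d(\Z^d;\Z)=0$), which we regard as a signed set of $d$-cubes. Recalling that a $(d-1)$-plaquette $\sigma$ lies in $P$ exactly when its dual edge $\sigma^\bullet$ is \emph{absent} from $Q$, a short bookkeeping of the coefficient of each plaquette in $\rho_X+\partial A$ shows that $\mathrm{supp}(S)\subseteq P$ holds if and only if, among the dual edges present in $Q$, those ``cut'' by $A$ (having exactly one incident $d$-cube in $A$) are exactly the duals of the plaquettes of $X$ missing from $P$, and are cut with the sign that cancels the corresponding coefficient of $\rho_X$. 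Note also at the outset that $\neg F^{i_j}$ forces $\sigma_{i_j}\in P$: otherwise $\sigma_{i_j}^\bullet$ would join the $d$-cube immediately above $\sigma_{i_j}$ to the one immediately below it, and the latter lies in $W$.

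First I would build the cut $A$. Work on the probability-one event — legitimate since $p^*<p_c$, by Theorem~\ref{thm:expdecay} — that every cluster of $Q$ is finite. Let $Q^\flat$ denote $Q$ with the dual edges of the missing plaquettes of $X$ deleted, and let $A$ be the union of the $Q^\flat$-clusters of the ``floor cubes'': for each plaquette $\sigma_k$ of $X$ missing from $P$, the $d$-cube $\tau_k^-$ immediately below $\sigma_k$. Then $A$ is a finite union of finite clusters. Being a union of $Q^\flat$-components, $A$ cuts no dual edge of $Q^\flat$; the remaining dual edges of $Q$ are precisely the duals of the missing plaquettes of $X$, and for such a $\sigma_k$ we have $\tau_k^-\in A$ by construction. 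Hence $A$ cuts exactly the required edges, provided that for every missing plaquette $\sigma_k$ of $X$ the ``ceiling cube'' $\tau_k^+$ immediately above it fails to lie in $A$; granting this, $S=\rho_X\pm\partial A$ (sign as above) is the desired chain, so $V_\gamma^{\mathrm{fin}}(\Z)$ occurs. This $A$ is the set $C$ drawn in Figure~\ref{fig:perimeter}(A), and the two parts of $\partial A$ are the chains $\alpha_1,\alpha_2$ of Figure~\ref{fig:perimeter}(B).

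The crux is to exclude $\tau_k^+\in A$ using $\bigcap_{j\le M}\neg F^{i_j}$. Suppose $\tau_k^+\in A$; then there is a $Q^\flat$-path $\pi$ from $\tau_k^+$, which sits at height $x_d=\tfrac{1}{2}$, to some floor cube $\tau_{k'}^-$ at height $x_d=-\tfrac{1}{2}$, hence in $W$. Because the dual edges of the missing plaquettes of $X$ were removed, $\pi$ can cross the hyperplane $H_0=\{x_d=0\}$ only through plaquettes \emph{not} in $X$, so $\pi$ does cross $H_0$; consider the initial segment of $\pi$ up to its first such crossing. This segment stays in the closed upper half-space, and its orthogonal projection to $H_0$ traces a walk in the $(d-1)$-cube adjacency graph of $H_0$ that starts at $\sigma_k\in X$ and ends at a plaquette outside $X$. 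Thus the walk steps, at some point, from a plaquette $\sigma_{i_j}\in X$ to a neighbouring plaquette outside $X$, necessarily across a $(d-2)$-face of $\gamma$, so $\sigma_{i_j}$ is one of the boundary plaquettes; and at that step $\pi$ occupies a $d$-cube lying on the upward ray above $\sigma_{i_j}$. Since $\pi$ also reaches $\tau_{k'}^-\in W$ and $Q^\flat\subseteq Q$, the upward ray above $\sigma_{i_j}$ is $Q$-connected to $W$, that is, $F^{i_j}$ occurs — a contradiction. Therefore $\tau_k^+\notin A$, and the construction is complete.

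The hard part is this last ``shadow'' step: verifying that a $Q^\flat$-path confined to the upper half-space projects to a genuine walk among the $(d-1)$-cubes of $H_0$, that such a walk leaving $X$ must cross a face of $\gamma$ and hence step over a boundary plaquette, and that the $d$-cube occupied at that step lies on the relevant upward ray — this is exactly where the hypothesis on the boundary plaquettes of $X$ enters, and where the three-level geometry (above, on, below $H_0$) must be handled with care. A secondary but necessary point is the sign bookkeeping making $\rho_X\pm\partial A$ \emph{cancel} rather than double the coefficients of $\rho_X$ on the missing plaquettes of $X$ (the correct sign may depend on the parity of $d$); and one should record that, strictly, the asserted inclusion holds on the full-measure event that $Q$ has only finite clusters, which is all that is needed where the lemma is applied.
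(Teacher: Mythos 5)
Your proof is correct, and it takes a genuinely different route from the paper's. The paper builds the null-homology directly as a piece of the boundary of the union of dual cubes of the $Q$-clusters attached to the plaquettes of $X$: it truncates $\partial\sum_j\tau_j$ to the cylinder $\paren{X\setminus\gamma}\times\left[0,\infty\right)$ and then does a signed, plaquette-by-plaquette comparison with the boundary of the cubes lying inside the cylinder to conclude $\partial\alpha_1=(-1)^{d-1}\gamma$; the events $\neg F^{i_j}$ enter there to rule out cluster edges crossing the side $\gamma\times\left[0,\infty\right)$ of the cylinder. You instead take the candidate surface in the form $\rho_X\pm\partial A$, so that $\partial S=\gamma$ is automatic, and shift all the work to showing $\mathrm{supp}(S)\subseteq P$; with $A$ the union of $Q^\flat$-clusters of the floor cubes (the duals of the missing plaquettes of $X$ being deleted from $Q$), this reduces to the single statement that no ceiling cube lies in $A$, which your projection argument derives cleanly from $\bigcap_{j\le M}\neg F^{i_j}$: the projected walk must step out of $X$ over some boundary plaquette $\sigma_{i_j}$, at which moment the path sits on the ray above $\sigma_{i_j}$ and continues inside $Q$ down to $W$. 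Your route buys a trivial boundary computation and a single, localized use of the hypothesis (the sign issue you flag is harmless: every missing plaquette is cut from below, so all cuts carry a common sign and the global sign of $\pm\partial A$ can be chosen to cancel $\rho_X$ there), while the paper's route avoids the shadow lemma and exhibits the surface directly as a truncated cluster boundary, matching Figure~\ref{fig:perimeter}. Both arguments use subcriticality only to guarantee finiteness of the relevant dual clusters, so the inclusion holds on the full-measure event you record — the paper's own proof carries the same caveat, and that is all that is used in Proposition~\ref{prop:perimeter}.
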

\begin{proof}
Assume that the event 
\begin{equation}
    \label{eq:ptop}
\bigcap_{j \leq M} \neg F^{i_j}
\end{equation}
occurs. Let $\mathcal{C}_i$ denote the connected component of $a_i$ in $Q$ and let $\hat{\mathcal{C}}_i$ be the closure $\mathcal{C}_i$ in $\R^d\setminus P,$ for $i\leq 1 \leq N.$ Set $\mathcal{C}=\cup_{i=1}^N \mathcal{C}_i.$ By Lemma~\ref{prop:PtoQretract}, $\cup_{i=1}^N \hat{\mathcal{C}}_i$ is the union of the $d$-cubes dual to the sites of $\mathcal{C}.$ There are finitely many such cubes because $p<p_c\paren{\Z^d}$; denote them by  $\tau_1,\ldots,\tau_J.$

We may write 
\[\partial\sum_{i=1}^J\tau_j=\alpha_0+\alpha_1\]
where $\alpha_1$ and $\alpha_0$ are supported on $\paren{X\setminus \gamma} \times \left[0,\infty\right)$ and the complement of that set, respectively. Any plaquette in the support of  $\partial \sum \tau_j$ is contained in $P,$ so $\alpha_1 \in C_{d-1}\paren{P;\;\Z}.$ We will show that $\partial \alpha_1= (-1)^{d-1} \gamma.$

Let
\[\set{\tau_{j_k}}_{k=1}^K=\set{\tau:\in \set{\tau_1,\ldots,\tau_J}:\tau\subset X\times \left[0,\infty\right)}\]
We will compute the difference between $\alpha_1$ and $\alpha_2\coloneqq \partial \sum_{j=1}^K \tau_{j_k}.$ For $j\in \set{1,2},$ write
\[\alpha_j=\sum_{\sigma}a_{\sigma}^j\sigma\]
where the sum is taken over the $(d-1)$-plaquettes of $\Z^d.$ Each $(d-1)$-plaquette $\sigma$  is contained in exactly two $d$-cubes, so $a_{\sigma}^j\in\set{-1,0,1}$  for $j=1,2.$ In addition, if $a_{\sigma}^1\neq 0$ and $a_{\sigma}^2\neq 0$ then  $a_{\sigma}^1=a_{\sigma}^2.$

Let $\sigma$ be a $(d-1)$ plaquette of $\Z^d.$ If $a_{\sigma}^1\neq 0$ and $a_{\sigma}^2=0$ then $\sigma$ is dual to an edge connecting a site of $X\times \left[0,\infty\right)\setminus \mathcal{C}$ with a site of $\mathcal{C}$ outside of that cylinder. As we excluded plaquettes contained in $\gamma \times \left[0,\infty\right)$ from the support of $\alpha_1,$ it follows that $\sigma$ is one of the plaquettes $\set{\sigma_1,\ldots,\sigma_N}$ from the definition of $\gamma.$ On the other hand, if $a_{\sigma}^2\neq 0$ and $a_{\sigma}^1=0$ then $\sigma$  corresponds to an edge between two sites of $\mathcal{C},$ one inside the cylinder and one outside of it. By the assumption in (\ref{eq:ptop}), $\sigma\in \set{\sigma}_{i=1}^N.$ Morever, each $\sigma_i$ must fall into one of these two classes. Therefore
\[a_{\sigma_i}^1\neq 0 \iff a_{\sigma_i}^2= 0\]
for $i=1,\ldots,N.$ 

 In addition, by (\ref{eq:boundaryOperator}), $\sigma_i$ either appears with the sign $(-1)^{d-1}$ in $\alpha_1$ or it appears with the sign $(-1)^{d}$ in $\alpha_2.$ Thus
\[\alpha_1=\alpha_2+(-1)^{d-1}\sum_{i=1}^N\sigma_i\,,\]
and, recalling the definitions of $\alpha_2$ and the plaquettes $\sigma_i,$
\[\partial \alpha_1= \partial\circ \partial\paren{\sum_{k=1}^K\tilde{\tau}_k}+(-1)^{d-1} \partial\paren{\sum_{i=1}^N \sigma_i}=(-1)^{d-1} \gamma\,.\]
Therefore $\gamma$ is null-homologous in $P.$ 
\end{proof}

\begin{Proposition}\label{prop:perimeter}
    Let $p^* < p_c\paren{\Z^d,q},$ let $m\in \N$, and let $V_{\gamma}=V_{\gamma}^{\mathrm{inf}}\paren{m}$ or $V_{\gamma}=V_{\gamma}^{\mathrm{fin}}\paren{m}.$ Then there is an $0 < \beta < \infty$ so that 
    \[\mu^{\mathbf{f}}_{\Z^d,p,q,d-1}\paren{V_{\gamma}} \geq \exp\paren{-\beta \mathrm{Per}\paren{\gamma}}\,\]
    for any boundary $\gamma$ of a $(d-1)$-dimensional connected, hyperplanar region of $\Z^d.$
\end{Proposition}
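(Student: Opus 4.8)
The plan is to obtain the perimeter-law lower bound as a short consequence of the two inputs already in place --- Lemma~\ref{lemma:perimetertopological} and Proposition~\ref{prop:axisdisconnect} --- combined with FKG and translation invariance on the dual random-cluster measure.

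First I would trim the list of admissible events. By Corollary~\ref{cor:vgammacontain} we have $V_\gamma^{\mathrm{fin}}\paren{1} \subseteq V_\gamma^{\mathrm{fin}}\paren{m} \subseteq V_\gamma^{\mathrm{inf}}\paren{m}$ for every $m \in \N$, so it suffices to prove the bound for the smallest event $V_\gamma = V_\gamma^{\mathrm{fin}}\paren{1} = V_\gamma^{\mathrm{fin}}\paren{\Z}$. Next I would pass to the dual side: the event $V_\gamma^{\mathrm{fin}}\paren{1}$ is a deterministic function of $P$, hence of its dual complex $Q$, and under $\mu^{\mathbf f}_{\Z^d,p,q,d-1}$ the complex $Q$ is distributed as $\mu^{\bullet,\mathbf w}_{\Z^d,p^*} = \mu^{\mathbf w}_{\Z^d,p^*,q,1}$. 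This is the infinite-volume form of Theorem~\ref{thm:duality} (the finite-box duality passed to the limit, using the identification of free PRCM with wired RCM boundary conditions recorded after Definition~\ref{defn:boundaryconditions}), so all probabilities below may be computed for $\mu^{\bullet,\mathbf w}_{\Z^d,p^*}$.

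With these reductions the estimate is immediate. Lemma~\ref{lemma:perimetertopological} gives the inclusion $\bigcap_{j\le M}\neg F^{i_j} \subseteq V_\gamma^{\mathrm{fin}}\paren{1}$, hence $\mu^{\bullet,\mathbf w}_{\Z^d,p^*}\paren{V_\gamma^{\mathrm{fin}}\paren{1}} \ge \mu^{\bullet,\mathbf w}_{\Z^d,p^*}\paren{\bigcap_{j\le M}\neg F^{i_j}}$. Each $\neg F^{i_j}$ is a decreasing event in the bond configuration of $Q$ (it asserts that two sets are not connected), so the FKG inequality for $\mu^{\bullet,\mathbf w}_{\Z^d,p^*}$ --- applied exactly as inside the proof of Proposition~\ref{prop:axisdisconnect} --- yields $\mu^{\bullet,\mathbf w}_{\Z^d,p^*}\paren{\bigcap_{j\le M}\neg F^{i_j}} \ge \prod_{j\le M}\mu^{\bullet,\mathbf w}_{\Z^d,p^*}\paren{\neg F^{i_j}}$. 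By translation invariance of the infinite-volume wired measure (applied to the lattice translations relating the $\sigma_{i_j}$) every factor equals the common value $\zeta \coloneqq \mu^{\bullet,\mathbf w}_{\Z^d,p^*}\paren{\neg F_0}$, and $\zeta>0$ because $p^*<p_c\paren{\Z^d,q}$ makes Proposition~\ref{prop:axisdisconnect} applicable. Thus $\mu^{\mathbf f}_{\Z^d,p,q,d-1}\paren{V_\gamma^{\mathrm{fin}}\paren{1}} \ge \zeta^M$. Finally I would convert the exponent $M$ into a perimeter: the plaquettes $\sigma_{i_1},\dots,\sigma_{i_M}$ are exactly the cells of $X$ containing a $(d-2)$-face of $\gamma$, and within the ambient hyperplane each $(d-2)$-cell of $\gamma = \rho_{\partial X}$ is a facet of precisely one cell of $X$; mapping each such cell to that unique neighbour surjects the $(d-2)$-cells of $\gamma$ onto $\set{\sigma_{i_1},\dots,\sigma_{i_M}}$, so $M \le \mathrm{Per}\paren{\gamma}$. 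Since $\zeta \in (0,1]$ this gives $\mu^{\mathbf f}_{\Z^d,p,q,d-1}\paren{V_\gamma} \ge \zeta^M \ge \zeta^{\mathrm{Per}\paren{\gamma}} = \exp\paren{-\beta\,\mathrm{Per}\paren{\gamma}}$ with $\beta = -\log\zeta \in (0,\infty)$ (and if $\zeta=1$ the left side is $1$ and any $\beta>0$ works).

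I do not expect any serious obstacle here: the weight of the argument is carried upstream by Lemma~\ref{lemma:perimetertopological} and Proposition~\ref{prop:axisdisconnect}. The only points deserving a sentence of care are that $\neg F^{i_j}$ is decreasing so that FKG applied to the intersection points the right way, that the duality sending $\mu^{\mathbf f}_{\Z^d,p,q,d-1}$ to $\mu^{\bullet,\mathbf w}_{\Z^d,p^*}$ is being used in infinite volume and so should be invoked as the limit of the finite-box statement, and the elementary combinatorial bound $M \le \mathrm{Per}\paren{\gamma}$.
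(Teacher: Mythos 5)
Your proposal is correct and follows essentially the same route as the paper's proof: reduce to $V_\gamma^{\mathrm{fin}}\paren{\Z}$ via Corollary~\ref{cor:vgammacontain}, pass to the dual wired measure, combine Lemma~\ref{lemma:perimetertopological} with FKG and translation invariance, and invoke Proposition~\ref{prop:axisdisconnect} for positivity of $\mu^{\bullet,\mathbf w}_{\Z^d,p^*}\paren{\neg F_0}$. Your added care about the number of factors ($M\le\mathrm{Per}\paren{\gamma}$) and the infinite-volume form of the duality only makes explicit what the paper leaves implicit.
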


\begin{proof}
By Corollary~\ref{cor:vgammacontain}, it suffices to show the statement for $V_{\gamma}=V_{\gamma}^{\mathrm{fin}}\paren{\mathbb{Z}}.$   Using the FKG inequality, Proposition~\ref{prop:axisdisconnect} and Lemma~\ref{lemma:perimetertopological}, we have
\begin{align*}
    \mu^{\mathbf{f}}_{\Z^d,p,q,d-1}(V_\gamma) &\geq \mu^{\bullet,\mathbf{w}}_{\Z^d,p^*}\paren{\bigcap_{i \leq N} \neg F^i}\\
    &\geq \mu^{\bullet,\mathbf{w}}_{\Z^d,p^*}\paren{\neg\paren{F_0}}^{\mathrm{Per}(\gamma)}\\
    &\geq \exp{(-\beta\mathrm{Per}(\gamma))}\,,
\end{align*} 
for some $\beta>0.$
\end{proof}

\section{A Sharp Constant for the Perimeter Law}
\label{sec:sharpnessperimeter}

In this section, we gain a finer understanding of the supercritical asymptotics of Wilson loop variables corresponding the boundaries of $(d-1)$-dimensional boxes of $\Z^d.$ We may assume without loss of generality that these boxes are of the form $\brac{0,M_1}\times \ldots \times \brac{0,M_{d-1}} \times 0.$ This section is the only place where we require control over how the dimensions of the boxes grow to $\infty.$  Recall from the introduction that a family of $k$-dimensional boxes $\set{r_{l}}$ is suitable if its  $k$ dimensions diverge to $\infty$ and if $m\paren{r_{l}}=\omega\paren{\log\paren{M\paren{r_l}}},$ where $M\paren{r}$ and $m\paren{r}$ denote the largest and smallest dimensions. When $\set{r_{l}}$ is a suitable, we say that $\set{\gamma_l}=\set{\partial r_{l}}$ is a \emph{suitable} family of $(k-1)$-dimensional rectangular boundaries. This hypothesis allows us to interpolate between boundary conditions for the PRCM on a box, and is unnecessary in the case $q=1$ of Bernoulli plaquette percolation.

\begin{Theorem}
\label{theorem:sharp_perimeter}
Let $p>p^{*}\paren{p_c\paren{\Z^d,q}}$ and $m\in \N.$ There is a constant $0 < \oldconstant{const:prcm2} < \infty$ so that if $\set{\gamma_l}$ is a suitable family of $(d-2)$-dimensional rectangular boundaries, $\mu_{\Z^d,p,q}=\mu^{\xi}_{\Z^d,p,q,d-1}$ is any infinite volume random-cluster measure, and $V_{\gamma}=V_{\gamma}^{\mathrm{fin}}\paren{m}$ or  $V_{\gamma}=V_{\gamma}^{\mathrm{inf}}\paren{m}$ then
\[-\lim_{l\rightarrow\infty}\frac{\mu_{\Z^d,p,q}\paren{V_{\gamma}}}{\mathrm{Per}\paren{\gamma_l}}=\oldconstant{const:prcm2}\,.\]
\end{Theorem}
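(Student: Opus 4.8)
The plan is to prove matching exponential upper and lower bounds for $\mu^{\xi}_{\Z^d,p,q}\paren{V_{\gamma_l}}$ whose rates converge to one constant, and to pin that constant down by a subadditivity argument. Throughout I pass to the dual one-dimensional random-cluster measure $\mu^{\bullet}\coloneqq\mu_{\Z^d,p^{*}\paren{p},q,1}$, which is \emph{subcritical} since $p>p^{*}\paren{p_c\paren{\Z^d,q}}$, so by Theorem~\ref{thm:expdecay} and Corollary~\ref{cor:C0} its clusters are a.s.\ finite with exponentially decaying radii. By Corollary~\ref{cor:vgammacontain} we have $V^{\mathrm{fin}}_{\gamma}\paren{\Z}\subseteq V^{\mathrm{fin}}_{\gamma}\paren{m}\subseteq V^{\mathrm{inf}}_{\gamma}\paren{m}$, and all of these are increasing events; hence by Proposition~\ref{prop:extremal} it is enough to prove a lower bound $\exp\paren{-\paren{\oldconstant{const:prcm2}+\epsilon}\mathrm{Per}\paren{\gamma_l}}$ for $\mu^{\mathbf{f}}_{\Z^d,p,q}\paren{V^{\mathrm{fin}}_{\gamma_l}\paren{\Z}}$ and an upper bound $\exp\paren{-\paren{\oldconstant{const:prcm2}-\epsilon}\mathrm{Per}\paren{\gamma_l}}$ for $\mu^{\mathbf{w}}_{\Z^d,p,q}\paren{V^{\mathrm{inf}}_{\gamma_l}\paren{m}}$ for all large $l$; these pinch every infinite-volume measure and every intermediate event.

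To define the constant, work with the ``long box'' family $\rho_{L,N}=\brac{0,L}\times\brac{0,N}^{d-2}\times\set{0}$ and $\gamma_{L,N}=\partial\rho_{L,N}$, with $\mathrm{Per}\paren{\gamma_{L,N}}=2N^{d-2}+2\paren{d-2}LN^{d-3}$. Since $\gamma_{L+L',N}$ equals $\gamma_{L,N}+\gamma_{L',N}$ as $\paren{d-2}$-chains after translating the second block, and the event $V^{\mathrm{fin}}$ of a sum of chains is implied by the intersection of the corresponding $V^{\mathrm{fin}}$'s, Proposition~\ref{prop:FKGboundary} makes $L\mapsto-\log\mu^{\mathbf{f}}_{\Z^d,p,q}\paren{V^{\mathrm{fin}}_{\gamma_{L,N}}}$ subadditive, and Fekete's lemma gives $g\paren{N}\coloneqq\lim_L-\tfrac1L\log\mu^{\mathbf{f}}_{\Z^d,p,q}\paren{V^{\mathrm{fin}}_{\gamma_{L,N}}}$, equal to the infimum of the same ratios and lying in $\paren{0,\infty}$ --- positive by Proposition~\ref{prop:perimeter}, finite by exhibiting (as there) an event of probability $\exp\paren{-O\paren{LN^{d-3}}}$ inside $V^{\mathrm{fin}}_{\gamma_{L,N}}$ using Corollary~\ref{cor:C0}. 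Repeating the chain-additivity/FKG gluing in each of the $d-2$ transverse directions (together with a short decoupling estimate in the strongly subcritical dual) shows that $g\paren{N}/\paren{2\paren{d-2}N^{d-3}}$ converges, and I set $\oldconstant{const:prcm2}=\oldconstant{const:prcm2}\paren{p,q}$ to be its limit. For the families $\gamma_{L,N}$ with $L\gg N\gg1$, Fekete's lemma already delivers both bounds $\exp\paren{-\paren{\oldconstant{const:prcm2}\pm\epsilon}\mathrm{Per}\paren{\gamma_{L,N}}}$.

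For a general suitable family, the lower bound is obtained by building a cheap hypersurface with boundary $\gamma_l$: by Corollary~\ref{cor:linkG}, $V^{\mathrm{fin}}_{\gamma_l}\paren{\Z}$ contains the event that no simple loop of $Q$ inside a thin slab about $\rho\paren{\gamma_l}$ links $\gamma_l$, and for a fixed $(d-1)$-plaquette $\sigma$ of that slab the dual cluster of $\sigma^{\bullet}$ links $\gamma_l$ with probability at most $\exp\paren{-c_0\,\mathrm{dist}\paren{\sigma,\gamma_l}}$ by subcriticality; since ``$\sigma$ is not in a linking loop'' is decreasing in $Q$, FKG together with $\sum_\sigma\exp\paren{-c_0\,\mathrm{dist}\paren{\sigma,\gamma_l}}=O\paren{\mathrm{Per}\paren{\gamma_l}}$ gives a perimeter-law lower bound, which a tiling of the perimeter region by translates of the near-optimal configurations for the $\gamma_{L,N}$ (glued by FKG) refines to rate $\oldconstant{const:prcm2}+\epsilon$. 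For the upper bound one instead forces a linking dual loop: fix a separated set $S$ of $(d-2)$-plaquettes of $\gamma_l$ with $\abs{S}\gtrsim\mathrm{Per}\paren{\gamma_l}/R^{d-2}$, let $A_\sigma$ be the local event that the dual configuration within distance $R$ of $\sigma$ contains a meridian loop of $\sigma$, so that $V^{\mathrm{inf}}_{\gamma_l}\subseteq\bigcap_{\sigma\in S}\neg A_\sigma$ by Corollary~\ref{cor:linkG}; since the PRCM has finite energy, $\mu^{\mathbf{w}}_{\Z^d,p,q}\paren{A_\sigma\mid\mathcal{F}_{B\paren{\sigma,R}^c}}$ is bounded below uniformly, so revealing the blocks $B\paren{\sigma,R}$ one at a time yields a perimeter-law upper bound, tuned to rate $\oldconstant{const:prcm2}-\epsilon$ by the right choice of $R$ and block event and a comparison with $g\paren{N}$.

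The main obstacle is reconciling the constants: one must show that the rate from the lower-bound tiling, the rate from the upper-bound finite-energy exploration, and the Fekete constant $\oldconstant{const:prcm2}$ all coincide, and in particular do not depend on the suitable family chosen, on the infinite-volume measure $\mu^{\xi}$, or on whether $V^{\mathrm{fin}}$ or $V^{\mathrm{inf}}$ is used. The device that makes this go through is precisely the \emph{suitability} hypothesis $m\paren{r_l}=\omega\paren{\log M\paren{r_l}}$: it guarantees that when a finite-box event near the perimeter is compared with its infinite-volume (or opposite-boundary-condition) counterpart, the boundary-condition discrepancy costs only $e^{o\paren{\mathrm{Per}\paren{\gamma_l}}}$ --- small enough to survive a union bound over the $O\paren{\mathrm{Per}\paren{\gamma_l}}$ tiles or blocks --- so that the exponential rate is the translation-invariant constant $\oldconstant{const:prcm2}$ rather than a boundary-sensitive quantity. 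Assembling the two bounds into the sandwich described in the first paragraph then proves the theorem.
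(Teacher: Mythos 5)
Your overall sandwich strategy (reduce via Proposition~\ref{prop:extremal} and Corollary~\ref{cor:vgammacontain} to a lower bound for $\mu^{\mathbf{f}}\paren{V^{\mathrm{fin}}}$ and an upper bound for $\mu^{\mathbf{w}}\paren{V^{\mathrm{inf}}}$, and use suitability to absorb boundary-condition discrepancies at cost $e^{o\paren{\mathrm{Per}}}$) matches the paper, but the two steps that actually produce a \emph{single} constant are missing. First, your definition of $\oldconstant{const:prcm2}$ does not converge by the argument you give. Fekete in $L$ at fixed cross-section $N$ is fine, but the subsequent claim that FKG gluing in the $d-2$ transverse directions makes $g\paren{N}/\paren{2\paren{d-2}N^{d-3}}$ converge fails for a structural reason: when two long boxes are glued along a shared $(d-1)$-face, that internal face cancels from the boundary chain, and its area ($\asymp LN^{d-3}$) is of the \emph{same order} as the perimeter itself. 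So chain-additivity plus FKG only yields an inequality whose normalizations mismatch by a constant factor, not by $o\paren{1}$ --- exactly the reason the naive tiling argument that works in the area-law regime (Lemma~\ref{lemma:sharpconstant}) cannot produce the perimeter-law constant. The paper instead defines the constant through a \emph{local} event: the tube/capping event $C_{\Lambda_n}$ (a plaquette chain in the cube capping a face of $\gamma$ up to a relative boundary), whose wired-box probabilities are submultiplicative over disjoint cubes via the factorization Lemma~\ref{lemma:wired} (Mayer--Vietoris for the wired PRCM); this gives Proposition~\ref{prop:C} and, with the interpolation Proposition~\ref{prop:interpolate} and suitability, Proposition~\ref{prop:barctube}.

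Second, and more seriously, your upper bound does not match the constant. The finite-energy insertion of a meridian loop in each ball $B\paren{\sigma,R}$ gives \emph{some} perimeter-law decay with rate $\asymp-\log\paren{1-\delta\paren{R}}/R^{d-2}$, but nothing in the proposal forces this rate to approach the lower-bound rate; ``tuned \ldots by a comparison with $g\paren{N}$'' is precisely the missing argument. The mechanism the paper uses is that $V_{\gamma}$ \emph{implies} the local event $C$ (after translation/rotation, via Lemma~\ref{lemma:CR}) in each of $\asymp\mathrm{Per}\paren{\gamma_l}/\paren{2n}^{d-2}$ disjoint cubes along $\gamma_l$, so Lemma~\ref{lemma:wired} gives $\mu^{\mathbf{w}}\paren{V^{\mathrm{inf}}_{\gamma_l}}\leq\mu^{\mathbf{w}}_{\Lambda_n}\paren{C_{\Lambda_n}}^{g\paren{n,l}}$; and conversely the \emph{same} local events $\overline{C}_{t\paren{s,L}}$, together with two global crossings and the corner events $E_{u,L}$, imply $V^{\mathrm{fin}}_{\gamma_l}\paren{m}$ by the topological construction of Lemmas~\ref{lemma:S},~\ref{lemma:W123} and Proposition~\ref{prop:implyv} (no dual loop can link $\gamma$). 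Having the identical local event appear as both necessary and (essentially) sufficient is what pins the upper and lower rates to one constant $c$, independent of the family, the infinite-volume measure, the coefficient choice $V^{\mathrm{fin}}$ versus $V^{\mathrm{inf}}$. Your proposal has no analogue of Lemma~\ref{lemma:wired} (independence fails for the PRCM, so some factorization substitute is mandatory) and no analogue of Proposition~\ref{prop:implyv}; without them the two bounds you construct have no reason to share a rate, and the theorem as stated is not reached.
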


The events are increasing, so by  Proposition~\ref{prop:extremal}, it suffices to show that there exists such a constant \oldconstant{const:prcm2} so that 
\[\limsup_{l\rightarrow\infty}-\frac{\mu_{\Z^d,p,q}^{\mathbf{f}}\paren{V_{\gamma_l}^{\mathrm{fin}}\paren{m}}}{\mathrm{Per}\paren{\gamma_l}}\leq \oldconstant{const:prcm2}\]
and 
\[\limsup_{l\rightarrow\infty}-\frac{\mu_{\Z^d,p,q}^{\mathbf{w}}\paren{V_{\gamma_l}^{\mathrm{inf}}\paren{m}}}{\mathrm{Per}\paren{\gamma_l}}\geq \oldconstant{const:prcm2}\,.\]

For the remainder of this section, fix an infinite volume PRCM $\mu_{\Z^d,p,q}=\mu_{\Z^d,p,q,d-1}^{\xi}$ and denote the dual RCM by $\mu^{\bullet}_{\Z^d,p^*,q}=\mu_{\Z^d,p^*\paren{p},q,1}^{\xi^{\bullet}}.$ The outline of our proof is similar to that of Theorem 3.9 of~\cite{aizenman1983sharp}, though we must modify it to work for higher dimensional plaquette percolation and to handle the dependence between disjoint plaquette events. A non-expert reader may benefit from reading that account first, as the arguments are simpler. 

We split the proof into three parts. In Section~\ref{subsec:interpolate}, we show a technical lemma interpolating between $\mu_{\Z^d,p,q}$ and the plaquette random-cluster measure with wired boundary conditions in a box. Next, we construct a plaquette event that precludes the existence of a dual loop that links with $\gamma$ in Section~\ref{subsec:geometric}, and a related event which is implied by $V_{\gamma}^{\mathrm{fin}}.$ We conclude in Section~\ref{subsec:sharp_perimeter} by ``sandwiching'' the probability of $V_{\gamma}$ between the probabilities of these two events, thereby demonstrating the existence of the constant $c.$

In several places in this section, we will use the following notion of a box crossing.
\begin{Definition}\label{defn:rectangle_crossing}
Let $r$ be a box in $\Z^d.$ The $i$-box crossing event for $r,$ denoted $R^{\square}_i\paren{r},$ is the event that there is a hypersurface of plaquettes contained in the interior of $r$ which separates the two faces of $r$ orthogonal to the $\vec{e}_i$-axis. 
\end{Definition}
Here (and later), a hypersurface of plaquettes contained in a box $r$ will be a collection of plaquettes $S$ so that $\partial \rho_S$ is supported on $\partial r.$

\subsection{An Interpolation Lemma}\label{subsec:interpolate}

Let $r=\brac{0,M_1}\times \ldots \brac{0,M_d}$ be a box in $\Z^d,$ let $r^L$ be the enlarged box $r=\brac{-L,M_1+L}\times \ldots \brac{-L,M_d+L},$ and let $A$ be an increasing event depending only on the edges of $r.$ We will compare the asymptotics of $\mu_{\Z^d,p}\paren{A}$ with those of $\mu_{r^L,p}^{\mathbf{w}}\paren{A}$ in the supercritical regime. Towards this end, we will show that, when the dimensions of the boxes are grown appropriately, there is a high probability that $\partial r^L$ is separated from $r$ by a hypersurface of plaquettes. First, we prove a technical lemma.

\begin{Lemma}\label{lemma:crossing}
    Let $p>p^{*}\paren{p_c\paren{q}}.$ There exists a $b_p>0$ so that 
    \[\mu_{\Z^d,p,q}\paren{R^{\square}_d\paren{r}}\geq 1- e^{-b_p M_d} \prod_{j =1}^{d-1} M_j\,.\]
\end{Lemma}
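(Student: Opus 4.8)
The plan is to prove Lemma~\ref{lemma:crossing} by dualizing: the failure of the box‑crossing event $R^{\square}_d\paren{r}$ is a connection event in the dual one‑dimensional random‑cluster model, which is subcritical when $p>p^{*}\paren{p_c\paren{q}}$, so it is controlled by the exponential decay of Theorem~\ref{thm:expdecay} together with a union bound over a face of $r$.

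First I would observe that $R^{\square}_d\paren{r}$ is increasing in the plaquette configuration $P$, hence $\neg R^{\square}_d\paren{r}$ is increasing in the dual edge configuration $Q$. Next, by the cubical‑lattice duality underlying Proposition~\ref{cor:alexander} (the same dichotomy exploited in Corollary~\ref{corollary:CrossingJ}): the two faces of $r$ orthogonal to $\vec{e}_d$ are separated by a hypersurface of plaquettes lying in the interior of $r$ if and only if there is \emph{no} $Q$‑open path, confined to the dual of $r$, joining the corresponding two faces of the dual box; thus $\neg R^{\square}_d\paren{r}$ is exactly this dual crossing event. Since $p\mapsto p^{*}\paren{p}$ is an order‑reversing involution (immediate from the displayed formula for $p^{*}$), the hypothesis $p>p^{*}\paren{p_c\paren{q}}$ gives $p^{*}\coloneqq p^{*}\paren{p}<p_c\paren{q}$, so the dual RCM is subcritical. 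By the dual form of Proposition~\ref{prop:extremal}, the dual edge measure of any infinite‑volume PRCM $\mu_{\Z^d,p,q}$ is stochastically dominated by $\mu^{\bullet,\mathbf{w}}_{\Z^d,p^{*}}$, and because $\neg R^{\square}_d\paren{r}$ is increasing in $Q$ this yields
\[
\mu_{\Z^d,p,q}\paren{\neg R^{\square}_d\paren{r}}\;\le\;\mu^{\bullet,\mathbf{w}}_{\Z^d,p^{*}}\paren{\neg R^{\square}_d\paren{r}}.
\]

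Then I would run the union bound. A $Q$‑path realizing the dual crossing contains a dual vertex at height $x_d=1/2$ (adjacent to the bottom face) and one at height $x_d=M_d-1/2$ (adjacent to the top face), so it connects some vertex of the former type, of which there are at most $\prod_{j=1}^{d-1}M_j$, to graph‑distance at least $M_d-1$. By exponential decay (Theorem~\ref{thm:expdecay} for the subcritical wired measure, obtained from the finite‑volume statement there by the standard monotone limit $\mu^{\bullet,\mathbf{w}}_{\Lambda_N,p^{*}}\downarrow\mu^{\bullet,\mathbf{w}}_{\Z^d,p^{*}}$) each such connection has probability at most $e^{-b_{p^{*}}\paren{M_d-1}}$, so
\[
\mu_{\Z^d,p,q}\paren{\neg R^{\square}_d\paren{r}}\;\le\;e^{b_{p^{*}}}\,e^{-b_{p^{*}}M_d}\prod_{j=1}^{d-1}M_j.
\]
Choosing $b_p\in\paren{0,b_{p^{*}}}$ absorbs the constant $e^{b_{p^{*}}}$ once $M_d$ exceeds a fixed threshold depending on $p$ and $d$; for the remaining bounded range of $M_d$ the claimed right‑hand side $1-e^{-b_pM_d}\prod_jM_j$ is non‑positive once $b_p$ is small (the statement being non‑trivial only when $M_d\gtrsim\log\prod_jM_j$), apart from finitely many small box shapes, which are verified directly using that a flat interior hypersurface occurs with probability bounded below. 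Taking complements gives the stated lower bound on $\mu_{\Z^d,p,q}\paren{R^{\square}_d\paren{r}}$.

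I expect the principal obstacle to be pinning down the plaquette/dual‑path duality at the right level of generality — checking that a separating hypersurface of \emph{interior} plaquettes with boundary on $\partial r$ corresponds precisely to the absence of a dual path confined to the dual of $r$ — together with the attendant bookkeeping of which dual measure appears and of the finite‑to‑infinite‑volume passage; the probabilistic content is entirely supplied by the exponential‑decay theorem.
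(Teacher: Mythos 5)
Your proposal is correct and follows essentially the same route as the paper: dualize $\neg R^{\square}_d\paren{r}$ to a top-to-bottom connection event for the subcritical dual RCM, compare with wired boundary conditions so that Theorem~\ref{thm:expdecay} applies, and take a union bound over the $\prod_{j=1}^{d-1}M_j$ dual vertices adjacent to one face. Your extra bookkeeping (the $M_d-1$ versus $M_d$ distance, absorbing $e^{b_{p^*}}$ by shrinking $b_p$, and the finitely many small boxes) is care the paper's proof simply elides, not a difference in method.
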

\begin{proof}
    Let $D^+,D^-$ be the top and bottom faces of $r,$ respectively (with respect to the $d$-th coordinate direction). By definition,
    \[\mu_{\Z^d,p}\paren{\neg R^{\square}_d\paren{r}}=\mu_{\Z^d,p^*}^{\bullet}\paren{D^{+} \xleftrightarrow[]{Q \cap r } D^{-}}\]
    where $\mu_{\Z^d,p^*}^{\bullet}$ is the dual $1$-dimensional random-cluster model and $p^*=p^*\paren{p}.$ 
    
    Let $v$ be a dual vertex contained in $D^+-\frac{1}{2}\vec{e}_d.$ Then  
    \begin{align*}
    \mu_{\Z^d,p^*,q}^{\bullet}\paren{v\xleftrightarrow[r^{\bullet}]{}D^-+\frac{1}{2}\vec{e}_d} \leq & \mu_{\Z^d,p^*,q}^{\bullet}\paren{v\xleftrightarrow[\Lambda_{M_d}\paren{v}]{}\partial \Lambda_{M_d}\paren{v}}\\
    \leq & \mu_{\Lambda_{M_d}\paren{v},p^*,q}^{\bullet,\mathbf{w}}\paren{v\xleftrightarrow[\Lambda_{M_d}\paren{v}]{}\partial \Lambda_{M_d}\paren{v}}\\
    \leq & e^{-b_p M_d}
    \end{align*}
    as a consequence of Theorem~\ref{thm:expdecay}. 
As 
\[\neg R^{\square}_d\paren{r}=\bigcup_{v\in D^+-\frac{1}{2}\vec{e}_d}  v\xleftrightarrow[r]{}D^-+\frac{1}{2}\vec{e}_d\,,\]
the desired statement follows by the union bound. 
\end{proof}

\begin{Corollary}
\label{cor:xi}
Let $\Xi_{r,L}$ be the event that  $\partial r^L$ is separated from $r$ by a hypersurface of plaquettes contained in the annular region  $ r^L \setminus r.$ Then, if $p>p^{*}\paren{p_c\paren{q}},$
\[\mu_{\Z^d,p,q}\paren{\Xi_{r,L}} \geq  \prod_{i=1}^{d} \paren{1-e^{-b_p L}\prod_{j\neq i} M_j}^2 \,.\]
\end{Corollary}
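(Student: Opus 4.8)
\textbf{Proof proposal for Corollary~\ref{cor:xi}.}
The plan is to produce, inside the annulus $r^{L}\setminus r$, a closed hypersurface of open plaquettes that encloses $r$, assembling it from $2d$ pieces — one for each face of $r$ — and to combine their probabilities via the FKG inequality (Proposition~\ref{prop:FKGboundary}). For $i\in\{1,\dots,d\}$ and a sign $s\in\{+,-\}$ let $B_i^{s}$ be the thin slab of thickness $L$ glued to the $se_i$-face of $r$, e.g.\ $B_i^{+}=[0,M_1]\times\cdots\times[M_i,M_i+L]\times\cdots\times[0,M_d]$: this is a $d$-box of extent $L$ in the $e_i$-direction and extent $M_j$ in every other direction $j\neq i$, and it is contained in $r^{L}\setminus r$. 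Let $C_i^{s}=R^{\square}_i(B_i^{s})$ be the event that $B_i^{s}$ contains a hypersurface of plaquettes separating its two $e_i$-faces. Applying Lemma~\ref{lemma:crossing} to the box $B_i^{s}$, with the $e_i$-direction playing the role of the ``$d$-th'' direction there, gives
\[
\mu_{\Z^d,p,q}\paren{C_i^{s}}\ \geq\ 1-e^{-b_p L}\prod_{j\neq i}M_j .
\]

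Each $C_i^{s}$ is an increasing event, so by the FKG inequality
\[
\mu_{\Z^d,p,q}\paren{\textstyle\bigcap_{i,s}C_i^{s}}\ \geq\ \prod_{i=1}^{d}\mu_{\Z^d,p,q}\paren{C_i^{+}}\,\mu_{\Z^d,p,q}\paren{C_i^{-}}\ \geq\ \prod_{i=1}^{d}\paren{1-e^{-b_p L}\prod_{j\neq i}M_j}^{2},
\]
so it remains to prove the deterministic containment $\bigcap_{i,s}C_i^{s}\subseteq\Xi_{r,L}$. Here one chooses the crossing surfaces $S_i^{s}$ of the slabs so that $S_i^{s}$ is anchored along the boundary of the corresponding face of $r$; then the chain $\rho=\sum_{i,s}\rho_{S_i^{s}}$ has $\partial\rho=0$, because along the $(d-2)$-skeleton of $r$ the boundary contributions of the pieces cancel exactly as $\partial(\partial r)=0$. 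Thus $\rho$ is a closed hypersurface of plaquettes lying in $r^{L}\setminus r$ and homologous to $\partial r$, hence it separates $r$ from $\partial r^{L}$, which is $\Xi_{r,L}$. Equivalently one argues path-wise: a continuous path from $r$ to a first point of $\partial r^{L}$ leaves $r$ through some face $se_i$, and from the last time its $i$-th coordinate equals that face's value onward it is confined to $B_i^{s}$ until it reaches the opposite $e_i$-face of $B_i^{s}$ (which lies on $\partial r^{L}$), so it must meet $S_i^{s}$.

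The main obstacle is precisely this geometric step: one must formulate the slab-crossing events sharply enough that the $2d$ surfaces glue into a genuine separating hypersurface while the slabs stay thin enough that the estimate keeps the factor $\prod_{j\neq i}M_j$ rather than $\prod_{j\neq i}(M_j+2L)$. In the anchored formulation this means separating the inner face of a slab from the rest of its boundary — an ``isolation'' event rather than a straight crossing — which calls for the variant of Lemma~\ref{lemma:crossing} whose union bound ranges over dual vertices of the inner face and exploits the distance $L$ to the opposite face, the contributions near the $(d-2)$-edges of $r$ being absorbed by the freedom of the surface to terminate on those edges. (If one is willing to replace $\prod_{j\neq i}M_j$ by $\prod_{j\neq i}(M_j+2L)$ the geometric step becomes routine: enlarge each slab to $[-L,M_1+L]\times\cdots\times[M_i,M_i+L]\times\cdots\times[-L,M_d+L]$ so that the confinement argument above applies verbatim; for the uses of this corollary the weaker bound is equally good.) Everything else is bookkeeping with FKG and the exponential-decay input of Theorem~\ref{thm:expdecay} already packaged in Lemma~\ref{lemma:crossing}.
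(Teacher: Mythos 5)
Your overall strategy coincides with the paper's: cover the annulus by $2d$ slabs, one per face of $r$, bound each slab-crossing probability by Lemma~\ref{lemma:crossing}, and combine the increasing events with FKG. The difference lies in the choice of slabs, and your primary variant does not work as written. With slabs of lateral extent exactly $\brac{0,M_j}$, the containment $\bigcap_{i,s}C_i^{s}\subseteq\Xi_{r,L}$ fails: the annulus $r^L\setminus r$ contains edge and corner regions such as $\brac{M_1,M_1+L}\times\brac{M_2,M_2+L}\times\cdots$ that are covered by no slab, so a path from $r$ to $\partial r^L$ can leave through such a region without ever crossing any $B_i^{s}$ between its two $e_i$-faces. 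Your ``anchored'' repair is not supplied by the event $R^{\square}_i\paren{B_i^{s}}$ as defined: a (minimal) crossing surface of $B_i^{s}$ has no reason to have its boundary along the $(d-2)$-boundary of the corresponding face of $r$, so the asserted cancellation $\partial\rho=0$ is unjustified, and the ``isolation'' variant of Lemma~\ref{lemma:crossing} that would fix this is exactly the missing ingredient you acknowledge but do not prove.

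Your parenthetical fallback --- enlarging each slab laterally to $\brac{-L,M_j+L}$ so that slabs at perpendicular faces overlap and the confinement argument applies verbatim --- is precisely the paper's proof (its boxes $r_{D,L}$), so that version is correct and is the one to keep. Concerning your worry about losing the constant: the enlarged slabs give $\prod_{j\neq i}\paren{M_j+2L}$ rather than $\prod_{j\neq i}M_j$, but this discrepancy is present in the paper itself (its proof also uses the enlarged slabs while the statement retains $M_j$), and it is harmless for the only application of the corollary, in Proposition~\ref{prop:barctube}, where $L\paren{l}=o\paren{m\paren{r_l}}$ and only the order of $\log\mu_{\Z^d,p,q}\paren{\Xi_{r,L}}$ relative to the perimeter matters. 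In short: same approach as the paper once you adopt the fallback; the anchored exact-width variant should be dropped or else backed by the missing isolation lemma.
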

\begin{proof}
We define a box crossing event for each $(d-1)$-dimensional face $D$ of $r.$ For the special case $D=\brac{0,M_1}\times \ldots \brac{0,M_{d-1}}\times \set{0},$ set $r_{D,L}=\brac{-L,M_1+L}\times \ldots \brac{-L,M_{d-1}+L}\times \brac{-L,-M_d}$ and $\hat{R}^{\square}\paren{D,L}$ to be the event $R^{\square}_d\paren{r_{D,L}}.$ More generally, define $\hat{R}^{\square}\paren{D,L}$ by symmetry.  Then $\Xi_{r,L}$ is implied by the occurrence of the $2d$ events $R^{\square}_d\paren{r_{D,L}}$, one for each face of $r.$ Since these events are increasing, the statement follows from the preceding lemma by the FKG inequality.
\end{proof}

%\begin{Lemma}\label{lemma:boundarycomparison}
%Let $r \subset r'$ be rectangles and let $A$ be an increasing event on the edges of $r.$ Then
%\[\mu^{\mathbf{f}}_{r,p}\paren{A} \leq  \inf_{\xi} \mu^{\xi}_{r',p}\paren{A}\,.\]
%\end{Lemma}

%\begin{proof}
%    There is a random variable $\zeta = \zeta\paren{\xi}$ taking values in boundary conditions on $r'$ so that
%    \[\mu^{\xi}_{r',p}|_{r} = \mu^{\zeta}_{r,p}\,.\]
%    Then since 
%    \[\mu^{\mathbf{f}}_{r,p}\paren{A} \leq \mu^{\zeta}_{r,p,q}\paren{A}\]
%    for any $\zeta$ almost surely, infimizing over $\xi$ gives the desired inequality.
%\end{proof}

\begin{Proposition}\label{prop:interpolate}
Let $A$ be an increasing event that depends only on the plaquettes of $r.$ If $p>p^{*}\paren{p_c\paren{q}}$ then
\[ \mu_{\Z^d,p,q}\paren{\Xi_{r,L}}\mu_{r^L,p}^{\mathbf{w}}\paren{A}\leq \mu_{\Z^d,p,q}\paren{A}\leq \mu_{r^L,p}^{\mathbf{w}}\paren{A}\,.\]
\end{Proposition}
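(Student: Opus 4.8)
The plan is as follows. The right-hand inequality is immediate from Proposition~\ref{prop:extremal}: since $A$ is increasing and depends only on the plaquettes of $r\subseteq r^L$, and $\mu_{\Z^d,p,q}\leq_{\mathrm{st}}\mu^{\mathbf{w}}_{r^L,p,q,d-1}$, we get $\mu_{\Z^d,p,q}(A)\leq\mu^{\mathbf{w}}_{r^L,p}(A)$. So the work is in the left-hand inequality, and I would reduce it to the claim $\mu_{\Z^d,p,q}(A\mid\Xi_{r,L})\geq\mu^{\mathbf{w}}_{r^L,p}(A)$ (trivial if $\mu_{\Z^d,p,q}(\Xi_{r,L})=0$): combined with $\mu_{\Z^d,p,q}(A)\geq\mu_{\Z^d,p,q}(A\cap\Xi_{r,L})=\mu_{\Z^d,p,q}(\Xi_{r,L})\,\mu_{\Z^d,p,q}(A\mid\Xi_{r,L})$ this finishes the proof.

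To obtain $\mu_{\Z^d,p,q}(A\mid\Xi_{r,L})\geq\mu^{\mathbf{w}}_{r^L,p}(A)$ I would run the standard ``wired boundary condition from a separating surface'' argument, in the form used for Theorem~3.9 of~\cite{aizenman1983sharp}. On $\Xi_{r,L}$ there is a distinguished innermost hypersurface $\Sigma$ of occupied plaquettes contained in the annulus $r^L\setminus r$ and separating $\partial r^L$ from $r$; the key point is to construct $\Sigma$ by an exploration started from $\partial r^L$ (equivalently, by revealing in the dual RCM $\mu^{\bullet}_{\Z^d,p^*,q}$ the cluster of the outer dual boundary of the annulus), so that $\Sigma$ together with the plaquette configuration between $\Sigma$ and $\partial r^L$ (including $\Sigma$ itself) is measurable with respect to data that places no constraint on the plaquettes strictly inside the region $R_\Sigma$ that $\Sigma$ encloses. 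Conditionally on $\{\Sigma=S\}$ and this outer data, the domain Markov property of the PRCM (Section~\ref{sec:RCMboundary}) says the plaquettes of $R_S$ are distributed as a PRCM on $R_S$ with boundary condition induced by the revealed plaquettes; because $S$ is a fully occupied separating $(d-1)$-hypersurface, it caps off $R_S$ homologically in the same way as $\partial R_S$, so this induced boundary condition is the wired one and the conditional law is $\mu^{\mathbf{w}}_{R_S,p,q,d-1}$. (Dually this is the elementary fact that conditioning a classical random-cluster model on the cluster of the outer boundary, reached through a closed cutset, leaves a free boundary condition on the enclosed region; one can also verify it directly from Proposition~\ref{cor:alexander}.) Since $r\subseteq R_S\subseteq r^L$ and $A$ is increasing, the nesting of wired measures in Proposition~\ref{prop:extremal} (extended from boxes to general finite regions via the conditioning description of boundary conditions) gives $\mu^{\mathbf{w}}_{R_S,p}(A)\geq\mu^{\mathbf{w}}_{r^L,p}(A)$, hence $\mu_{\Z^d,p,q}(A\mid\Sigma=S,\text{outer data})\geq\mu^{\mathbf{w}}_{r^L,p}(A)$; averaging over $S$ and the outer data yields the claimed bound.

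The main obstacle is making this conditioning rigorous in the plaquette setting. Concretely one must (i) set up the exploration so that the event $\{\Sigma=S\}$ does not implicitly constrain the interior plaquettes — a ``no separating surface deeper than $S$'' clause would be a decreasing constraint and would break the domination — which is the higher-dimensional, dependent analogue of the outermost-circuit exploration in~\cite{aizenman1983sharp}; and (ii) prove the topological statement that a fully occupied separating $(d-1)$-hypersurface induces the wired weight $q^{\mathbf{b}_{d-2}(\,\cdot\,;\Q)}$ on the region it bounds, which should follow from Proposition~\ref{cor:alexander} together with the domain Markov description of the PRCM with boundary conditions. The supercriticality hypothesis $p>p^{*}(p_c(q))$ plays no role in the inequality itself — it is what makes $\Xi_{r,L}$ likely, by Corollary~\ref{cor:xi}, so that the bound is useful downstream — but I would retain it for consistency with the rest of the section.
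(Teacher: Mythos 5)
Your proposal is correct, and its skeleton — the right-hand inequality from extremality of wired boundary conditions, plus the reduction of the left-hand inequality to $\mu_{\Z^d,p,q}\paren{A\mid \Xi_{r,L}}\geq \mu^{\mathbf{w}}_{r^L,p}\paren{A}$ — is exactly the paper's. The difference is in how that conditional domination is justified: the paper disposes of it in one line as ``a standard application of Holley's inequality,'' asserting a coupling in which the restriction to $r$ of $\paren{\mu_{\Z^d,p,q}\mid \Xi_{r,L}}$ contains the restriction of $\mu^{\mathbf{w}}_{r^L,p}$, whereas you implement the domination by exploring the separating surface from $\partial r^L$, invoking the domain Markov property on the enclosed region $R_S$, and then nesting wired measures ($r\subset R_S\subset r^L$). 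Your route is essentially the honest unpacking of the paper's citation — and arguably the more careful one, since conditioning an FKG measure on the increasing event $\Xi_{r,L}$ does not by itself preserve the Holley lattice condition, so some surface-decomposition argument of the kind you sketch is what makes the ``standard application'' legitimate. Two small remarks: the surface produced by your exploration from $\partial r^L$ (revealing the dual cluster of the outer boundary) is the \emph{outermost} separating surface, not the innermost — the mechanics you describe are right, only the label is off; and in your flagged obstacle (ii) you do not need the conditional law on $R_S$ to \emph{equal} the wired measure, only to stochastically dominate it, which weakens what must be proved about the fully occupied surface and makes that step routine. With those details filled in (which the paper itself does not supply), your argument is complete.
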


\begin{proof}
A standard application of Holley's inequality (Theorem 2.3 in~\cite{grimmett2006random}) yields that there is a coupling between restriction of the conditional measure $\paren{\mu_{\Z^d,p,q} \middle |  \Xi_{r,L}}$ to $r$ and the restriction of $\mu_{r_L,p,q}^{\mathbf{w}}$ to $r$ such that the former contains the latter almost surely. This implies the first inequality. The second inequality is also an immediate consequence of Holley's inequality.
\end{proof}

\subsection{Geometric Lemmas}\label{subsec:geometric}
In this section, we will prove several geometric results that will be useful for the proof of Theorem~\ref{theorem:sharp_perimeter}. First, we show that box crossings behave nicely under intersections. For convenience, we state the following lemma for $d$-box crossings. The corresponding statement for $i$-box crossings follows by symmetry.
\begin{Lemma}\label{lemma:crossingsubset}
    Let $r_1$ and $r_2$ be boxes of the form $\brac{0,M_1}\times \ldots \times \brac{0,M_d}$ and $\brac{N_1,M_1'}\times \ldots \times \times \brac{N_{d-1},M_{d-1}'}\times \brac{0,M_d},$ with $0\leq N_{i}<M_i'\leq M_i $ for $i=1,\ldots,{d-1}.$ That is, $r_2$ is contained in $r_1$ and has the same height as it. Then $R^{\square}_d\paren{r_1}\implies R^{\square}_d\paren{r_2}.$ 
\end{Lemma}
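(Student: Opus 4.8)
The plan is to dualize and argue by contraposition, so that the claim reduces to the obvious fact that a dual top-to-bottom crossing of the smaller box is also one of the larger box. Write $D_j^{+}$ and $D_j^{-}$ for the top and bottom faces of $r_j$ with respect to the $\vec{e}_d$-direction, for $j = 1,2$. As recorded in the proof of Lemma~\ref{lemma:crossing}, the event $\neg R^{\square}_d\paren{r_j}$ coincides with the event that in the dual configuration $Q$ there is an open path joining a dual vertex adjacent to $D_j^{+}$ to a dual vertex adjacent to $D_j^{-}$, all of whose vertices and edges lie in $r_j$. Hence it suffices to establish the contrapositive, namely that $\neg R^{\square}_d\paren{r_2}$ implies $\neg R^{\square}_d\paren{r_1}$.

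First I would record the relevant inclusions of boxes and faces. Since $0 \le N_i < M_i' \le M_i$ for $i = 1,\dots,d-1$ and both boxes have the same range $\brac{0,M_d}$ in the last coordinate, we have $r_2 \subseteq r_1$; moreover $D_2^{+} \subseteq D_1^{+}$ and $D_2^{-} \subseteq D_1^{-}$, because a top $(d-1)$-plaquette of $r_2$ sits at height $x_d = M_d$ with footprint inside that of $r_2$, hence inside that of $r_1$, so it is a top plaquette of $r_1$, and symmetrically for the bottom. Consequently every dual vertex adjacent to $D_2^{\pm}$ is adjacent to $D_1^{\pm}$, and every dual path whose vertices and edges lie in $r_2$ has all of its vertices and edges in $r_1$.

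With these inclusions the argument is immediate: if $\neg R^{\square}_d\paren{r_2}$ holds, take the witnessing open path of $Q$ inside $r_2$ from a vertex adjacent to $D_2^{+}$ to a vertex adjacent to $D_2^{-}$; by the previous paragraph this very path is an open path of $Q$ inside $r_1$ from a vertex adjacent to $D_1^{+}$ to a vertex adjacent to $D_1^{-}$, which witnesses $\neg R^{\square}_d\paren{r_1}$. This gives $R^{\square}_d\paren{r_1} \implies R^{\square}_d\paren{r_2}$. There is no genuine obstacle here; the only point requiring (trivial) care is the face inclusion $D_2^{\pm} \subseteq D_1^{\pm}$, which uses precisely the hypothesis that $r_1$ and $r_2$ have the same extent in the $\vec{e}_d$-direction. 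The analogous statement for $i$-box crossings then follows by permuting the coordinate axes.
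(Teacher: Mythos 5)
Your proposal is correct and follows essentially the same route as the paper: the paper also argues by contraposition, observing that a failure of $R^{\square}_d\paren{r_2}$ yields a dual path in $r_2$ between $D_2^+$ and $D_2^-$, which, since $r_2\subset r_1$ and the two boxes share the same extent in the $\vec{e}_d$-direction, is also a dual path in $r_1$ connecting $D_1^+$ to $D_1^-$, contradicting $R^{\square}_d\paren{r_1}$. Your additional care about the face inclusions $D_2^{\pm}\subseteq D_1^{\pm}$ is the same (tacit) point the paper relies on.
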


\begin{proof}
Let $D_1^+$ and $D_1^-$ be the two faces of $r_1$ orthogonal to $\vec{e}_d$ and let $D_2^+$ and $D_2^-$ be faces of $r_2$ orthogonal to $\vec{e}_d$ so that $D_2^+ \subset D_1^+$ and $D_2^- \subset D_1^-.$ Then if $R^{\square}_d\paren{r_2}$ does not occur, there is a dual path between $D_2^+$ and $D_2^-$ in $r_2.$ But then since $r_2 \subset r_1,$ such a path also connects $D_1^+$ from $D_1^-$ in $r_1,$ so $R^{\square}_d\paren{r_2}$ cannot occur. %Taking the contrapositive yields $R^{\square}_d\paren{r_1}\implies R^{\square}_d\paren{r_2}.$
\end{proof}

We say that a set of plaquettes $J$ is a minimal witness for a crossing event $R^{\square}_i\paren{r}$ if there is no subset $J'\subset J$ which is also a witness for $R^{\square}_i\paren{r}.$ We now investigate properties of a minimal box crossing.

\begin{Lemma}\label{lemma:crossingtopology}
Let $r$ be a box and let $\hat{B}_i\paren{r}$ be the union of the $(d-1)$-faces of $r$ which are not orthogonal to $\vec{e}_i.$ Assume $R^{\square}_i\paren{r}$ occurs and that $J$ is a minimal collection of plaquettes witnessing that event. Then the map on homology $H_{d-2}\paren{\partial J;\; \Z}\rightarrow H_{d-2}\paren{\hat{B}_i\paren{r};\;\Z}\cong \Z$ induced by inclusion is an isomorphism. In particular, if $\rho_J=\sum_{\sigma\in J}\sigma$ then $\partial{\rho_J}$ generates $H_i\paren{\hat{B}_i\paren{r};\;\Z}\cong \Z$ and is null-homologous in $P.$   
%\begin{itemize}
    %\item The map on homology $H_{d-1}\paren{\partial S;\; \Z}\rightarrow H_i\paren{\hat{B}_i\paren{r};\;\Z}\cong \Z$ induced by inclusion is an isomorphism.
    %\item $S$ separates $r\setminus P$ into two components; call them $V_1$ and $V_2.$ Then the map
    %\[(i_1^*, i_2^*):H^1\paren{S^d\setminus P}\rightarrow H^1\paren{V_1\cap P^c;\;\Z}\oplus  H^1\paren{V_2\cap P^c;\;\Z}\] is an isomorphism, where $i_1^*$ and $i_2^*$ are the maps on cohomology induced by the inclusions 
    %$i_1:U_1\cap P^c \hookrightarrow  S^d\setminus P$ and  $i_2:U_2\cap P^c \hookrightarrow  S^d\setminus P,$  respectively.
%\end{itemize}
\end{Lemma}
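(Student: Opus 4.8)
The plan is to prove the statement in the following order: first establish that $\partial\rho_J$ is a cycle supported on $\hat{B}_i(r)$, then show the induced map $H_{d-2}(\partial J;\Z)\to H_{d-2}(\hat B_i(r);\Z)$ is surjective using that $J$ witnesses the crossing, then show it is injective using minimality of $J$, and finally read off the two stated consequences. Throughout I work with integer coefficients. Since the homology of a box and of $\hat B_i(r)$ (which is a $(d-1)$-sphere with two open $(d-1)$-disks removed, hence homotopy equivalent to $S^{d-2}$) is free, all the groups in sight are free abelian, and $H_{d-2}(\hat B_i(r);\Z)\cong\Z$ by a direct computation (e.g.\ Mayer--Vietoris, or the fact that $\hat B_i(r)$ deformation retracts onto the equatorial $S^{d-2}$).

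For surjectivity: by definition of a hypersurface of plaquettes contained in $r$, $\partial\rho_J$ is supported on $\partial r$; moreover $J$ consists of plaquettes in the \emph{interior} of $r$, so $\partial\rho_J$ has no component on the two faces orthogonal to $\vec e_i$, hence is supported on $\hat B_i(r)$. I claim its class generates $H_{d-2}(\hat B_i(r);\Z)$. This is where the crossing hypothesis enters: dualizing via Proposition~\ref{cor:alexander}/Corollary~\ref{corollary:CrossingJ}-type reasoning, $R^{\square}_i(r)$ occurring means precisely that the two faces of $r$ orthogonal to $\vec e_i$ are \emph{not} connected in the dual complex within $r$, so a dual path running between the two ``caps'' of $\hat B_i(r)$ must cross $\rho_J$ an odd — indeed, net nonzero — number of times; equivalently the linking number of $\partial\rho_J$ with the generating loop of $H_1$ of the complement of the equator is $\pm1$. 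Concretely: let $\ell$ be a small dual loop encircling a single plaquette of a genuine crossing surface and completed through the exterior; pairing $[\partial\rho_J]$ against the generator of $H^{d-2}(\hat B_i(r);\Z)$ computes this linking number, and the crossing condition forces it to be $\pm1$. Thus the map is surjective.

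For injectivity: suppose $[\partial J]$ maps to $0$ in $H_{d-2}(\hat B_i(r);\Z)\cong\Z$; I want to contradict minimality. Here the idea (following the corresponding step of~\cite{aizenman1983sharp}, Lemma~3.? / the analogous argument for $d=3$) is that if the image were trivial then $\rho_J$, capped off by a chain in $\hat B_i(r)$, would be a $(d-1)$-cycle in $\overline r$; since $H_{d-1}(\overline r;\Z)=0$ this cycle bounds, which lets us split $J$ into two pieces each of which, after adjusting along the boundary, still separates the two $\vec e_i$-faces — contradicting that $J$ is minimal. (Dually: a minimal cutset for the two faces in the dual graph maps to a generator, not $0$, under the linking pairing; a ``wasteful'' crossing surface whose boundary class is $0$ contains a strictly smaller sub-collection that still blocks all dual crossing paths.) I expect this injectivity/minimality step to be the main obstacle, since it requires carefully turning the combinatorial minimality of the cutset into the topological statement; the cleanest route is probably to argue in the dual: a dual path from one $\vec e_i$-face to the other must, by the crossing being blocked, intersect $J$; assign to such a path its intersection number with $\rho_J$; minimality of $J$ means every plaquette of $J$ lies on \emph{some} dual blocking-path where it is the unique intersection, which pins down $[\partial\rho_J]$ as exactly the generator and rules out the class being $0$ (or a higher multiple).

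Finally, the two ``in particular'' claims are immediate: since $H_{d-2}(\partial J;\Z)\to H_{d-2}(\hat B_i(r);\Z)\cong\Z$ is an isomorphism and $[\partial J]$ on the source side is carried by $\partial\rho_J$, the class $[\partial\rho_J]$ generates $H_{d-2}(\hat B_i(r);\Z)$; and $\partial\rho_J$ is null-homologous in $P$ because $\rho_J\in C_{d-1}(P;\Z)$ (all plaquettes of $J$ lie in $P$, as $J$ witnesses $R^{\square}_i(r)$, which is an event about the plaquette configuration) so $\partial\rho_J=\partial\rho_J$ is literally a boundary in $P$. (The statement as typed says ``$\partial\rho_J$ generates $H_i(\hat B_i(r);\Z)$'' — presumably $H_{d-2}$ is intended, matching the first sentence, and I would state it that way.)
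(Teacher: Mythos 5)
Your route (intersection/linking numbers against dual paths, plus a separate minimality argument) is genuinely different from the paper's, but the two places where the real content sits are not established, and the surjectivity step is asserted in a form that is false. You claim that the occurrence of $R^{\square}_i\paren{r}$ alone forces the pairing of $\brac{\partial\rho_J}$ with the generator to be $\pm 1$. It does not: the union of two parallel full slices of $r$ orthogonal to $\vec{e}_i$ (at two interior heights) is a witness for the crossing event, its boundary is supported on $\hat{B}_i\paren{r},$ and with the fixed positive orientations its class is \emph{twice} a generator. What the crossing hypothesis gives is only that every dual path between the two $\vec{e}_i$-faces meets $J,$ hence that the class is nonzero; pinning it down to exactly $\pm1$ is precisely where minimality must enter, and your minimality step is only a sketch whose key claim --- that for each plaquette of $J$ there is a dual blocking path meeting $J$ in that plaquette alone --- is not justified: a proper subset of $J$ can fail to be a witness because it stops being a hypersurface (its boundary is no longer supported on $\partial r$) rather than because it stops separating, so minimality as defined does not directly produce such paths. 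Relatedly, your injectivity paragraph only rules out the class being $0,$ which is not enough (all multiples other than $\pm1$ must be excluded), and you never identify $H_{d-2}\paren{\partial J;\;\Z}$ itself, which the isomorphism statement requires.

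The paper closes exactly this gap with one short computation that avoids intersection numbers altogether: let $A$ be the union of the $d$-cells of the connected component of $r\setminus J$ containing the face $D_i^-.$ Minimality forces $J$ to contain no plaquettes of $\partial r$ and to coincide with the plaquettes of $\partial A$ not lying in $\partial r,$ so $\partial A= D_i^-\cup J\cup E$ with $E\subset \hat{B}_i\paren{r};$ then $0=\partial\partial\rho_A=\partial\rho_{D_i^-}+\partial\rho_E+\partial\rho_J$ gives $\brac{\partial\rho_J}=-\brac{\partial\rho_{D_i^-}}$ in $H_{d-2}\paren{\hat{B}_i\paren{r};\;\Z},$ which is a generator. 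If you wish to keep the dual-path framework, you would still need to prove (not assert) that a minimal witness is crossed exactly once by some dual path; the component-$A$ argument is the efficient way to extract that from minimality.
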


\begin{proof}
Let $D_i^+$ and $D_i^-$ be the two $(d-1)$-faces of $r$ orthogonal to $\vec{e}_i.$ Let $A$ be the union of $d$-cells of the connected component containing $D_i^-$ of $r \setminus J.$ Since $J$ is minimal, it cannot contain any plaquettes of $\partial r$ and it must contain all plaquettes of $\partial A$ that are not supported on $r.$ Thus, we can write $\partial A$ as the union of three disjoint sets of plaquettes 
\[\partial A = D_i^- \cup J \cup E\,,\]
where $E$ is a union of plaquettes contained in $\hat{B}_i\paren{r}.$ Recall that $\rho_Y$ is the sum of the positively oriented plaquettes composing $Y$. Thus 
\begin{align*}
    \partial \rho_{D_i^-} + \partial \rho_{E} + \partial \rho_{J} = \partial \partial \rho_{A} = 0\,.
\end{align*}
So since $0 = \brac{\partial \rho_{E}} \in H_{d-2}\paren{\hat{B}_i\paren{r};\;\Z},$ it follows that
\[\brac{\partial \rho_{D_i^-}} = -\brac{\partial \rho_{J}} \in H_{d-2}\paren{\hat{B}_i\paren{r};\;\Z}\,.\]
Now, $\brac{\partial \rho_{D_i^-}}$ is a generator for $H_{d-2}\paren{\hat{B}_i\paren{r};\;\Z},$ so $\brac{\partial \rho_{J}}$ is as well. Thus, the map $H_{d-2}\paren{\partial J;\; \Z}\rightarrow H_{d-2}\paren{\hat{B}_i\paren{r};\;\Z}\cong \Z$ is an isomorphism.
\end{proof}

%\begin{Lemma}\label{lemma:orthogonalcrossings}
%Let $i\neq j,$ and assume that $R^{\square}_i\paren{r}$ and $R^{\square}_j\paren{r}$ occur. If $S_1$ and $S_2$ are minimal witnesses for those respective events, then $S_2$ divides each component of $r\setminus S_1$ into two components. 
%\end{Lemma}

%\begin{proof}

%\end{proof}

\begin{figure}
    \centering
     \includegraphics[width=0.6\textwidth]{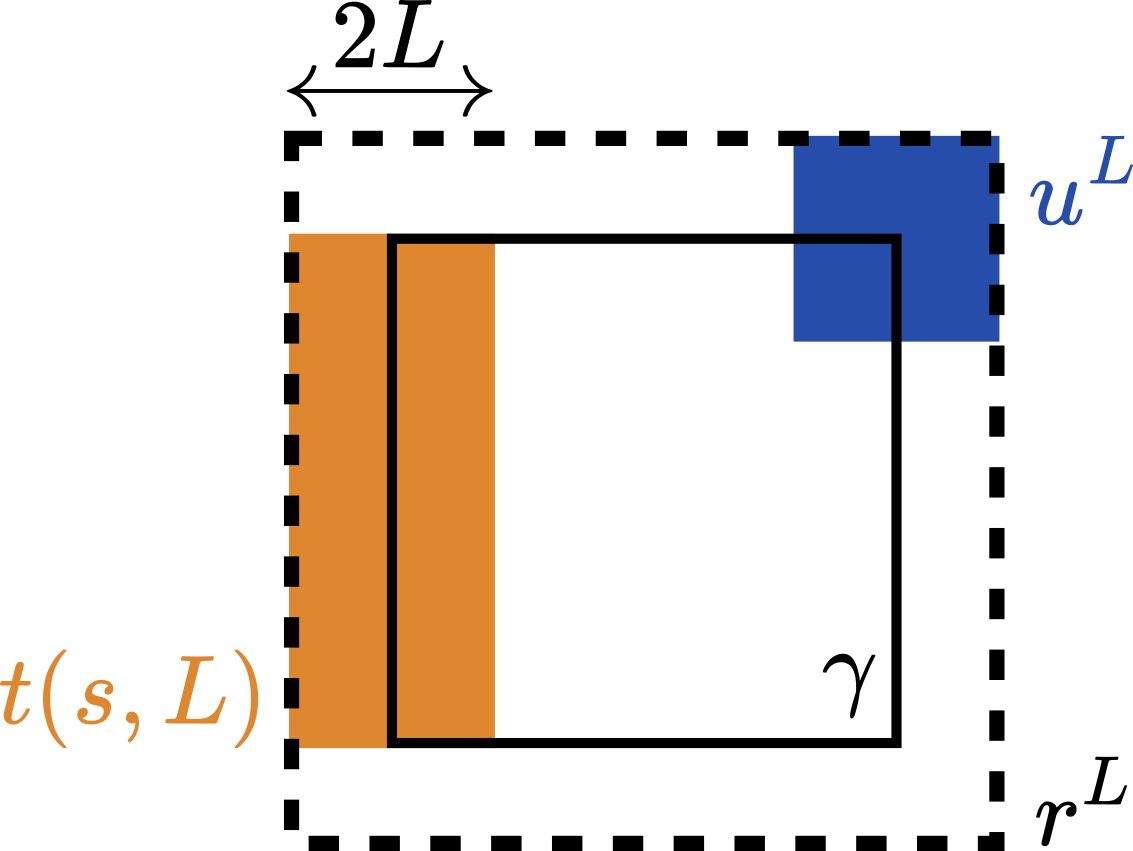}
        \caption{An illustration of some of the notation developed in this section, shown for $d=3$ in the plane containing $\gamma.$} 
    \label{fig:tubes}
\end{figure}

Given a box of the form $t=s'\times \brac{-L,L}^2$ of $\Z^d,$ let $s$ be the $(d-2)$-dimensional box $S'\times\set{0}^2.$ We say that $t=t\paren{s,L}$ is the tube around $s$ of width $L.$ Often $s$ will be a $(d-2)$-face of a rectangular boundary $\gamma.$ Denote by $C_t$ be the event that there a $(d-1)$-chain $\tau\in C_{d-1}\paren{P\cap t;\;G}$ so that $\partial \tau= \rho_{s}+\alpha$ where $\alpha$ is supported on $\partial t.$ See Figure~\ref{fig:CR}. Compare this with the definition of the corresponding event in Section 3(iii) of~\cite{aizenman1983sharp} when $d=3.$

 \begin{figure}
    \centering
     \includegraphics[width=0.2\textwidth]{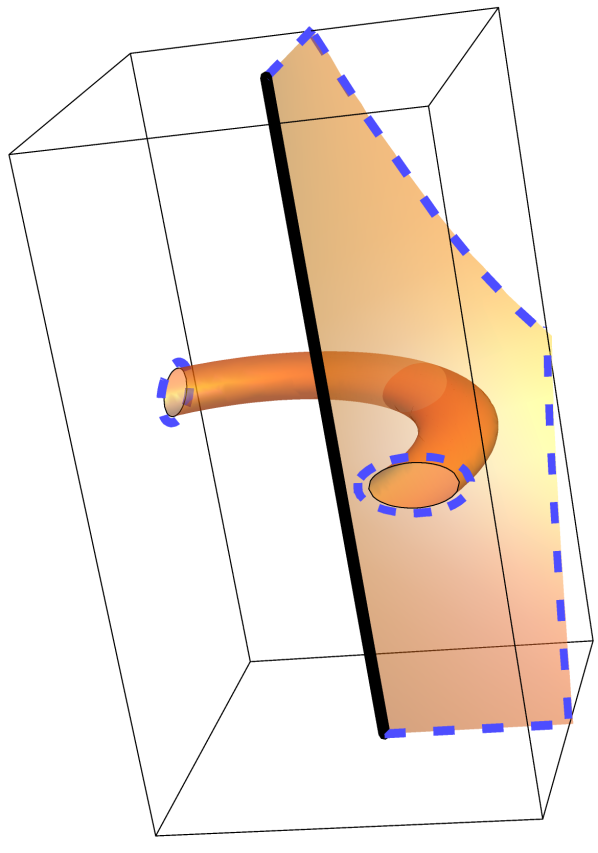}
     \qquad     
     \includegraphics[width=0.2\textwidth]{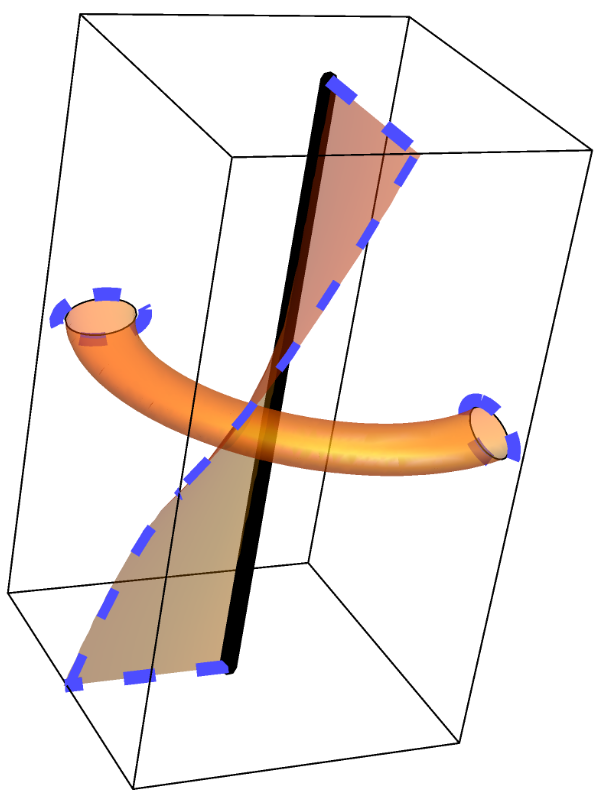}
    \qquad     
    \includegraphics[width=0.2\textwidth]{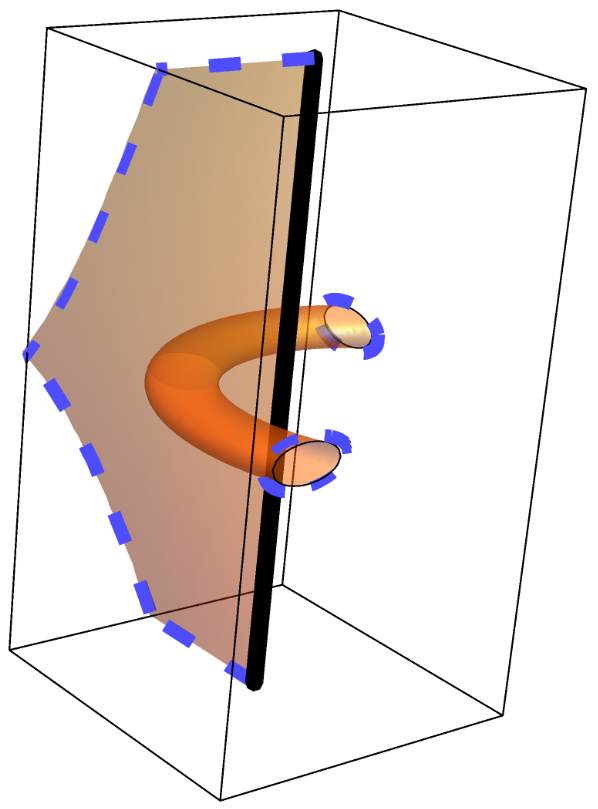}

         \caption{A witness for the event $C_t,$ shown from three different viewpoints. The support of $\tau$ is the orange hypersurface, the support of $\alpha$ is shown by a dotted blue curve, and $s$ is depicted with a thick black line. A handle is included to emphasize the possible complexity of set of plaquettes. See also Figure 13 of~\cite{aizenman1983sharp}.}
    \label{fig:CR}
\end{figure}

\begin{Lemma}\label{lemma:CR}
     Let $t,$ $s,$ and $s'$ be as in the previous paragraph, and suppose that $s''\subset s'$ is a $(d-2)$-dimensional box in $\Z^{d-2}\times \set{0}^2.$ Then if $t''=s''\times \brac{-L,L}^2,$ $C_t\implies C_{t''}.$  
\end{Lemma}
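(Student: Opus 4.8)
The plan is to produce a witness for $C_{t''}$ simply by restricting a witness for $C_t$ to the narrower tube. So, assume $C_t$ holds: there is $\tau \in C_{d-1}\paren{P \cap t;\;G}$ with $\partial \tau = \rho_s + \alpha$ and $\alpha$ supported on $\partial t$. I would let $\tau''$ be the subchain of $\tau$ obtained by deleting every $(d-1)$-plaquette not contained in $t''$. Since $t'' \subseteq t$ and $\tau$ is supported on plaquettes of $P$, automatically $\tau'' \in C_{d-1}\paren{P \cap t'';\;G}$, so the whole content of the lemma is the identity $\partial \tau'' = \rho_{s''} + \alpha''$ with $\alpha''$ supported on $\partial t''$, where $\rho_{s''}$ denotes the positively-oriented sum over the $(d-2)$-plaquettes of the box $s'' \times \set{0}^2$ that plays the role of $s$ for the tube $t''$.

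To establish this I would run a local analysis of boundary coefficients. Write $t'' = \prod_{i=1}^{d}\brac{a_i,b_i}$, and call a $(d-2)$-cell $\sigma$ of $\Z^d$ \emph{interior to} $t''$ if every $(d-1)$-cube of $\Z^d$ incident to $\sigma$ lies in $t''$; in coordinates this says the two ``fixed'' coordinate values of $\sigma$ lie in the open integer ranges $\set{a_i+1,\dots,b_i-1}$ while each of the $d-2$ ``varying'' unit intervals of $\sigma$ lies in $\brac{a_i,b_i}$. For such a $\sigma$ the coefficient of $\sigma$ in $\partial \tau''$ equals its coefficient in $\partial \tau$, since only the $(d-1)$-cubes containing $\sigma$ contribute, and all of these already lie in $t''$ and hence carry the same coefficient in $\tau''$ as in $\tau$.

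Next I would record two geometric observations. First: because $t'' \subseteq t$, an interior cell of $t''$ has its two fixed coordinate values strictly inside the corresponding ranges of $t$ as well, so $\overline{\sigma}$ is contained in no face of $t$; hence $\alpha$, being supported on $\partial t$, has coefficient $0$ on every cell interior to $t''$. Second: among the $(d-2)$-cells of $s = s' \times \set{0}^2$, exactly those whose underlying $(d-2)$-cube lies in $s''$ are interior to $t''$ — i.e. exactly the cells of $s'' \times \set{0}^2$ — using that $0$ lies strictly inside $\brac{-L,L}$. Combining these, $\partial \tau''$ agrees with $\rho_{s''}$ on every cell interior to $t''$. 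Finally, set $\alpha'' := \partial \tau'' - \rho_{s''}$; the above makes $\alpha''$ vanish on all interior cells of $t''$, and any $(d-2)$-cell with a nonzero coefficient in $\partial \tau''$ is a face of a plaquette of $\tau''$, so its closure lies in $t''$; if such a cell is not interior to $t''$, then one of its fixed coordinates equals some $a_i$ or $b_i$, so its closure lies in a face of $t''$ and it is a cell of $\partial t''$. (Every cell of $\rho_{s''}$ is interior to $t''$, so it contributes nothing here.) Thus $\alpha''$ is supported on $\partial t''$ and $\tau''$ witnesses $C_{t''}$.

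I do not expect a serious obstacle: the argument is essentially careful bookkeeping of which $(d-2)$-cells of $\Z^d$ sit in the interior, on the boundary, or in the exterior of $t''$. The only place the hypotheses are genuinely used is the claim that interior cells of $t''$ avoid $\partial t$, which relies both on $t'' \subseteq t$ and on the two tubes having the same width $L$ (so that the central value $0$ is interior to $\brac{-L,L}$ in the last two coordinates); this is the mild input ensuring that the restricted chain $\tau''$ behaves correctly along the boundary.
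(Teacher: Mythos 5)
Your proposal is correct and is essentially the paper's own argument: the paper's (one-line) proof takes the same restricted chain $\tau''=\sum_{\sigma\subset t''}a_{\sigma}\sigma$ and asserts directly that $\partial\tau''=\rho_{s''}+\alpha''$ with $\alpha''$ supported on $\partial t''.$ Your cell-by-cell bookkeeping of interior versus boundary $(d-2)$-cells just makes explicit the verification the paper leaves implicit.
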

 
\begin{proof}
Assume that the event $C_t$ occurs. Then there exists a chain $\tau=\sum_{\sigma\subset t}a_{\sigma}\sigma$ so that $\partial \tau=\rho_{s}+\alpha$ where $\alpha$ is supported on the boundary of $t.$ If $\tau'=\sum_{\sigma \subset t''}a_{\sigma}\sigma$ then $\partial\tau'=\rho_{s}+\alpha'$ where $\alpha'$ is supported on the boundary of $t''.$
\end{proof}

%For a box $u$ let $E_{u,L}$ be the event that all plaquettes in $u^L$ are open. Next, we show that we can glue the homologies found in the previous lemma together by assuming the events  $E_{u,L}$ occur for every  $(d-3)$-face $u$ of $\gamma.$ Let $T\paren{L}=\cup_s s^L,$ where $s$ ranges over all $(d-2)$-faces of $r\paren{\gamma},$ so $T$ is a solid $(d-1)$-torus surrounding $\gamma.$

% \begin{Lemma}\label{lemma:glueCR}
% The event $\cap_{s} C_{t\paren{s,L}} \bigcap \cap_{u} E_{u,L},$ where $s$ and $u$ range over all $(d-2)$- and $(d-3)$-faces of $r\paren{\gamma}$ respectively implies that $\gamma$ is homologous to a cycle in $\partial T\paren{L}.$ 
% \end{Lemma}

% \begin{proof}
%     Since $C_{t\paren{s,L}}$ occurs for each $s,$ we consider what happens in the neighborhoods of the $(d-3)$-faces. Fix a $(d-3)$-face $u$ and let $s$ and $s'$ be the $(d-2)$-faces of $\gamma$ containing $u.$ Let $\tau_1$ and $\tau_2$ be the chains from the definition of $C_{t\paren{s,L}}$ and $C_{t\paren{s',L}}$ respectively. Then let $\alpha_u$ be the $(d-2)$-chain consisting of the summands of $\gamma + \partial \tau_{s} + \partial \tau_{s'}$ contained in $u^L.$ Since $u^L$ retracts to $u^L \cap \partial T\paren{L},$ $\alpha_u$ is homologous to a chain $\alpha_u'$ supported on $\partial T\paren{L}.$ Then if we do the same process at each $(d-3)$-face $u,$ the desired homologous cycle can be written as
%     \[\gamma + \sum_{s} \partial \tau_s + \sum_{u} \alpha_u'\,.\]
% \end{proof}

Consider the four boxes $y_1=s'\times \brac{-L,-L/2}\times \brac{-L,L},$  $y_2=s'\times \brac{L/2,L}\times  \brac{-L,L},$  $y_3=s'\times  \brac{-L,L} \times \brac{-L,-L/2},$ and $y_4=s'\times  \brac{-L,L}\times \brac{L/2,L}$ that surround $s.$ Set 
\[D_t=R^{\square}_{d-1}\paren{y_1}\cap R^{\square}_{d-1}\paren{y_2}\cap R^{\square}_{d}\paren{y_3}\cap R^{\square}_{d}\paren{y_4}\,.\]
$D_t$ implies that $s$ is separated from the faces of $t$ parallel to it by a surface of plaquettes. Let $\tilde{B}$ to be the union of the four faces of $t$ that are parallel to $s.$

\begin{figure}
    \centering
     \begin{subfigure}[b]{2.12in}
         \centering
         \includegraphics[width=\textwidth]{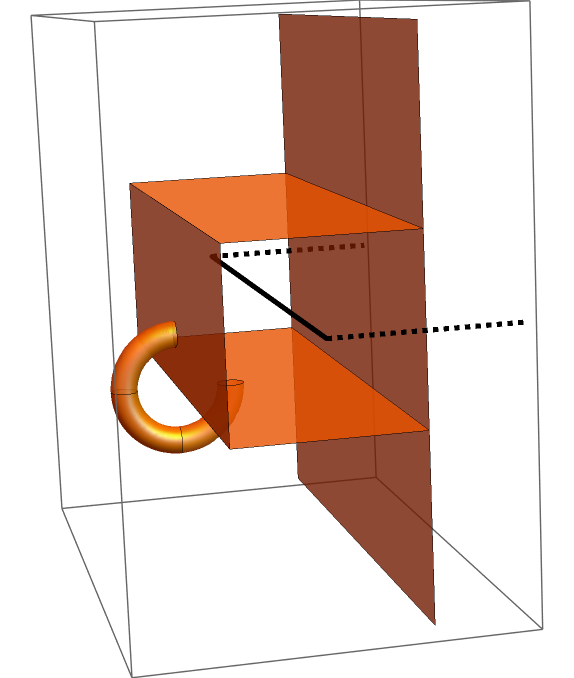}
         \subcaption[]{}
         \label{fig:S1}
     \end{subfigure}
    \qquad\qquad
     \begin{subfigure}[b]{1.38in}
         \centering
         \includegraphics[width=\textwidth]{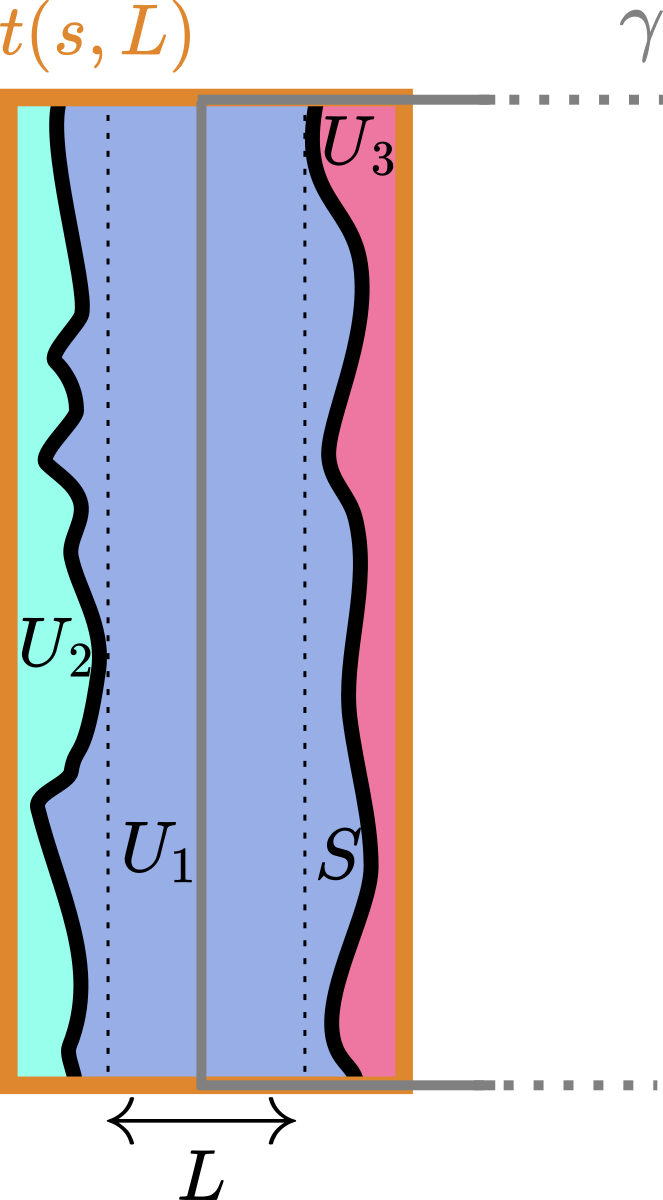}
        \subcaption[]{}
        \end{subfigure}
         \caption{(A) An example of the surface $S$ constructed in Lemma~\ref{lemma:S}, shown in orange. The surface includes a handle to emphasize the possible complexity of $S.$ $s$ is shown by a solid black line, and the neighboring parts of $\gamma$ by a dashed black line. (B) A cross section of the partition of the tube  $t\paren{s,L}$ by the hypersurface $S,$ in the plane containing $\gamma.$ $S$ is shown in black, and $\gamma$ in gray.}
    \label{fig:SPartition}
\end{figure}

\begin{Lemma}\label{lemma:S}
If $D_t$ occurs, then there is a connected hypersurface $S$ of plaquettes of $P\cap t$ that separates $t$ into three components $U_1=U_1(s), U_2=U_2(s),$ and $U_3=U_3(s)$ with the following properties.
\begin{itemize}
    \item $t\paren{s,L/2}$ is contained in $U_1$ and $\tilde{B}$ is contained in $U_2\cup U_3.$
    \item $U_2$ contains the $(d-1)$-face of $t$ that is contained in the boundary of $r^L.$ 
    \item $U_3$ is the component of $y_2 \setminus S_2$ containing the shared face of $y_2$ and $t,$ where $S_2$ is a minimal witness for  $R^{\square}_{d}\paren{y_2}.$
    %\item $y_5\coloneqq \brac{0,M_1}\times \ldots \times \brac{0,M_{d-2}}\times \brac{0,L}\times \brac{-L/3,L/3}$ is contained in $U_1\cup U_3.$ 
    \item $r\cap t\paren{s,L}$ is contained in $U_1\cup U_3.$ 
\end{itemize}
See Figure~\ref{fig:SPartition}.
\end{Lemma}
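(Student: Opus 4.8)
We want to extract, from the four crossing events defining $D_t$, a single connected plaquette hypersurface $S$ inside $P\cap t$ and show it partitions the tube into the three advertised regions. First I would fix minimal witnesses for each of the four crossing events: let $S_1$ be a minimal witness for $R^{\square}_{d-1}(y_1)$, $S_2$ for $R^{\square}_d(y_2)$, $S_3$ for $R^{\square}_{d-1}(y_3)$, and $S_4$ for $R^{\square}_d(y_4)$. Each $S_i$ lives in the interior of $y_i\subset t$, so all four are plaquette sets of $P\cap t$. The naive candidate is $S=S_1\cup S_2\cup S_3\cup S_4$; the first task is to argue that these four ``walls'' fit together into one connected surface enclosing the core tube $t(s,L/2)$. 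The geometry here is the key point: the boxes $y_1,\dots,y_4$ are arranged around $s$ so that consecutive ones share a $(d-1)$-dimensional face (e.g.\ $y_1$ and $y_3$ overlap in $s'\times[-L,-L/2]\times[-L,-L/2]$), and a minimal crossing of one box, restricted to that overlap face, must meet a minimal crossing of the neighbor — this is exactly the kind of ``crossings in a shared sub-box'' statement packaged by Lemma~\ref{lemma:crossingsubset}. I would use that lemma (and its symmetric versions) to see that $S_i$ restricted to the overlap region still witnesses a crossing of the smaller box, forcing $S_i$ and $S_{i+1}$ to intersect there; hence $S_1\cup\cdots\cup S_4$ is connected. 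Replacing $S$ by the connected component containing $t(s,L/2)$'s enclosing walls if necessary, we get the connected hypersurface $S\subset P\cap t$.

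**Next**, I would identify the three components of $t\setminus S$. Removing $S$ from $t$, the core region adjacent to $s$ — which contains $t(s,L/2)$ by construction, since each $S_i$ separates the half-width slab face from the outer face of $y_i$ — is one component; call it $U_1$. The outer region touching the face of $t$ lying in $\partial r^L$ is another; call it $U_2$. The third bullet pins down $U_3$ precisely: it is to be the component of $y_2\setminus S_2$ containing the face $y_2$ shares with (the core of) $t$; since $S_2$ is a minimal $R^{\square}_d(y_2)$-witness, $y_2\setminus S_2$ has exactly the component adjacent to that shared face, and this must be a component of $t\setminus S$ distinct from $U_1$ and $U_2$ because $S_2\subset S$ walls it off on the $y_2$ side while $S_1,S_3,S_4$ wall it off laterally. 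So set $U_3$ to be that component. The bullet ``$t(s,L/2)\subset U_1$ and $\tilde B\subset U_2\cup U_3$'' then follows because every point of the four parallel faces $\tilde B$ lies beyond one of the walls $S_i$ from $s$; a short case analysis on which face one is on (together with the minimality of the witnesses, so the walls don't spuriously cut $\tilde B$) shows each such point is in $U_2$ or $U_3$. The bullet ``$U_2$ contains the face of $t$ in $\partial r^L$'' is immediate from how we named $U_2$. Finally, ``$r\cap t(s,L)\subset U_1\cup U_3$'': the cells of $r$ inside the tube lie either in the core (hence $U_1$) or adjacent to the $y_2$-side (hence $U_3$), because $r$ sits on one side of the ambient lattice and only $y_2$ points toward the interior of $r^L$ along that direction; I would make this precise by checking the coordinate ranges, noting $r\cap t(s,L)$ avoids the slabs $y_1,y_3,y_4$ that were pushed to the far sides.

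**The main obstacle** I anticipate is the connectedness/fitting-together step: rigorously showing that four minimal crossing surfaces in the overlapping boxes $y_1,\dots,y_4$ actually glue to a single connected surface that genuinely encloses the core tube, rather than, say, three disjoint pieces or a surface with gaps. This is a purely combinatorial-topological statement about how crossings of adjacent boxes must intersect in their common faces, and it is where the precise placement of the $y_i$ (the half-width overlaps $[-L,-L/2]$, $[L/2,L]$) is used. I expect to handle it by repeated application of Lemma~\ref{lemma:crossingsubset}: restricting $S_i$ to the prism where $y_i$ and $y_j$ meet still yields a witnessing crossing of that prism, and two crossings of the same prism in complementary directions must share a plaquette (a Menger/planar-duality type fact, or directly: otherwise one could find a dual path threading both). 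Once connectedness is in hand, the identification of $U_1,U_2,U_3$ and the verification of the four bullets is routine bookkeeping with coordinate ranges, and I would present it compactly, leaning on Figure~\ref{fig:SPartition} for intuition.
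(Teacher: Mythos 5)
There is a genuine gap, and it is structural rather than a matter of missing detail. Your candidate $S$ is (a connected piece of) the full union $S_1\cup S_2\cup S_3\cup S_4$, and you then assert that $t\setminus S$ has the three advertised components. It does not: in the cross-section transverse to $s$ the four minimal crossings form a tic-tac-toe pattern (two full-height ``vertical'' walls in $y_1,y_2$ and two full-width ``horizontal'' walls in $y_3,y_4$), so the complement generically has up to nine regions --- a center, four side regions, and four corner regions --- not three. Concretely, the part of the top face of $t$ lying above $S_4$ and between $S_1$ and $S_2$ sits in a component that contains neither the face of $t$ in $\partial r^L$ nor the shared face of $y_2$ and $t$, so your verification of $\tilde B\subset U_2\cup U_3$ fails; likewise the far component of $y_2\setminus S_2$ is itself subdivided inside $t\setminus S$ by the portions of $S_3,S_4$ that extend past $S_2$, so your $U_3$ need not be a component of $t\setminus S$ at all. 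The missing idea is a pruning step: the paper never uses the whole union. It first cuts $t$ with $S_2$ alone, which isolates $U_3$ (the component of $y_2\setminus S_2$ on the side of the face $y_2$ shares with $t$) and leaves one large component $U_4$ containing $s$ and $\tilde B\setminus U_3$; it then observes that $S_2$ together with $S_1\cup S_3\cup S_4$ separates $s$ from $\tilde B$ and extracts a \emph{minimal} hypersurface $S_5\subset(S_1\cup S_3\cup S_4)\cap U_4$ such that $S_2\cup S_5$ still separates. Minimality is what forces $U_4\setminus S_5$ to have exactly two components $U_1\ni s$ and $U_2$, and confining $S_5$ to $U_4$ is what keeps $U_3$ intact; setting $S=S_2\cup S_5$ then gives the three-component decomposition, and the last bullet follows from $r\cap t(s,L)\subset t(s,L/2)\cup y_2$.

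Two smaller points. First, Lemma~\ref{lemma:crossingsubset} does not provide the ``adjacent crossings must meet'' fact you lean on: it only says a crossing of a box restricts to a crossing of a sub-box of the same height, and in any case two transversal crossing hypersurfaces need only intersect in cells of dimension at most $d-2$, not in shared plaquettes --- so connectivity of the naive union would need a separate argument (in the paper connectivity is instead inherited from the minimal choice of $S_2\cup S_5$). Second, your labeling of the crossing directions ($R^{\square}_d(y_2)$, $R^{\square}_{d-1}(y_3)$) follows a typo in the statement rather than the definition of $D_t$; this is cosmetic, but the pruning step above is not, and without it the bookkeeping you defer to ``routine'' verification cannot be completed.
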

\begin{proof}
%We may assume without loss of generality that  \[s=\brac{0,M_1}\times \ldots \times \brac{0,M_{d-2}}\times 0 \times 0.\] 
Let $S_1,S_2, S_3,$ and $S_4$ be minimal witnesses for the crossing events $R^{\square}_{d-1}\paren{y_1}, R^{\square}_{d-1}\paren{y_2},$ $R^{\square}_{d}\paren{y_3},$ and $R^{\square}_{d}\paren{y_4},$ respectively. $S_2$ separates $t\setminus S_2$ into two components $U_3$ and $U_4,$ where the former contains both $\gamma$ and the face of $t$ contained in $\partial r^L$  ($s\times \brac{-L,L} \times \set{-L}$) and the latter contains the opposite face of $t.$ 

The union $\cup_{i=1}^4 S_i$ separates $\gamma$ from $\tilde{B},$ so we may find a minimal hypersurface $S_5$ composed of plaquettes of $\paren{S_1\cup S_3\cup S_4}\cap U_4$ so that $S_2\cup S_5$ does the same. As $S_5$ is minimal, it must divide $U_4$ into two components $U_1$ and $U_2,$
where $U_1$ contains $s.$
%with the properties described in the second bullet point. 

Set $S=S_2\cup S_5.$ The first two properties are satisfied by construction, and the third follows from the observation that $r\cap t\paren{s,L} \subset t\paren{s,L/2}\cup y_2.$   

%We will show that $y_5$ is contained in $R^d\setminus U_2.$ From the second property, $y_5 \cap R^d\setminus U_2 \neq \emptyset,$ so it is enough to show that $y_5 \cap \partial U_2 = \emptyset.$ Consider the plaquettes of $S.$ Those contained in $S_5$ are adjacent to both $U_1$ and $U_2$ and are outside of $y_5$ by construction. On the other hand, the plaquettes of $S_2$ fall into two different classes. Those that are contained in the components of $t\setminus \paren{S_3\cup S_4}$ that do not contain $\gamma$ separate $U_2$ from $U_3$ and are outside $y_5.$ The remaining plaquettes separate $U_1$ from $U_3$ and may be contained in $y_5$ but that is not an issue. It follows that $y_5\subset \R^d\setminus U_2,$ as desired.
\end{proof}

If $s$ is a $(d-2)$-dimensional box in $\Z^{d},$ there is a rigid motion $\rho$ of $\Z^d$ so that $\rho\paren{s}$ lies in  $\Z^{d-2}\times \set{0}^2.$  Let $t\paren{s,L}=\rho^{-1}\paren{t\paren{\rho{s},L}}.$ Similarly, set $C_{t\paren{s,L}}$ and $D_{t\paren{s,L}}$ to be the events $\rho^{-1}\paren{C_{t\paren{\rho{s},L}}}$ and   $\rho^{-1}\paren{D_{t\paren{\rho{s},L}}},$ respectively.  Also, denote by $\overline{C}_{t\paren{s,L}}$ the event $C_{t\paren{s,L}}\cap D_{t\paren{s,L}}.$ When $s$ is not specified, it will be assumed to be contained in $\Z^{d-2}\times\set{0}^2.$

 We are now ready to state the main topological result of this section. Let $r$ be a $(d-1)$-dimensional box in $\Z^d$ and let $\gamma=\partial r.$ Also, set $T\paren{L}=\cup_s s^L,$ where $s$ ranges over all $(d-2)$-faces of $r\paren{\gamma},$ so $T$ is a solid $(d-1)$-torus surrounding $\gamma.$ Finally, for a box $u,$ denote by $E_{u,L}$ be the event that all plaquettes in $u^L$ are contained in $P.$ 
  \begin{Proposition}\label{prop:implyv}
  
Set $y_6=\brac{-L,M_1+L}\times \ldots \times \brac{-L,M_{d-1}+L}\times\brac{-L,-L/2}$ and  $y_7=\brac{-L,M_1+L}\times \ldots \times \brac{-L,M_{d-1}+L}\times\brac{L/2,-L}.$  Then, for any $m\in \N$ 
\begin{equation}
    \label{eqn:eqn:implyv}
R^{\square}_d\paren{y_6}\bigcap R^{\square}_d\paren{y_7} \bigcap \cap_{s} \overline{C}_{t\paren{s,L}} \bigcap \cap_{u} E_{u,L} \implies V_{\gamma}^{\mathrm{fin}}\paren{m}\,,
\end{equation}
where $s$ ranges over all $(d-2)$-dimensional faces of $\gamma$ and $u$ ranges over all $(d-3)$-faces of $\gamma.$   
\end{Proposition}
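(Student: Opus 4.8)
The plan is to show that, under the event on the left-hand side of~(\ref{eqn:eqn:implyv}), the cycle $\gamma$ bounds in $P$ over $\Z$ (hence over any $\Z_m$, by Corollary~\ref{cor:vgammacontain}). The strategy is to exhibit an explicit $(d-1)$-chain $\Theta \in C_{d-1}(P;\Z)$ with $\partial \Theta = \pm\gamma$, built by patching together local pieces: a ``cap'' surface inside the region $r$ enclosed by $\gamma$ coming from the events $R^{\square}_d(y_6) \cap R^{\square}_d(y_7)$ (which separate $r$ from the top and bottom faces of $r^L$), the tube surfaces from the events $\overline{C}_{t(s,L)} = C_{t(s,L)} \cap D_{t(s,L)}$ near each $(d-2)$-face $s$ of $\gamma$, and the solidly-filled blocks $E_{u,L}$ near each $(d-3)$-face $u$ of $\gamma$ where several tubes overlap and must be reconciled.

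First I would set up the homological bookkeeping: working in the large box $r^L$ (or its one-point compactification / relative homology rel $\partial r^L$), I want to assemble the chain in stages. From $R^{\square}_d(y_6)$ and $R^{\square}_d(y_7)$, together with Lemma~\ref{lemma:crossingtopology}, I get minimal witnesses whose associated chains $\rho_{J}$ have $\partial \rho_J$ generating $H_{d-2}(\hat B_d(y_6);\Z)$; intersecting this separating surface with the slab through $r$ produces a $(d-1)$-chain $\Theta_0$ inside the portion of $r^L$ above/below $r$, roughly with boundary $\gamma$ plus correction chains supported on the lateral faces of $r^L$, i.e.\ on the solid torus $T(L)$. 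Next, along each $(d-2)$-face $s$ of $\gamma$, the event $C_{t(s,L)}$ gives a chain $\tau_s \in C_{d-1}(P\cap t(s,L);\Z)$ with $\partial \tau_s = \rho_s + \alpha_s$, $\alpha_s$ supported on $\partial t(s,L)$; and $D_{t(s,L)}$ — via Lemma~\ref{lemma:S} — gives the separating hypersurface $S(s)$ splitting the tube into $U_1,U_2,U_3$, which lets me choose a canonical ``half'' of each tube so that the $\alpha_s$ pieces, together with the correction chains from $\Theta_0$, can be cancelled against the filled blocks $E_{u,L}$ at the corners. Summing $\Theta = \Theta_0 - \sum_s \tau_s + (\text{corner filling chains from } E_{u,L})$ and checking $\partial \Theta = \pm\gamma$ gives the result.

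The main obstacle — and the reason for including the $D_{t(s,L)}$ and $E_{u,L}$ events at all — is the corner reconciliation: near a $(d-3)$-face $u$ of $\gamma$ two adjacent tubes $t(s,L)$ and $t(s',L)$ overlap, and the locally-chosen tube surfaces $\tau_s, \tau_{s'}$ need not agree on the overlap, so their boundary corrections $\alpha_s, \alpha_{s'}$ leave residual $(d-2)$-cycles concentrated near $u$. The role of $E_{u,L}$ (all plaquettes of $u^L$ present in $P$) is to guarantee that these residual cycles are null-homologous \emph{within $P$} near the corner, so they can be filled; and the role of the separating surfaces $S(s)$ from $D_{t(s,L)}$ is to make the choice of which part of each tube to use \emph{consistent} across corners, so that the filling is with $\Z$ coefficients and the orientations match up. I would handle this by working corner-by-corner: fix $u$, note $u^L$ is a filled box so $H_{d-2}(u^L;\Z)$ is trivial (it is contractible), hence any $(d-2)$-cycle supported there bounds a $(d-1)$-chain in $P \cap u^L$; then a careful but routine orientation/sign check (using~(\ref{eq:boundaryOperator})-type computations as in Lemma~\ref{lemma:perimetertopological} and Lemma~\ref{lemma:crossingtopology}) shows the total boundary of the assembled chain is exactly $(-1)^{?}\gamma$. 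I expect the sign/orientation tracking and the verification that the separating surfaces $S(s)$ force globally consistent choices to be the delicate part; the existence of the individual local chains is immediate from the hypothesised events.
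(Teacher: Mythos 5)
Your proposal takes a genuinely different route from the paper --- the paper never constructs a spanning surface at all; it uses the events only to build, via Lemma~\ref{lemma:W123}, a hypersurface $\hat{S}\subset P\cap r^L$ partitioning $r^L$ into $W_1,W_2,W_3$, observes that any dual loop is confined to one of these regions, kills its linking number with $\gamma$ in each case by a contractibility argument (Corollary~\ref{cor:link3}, using $C_{t(s,L)}$ only to replace $\gamma$ by a homologous cycle avoiding the tube interior), and concludes by the dual criterion of Corollary~\ref{cor:linkG}. As written, your primal construction has two genuine gaps. First, the coefficients: the event $C_{t(s,L)}$ only supplies a chain $\tau_s\in C_{d-1}\paren{P\cap t;\;G}$ with $G$ the relevant finite group, not an integral chain, and the hypotheses do not in general imply $V_{\gamma}^{\mathrm{fin}}\paren{\Z}$ --- a mod-$2$ witness can be of the non-orientable type illustrated in Figure~\ref{fig:vgamma2} --- so the plan ``exhibit $\Theta\in C_{d-1}\paren{P;\Z}$ with $\partial\Theta=\pm\gamma$ and then invoke Corollary~\ref{cor:vgammacontain}'' cannot succeed; at best you must run the construction over $\Z_m$ with $C_t$ taken with matching coefficients.

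Second, and more seriously, the assembly step producing your ``cap'' $\Theta_0$ does not parse. A minimal witness for $R^{\square}_d\paren{y_6}$ (resp.\ $y_7$) is a hypersurface lying entirely in the slab at height $\brac{-L,-L/2}$ (resp.\ $\brac{L/2,L}$), at distance at least $L/2$ from the hyperplane containing $r$, and by Lemma~\ref{lemma:crossingtopology} its boundary sits on the lateral faces of that slab --- nowhere near $\gamma$ or the correction chains $\alpha_s$ on $\partial t\paren{s,L}$. So ``intersecting this separating surface with the slab through $r$'' yields nothing with boundary approximately $\gamma$, and after summing the tube chains you are left with the $(d-2)$-cycle $\sum_s\alpha_s$ (plus corner corrections) which must be shown to bound in $P$; no argument is given for this, and the hypothesized events do not obviously supply the annular surfaces needed to connect the tube boundaries to the crossing surfaces. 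This filling problem is exactly the content of the proposition, and it is precisely what the authors avoid by arguing on the dual side, where the same events are used only to \emph{confine} dual loops rather than to \emph{build} a surface. Your observations about the roles of $D_{t\paren{s,L}}$, $E_{u,L}$ and the triviality of $H_{d-2}$ of the corner boxes are sound locally, but they do not close the global gap.
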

While it would suffice to replace $R^{\square}_d\paren{y_6}\cap R^{\square}_d\paren{y_7}$ with a single occurrence of $R^{\square}_d\paren{r^L},$ the proof is simpler for this formulation of the proposition. We begin with a lemma extending the construction in Lemma~\ref{lemma:S}.

\begin{figure}
 \begin{subfigure}[b]{2.06in}
         \centering
         \includegraphics[width=\textwidth]{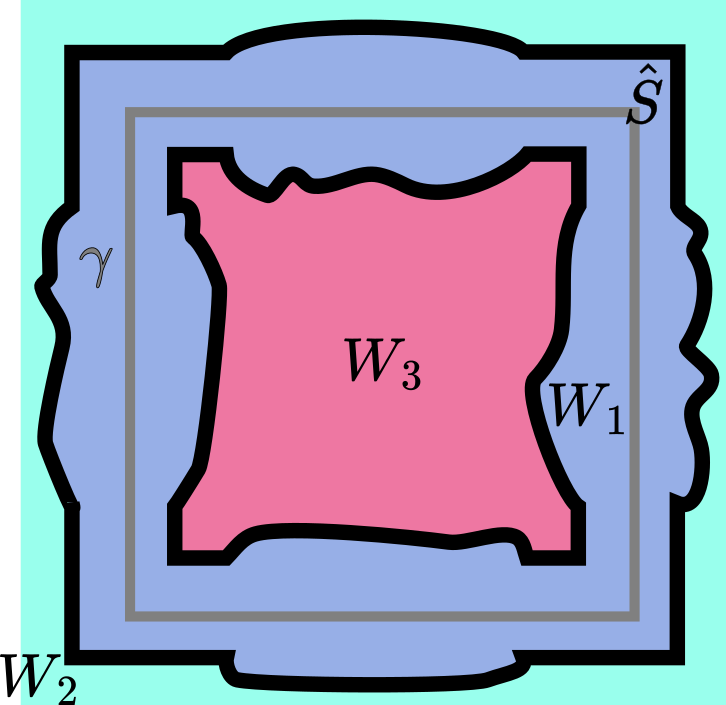}
         \subcaption[]{}
         \label{fig:SHat0}
     \end{subfigure}
    \qquad\qquad
     \begin{subfigure}[b]{2.42in}
         \centering
         \includegraphics[width=\textwidth]{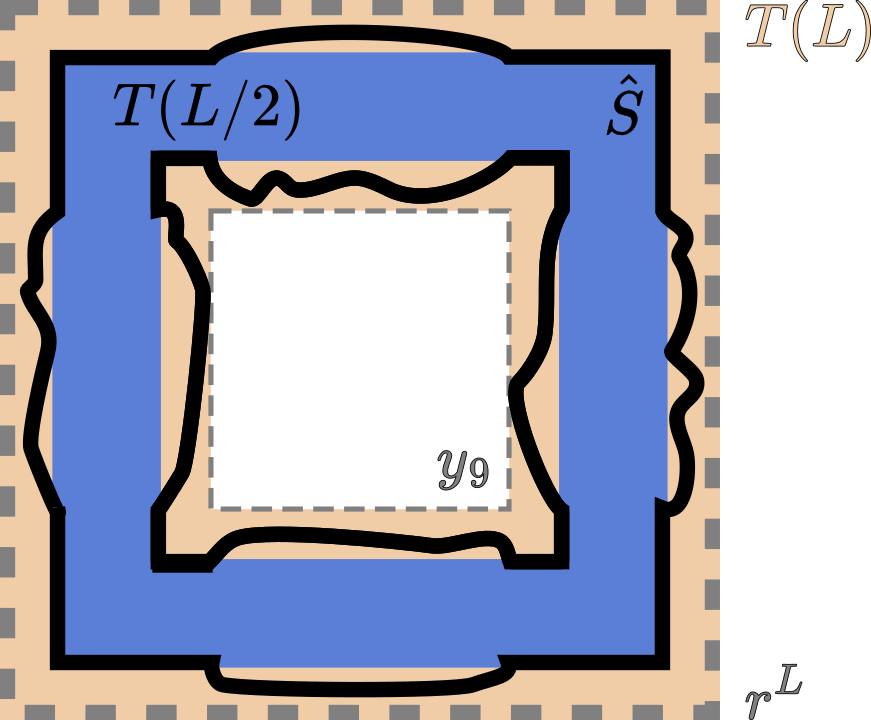}
        \subcaption[]{}
        \end{subfigure}
         \caption{A cross section of the hypersurface $\hat{S}$ constructed in Lemma~\ref{lemma:W123} for the case $d=3$, shown in the plane containing $\gamma.$ It is depicted in two different contexts: (A) with the regions $W_1, W_2,$ and $W_3$ in pastel purple, green, and pink, respectively and (B) with the regions $T\paren{L}$ (light orange), $T\paren{L/2}$ (dark purple), $r^L$ (the outer box, bounded by a thick dashed line), and $y_8$ (the inner box, bounded by a thin dashed line). Observe that $S$ may be taken to coincide with the boundary of $T(L/2)$ in a neighborhood of a corner of $\gamma$; this is possible because of the occurence of the events $E\paren{u,L}.$}
    \label{fig:SHat}
\end{figure}

\begin{Lemma}\label{lemma:W123}
Assume the hypotheses of Proposition~\ref{prop:implyv}. Then there exists a hypersurface $\hat{S}$ of plaquettes of $P\cap r^L$ that separates $r^L$ into three regions $W_1, W_2,$ and $W_3$ satisfying:
\begin{itemize}
    \item $T\paren{L/2}\subset W_1\subset T\paren{L}.$ 
    \item $W_3$ is contained in the shrunken box $y_8\coloneqq \brac{L/2,M_1-L/2}\times \ldots \times \brac{L/2,M_{d-1}-L/2}\times\brac{-L,L}.$
    \item $r$ is contained in $W_1\cup W_3.$ 
\end{itemize}
See Figure~\ref{fig:SHat}.
\end{Lemma}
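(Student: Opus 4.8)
The plan is to build $\hat{S}$ by patching together the local hypersurfaces $S(s)$ provided by Lemma~\ref{lemma:S} for each $(d-2)$-face $s$ of $\gamma$, using the crossing events $R^{\square}_d(y_6), R^{\square}_d(y_7)$ to close things off in the $\vec{e}_d$-direction and the events $E_{u,L}$ to make the local pieces agree near the $(d-3)$-faces (corners) of $\gamma$. First I would fix, for each $(d-2)$-face $s$, the hypersurface $S(s)$ and the three regions $U_1(s) \supset t(s,L/2)$, $U_2(s)$, $U_3(s)$ from Lemma~\ref{lemma:S}, noting that the occurrence of $D_{t(s,L)}$ (part of $\overline{C}_{t(s,L)}$) is exactly what is needed to invoke that lemma. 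The idea is that $W_1$ should be the union of the ``$U_1$-type'' regions, i.e. the component of the complement of the patched surface that contains $T(L/2)$; $W_3$ the ``interior'' component containing the bulk of $r$; and $W_2$ the ``exterior'' component touching $\partial r^L$.

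The main work is the patching near the corners. Near a $(d-3)$-face $u$ of $\gamma$, two adjacent tubes $t(s,L)$ and $t(s',L)$ overlap in a neighborhood of $u^L$, and the two locally-constructed surfaces $S(s)$ and $S(s')$ need not coincide there. This is precisely where I would use $E_{u,L}$: since all plaquettes of $u^L$ are present in $P$, I can modify each $S(s)$ in a neighborhood of each of its endpoint corners so that it agrees with $\partial T(L/2)$ there (as remarked in the caption of Figure~\ref{fig:SHat}), which is a fixed deterministic surface independent of the randomness. After this modification the pieces $S(s)$, now all equal to a piece of $\partial T(L/2)$ on the overlaps, glue into a single hypersurface $S_0$ of plaquettes of $P$ whose ``outer'' side runs along (a deformation of) $\partial T(L)$ and whose ``inner'' side runs along $\partial T(L/2)$, separating a solid-torus-like region $W_1$ with $T(L/2)\subset W_1 \subset T(L)$ from the rest of $r^L$. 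To finish, I would cap off the region exterior to $S_0$ in the $\vec{e}_d$-direction using minimal witnesses $S_6, S_7$ for $R^{\square}_d(y_6)$ and $R^{\square}_d(y_7)$: these separate the $\pm \vec{e}_d$ faces of $r^L$ from $\gamma$'s plane, so $S_0 \cup S_6 \cup S_7$ together cut $r^L$ into the claimed three regions, with $W_3$ — the component containing the ``core'' of $r$ — forced into the shrunken box $y_8$ because $S_0$ coincides with $\partial T(L/2)$ near the corners and with the $S(s)$ along the edges, all of which lie within distance $L/2$ of $\gamma$; hence $W_3$ avoids the $L/2$-collar of $\partial r^L$ in the first $d-1$ coordinates and avoids the collar in the last coordinate by $S_6, S_7$. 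The remaining bullets ($T(L/2)\subset W_1 \subset T(L)$ and $r \subset W_1 \cup W_3$) then follow from the corresponding properties in Lemma~\ref{lemma:S}, since $r \cap t(s,L) \subset U_1(s) \cup U_3(s)$ for each $s$ and $r \setminus T(L)$ is covered by $W_3$.

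I expect the main obstacle to be bookkeeping the gluing rigorously: verifying that after the corner modifications the surfaces genuinely agree on overlaps (so their union is an honest hypersurface of plaquettes with the right separation properties), and that the resulting complement really has exactly three connected components with the stated inclusions, rather than extra spurious components. Making ``$\hat S$ coincides with $\partial T(L/2)$ near a corner'' precise requires checking that the modification only uses plaquettes guaranteed present by $E_{u,L}$ and does not destroy the separation achieved by $S(s)$ away from the corner — essentially a local surgery argument on cubical surfaces. Once $\hat S = S_0 \cup S_6 \cup S_7$ and $W_1, W_2, W_3$ are in hand, Proposition~\ref{prop:implyv} itself should follow quickly: one uses the chains $\tau(s)$ from the events $C_{t(s,L)}$, together with the boundary $\partial T(L/2)$ and the fact that $r$ and hence $\rho_r$ lives in $W_1 \cup W_3$, to write $\gamma = \partial r$ as a boundary in $P$ (which gives $V_\gamma^{\mathrm{fin}}(1)$, hence $V_\gamma^{\mathrm{fin}}(m)$ for all $m$ by Corollary~\ref{cor:vgammacontain}) — but that last deduction is the content of the subsequent argument, not of this lemma, so here I would stop at the construction of $\hat S$ and the three regions.
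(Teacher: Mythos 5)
Your sketch assembles the right ingredients (the local surfaces and regions $U_1\paren{s},U_2\paren{s},U_3\paren{s}$ from Lemma~\ref{lemma:S}, minimal witnesses of $R^{\square}_d\paren{y_6}$ and $R^{\square}_d\paren{y_7}$, and the corner events $E_{u,L}$), but the step you yourself flag as ``the main obstacle'' --- making the corner surgery precise, checking that the modified pieces of the $S\paren{s}$ genuinely agree on the overlaps, and verifying that the glued object separates $r^L$ with the stated inclusions --- is the entire content of the lemma, and it is left undone. This is a genuine gap rather than deferred routine bookkeeping: it is exactly in the surgical neighborhoods of the $(d-3)$-faces that a ``core'' component could leak past the truncated inner crossing pieces $S_2\paren{s}$ into the $L/2$-collar of an adjacent face, and ruling that out is where the work lies. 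The paper sidesteps the gluing problem entirely: it never builds one surface by surgery. It sets $W_1\coloneqq\bigcup_s U_1\paren{s}$ (whose boundary consists of plaquettes of $P$, the tube-end faces near corners being open by $E_{u,L}$), lets $S'$ and $S''$ be minimal witnesses of the two $d$-crossings, uses Lemma~\ref{lemma:crossingtopology} to see that they cut $r^L$ into three pieces with middle piece $W_0$, defines $W_3\coloneqq W_0\cap\paren{y_9\cup\bigcup_s U_3\paren{s}}$ with $y_9$ the closure of $r^L\setminus T$, and only then takes $\hat S\coloneqq\partial W_1\cup\partial W_3$ and $W_2\coloneqq r^L\setminus\paren{\hat S\cup W_1\cup W_3}$. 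All the separation and containment claims then reduce to properties already established for the $U_j\paren{s}$ and for minimal crossing witnesses; note also that the lemma only asks for three \emph{regions}, whereas your construction tacitly assumes the complement of $\hat S$ has exactly three connected components, which need not hold (the outer region is typically disconnected).

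A second concrete problem is your justification of the bullet $W_3\subset y_8$. The surfaces $S\paren{s}$ do not ``lie within distance $L/2$ of $\gamma$'' --- they lie only in the tubes $t\paren{s,L}$, i.e.\ within distance $L$ --- and ``avoiding the $L/2$-collar of $\partial r^L$'' is strictly weaker than membership in $y_8$, which requires staying at distance at least $L/2$ \emph{inside} the face hyperplanes of $r$ in the first $d-1$ coordinates. The correct mechanism is the one built into the paper's definition of $W_3$: the inner pieces confining $W_3$ live in the boxes $y_2\paren{s}$ (at distance between $L/2$ and $L$ on the inner side of each face), $y_9\subset y_8$, and the intersection with $W_0$ controls the $\vec{e}_d$-direction. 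As written, your argument for the second bullet would not survive scrutiny even if the gluing were carried out.
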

\begin{proof}
Set $W_1=\cup_{s} U_1\paren{s},$ where $U_1\paren{s}$ was defined in the statement of Lemma~\ref{lemma:S} (and we extend the definition to general $(d-2)$-dimensional boxes by using translations/rotations). By construction, every plaquette in the boundary of $W_1$ is contained in $P$ and  $T\paren{L/2}\subset W_1\subset T\paren{L}.$ 

Let $S'$ and $S''$ be minimal witnesses for the crossings  $R^{\square}_d\paren{y_6}$ and $R^{\square}_d\paren{y_7}.$ They are disjoint, so by Lemma~\ref{lemma:crossingtopology}, they divide $r^L$ into three components, one of which contains the center of $r^L.$ Call this component $W_0.$ Set $y_9= \brac{L,M_1-L} \times \ldots \times \brac{L,M_{d-1}-L} \times \brac{-L,L},$ i.e. the closure of $r^L\setminus T,$ and 
\[W_3=W_0\cap \paren{y_9 \cup \bigcup_{s} U_3\paren{s}}\,.\]
Notice that the second desired containment property is satisfied since it is holds for both $y_9$ and each $U_3\paren{s}.$ It is also not difficult to check that $r \subset W_1 \cup W_3.$
Every plaquette in $\partial W_3$ is contained in $P$ because each face of $\partial y_9$ is contained in the component $U_3\paren{s}$ for some $s.$ That is, $\partial W_3$ is a union of subsets of $S',$ $S'',$ and hypersurfaces of the form $S_5$ constructed in the proof of Lemma~\ref{lemma:S}. 

Set $\hat{S}=\partial W_1\cup \partial W_3$ and $W_2=r^L\setminus \paren{\hat{S} \cup W_1\cup W_3}.$ Then the regions $W_1,W_2,$ and $W_3$ satisfy the required conditions.
%the bullet points by Lemma~\ref{lemma:S} and by construction.
\end{proof}

\begin{proof}[Proof of Proposition~\ref{prop:implyv}]
   By Corollary~\ref{cor:linkG} it suffices to show that no dual loop of $\overline{r^{L+1/2}}$ can be linked with $\gamma.$ Any such loop $\gamma^{\bullet}$ must be contained in one of the three components $W_1, W_2,$ and $W_3.$ 

    In the first case, $\gamma^{\bullet}$ is in the interior of one of the tubes $t\paren{s,L},$ as the events $E_{u,L}$ precludes it from entering more than one tube. The occurrence of the event $C_{t\paren{s,L}}$ implies that $\gamma$ is homologous to a cycle $\gamma'$ contained in $T\setminus \mathrm{interior}\paren{t\paren{s,L}}.$ The interior of $t\paren{s,L}$ is contractible in $\R^d\setminus \paren{T\setminus \mathrm{interior}\paren{t\paren{s,L}}}$ so $\gamma^{\bullet}$ cannot be linked with $\gamma.$ See Corollary~\ref{cor:link3}. 
   
   If $\gamma^{\bullet}$ is contained in $W_2$ then $\gamma^{\bullet}$ cannot be linked with $\gamma$ because $r\subset \R^d\setminus W_2$ and $\gamma$ is contractible in $r.$

    Finally, $\gamma$ is contractible in $\R^d\setminus y_8$ so it cannot be linked with any loop contained in $W_3.$ 
\end{proof}

\subsection{Proof of Theorem~\ref{theorem:sharp_perimeter}}
\label{subsec:sharp_perimeter}

Now that we have finished the technical lemmas, the proof proceeds similarly to those of~\cite{aizenman1983sharp}. Their arguments often use the independence of events defined on disjoint edge sets in Bernoulli percolation. In lieu of that, we employ the following lemma for wired boundary boundary conditions.
\begin{Lemma}\label{lemma:wired}

Let $X$ be a subcomplex of $\Z^d,$ and suppose $r_1,r_2\subset X$ are boxes which contain no shared $d$-cubes. If $A_1,A_2$ are increasing events that depend only on the edges of $r_1$ and $r_2$ respectively, then
\[\mu^{\mathbf{w}}_{X,p,q}\paren{A_1\cap A_2}\leq \mu^{\mathbf{w}}_{r_1,p,q}\paren{A_1}\mu^{\mathbf{w}}_{r_2,p,q}\paren{A_2}\,.\]  
\end{Lemma}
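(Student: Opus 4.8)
The plan is to deduce this from the comparison inequalities between random-cluster measures with different boundary conditions, together with the FKG/positive-association property. The key point is that $\mu^{\mathbf{w}}_{X,p,q}$, restricted to the edges of $r_1 \cup r_2$, is dominated by a product-like measure obtained by wiring $r_1$ and $r_2$ separately. First I would recall that for the classical random-cluster model, conditioning on the configuration outside a box and then wiring that box gives the stochastically largest measure: for any box $r \subset X$ and any boundary configuration $\omega$ on $X \setminus r$, the conditional law of the edges in $r$ given $\omega$ is stochastically dominated by $\mu^{\mathbf{w}}_{r,p,q}$ (this is the content of the comparison between boundary conditions, analogous to Proposition~\ref{prop:extremal} and a standard consequence of Holley's inequality, Theorem~2.3 of~\cite{grimmett2006random}).

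The main step is then a two-stage conditioning argument. Condition $\mu^{\mathbf{w}}_{X,p,q}$ on the states of all edges outside $r_1$; since $A_1$ depends only on edges of $r_1$, the conditional probability of $A_1$ is bounded above by $\mu^{\mathbf{w}}_{r_1,p,q}(A_1)$ uniformly in the conditioning. Taking expectation over the conditioning, and using that $A_2$ depends only on edges of $r_2$, which lies in the conditioned region (here I use that $r_1$ and $r_2$ share no $d$-cubes, hence no edges, so revealing everything outside $r_1$ reveals all of $r_2$), we get
\[
\mu^{\mathbf{w}}_{X,p,q}(A_1 \cap A_2) = \mathbb{E}_{\mu^{\mathbf{w}}_{X,p,q}}\!\left[ I_{A_2}\, \mu^{\mathbf{w}}_{X,p,q}(A_1 \mid \mathcal{F}_{r_1^c}) \right] \leq \mu^{\mathbf{w}}_{r_1,p,q}(A_1)\, \mu^{\mathbf{w}}_{X,p,q}(A_2)\,.
\]
Then I would apply the same argument once more to the factor $\mu^{\mathbf{w}}_{X,p,q}(A_2)$, conditioning on everything outside $r_2$, to obtain $\mu^{\mathbf{w}}_{X,p,q}(A_2) \leq \mu^{\mathbf{w}}_{r_2,p,q}(A_2)$ (this is just extremality of wired boundary conditions, Proposition~\ref{prop:extremal} applied to the single box $r_2$). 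Combining the two inequalities gives the claim.

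The step I expect to be the main obstacle — really the only nontrivial input — is justifying the uniform domination of the conditional measure $\mu^{\mathbf{w}}_{X,p,q}(\,\cdot \mid \mathcal{F}_{r_1^c})$ by $\mu^{\mathbf{w}}_{r_1,p,q}$. This is where one must be careful that the relevant complexes here are the dual one-dimensional random-cluster models (by Definition~\ref{defn:boundaryconditions}), so the boundary-condition comparison is exactly the classical graph statement: adding edges outside $r_1$ to the "ghost" boundary only increases connectivities, so the worst case is the fully wired boundary on $r_1$; this is Holley's inequality applied to the finite-energy, positively-associated random-cluster specification, precisely as in the proof of Proposition~\ref{prop:interpolate}. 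Once that domination is in hand, the rest is the routine tower-property manipulation above, and the hypothesis that $r_1, r_2$ contain no common $d$-cubes is used solely to guarantee that $A_2$ is measurable with respect to $\mathcal{F}_{r_1^c}$ (and symmetrically).
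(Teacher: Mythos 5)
Your argument is correct, but it is not the route the paper takes. The paper's proof conditions on the increasing event $B$ that every plaquette of $\partial r_1\cup\partial r_2$ is open: by the FKG inequality this only increases the probability of $A_1\cap A_2$, the conditional law on $r_1\cup r_2$ is then the wired measure $\mu^{\mathbf{w}}_{r_1\cup r_2,p,q}$, and a Mayer--Vietoris computation shows $\abs{H^{d-2}\paren{P_1^{\mathbf{w}}\cup P_2^{\mathbf{w}};\;\Z_q}}=\abs{H^{d-2}\paren{P_1^{\mathbf{w}};\;\Z_q}}\,\abs{H^{d-2}\paren{P_2^{\mathbf{w}};\;\Z_q}}$, so that this wired measure on the two boxes is literally the product $\mu^{\mathbf{w}}_{r_1,p,q}\times\mu^{\mathbf{w}}_{r_2,p,q}$, giving the bound in one step. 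You instead use the tower property together with a uniform stochastic domination of the conditional measure $\mu^{\mathbf{w}}_{X,p,q}\paren{\,\cdot\mid \mathcal{F}_{r_1^c}}$ by $\mu^{\mathbf{w}}_{r_1,p,q}$, and then extremality again for $r_2$. That works, and you correctly identify where the no-shared-$d$-cube hypothesis enters (measurability of $A_2$ with respect to the exterior of $r_1$, since the measures only randomize plaquettes not in $\partial r_i$). The trade-off: your route leans on the spatial Markov/nesting property of the PRCM (that conditioning on the exterior yields a PRCM on $r_1$ with some boundary condition, to which Proposition~\ref{prop:extremal} or Holley applies); the paper defines boundary conditions via duality and only asserts this conditioning description without proof, so within this paper that identification is a nontrivial input, although it is of the same nature as the Holley comparisons invoked in Proposition~\ref{prop:interpolate} and is standard for the dual one-dimensional model. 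The paper's route avoids this entirely, needing only FKG plus the explicit topological factorization. One small point of care in your sketch: after dualizing, the wired PRCM on $r_1$ corresponds to the \emph{free} classical random-cluster measure on the dual box and plaquette-increasing events become dual-edge-decreasing, so the comparison you want is that any induced boundary condition dominates the free one on the dual side, equivalently is dominated by wired on the primal side; your phrasing about "the worst case is the fully wired boundary" should be read in the primal PRCM to get the inequality in the right direction.
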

{Lemma}
\begin{proof}
Let $P_1\subset r_1$ and $P_2\subset r_2.$ As an application of the Mayer--Vietoris sequence, 
\[H^{d-2}\paren{P_1^{\mathbf{w}}\cup P_2^{\mathbf{w}};\;\Z_q} \cong H^{d-2}\paren{P_1^{\mathbf{w}};\;\Z_q}\oplus H^{d-2}\paren{P_2^{\mathbf{w}};\;\Z_q}\]
so
\[\abs{H^{d-2}\paren{P_1^{\mathbf{w}}\cup P_2^{\mathbf{w}};\;\Z_q}}= \abs{H^{d-2}\paren{P_1^{\mathbf{w}};\;\Z_q}}\abs{H^{d-2}\paren{P_2^{\mathbf{w}};\;\Z_q}}\]
It follows that 
$\mu^{\mathbf{w}}_{r_1\cup r_2,p,q}$ is the independent product measure  $\mu^{\mathbf{w}}_{r_1,p,q}\times \mu^{\mathbf{w}}_{r_2,p,q}.$ 
(Alternatively, one could prove this by counting components of the dual graphs.)  

Let $B$  be the event that all plaquettes of $\partial r_1 \cup\partial r_2$ are contained in $P.$ Then, by the FKG inequality,
    \begin{align*}
        \mu^{\mathbf{w}}_{X,p,q}\paren{A_1\cap A_2} \leq & \mu^{\mathbf{w}}_{X,p,q}\paren{A_1\cap A_2\mid B}\\
        =&  \mu^{\mathbf{w}}_{X_1\cup X_2,p,q}\paren{A_1\cap A_2}\\
        =&  \mu^{\mathbf{w}}_{X_1,p,q}\paren{A_1}\mu^{\mathbf{w}}_{X_2,p,q}\paren{A_2}\,.
    \end{align*}
\end{proof}

Next we show the analogue of Proposition 3.6 of~\cite{aizenman1983sharp}, closely following the argument therein.

\begin{Proposition}\label{prop:C}
For $p\in\brac{0,1},$ 
    \[c \coloneqq -\lim_{n\rightarrow\infty} \frac{\log\paren{ \mu_{\Lambda_n,p,q}^{\mathbf{w}}\paren{C_{\Lambda_n}}}}{\paren{2n}^{d-2}}\]
    exists, and is positive when $p>p^{*}\paren{p_c\paren{q}}.$ 
\end{Proposition}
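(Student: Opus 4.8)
The plan is to mimic the standard subadditivity argument that establishes the existence of surface tension constants (cf. Proposition 2.4 of~\cite{aizenman1983sharp} and the proof of Lemma~\ref{lemma:sharpconstant} above), but now for the event $C_{\Lambda_n}$ under wired boundary conditions. First I would set $a_n \coloneqq -\log\paren{\mu_{\Lambda_n,p,q}^{\mathbf{w}}\paren{C_{\Lambda_n}}}$ and seek to show that $a_n$ is (approximately) subadditive with respect to partitioning the base box $s' = \brac{-n,n}^{d-2}$ into smaller $(d-2)$-dimensional sub-boxes. The key geometric input is Lemma~\ref{lemma:CR}: if $s'' \subset s'$ then $C_{t\paren{s,L}} \implies C_{t\paren{s'',L}}$; equivalently, a witness for the big-tube event restricts to a witness for each smaller sub-tube event. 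Conversely, to go the other direction I would tile $s'$ by $m^{d-2}$ translates of a smaller base box $\brac{-k,k}^{d-2}$ (up to a boundary layer of $o\paren{n^{d-2}}$ plaquettes, filled in by forcing those plaquettes to be open, which costs only a factor exponential in $o\paren{n^{d-2}}$), and observe that simultaneous occurrence of the corresponding tube events $C$ forces a witness for $C_{\Lambda_n}$ — the union of the small witnesses, together with the forced plaquettes, has the right boundary by linearity of $\partial$. This is the same ``gluing chains along a tiling'' computation as in Lemma~\ref{lemma:sharpconstant}.

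The new ingredient, replacing the independence used in~\cite{aizenman1983sharp}, is Lemma~\ref{lemma:wired}: since the $m^{d-2}$ sub-tubes $t\paren{s_j, L}$ (for distinct $(d-2)$-faces $s_j$) can be chosen to share no $d$-cubes, and each $C_{t\paren{s_j,L}}$ is an increasing event depending only on plaquettes in $t\paren{s_j,L}$, we get
\[
\mu^{\mathbf{w}}_{\Lambda_n,p,q}\paren{\bigcap_j C_{t\paren{s_j,L}}} \leq \prod_j \mu^{\mathbf{w}}_{t\paren{s_j,L},p,q}\paren{C_{t\paren{s_j,L}}}\,.
\]
Combined with Lemma~\ref{lemma:CR} (which gives the reverse containment $C_{\Lambda_n} \subset \bigcap_j C_{t\paren{s_j,L}}$ after suitably identifying tubes) and the FKG inequality for filling in boundary plaquettes, this yields a relation of the form $a_n \leq m^{d-2} a_k + o\paren{n^{d-2}}$ whenever $n \approx mk$. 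A standard Fekete-type argument over such tilings then gives that $a_n / \paren{2n}^{d-2}$ converges, defining $c$.

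It remains to show $c > 0$ when $p > p^*\paren{p_c\paren{q}}$. Here I would argue that $c \geq c'$ where $c'$ is the exponential rate for a single-plaquette version of the event: specifically, bound $\mu^{\mathbf{w}}_{\Lambda_n,p,q}\paren{C_{\Lambda_n}}$ above by the probability that, for each of the $\asymp n^{d-2}$ unit sub-tubes along $s'$, the corresponding width-$O(1)$ tube event occurs; by Lemma~\ref{lemma:wired} these factorize, so it suffices to show a single such event has probability bounded away from $1$. A witness for $C_{t}$ on a bounded tube $t$ requires a $(d-1)$-chain in $P\cap t$ whose boundary is $\rho_s$ plus something on $\partial t$; dually (via the duality of Theorem~\ref{thm:duality} / Proposition~\ref{cor:alexander} and the linking-number characterization of $V_\gamma$), the \emph{non}-occurrence of such a crossing is implied by the existence of a short dual loop in $Q \cap t$ linking $s$ within the bounded tube, which in the supercritical regime $p > p^*\paren{p_c(q)}$ (equivalently $p^*(p) < p_c(q)$, the subcritical regime for the dual RCM) has probability bounded below by a constant, using the exponential decay of Theorem~\ref{thm:expdecay} together with a local surgery as in Proposition~\ref{prop:axisdisconnect} to close up a dual path into a linking loop inside a bounded region. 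Thus each bounded tube event has probability at most $1-\delta$ for some $\delta > 0$ depending only on $p,q,d$, and $c \geq -\log\paren{1-\delta}\cdot(\text{density of tubes}) > 0$.

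The main obstacle I anticipate is the last step: carefully producing, in a \emph{bounded} tube $t$ and with \emph{wired} boundary conditions, a dual loop that genuinely links $s$ with linking number $\pm 1$ (so that Corollary~\ref{cor:linkG} applies to preclude $C_t$), rather than merely a dual crossing. The wired boundary conditions help — they only make dual connections \emph{less} likely, so a lower bound on the linking-loop probability under wired b.c.\ follows from one under free/full-space b.c.\ via Proposition~\ref{prop:extremal} — but one must check that the surgery closing up the dual path stays inside $t$ (or a bounded enlargement) and achieves odd linking number. This is the bounded-region analogue of the disconnection estimate in Proposition~\ref{prop:axisdisconnect}, and I expect it to go through with the same FKG-plus-local-modification technique, at the cost of some bookkeeping about orientations and the precise geometry of the tube $t\paren{s,L}$ relative to the $(d-2)$-face $s$.
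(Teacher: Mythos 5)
Your skeleton — tile the base of the tube, restrict witnesses via Lemma~\ref{lemma:CR}, decouple via Lemma~\ref{lemma:wired}, then run a Fekete argument — is indeed the paper's mechanism, but as assembled it contains a step that fails and a direction error. The gluing claim, that simultaneous occurrence of the sub-tube events $C_{t\paren{s_j,L}}$ forces $C_{\Lambda_n}$ because ``the union of the small witnesses \ldots has the right boundary by linearity of $\partial$,'' is false: each witness $\tau_j$ satisfies $\partial\tau_j=\rho_{s_j}+\alpha_j$ with $\alpha_j$ supported on all of $\partial t\paren{s_j,L}$, in particular on the lateral walls shared with neighbouring tubes, and these internal contributions neither cancel nor lie on the boundary of the big box; forcing a layer of plaquettes open does not by itself supply the correcting chain. (This is precisely why the paper's perimeter-law lower bound uses the decorated events $\overline{C}_t$, the crossings $D_t$, the events $E_{u,L}$ and the topological Proposition~\ref{prop:implyv} rather than gluing bare $C$ events.) Relatedly, the inequality you attribute to Lemma~\ref{lemma:CR} plus Lemma~\ref{lemma:wired} comes out backwards: the containment $C_{\Lambda_n}\subset\bigcap_j C_{t\paren{s_j,L}}$ together with the wired factorization gives $-\log\mu^{\mathbf{w}}_{\Lambda_n,p,q}\paren{C_{\Lambda_n}}\geq k\,\paren{-\log\mu^{\mathbf{w}}_{\Lambda_m,p,q}\paren{C_{\Lambda_m}}}$, i.e.\ superadditivity, not $a_n\leq m^{d-2}a_k+o\paren{n^{d-2}}$. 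Fortunately that one-sided inequality already suffices: it yields $\limsup_n \log\mu^{\mathbf{w}}_{\Lambda_n,p,q}\paren{C_{\Lambda_n}}/n^{d-2}\leq \log\mu^{\mathbf{w}}_{\Lambda_m,p,q}\paren{C_{\Lambda_m}}/m^{d-2}$ for every fixed $m$, and taking the limit inferior over $m$ gives existence of the limit. This is exactly the paper's proof, so deleting the gluing step (and fixing the inequality's direction) repairs your existence argument.

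For positivity your route is both flawed and unnecessary. First, $s$ is a $(d-2)$-dimensional box, not a cycle, so ``linking number with $s$'' is undefined and Corollary~\ref{cor:linkG} (stated for closed boundaries $\gamma=\partial r'$) does not directly preclude the relative event $C_t$; you would need a relative version of the duality that the paper never develops. Second, the monotonicity you invoke is reversed: wired primal boundary conditions correspond to free dual conditions and make dual connections \emph{less} likely, so a lower bound on a dual-loop probability in the full lattice does not transfer to wired boundary conditions via Proposition~\ref{prop:extremal}. None of this machinery is needed: once superadditivity is in hand, $c\geq -\log\mu^{\mathbf{w}}_{\Lambda_m,p,q}\paren{C_{\Lambda_m}}/\paren{2m}^{d-2}$ for any fixed $m$, and $\mu^{\mathbf{w}}_{\Lambda_m,p,q}\paren{C_{\Lambda_m}}<1$ for every $p<1$ by finite energy: with positive probability every plaquette containing a fixed $(d-2)$-cell of $s$ is vacant, and then no chain $\tau$ in $P\cap\Lambda_m$ can satisfy $\partial\tau=\rho_s+\alpha$ with $\alpha$ supported on $\partial\Lambda_m$. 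So positivity is immediate and requires neither the dual-loop surgery nor Theorem~\ref{thm:expdecay}.
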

\begin{proof}
    Let $n$ and $m$ be positive integers with $n>m.$ We can find $k\coloneqq \floor{\frac{n}{m}}^{d-2}$ disjoint cubes of width $m$ which are contained in $\Lambda_n$ and are centered at points of $s.$ Call these cubes $\Lambda^1,\ldots,\Lambda^k$ and let $D$ be the event that all plaquettes in the boundaries of those cubes are occupied. By Lemma~\ref{lemma:CR}, if $C_{\Lambda_n}$ occurs then the events $C_{\Lambda^1},\ldots,C_{\Lambda^k}$ happen as well. Then, 
    \[\mu_{\Lambda_n,p,q}^{\mathbf{w}}\paren{C_{\Lambda_n}}\leq   \mu_{\Lambda_m,p,q}^{\mathbf{w}}\paren{C_{\Lambda_m}}^{k}\,,\]
as a consequence of Lemma~\ref{lemma:wired}.

Taking logs and rearranging yields
\[\frac{\log\paren{ \mu_{\Lambda_n,p,q}^{\mathbf{w}}\paren{C_{\Lambda_n}}}}{n^{d-2}}\leq \paren{1+a\frac{m^{d-2}}{n^{d-2}}}\frac{\log\paren{\mu_{\Lambda_m,p,q}^{\mathbf{w}}\paren{C_{\Lambda_m}}}}{{m^{d-2}}}\]
where 
\[a=k-\paren{\frac{n}{m}}^{d-2}\]
satisfies $\abs{a}<1$ so
\[\limsup_{n\rightarrow\infty} \frac{\log\paren{ \mu_{\Lambda_n,p,q}^{\mathbf{w}}\paren{C_{\Lambda_n}}}}{n^{d-2}} \leq  \frac{\log\paren{ \mu_{\Lambda_m,p,q}^{\mathbf{w}}\paren{C_{\Lambda_M}}}}{m^{d-2}}\]
and we may conclude by taking the limit infimum as $m\rightarrow \infty.$ 
\end{proof}
We will eventually see that $c= \oldconstant{const:prcm2}.$ Note that the definition of the event $C_t$ depends on the choice of abelian group $G$ for homology coefficients, and so $c$ may be contingent on it as well.

\begin{Proposition}
For $p\in\brac{0,1},$ 
    \[\lim_{n\rightarrow\infty} -\frac{\log\paren{ \mu_{\Lambda_n,p,q}^{\mathbf{w}}\paren{\overline{C}_{\Lambda_n}}}}{\paren{2n}^{d-2}}=c\,.\]
\end{Proposition}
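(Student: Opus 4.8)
The plan is to prove the matching bounds $\liminf_{n\to\infty}(\star)\ge c$ and $\limsup_{n\to\infty}(\star)\le c$, where I abbreviate $(\star)\coloneqq-\log\paren{\mu^{\mathbf{w}}_{\Lambda_n,p,q}\paren{\overline{C}_{\Lambda_n}}}/\paren{2n}^{d-2}$. The lower bound is immediate: since $\overline{C}_{\Lambda_n}=C_{\Lambda_n}\cap D_{\Lambda_n}\subseteq C_{\Lambda_n}$, monotonicity of measures gives $-\log\paren{\mu^{\mathbf{w}}_{\Lambda_n,p,q}\paren{\overline{C}_{\Lambda_n}}}\ge-\log\paren{\mu^{\mathbf{w}}_{\Lambda_n,p,q}\paren{C_{\Lambda_n}}}$, and dividing by $\paren{2n}^{d-2}$ and applying Proposition~\ref{prop:C} yields $\liminf_{n\to\infty}(\star)\ge c$. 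This already disposes of the statement when $c=\infty$, which I expect to be the case whenever $p\le p^{*}\paren{p_c\paren{q}}$: for such $p$ the dual one-dimensional random-cluster model with parameter $p^{*}\paren{p}$ is supercritical or critical, so any plaquette configuration for which $C_{\Lambda_n}$ occurs must contain a hypersurface of plaquettes whose area is $\Omega\paren{n^{d-1}}$ --- because the $(d-2)$-box $s$ sits at distance $\Omega(n)$ from $\partial\Lambda_n$ in the two directions transverse to it --- and a Peierls-type estimate then forces $-\log\paren{\mu^{\mathbf{w}}_{\Lambda_n,p,q}\paren{C_{\Lambda_n}}}=\Omega\paren{n^{d-1}}$, whence $c=\infty$.

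For the reverse bound I may assume $p>p^{*}\paren{p_c\paren{q}}$. First I would note that $C_{\Lambda_n}$ and $D_{\Lambda_n}$ are both increasing events depending only on the plaquettes of $\Lambda_n$, so the FKG inequality for wired boundary conditions (Proposition~\ref{prop:FKGboundary}) gives
\[\mu^{\mathbf{w}}_{\Lambda_n,p,q}\paren{\overline{C}_{\Lambda_n}}\ge\mu^{\mathbf{w}}_{\Lambda_n,p,q}\paren{C_{\Lambda_n}}\,\mu^{\mathbf{w}}_{\Lambda_n,p,q}\paren{D_{\Lambda_n}}\,,\]
and therefore
\[-\log\paren{\mu^{\mathbf{w}}_{\Lambda_n,p,q}\paren{\overline{C}_{\Lambda_n}}}\le-\log\paren{\mu^{\mathbf{w}}_{\Lambda_n,p,q}\paren{C_{\Lambda_n}}}-\log\paren{\mu^{\mathbf{w}}_{\Lambda_n,p,q}\paren{D_{\Lambda_n}}}\,.\]
Dividing by $\paren{2n}^{d-2}$, the first term converges to $c$ by Proposition~\ref{prop:C}, so the proof reduces to showing that the second is $o\paren{n^{d-2}}$; I would in fact prove the stronger statement $\mu^{\mathbf{w}}_{\Lambda_n,p,q}\paren{D_{\Lambda_n}}\to 1$, which makes $-\log\paren{\mu^{\mathbf{w}}_{\Lambda_n,p,q}\paren{D_{\Lambda_n}}}\to 0$.

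To this end, recall that, viewing $\Lambda_n$ as the tube $t\paren{s,n}$ with $s=\brac{-n,n}^{d-2}\times\set{0}^2$, the event $D_{\Lambda_n}$ is the intersection of the four crossing events $R^{\square}_{d-1}\paren{y_1}$, $R^{\square}_{d-1}\paren{y_2}$, $R^{\square}_{d}\paren{y_3}$, $R^{\square}_{d}\paren{y_4}$, in each of which the crossed side of $y_j$ has length $n/2$ and the other $d-1$ sides have length $2n$. Each $R^{\square}_i\paren{y_j}$ is increasing, so by monotonicity in the boundary conditions (Proposition~\ref{prop:extremal}, with $r=\Lambda_n$) followed by Lemma~\ref{lemma:crossing} (applied to $\mu_{\Z^d,p,q}$, with the coordinate axes relabeled as permitted by symmetry),
\[\mu^{\mathbf{w}}_{\Lambda_n,p,q}\paren{R^{\square}_i\paren{y_j}}\ge\mu_{\Z^d,p,q}\paren{R^{\square}_i\paren{y_j}}\ge 1-e^{-b_p n/2}\paren{2n}^{d-1}\,,\]
and a union bound yields $\mu^{\mathbf{w}}_{\Lambda_n,p,q}\paren{D_{\Lambda_n}}\ge 1-4e^{-b_p n/2}\paren{2n}^{d-1}\to 1$. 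Assembling these pieces gives $\limsup_{n\to\infty}(\star)\le c$ and hence the proposition. The bulk of the argument --- the FKG decoupling together with the supercritical crossing estimate --- is routine; the only step that takes any care is confirming $c=\infty$ in the degenerate range $p\le p^{*}\paren{p_c\paren{q}}$, which is where the dual representation and an isoperimetric (surface-energy) input are genuinely needed.
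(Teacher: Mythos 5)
Your main argument is the same as the paper's: the containment $\overline{C}_{\Lambda_n}\subseteq C_{\Lambda_n}$ gives one side, and the FKG inequality $\mu^{\mathbf{w}}_{\Lambda_n,p,q}\paren{\overline{C}_{\Lambda_n}}\geq \mu^{\mathbf{w}}_{\Lambda_n,p,q}\paren{C_{\Lambda_n}}\mu^{\mathbf{w}}_{\Lambda_n,p,q}\paren{D_{\Lambda_n}}$ together with the fact that $D_{\Lambda_n}$ is a finite intersection of box-crossing events whose probabilities tend to $1$ by Lemma~\ref{lemma:crossing} gives the other; the paper's proof is exactly this sandwich, and your explicit use of Proposition~\ref{prop:extremal} to transfer the infinite-volume crossing estimate to the wired finite-volume measure is a welcome extra bit of care that the paper leaves implicit. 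In the regime $p>p^{*}\paren{p_c\paren{q}}$ --- the only regime in which the proposition is subsequently used --- your proof is correct and essentially identical to the paper's.

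The one shaky piece is your treatment of $p\le p^{*}\paren{p_c\paren{q}}$, where you assert $c=\infty$ via ``a Peierls-type estimate.'' The geometric input is fine (a witness for $C_{\Lambda_n}$ must contain $\Omega\paren{n^{d-1}}$ plaquettes, e.g.\ by an intersection-number argument with disjoint dual loops encircling the cells of $s$), but that alone does not force $-\log\mu^{\mathbf{w}}_{\Lambda_n,p,q}\paren{C_{\Lambda_n}}=\Omega\paren{n^{d-1}}$: having $\Omega\paren{n^{d-1}}$ open plaquettes among the $\Theta\paren{n^{d}}$ plaquettes of $\Lambda_n$ is typical, and the entropy of candidate surfaces is itself $\exp\paren{O\paren{n^{d-1}}}$, so a Peierls energy-entropy bound only works for small $p$. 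Establishing area-order decay throughout the dual-supercritical regime is precisely the surface-tension argument of Section~\ref{sec:arealaw}, and even there it is only proved for $p<p^{*}\paren{p_{\mathrm{surf}}\paren{q}}$, leaving a window below $p^{*}\paren{p_c\paren{q}}$ where your claim is unproven. That said, the paper's own proof has the same restriction (it is stated for $p\in\brac{0,1}$ but invokes Lemma~\ref{lemma:crossing}, whose hypothesis is $p>p^{*}\paren{p_c\paren{q}}$), so this does not undermine the proposition as it is actually used; just do not present the degenerate range as settled by Peierls.
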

\begin{proof}
    The proof is identical to that of Proposition 3.7 in~\cite{aizenman1983sharp}:
    \[\mu_{\Lambda_n,p,q}^{\mathbf{w}}\paren{C_{\Lambda_n}}\mu_{\Lambda_n,p,q}^{\mathbf{w}}\paren{\overline{D}_r} \leq \mu_{\Lambda_n,p,q}^{\mathbf{w}}\paren{\overline{C}_{\Lambda_n}} \leq \mu_{\Lambda_n,p,q}^{\mathbf{w}}\paren{C_{\Lambda_n}}\]
    by the FKG inequality, where we are using the definition of $\overline{C}_{\Lambda_n}.$ The event $\overline{D}_r$ is the intersection of $2d$ box crossing events whose probability goes to $1$ as $N\rightarrow\infty$ by Lemma~\ref{lemma:crossing}. As such, the desired result follows by taking logarithms and dividing by $N.$
    
\end{proof}

\begin{Proposition}\label{prop:barctube}
Let $r_{l}$ be a family of boxes of the form $\brac{0,n_1(l)}\times \ldots \times \brac{0,n_{d-2}(l)}\times \brac{-m\paren{l},m\paren{l}}^2$ all of whose dimensions diverge to $\infty.$ Then
    \[\lim_{l\rightarrow\infty} -\frac{\log\paren{ \mu_{r_{l},p,q}^{\mathbf{w}}\paren{\overline{C}_{r_{l}}}}}{\prod_{i=1}^{d-2}n_i\paren{l}}=c\,.\]
    In addition, if $r_{l}$ is suitable then
\[\lim_{l\rightarrow\infty} -\frac{\log\paren{ \mu_{\Z^d,p,q}\paren{\overline{C}_{r_{l}}}}}{\prod_{i=1}^{d-2}n_i\paren{l}}=c\,.\]
\end{Proposition}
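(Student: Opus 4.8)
The plan is to establish the two displays in sequence, deducing the infinite-volume one from the wired one. Write $A\paren{l}=\prod_{i=1}^{d-2}n_i\paren{l}$ and recall from Proposition~\ref{prop:C} and the proposition immediately preceding this one that $-\log\mu^{\mathbf{w}}_{\Lambda_n,p,q}\paren{C_{\Lambda_n}}$ and $-\log\mu^{\mathbf{w}}_{\Lambda_n,p,q}\paren{\overline{C}_{\Lambda_n}}$ are both $\paren{c+o\paren{1}}\paren{2n}^{d-2}$ as $n\to\infty$. For the first display I prove the lower and upper bounds separately; since $\overline{C}_{r_l}\subseteq C_{r_l}$, the lower bound is really a statement about $C_{r_l}$, while the upper bound — a lower bound on $\mu^{\mathbf{w}}_{r_l,p,q}\paren{\overline{C}_{r_l}}$ — will be obtained by an explicit construction. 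Both directions mimic the Fekete-type arguments of Propositions~2.4, 3.6 and 3.7 of~\cite{aizenman1983sharp}.

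For the lower bound, set $a\paren{l}=\min\set{n_1\paren{l},\ldots,n_{d-2}\paren{l},2m\paren{l}}$ and $w\paren{l}=\floor{a\paren{l}/\log\paren{a\paren{l}+2}}$, so that $w\paren{l}\to\infty$, $w\paren{l}\leq 2m\paren{l}$, and $w\paren{l}/n_i\paren{l}\to 0$ for every $i$. Tile the cross-section $\prod_{i=1}^{d-2}\brac{0,n_i\paren{l}}$ by $k=\prod_{i=1}^{d-2}\floor{n_i\paren{l}/w\paren{l}}$ pairwise disjoint $d$-cubes of side $w\paren{l}$, each centered at a point of the core $s$; since $w\paren{l}\leq 2m\paren{l}$ these sit inside $r_l$ and share no $d$-cubes. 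By Lemma~\ref{lemma:CR} (whose proof does not use that $t''$ has the same extent as $t$ in the two coordinate directions transverse to the core), $C_{r_l}$ implies $C$ of each of these cubes, so Lemma~\ref{lemma:wired} gives $\mu^{\mathbf{w}}_{r_l,p,q}\paren{\overline{C}_{r_l}}\leq\mu^{\mathbf{w}}_{r_l,p,q}\paren{C_{r_l}}\leq\mu^{\mathbf{w}}_{\Lambda_{w\paren{l}/2},p,q}\paren{C_{\Lambda_{w\paren{l}/2}}}^{k}$. Since $k\geq\paren{1-o\paren{1}}A\paren{l}/w\paren{l}^{d-2}$ and $-\log\mu^{\mathbf{w}}_{\Lambda_{w\paren{l}/2},p,q}\paren{C_{\Lambda_{w\paren{l}/2}}}=\paren{c+o\paren{1}}w\paren{l}^{d-2}$ by Proposition~\ref{prop:C}, taking logarithms and dividing by $A\paren{l}$ gives $\liminf_{l\to\infty}-\log\mu^{\mathbf{w}}_{r_l,p,q}\paren{\overline{C}_{r_l}}/A\paren{l}\geq c$.

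For the upper bound, fix $n\in\N$ and tile the cross-section by $k'=\prod_{i=1}^{d-2}\floor{n_i\paren{l}/\paren{2n}}$ disjoint cubes of side $2n$ centered on $s$, leaving a leftover region carrying $o\paren{A\paren{l}}$ plaquettes. Following the scheme of Propositions~3.6--3.8 of~\cite{aizenman1983sharp}, the decisive claim is that the joint occurrence of $\overline{C}$ on each of these cubes, of the event that the plaquettes of the leftover region near $s$ are present, and, for each cube, of a box crossing of the bounded-cross-section ``column'' joining it to $\partial r_l$, implies $\overline{C}_{r_l}$: the spanning hypersurfaces produced by the cube events, each pinned near its cube boundary by its $D$-part, glue across the filled leftover region into a single hypersurface spanning $s$ inside $r_l$, and the column crossings, together with the cube $D$-parts, separate $s$ from $\partial r_l$, witnessing $D_{r_l}$. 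Since each column has bounded cross-section, a union bound and Theorem~\ref{thm:expdecay} show the column crossings fail with probability $o\paren{1}$ as $l\to\infty$ for fixed $n$; so by the FKG inequality (Proposition~\ref{prop:FKGboundary}), Proposition~\ref{prop:extremal} and Lemma~\ref{lemma:wired} one obtains $\mu^{\mathbf{w}}_{r_l,p,q}\paren{\overline{C}_{r_l}}\geq\mu^{\mathbf{w}}_{\Lambda_n,p,q}\paren{\overline{C}_{\Lambda_n}}^{k'}e^{-o\paren{A\paren{l}}}$, and as $k'\paren{2n}^{d-2}=\paren{1+o\paren{1}}A\paren{l}$ the preceding proposition yields $\limsup_{l\to\infty}-\log\mu^{\mathbf{w}}_{r_l,p,q}\paren{\overline{C}_{r_l}}/A\paren{l}\leq c+o_n\paren{1}$; letting $n\to\infty$ finishes the first display. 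I expect this gluing step to be the main obstacle: as in~\cite{aizenman1983sharp}, the auxiliary $D$-part of $\overline{C}$ is exactly what makes the cube hypersurfaces agree along shared faces so that no cost is incurred per seam, but reconciling this with the homological bookkeeping of Lemma~\ref{lemma:crossingtopology} and with the dependence between disjoint plaquette events requires care, and the higher-dimensional geometry must be handled more explicitly than in~\cite{aizenman1983sharp}.

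Finally, the second display follows from the first by the interpolation lemma. Given a suitable family $\set{r_l}$, choose $L=L\paren{l}\to\infty$ with $\log M\paren{r_l}\ll L\paren{l}\ll m\paren{r_l}$, which is possible precisely because suitability means $m\paren{r_l}=\omega\paren{\log M\paren{r_l}}$; then Corollary~\ref{cor:xi} gives $\mu_{\Z^d,p,q}\paren{\Xi_{r_l,L}}\to 1$, while the enlarged box $r_l^{L}$ again has all dimensions diverging and cross-sectional volume $\prod_{i=1}^{d-2}\paren{n_i\paren{l}+2L}=\paren{1+o\paren{1}}A\paren{l}$. Proposition~\ref{prop:interpolate} then sandwiches $\mu_{\Z^d,p,q}\paren{\overline{C}_{r_l}}$ between $\mu_{\Z^d,p,q}\paren{\Xi_{r_l,L}}\mu^{\mathbf{w}}_{r_l^{L},p,q}\paren{\overline{C}_{r_l}}$ and $\mu^{\mathbf{w}}_{r_l^{L},p,q}\paren{\overline{C}_{r_l}}$. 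Bounding $\mu^{\mathbf{w}}_{r_l^{L},p,q}\paren{\overline{C}_{r_l}}$ above by $\mu^{\mathbf{w}}_{r_l,p,q}\paren{\overline{C}_{r_l}}$ (monotonicity in boundary conditions, Proposition~\ref{prop:extremal}) and below by re-running the construction of the previous paragraph inside $r_l^{L}$, and then invoking the first display together with $\mu_{\Z^d,p,q}\paren{\Xi_{r_l,L}}\to 1$, pins $-\log\mu_{\Z^d,p,q}\paren{\overline{C}_{r_l}}$ at $\paren{c+o\paren{1}}A\paren{l}$, which is the second display.
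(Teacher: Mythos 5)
Your lower bound on the rate (tiling the core of $r_l$ by disjoint cubes, using the restriction argument behind Lemma~\ref{lemma:CR} and the product bound of Lemma~\ref{lemma:wired}) and your treatment of the second display via Proposition~\ref{prop:interpolate} and Corollary~\ref{cor:xi} are essentially the paper's argument. The problem is the opposite direction for the first display. Your route to $\limsup_l -\log\mu^{\mathbf{w}}_{r_l,p,q}\paren{\overline{C}_{r_l}}/\prod_i n_i\paren{l}\leq c$ rests on the ``decisive claim'' that cube events $\overline{C}$ on a tiling of the core, plus filled plaquettes in the leftover region, plus column crossings to $\partial r_l$, together imply $\overline{C}_{r_l}$. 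You flag this as the main obstacle and never prove it, and as stated it does not follow from the ingredients you list: the $C$-part of each cube event only produces a chain $\tau_j$ with $\partial\tau_j=\rho_{s_j}+\alpha_j$ where $\alpha_j$ is supported on the cube boundary, which is \emph{interior} to $r_l$. To witness $C_{r_l}$ you must cancel these $\alpha_j$'s or push them to $\partial r_l$, i.e.\ you need $(d-1)$-chains in $P$ bounded by $\alpha_j$ modulo $\partial r_l$; neither the filled leftover plaquettes nor the column-crossing hypersurfaces (which are separating surfaces, the right ingredient only for the $D$-part) supply such chains. Note that the paper itself never glues small-cube $\overline{C}$ events into a big-tube $\overline{C}$ event; where a global consequence of the $\overline{C}$'s is needed (Proposition~\ref{prop:implyv}) it instead deduces $V_\gamma^{\mathrm{fin}}$ by a linking-number argument, not a glued witness. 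So this step is a genuine gap, not just an omitted routine verification.

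The paper avoids the construction entirely by running the containment in the \emph{reverse} direction: fix $l$, and for $N$ large place $k=\prod_{i=1}^{d-2}\floor{2N/n_i\paren{l}}$ disjoint translates of $r_l$ along the core of $\Lambda_N$; by the restriction argument of Lemma~\ref{lemma:CR}, $C_{\Lambda_N}$ implies all $k$ translated events, so Lemma~\ref{lemma:wired} gives $\mu^{\mathbf{w}}_{\Lambda_N,p,q}\paren{C_{\Lambda_N}}\leq \mu^{\mathbf{w}}_{r_l,p,q}\paren{C_{r_l}}^{k}$. Dividing $-\log$ of both sides by $\paren{2N}^{d-2}$ and letting $N\to\infty$, Proposition~\ref{prop:C} yields $-\log\mu^{\mathbf{w}}_{r_l,p,q}\paren{C_{r_l}}\leq c\prod_i n_i\paren{l}$ for every $l$, with no gluing, no column crossings, and no leftover-region events; the passage from $C_{r_l}$ to $\overline{C}_{r_l}$ is then the same FKG-with-$\overline{D}$ step as in the preceding proposition. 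You should replace your upper-bound paragraph with this argument (or else actually carry out a homological gluing, which would be substantially harder than what you sketch). The same issue propagates into your second display, where you propose to ``re-run the construction'' inside $r_l^{L}$; the paper instead uses the trivial containment $\overline{C}_{r_l^{L}}\subset\overline{C}_{r_l}$ and applies the already-proven first display to the enlarged boxes $r_l^{L}$, whose cross-sectional volume is $\paren{1+o\paren{1}}\prod_i n_i\paren{l}$ since $L=o\paren{m\paren{r_l}}$; with that substitution your interpolation step goes through as written.
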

\begin{proof}
%The proof of the first claim follows nearly identically to those in the previous two propositions, so we do not include all of the details here. 
The general idea is that we can fit the appropriate number of cubes along $s$ in $r_l$ for large $l$ and we can also fit the appropriate number of copies of $r_l$ along $s$ in a large cube. First, for a fixed value of $N,$ we can find  $\prod_{i=1}^{d-2}\floor{\frac{n_i\paren{l}}{2N}}$ disjoint $2N$-cubes  so that the occurrence of $C_{r_{l}}$ implies that an event of the form  $C_{\Lambda_N}$ in each cube (when $l$ is sufficiently large). The same argument as in the proof of Proposition~\ref{prop:C} yields that   
\[\liminf_{l\rightarrow\infty} -\frac{\log\paren{ \mu_{r_{l},p,q}^{\mathbf{w}}\paren{C_{r_{l}}}}}{\prod_{i=1}^{d-2}n_i\paren{l}}\geq c\,.\]
On the other hand, if we fix $r=\brac{0,n_1}\times\ldots\times\brac{0,n_{d-2}}\times\brac{-m,m}^2$ and choose $N>m,$ then $C_{\Lambda_{N}}$ entails that translates of the event $C_r$ happen in $\prod_{i=1}^{d-2}\floor{\frac{N}{n_i}}$ disjoint boxes (which do not depend on the specific witness for $C_{\Lambda_N}$).  Thus 
\[\limsup_{l\rightarrow\infty} -\frac{\log\paren{ \mu_{r_{l},p,q}^{\mathbf{w}}\paren{C_{r_l}}}}{\prod_{i=1}^{d-2}n_i\paren{l}}\leq c\]
so
\[\lim_{l\rightarrow\infty} -\frac{\log\paren{ \mu_{r_{l},p,q}^{\mathbf{w}}\paren{C_{r_l}}}}{\prod_{i=1}^{d-2}n_i\paren{l}}= c\,.\]
The proof that this limit coincides with the corresponding one for the event $\overline{C}_{r_l}$ is identical to that of the previous proposition.

We apply Proposition~\ref{prop:interpolate} to show the second claim. As $\set{r\paren{l}}$ is a suitable family of boxes, we may choose a thickening parameter $L\paren{l}$ so that $L\paren{l}= \omega\paren{\log\paren{M\paren{r_{l}}}}$ and $L\paren{l}= o\paren{m\paren{r_{l}}}.$ For convenience, set $\tilde{r}_l=r_l^{L\paren{l}}$ and $f\paren{r}$ to be the product of the first $(d-2)$ dimensions of a box $r.$ By Proposition~\ref{prop:interpolate},
\[\frac{\log\paren{\mu_{\Z^d,p,q}\paren{\Xi_{r,L}}}+\log\paren{\mu_{r^L,p,q}^{\mathbf{w}}\paren{\overline{C}_{r_{l}}}}}{f\paren{r_{l}}} \leq \frac{\mu_{\Z^d,p,q}\paren{\overline{C}_{r_{l}}}}{f\paren{r_{l}}}\leq \frac{\mu_{r^L,p,q}^{\mathbf{w}}\paren{\overline{C}_{r_{l}}}}{f\paren{r_{l}}}\,.\]
It follows from Corollary~\ref{cor:xi} that
\[\abs{\log\paren{\mu_{\Z^d,p,q}\paren{\Xi_{r,L}}}} \leq  -\sum_{i=1}^{d} 2\log\paren{1-e^{-b_p L}\paren{M\paren{r_{l}}}^{d-2}}\]
Using the assumption that $L\paren{l}=\omega\paren{\log\paren{M\paren{r_{l}}}},$ we see that the right side is uniformly bounded above for sufficiently large $l.$ Then since $f\paren{r_l} \to \infty$ as $l \to \infty,$
\begin{align*}
\lim_{l \to \infty} \frac{\log\paren{\mu_{\Z^d,p,q}\paren{\Xi_{r_{l},L\paren{l}}}}}{f\paren{r_{l}}} = 0\,.
\end{align*}
As such, it suffices to show that 
\[-\frac{\mu_{r_L,p,q}^{\mathbf{w}}\paren{\overline{C}_{r_{l}}}}{f\paren{r_{l}}}\rightarrow c\,.\]

We have that 
\[\frac{\log\paren{\mu_{\tilde{r}_n,p,q}^{\mathbf{w}}\paren{\overline{C}_{\tilde{r}_{l}}}}}{f\paren{r_{l}}} \leq \frac{\log\paren{\mu_{\tilde{r}_l,p,q}^{\mathbf{w}}\paren{\overline{C}_{r_{l}}}}}{f\paren{r_{l}}}\leq \frac{\log\paren{\mu_{r_{l},p,q}^{\mathbf{w}}\paren{\overline{C}_{r_{l}}}}}{f\paren{r_{l}}}\,.\]
We already showed that term on the right limits to $-c$ as $l\rightarrow \infty.$ To handle the term on the left, note that $f\paren{\tilde{r}_l}-f\paren{r_{l}}\in o\paren{f\paren{r_{l}}}$ because  $L\paren{l}= o\paren{m\paren{r_{l}}}.$ As such, the asymptotics remains unchanged if we replace the denominator with $f\paren{\tilde{r}_l}.$ Thus, we may conclude that the middle term also limits to $-c,$ which suffices by the logic in the previous paragraph.
\end{proof}

We are now ready to prove Theorem~\ref{theorem:sharp_perimeter}.

\begin{proof}[Proof of Theorem~\ref{theorem:sharp_perimeter}]
Let $p>p^{*}\paren{p_c\paren{q}},$ and let $\set{\gamma_l}=\set{\partial \rho_{r_l}}$ be a suitable family of $(d-2)$-dimensional rectangular boundaries. Also fix $m\in N$ and set $V_{\gamma}^{\mathrm{fin}}=V_{\gamma}^{\mathrm{fin}}\paren{m}$ and  $V_{\gamma}^{\mathrm{inf}}=V_{\gamma}^{\mathrm{inf}}\paren{m}.$

As noted after the statement, it suffices to show that 

\[\limsup_{l\rightarrow\infty}-\frac{\mu_{\Z^d,p,q}^{\mathbf{f}}\paren{V_{\gamma_l}^{\mathrm{fin}}}}{\mathrm{Per}\paren{\gamma_l}}\leq c\]
and 
\[\limsup_{l\rightarrow\infty}-\frac{\mu_{\Z^d,p,q}^{\mathbf{w}}\paren{V_{\gamma_l}^{\mathrm{inf}}}}{\mathrm{Per}\paren{\gamma_l}}\geq c\,.\]

We begin by showing the second equation. For a fixed $l,$ let $\lambda_1,\ldots,\lambda_{g\paren{n,l}}$ be a maximal set of disjoint cubes of width $2n$ in $\Z^d$ that are centered at points of $\partial r_l$ and are disjoint from the $(d-3)$-faces of $r_l.$ We have that 
\[\lim_{l\rightarrow\infty }\frac{\mathrm{Per}\paren{\gamma_l}}{g\paren{n,l}}=\paren{2n}^{d-2}\,.\]
The event $V_{\gamma}$ implies that events obtained from $C_{\Lambda_n}$ by rotations or translations occur for each of the cubes $\lambda_i.$ Thus we can apply Lemma~\ref{lemma:wired} to obtain

\[\mu_{\Z^d,p,q}^{\mathbf{w}}\paren{V_{\gamma_l}^{\mathrm{inf}}}\leq \mu^{\mathbf{w}}_{\Lambda_n,p,q}\paren{C_{\Lambda_n}}^{g\paren{n,l}}\,.\]

Therefore
\begin{align*}
\frac{-\log\paren{\mu_{\Z^d,p,q}^{\mathbf{w}}\paren{V_{\gamma_l}^{\mathrm{inf}}}}}{\mathrm{Per}\paren{\gamma_l}}&\geq -\frac{g\paren{n,l}\log\paren{\mu^{\mathbf{w}}_{\Lambda_n,p,q}\paren{C_{\Lambda_n}}}}{\mathrm{Per}\paren{\gamma_l}}\\
&\xrightarrow[]{l \to \infty} \frac{-\log\paren{\mu^{\mathbf{w}}_{\Lambda_n,p,q}\paren{C_{\Lambda_n}}}}{\paren{2n}^{d-2}}\\
&\xrightarrow[]{n \to \infty} c
\end{align*}
by Proposition~\ref{prop:C}.  

On the other hand, choose $L\paren{l}$ so that $L\paren{l}\in o\paren{m\paren{r_l}^{(d-2)/d}}$ and $L\paren{l}\in \omega\paren{\log\paren{M\paren{r_l}}}.$ Set $\tilde{r}_l=\paren{s_l}^{L\paren{l}}.$ Recall from the statement of Proposition~\ref{prop:implyv} that  
\[R^{\square}_d\paren{y_6}\bigcap R^{\square}_d\paren{y_7} \bigcap \cap_{s} \overline{C}_{t\paren{s,L}} \bigcap \cap_{u} E_{u,L} \implies V_{\gamma}^{\mathrm{fin}}\,,\]
where $s$ and $u$ range over the $(d-2)$ and $(d-3)-$ faces of $\gamma,$ respectively. All of these events are increasing, so by the FKG inequality
\begin{align*}
    \log\paren{\mu_{\Z^d,p}^{\mathbf{f}}\paren{V_{\gamma_l}}^{\mathrm{fin}}}
    &\geq \log\paren{\mu_{\Z^d,p}^{\mathbf{f}}\paren{R^{\square}_d\paren{y_6}}}+\log\paren{\mu_{\Z^d,p}^{\mathbf{f}}\paren{R^{\square}_d\paren{y_7}}}\\
    &\quad+\log\paren{ \mu_{\Z^d,p}^{\mathbf{f}}\paren{\cap_{u} E_{u,L(l)}}}+ \sum_{s} \mu_{\Z^d,p}^{\mathbf{f}}\paren{\overline{C}_{t\paren{s,L(l)}}}\,.
\end{align*}
We will show that the first three terms grow asymptotically more slowly than $\mathrm{Per}\paren{\gamma_l},$ and that the third term behaves as $-c\mathrm{Per}\paren{\gamma}.$  

The assumption that $L\in \omega\paren{\log \paren{M\paren{r_l}}}$ yields that 
\[\frac{\log\paren{\mu_{\Z^d,p}^{\mathbf{f}}\paren{R^{\square}_d\paren{y_6}}}}{\mathrm{Per}\paren{\gamma_l}}=\frac{\log\paren{\mu_{\Z^d,p}^{\mathbf{f}}\paren{R^{\square}_d\paren{y_7}}}}{\mathrm{Per}\paren{\gamma_l}}\xrightarrow[]{l \to \infty} 0\]
by Lemma~\ref{lemma:crossing}. 
In addition the events $E_{u,L(l)}$ require the activation of $a d L^d$ plaquettes where $a=4(d-1)(d-2)$ is the number of $(d-3)$-faces of $\gamma.$ Thus 
\[\frac{\log\paren{\mu_{\Z^d,p}^{\mathbf{f}}\paren{\bigcap_{u} E_{u,L(l)}}}}{\mathrm{Per}\paren{\gamma_l}}\leq \frac{\log\paren{\paren{p/q}^{a b L^d}}}{\mathrm{Per}\paren{\gamma_l}}\rightarrow 0\]
because  $L\paren{l}\in o\paren{m\paren{r\paren{l}}^{(d-2)/d}}.$ Finally, by Proposition~\ref{prop:barctube},
\[\lim_{l\rightarrow \infty}-\frac{\log\paren{\mu_{\Z^d,p}^{\mathbf{f}}\paren{\overline{C}_{t\paren{s,L}}}}}{\abs{s}}\rightarrow c\] 
as $\abs{s}\rightarrow\infty,$ where $\abs{s}$ is the total number of $(d-2)$-plaquettes in the face $s$ of $r_l.$ As $\mathrm{Per}\paren{\gamma_l}=\sum_s{\abs{s}},$ we have that 
\[\limsup_{n\rightarrow \infty}-\frac{\log\paren{\mu_{\Z^d,p}^{\mathbf{f}}\paren{V_{\gamma}^{\mathrm{fin}}}}}{\mathrm{Per}\paren{\gamma}}\leq  c\,,\]
so $c = \oldconstant{const:prcm2}$ and has the desired properties.

\end{proof}

\appendix

\section{Topological Tools}
\label{sec:topology}
In this section we will review the relevant topological definitions and tools that we use in the rest of the article. We will start with the general theory and then discuss the specific properties of the cubical complex $\Z^d$. A standard reference for the former is~\cite{hatcher2002algebraic}. When possible, we use the specifics of our model to simplify the presentation. In particular, all spaces we consider are cubical complexes. While these are not covered in~\cite{hatcher2002algebraic}, the definitions given there result in equivalent algebraic structures. See~\cite{kaczynski2004computational,saveliev2016topology} for a more in depth discussion of the homology and cohomology of cubical complexes.

\subsection{Definitions of Homology and Cohomology}
\label{subsec:homology}
First we recall the definition of homology. In this paper, we only consider spaces called cubical complexes, which have a combinatorial structure that makes defining and computing homology simpler than for general topological spaces. Our cubical complexes are built from $i$-dimensional unit cubes, also called $i$-plaquettes, for $0 \leq i \leq d.$ In the case of the $\Z^d$ lattice, the $i$-dimensional cells are the $i$-plaquettes with integer corner points. 

The key operation is then the map that takes a plaquette to its boundary. As an example, consider an oriented $2$-plaquette $\sigma$ in a cubical complex $X.$ The (point set) topological boundary consists of the union of the four edges ($1$-plaquettes) between its corners. It turns out to be useful to handle this union as a linear combination. This motivates the definition of the \emph{chain} group $C_i\paren{X;\;G}$ of finite formal linear combinations of $i$-plaquettes of $X$ with coefficients in an abelian group $G.$ Now to revisit our $2$-plaquette example, the boundary operator takes $\sigma$ to $\partial \sigma,$ which is the sum of the boundary edges of $\sigma$ with orientations consistent with the orientation of $\sigma$ as in Figure~\ref{fig:oneplaquette}. Note that the orientation of each plaquette can be chosen arbitrarily as long as consistency is maintained and the opposite orientation of an $i$-plaquette $\sigma$ is $-\sigma \in C_i\paren{X;\;G}.$ Then adding the requirement that $\partial$ be $G$-linear uniquely defines it on the full space of chains.

It turns out that a similar operator can be defined for plaquettes in every dimension. For concreteness, we give the formal definition here, though a reader unfamiliar with the subject will be comforted to know that it is rarely referred to explicitly in practice. An $i$-plaquette in $\Z^d$ may be written in the following form. Let $1 \leq k_1<k_2<\ldots<k_i \leq d$ and $x_1,\ldots,x_d\in \Z.$  Set $\mathcal{I}_j = \brac{x_j,x_j+1}$ for $j \in \set{k_1,k_2,\ldots,k_i}$ and $\mathcal{I}_j = \set{x_j}$ for $j \in \brac{d} \setminus {k_1,k_2,\ldots,k_i}.$ Then $\sigma = \prod_{1 \leq j \leq d} \mathcal{I}_j$ is an $i$-plaquette in $\R^d,$ and its boundary $\partial \sigma$ is
{\small
\begin{equation}
    \label{eq:boundaryOperator}
    \sum_{l=0}^i \paren{-1}^{l-1} \paren{\prod_{1 \leq j < k_l} \mathcal{I}_j \times \set{x_{k_l}+1} \times \prod_{k_l < m \leq d} \mathcal{I}_m - \prod_{1 \leq j < k_l} \mathcal{I}_j \times \set{x_{k_l}} \times \prod_{k_l < m \leq d} \mathcal{I}_m}\,.
\end{equation}
}

We pause at this point to note that we can now formally define the event $V_\gamma.$ We represent the cycle $\gamma$ as an $(i-1)$-chain consisting of the sum of its $(i-1)$-plaquettes, oriented so that $\partial \gamma = 0.$ Then $V_\gamma$ is  the event that there is some element $\tau$ of $C_{i}\paren{P;\;\Z_q}$ so that $\partial \tau=\gamma.$ Observe that this depends on $q.$ Our interest in boundaries then leads us to the study of homology.

The chains $\alpha \in C_i\paren{X;\;G}$ satisfying $\partial \alpha = 0$ are called \emph{cycles}, and the space of cycles is written $Z_i\paren{X;\;G}.$ The group of chains which are \emph{boundaries} of other chains is denoted $B_{i}\paren{X;\;G}.$ We can then think of $V_\gamma$ as the event that the cycle $\gamma$ is also a boundary. A key fact that underlies much of algebraic topology is that the boundary operator satisfies $\partial \circ \partial = 0,$ so in particular $B_{i}\paren{X;\;G} \subseteq Z_i\paren{X;\;G}.$ Then the \emph{$i$-th homology group} is the quotient of the cycles by the boundaries, or
\[H_i\paren{X;\;G} = Z_i\paren{X;\;G}/B_i\paren{X;\;G}\,.\]

We will also use the dual notion of cohomology. Here the basic objects are not chains, but linear functionals on chains. More precisely, the $i$-th \emph{cochain group} $C^i\paren{X;\;G}$ is defined as the group of $G$-linear functions from $C_i\paren{X;\;G}$ to $G.$ That is, it is the group of assignments of spins in $G$ to the $i$-plaquettes of $X.$ Note that when $X$ is infinite, a chain $\sigma\in C_i\paren{X;\;G}$ is by definition supported on finitely many $i$-plaquettes of $X,$ whereas a cochain may take non-zero values on infinitely many plaquettes. The cochain group comes with an analogous \emph{coboundary operator} $\delta : C^i\paren{X;\;G} \to C^{i+1}\paren{X;\;G},$ which is given by
\[\delta f\paren{\alpha} = f\paren{\partial \alpha}\]
for $f\in C^i\paren{X;\;G}, \alpha \in C_{i+1}\paren{X;\;G}.$ 

We are then interested in similar quantities, now defined with coboundaries instead of boundaries. The $i$-th cocycle group $Z^i\paren{X;\;G}$ is the group of cochains with zero coboundaries and the $i$-th coboundary group $B^i\paren{X;\;G}$ is the image of $\delta$ applied to $C^{i-1}\paren{X;G}.$ Then $i$-th cohomology group is the quotient \[H^i\paren{X;\;G} = Z^i\paren{X;\;G} / B^i\paren{X;\;G}\,.\] 

The notions of homology and cohomology are closely related in a sense that will be made precise shortly. However, it is useful to have both perspectives available. In the current context, the PLGT is naturally a cochain whereas the event $V_{\gamma}$ is best defined in terms of homology. The distinction is also important for algebraic duality theorems, which are often stated in terms of passing from one to the other. These duality theorems are also most naturally stated in terms of variants of homology and cohomology called \emph{reduced homology} and \emph{reduced cohomology}. These are denoted by $\tilde{H}_j\paren{X;\;G}$ and $\tilde{H}^j\paren{X;\;G},$ respectively. The idea is that any nonempty space has at least one connected component, and that it is useful to remove one factor from $H_0\paren{X;\;\Z}$ and $H^0\paren{X;\;\Z}$ so that a contractible space has zero homology in all dimensions. 

This is accomplished by defining a boundary operator on $C_0\paren{X;\;\Z}$ with the map $\epsilon : C_0\paren{X;\;\Z} \to \Z$ that sends each $0$-plaquette (or vertex) to $1.$ The reduced cohomology groups are obtained similarly with the dual map to $\epsilon.$ The resulting groups coincide with the usual $j$-th homology and cohomology groups except in the case $j=0,$ where
\[\tilde{H}_0\paren{X;\;\Z} \oplus \Z \cong H_0\paren{X;\;\Z} \qquad \text{and} \qquad  \tilde{H}^0\paren{X;\;\Z} \oplus \Z \cong H^0\paren{X;\;\Z}\,.\]
for nonempty spaces $X.$ To reduce notational complexity, \emph{we always write $H_j$ and $H^j$ to denote the reduced homology and cohomology groups in the remainder of the article.} Notice that this does not change the random-cluster measure, since it changes the partition function by a constant factor.

\begin{figure}[t]
    \centering
    \includegraphics[width=.5\textwidth]{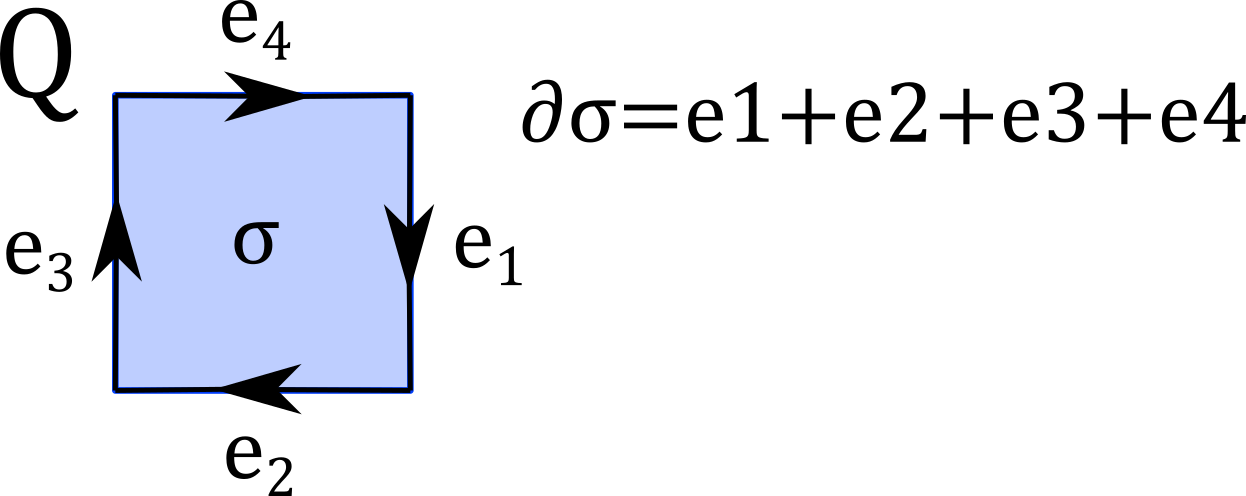}
    \caption{The boundary map for a two dimensional plaquette, reproduced from~\cite{duncan2022topological}.}
    \label{fig:oneplaquette}
\end{figure}

\subsection{Some Properties of Homology and Cohomology}
\label{subsec:homologyprops}

%We now specialize further to the specific class of cubical complexes that we work with. The \emph{$k$-skeleton} of a cubical complex $X,$ written $X^{\paren{k}},$ is the subcomplex consisting of the union of all cells of dimension at most $k.$ Then an $i$-dimensional \emph{percolation subcomplex} $P$ of $X$ is a complex satisfying $X^{\paren{i-1}} \subset P \subset X^{\paren{i}}.$ 
In this section we review a number of basic properties of homology and cohomology groups of percolation subcomplexes. From these statements we will see that there is a straightforward relationship between $H^{d-2}\paren{P;\;G}$ for a general coefficient group $G$ and $H^{d-2}\paren{P;\;\Q}.$
%These facts will be combined in the next section to show that if $P$ is sampled from the $(d-1)$-dimensional plaquette random-cluster model on a cube in $\Z^d$  with coefficients in $G$ then 
%\[H^{d-2}\paren{P;\;G}\cong G^{\mathbf{b}_{d-2}\paren{P;\;\Q}}\,,\]
%where $\mathbf{b}_{d-2}\paren{P;\Q}$ is the rank of the vector space $H_{d-2}\paren{P;\;\Q}.$ 

Let $0\leq j \leq d-1.$  The homology of $P$ with coefficients in the integers $\Z$ is a finitely generated abelian group so
\begin{equation}
\label{eqn:Zcoeff}
H_j\paren{P;\;\Z}\cong  \Z^{\mathbf{b}_j\paren{P;\;\Z}} \oplus T_j\paren{P}\,,
\end{equation}
where $\mathbf{b}_j\paren{P;\;\Z}$ is the rank of the $\Z$-module $H_j\paren{P;\;\Z}.$ The second summand, $T_i\paren{P},$ consists of all elements of $H_j\paren{P;\;\Z}$ of finite order and is referred to as the \emph{torsion subgroup} of $H_j\paren{P;\;\Z}.$ When $T_i\paren{P}=0,$ we call $H_j\paren{P;\;\Z}$ torsion-free.

It turns out that $(d-1)$-dimensional percolation complexes are torsion-free in dimensions $(d-1)$ and $(d-2).$ This can be be shown using Alexander duality, which relates the homology of a subset of Euclidean space with that of its complement. It is most naturally stated for subcomplexes of the $d$-dimensional sphere $S^d;$ we can embed our complexes into we can embed our percolation subcomplexes in $S^d$ by adding a point at infinity.  Here is where it is important to note that we are using reduced homology and cohomology.  The following statement is not true otherwise when $i=0$ or $i=d-1.$

\begin{Theorem}[Alexander Duality]\label{thm:adual}
   Let $K$ be a nonempty, locally contractible, proper subspace of the $d$-dimensional sphere $S^d.$ Then there is an isomorphism
   \[\hat{\mathcal{I}}:H_j\paren{S^d \setminus K ;\;\Z} \cong H^{d-j-1}\paren{K;\;\Z}\,.\]
\end{Theorem}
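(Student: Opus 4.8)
The plan is to deduce the statement from the standard formulation of Alexander duality, e.g.\ Theorem 3.44 of~\cite{hatcher2002algebraic}, specialized to our setting. In every application we make, $K$ is a finite cubical complex, hence a compact, locally contractible subspace of $S^d$; it is proper by hypothesis, so $S^d\setminus K$ is nonempty and open. The first step is to recall that for such $K$ the \v{C}ech cohomology $\check{H}^*\paren{K;\Z}$ agrees with singular cohomology $H^*\paren{K;\Z}$ --- this is where local contractibility (automatic for cubical complexes) enters --- so it suffices to produce an isomorphism $H_j\paren{S^d\setminus K;\Z}\cong\check{H}^{d-j-1}\paren{K;\Z}$.

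The second step is Poincar\'e--Alexander--Lefschetz duality for the closed oriented $d$-manifold $S^d$: for compact $K\subset S^d$ there is an isomorphism $\check{H}^k\paren{K;\Z}\cong H_{d-k}\paren{S^d,\,S^d\setminus K;\Z}$, obtained by taking the direct limit over open neighborhoods $U\supseteq K$ of the Poincar\'e--Lefschetz duality isomorphisms $H^k_c\paren{U;\Z}\cong H_{d-k}\paren{U;\Z}$ and identifying $\varinjlim_U H^k\paren{U;\Z}$ with $\check{H}^k\paren{K;\Z}$ (using that a cubical $K$ has arbitrarily small open neighborhoods deformation retracting onto it, which can be seen directly from the cubical structure). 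The third step combines this with the long exact sequence of the pair $\paren{S^d,\,S^d\setminus K}$: since $S^d\setminus K$ is nonempty and the reduced homology of $S^d$ vanishes in every degree $< d$, the connecting homomorphism gives $H_{d-k}\paren{S^d,\,S^d\setminus K;\Z}\cong H_{d-k-1}\paren{S^d\setminus K;\Z}$ in all degrees once one works with reduced homology throughout (this is precisely why the theorem must be stated with reduced groups, as noted in Section~\ref{subsec:homology}). Setting $j=d-k-1$ and composing the three isomorphisms yields $H_j\paren{S^d\setminus K;\Z}\cong H^{d-j-1}\paren{K;\Z}$, and one takes $\hat{\mathcal I}$ to be this composite.

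The main obstacle is bookkeeping rather than conceptual: one must track the reduced-versus-unreduced conventions carefully and check the degenerate degrees $j=0$ and $j=d-1$, where the unreduced statement fails --- all handled uniformly by our standing convention that $H_*$ and $H^*$ denote reduced (co)homology. The only genuinely nontrivial ingredient, the passage from Poincar\'e--Lefschetz duality for the open manifolds $U$ to the statement for the compact set $K$ via the direct limit, is classical and we simply cite it; for readers who prefer to avoid \v{C}ech cohomology, the cubical structure of $K$ also permits a direct combinatorial argument along the lines of the classical Alexander duality proof, but invoking~\cite{hatcher2002algebraic} is cleanest and suffices for our purposes.
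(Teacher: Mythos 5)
The paper offers no proof of Theorem~\ref{thm:adual}: it is quoted as the classical theorem (essentially Theorem 3.44 of~\cite{hatcher2002algebraic}), and your three-step sketch --- \v{C}ech equals singular cohomology for compact, locally contractible $K$; the duality $\check{H}^k\paren{K;\Z}\cong H_{d-k}\paren{S^d,S^d\setminus K;\Z}$ obtained as a direct limit of Poincar\'e--Lefschetz duality over neighborhoods; and the long exact sequence of the pair with reduced groups --- is exactly the standard proof of that compact case, and is fine as far as it goes (modulo the reduced/unreduced bookkeeping at the extreme degrees, which you correctly flag).

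The gap is in your reduction to the compact case. Your premise that ``in every application we make, $K$ is a finite cubical complex'' is false for this paper: in the proof of Proposition~\ref{cor:alexander} the theorem is invoked with $S^d\setminus K=P\cup\set{\infty}$, i.e.\ with $K=\R^d\setminus P$, and in the linking-number discussion of Section~\ref{sec:dualityvgamma} with $K=S^d\setminus\gamma_2$; in both cases $K$ is an open, noncompact subspace, not a cubical complex. So your argument does not cover the statement in the generality in which it is stated and used --- and no argument can, because without a compactness-type hypothesis the statement is false. For instance, take $d=2$ and $K=S^2\setminus C$ with $C$ a Cantor set: $K$ is nonempty, proper, and locally contractible, but $\tilde{H}_0\paren{S^2\setminus K;\Z}=\tilde{H}_0\paren{C;\Z}$ is free abelian of rank $2^{\aleph_0},$ while $\tilde{H}^{1}\paren{K;\Z}\cong\mathrm{Hom}\paren{H_1\paren{K;\Z},\Z}\cong\Z^{\N}$ is the Baer--Specker group (here $H_1\paren{K;\Z}$ is free of countably infinite rank), which is not free, so the two sides are not isomorphic. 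What rescues the paper's actual uses is that there $S^d\setminus K$ is compact, locally contractible, and has finitely generated homology (a finite complex plus an isolated point), while $K$ deformation retracts onto a finite complex by Lemma~\ref{prop:PtoQretract}. So either add such a hypothesis and deduce the stated isomorphism by applying your compact-case theorem to $S^d\setminus K$ and transposing via the universal coefficient theorems (free parts and torsion match degree by degree, since everything is finitely generated), or restate the theorem for compact $K$ only and handle the noncompact invocations separately; as written, your justification for why the compact case suffices does not stand.
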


We have the following corollary. 
\begin{Proposition}[Corollary 3.46 of~\cite{hatcher2002algebraic}]\label{prop:torsion}Let $P$ be a percolation subcomplex of a box in $\Z^d.$ Then
$T_{d-1}\paren{P;\;\Z}=T_{d-2}\paren{P;\;\Z}=0.$
\end{Proposition}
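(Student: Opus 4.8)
The plan is to deduce both vanishing statements from Alexander duality (Theorem~\ref{thm:adual}) together with the universal coefficient theorem for cohomology~\cite{hatcher2002algebraic}, after compactifying: I would regard $P$ as a subspace of $S^d$ by adjoining a point at infinity. Since $P$ is a subcomplex of a box, it is a \emph{finite} cubical complex, hence nonempty, locally contractible, and --- because $\dim P = d-1 < d$ --- a proper subspace of $S^d$; these are exactly the hypotheses needed to apply Theorem~\ref{thm:adual} with $K = P$. As throughout the paper, all homology and cohomology groups here are reduced, which is essential (the statement fails for the unreduced theories in degrees $0$ and $d-1$).

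The vanishing of $T_{d-1}\paren{P}$ requires no duality. As $P$ has no $d$-cells, $B_{d-1}\paren{P;\;\Z} = 0$, so $H_{d-1}\paren{P;\;\Z} = Z_{d-1}\paren{P;\;\Z}$ is a subgroup of the finitely generated free abelian group $C_{d-1}\paren{P;\;\Z}$; such a subgroup is itself free abelian, hence torsion-free.

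For $T_{d-2}\paren{P}$, I would first apply Theorem~\ref{thm:adual} with $j = 0$, which yields
\[H^{d-1}\paren{P;\;\Z} \cong H_0\paren{S^d \setminus P;\;\Z}\,.\]
The reduced zeroth homology of any space is free abelian, so $H^{d-1}\paren{P;\;\Z}$ is torsion-free. On the other hand, the universal coefficient theorem for cohomology provides an exact sequence
\[0 \longrightarrow \mathrm{Ext}\paren{H_{d-2}\paren{P;\;\Z},\Z} \longrightarrow H^{d-1}\paren{P;\;\Z} \longrightarrow \mathrm{Hom}\paren{H_{d-1}\paren{P;\;\Z},\Z} \longrightarrow 0\,,\]
and, since $P$ is finite, $\mathrm{Ext}\paren{H_{d-2}\paren{P;\;\Z},\Z}$ is isomorphic to the torsion subgroup $T_{d-2}\paren{P}$ of $H_{d-2}\paren{P;\;\Z}$. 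Hence $T_{d-2}\paren{P}$ injects into the torsion-free group $H^{d-1}\paren{P;\;\Z}$, forcing $T_{d-2}\paren{P} = 0$.

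Because this is really just a routine assembly of standard machinery, I do not expect a genuine obstacle; the only points that demand attention are the index bookkeeping in Alexander duality ($d-j-1 = d-1$ forces $j = 0$) and in the universal coefficient sequence, the reduced/unreduced distinction noted above, and the use of finiteness of $P$ to guarantee that the homology groups are finitely generated so that $\mathrm{Ext}\paren{\cdot,\Z}$ actually computes the torsion subgroup.
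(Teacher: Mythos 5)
Your proof is correct, and it is essentially the argument the paper relies on by citing Corollary 3.46 of~\cite{hatcher2002algebraic}: torsion in degree $d-1$ is ruled out trivially (no $d$-cells), and torsion in degree $d-2$ is ruled out by embedding $\mathrm{Ext}\paren{H_{d-2}\paren{P;\;\Z},\Z}\cong T_{d-2}\paren{P}$ into $H^{d-1}\paren{P;\;\Z}\cong H_0\paren{S^d\setminus P;\;\Z}$, which is free, via the universal coefficient sequence and Theorem~\ref{thm:adual}. The only cosmetic difference is that the paper's accompanying remark suggests seeing the torsion-freeness of the complement through the deformation retraction onto the dual graph, whereas you use directly that reduced $H_0$ of the complement is free; both observations play the same role.
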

It is not hard to prove this statement for the $(d-1)$-dimensional percolation subcomplexes using the tools developed below in Section~\ref{sec:dual}. In particular, the complement of $P$ has the same cohomology as the dual graph, and graphs do not have torsion in their homology or cohomology.

Next, the \emph{Universal Coefficients Theorem for Homology}  (Theorem 3A.3 in \cite{hatcher2002algebraic}) allows us to compute the homology of $P$ with coefficients in abelian group $G$ in terms of the homology with coefficients in $\Z.$ 

In particular, it implies that
\begin{equation}
\label{eq:UCTH}
H_j\paren{P;G}\cong \paren{H_j\paren{P;\;\Z}\otimes G}\oplus \mathrm{Tor}\paren{H_{j-1}\paren{P;\;\Z},G}\,.
\end{equation}
For a definition of the  $\mathrm{Tor}$ functor, please refer to Section 3A of~\cite{hatcher2002algebraic}. Here, we only require two facts. First, $\mathrm{Tor}\paren{0,G}$ vanishes for any abelian group $G.$ Second,  $\mathrm{Tor}\paren{H_{j-1}\paren{X;\;\Z},\Q}=0$ for any topological space $X.$ It follows that $\mathbf{b}_j\paren{X;\;\Z}=\mathbf{b}_j\paren{X;\;\Q},$ the dimension of the vector space $H_{j-1}\paren{X;\;\Q}.$   In particular, we may rewrite (\ref{eqn:Zcoeff}) as
\begin{equation}
\label{eqn:ZcoeffQ}
H_j\paren{P;\;\Z}\cong  \Z^{\mathbf{b}_j\paren{P;\;\Q}} \oplus T_j\paren{P;\;\Z}\,.
\end{equation}

There is also a \emph{Universal Coefficients Theorem for Cohomology} (Theorem 3.2 of~\cite{hatcher2002algebraic}) that relates the homology and cohomology groups of a topological space. We do not need the full statement here, only the following corollary.
\begin{Corollary}\label{cor:UCTC}
If $H_{j-2}\paren{P;\;G}$ vanishes (or, more generally, is a free $G$-module) then
\[H^{j-1}\paren{P;\;G}\cong \mathrm{Hom}\paren{H_{j-1}\paren{P;\;G},G}\,.\] In particular, when $G=\Z_q$ we have that
\begin{equation}
\label{eq:UCTC}
H^{j-1}\paren{P;\;\Z_q}\cong H_{j-1}\paren{P;\;\Z_q}\,.
\end{equation}
\end{Corollary}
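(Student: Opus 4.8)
The plan is to deduce both isomorphisms from the two universal coefficient theorems recalled in the appendix, together with elementary facts about $\mathrm{Ext}$, $\mathrm{Tor}$, and $\mathrm{Hom}$ of finitely generated abelian groups. Since $P$ is a cubical complex its cellular chain complex consists of free abelian groups, so Theorem~3.2 of~\cite{hatcher2002algebraic} provides a split short exact sequence
\begin{equation*}
0 \longrightarrow \mathrm{Ext}_{\Z}\paren{H_{j-2}(P;\Z),\,G} \longrightarrow H^{j-1}(P;\,G) \longrightarrow \mathrm{Hom}_{\Z}\paren{H_{j-1}(P;\Z),\,G} \longrightarrow 0\,.
\end{equation*}
The first task is to show the $\mathrm{Ext}$-term vanishes. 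Writing $H_{j-2}(P;\Z)\cong\Z^{b}\oplus\bigoplus_{l}\Z_{n_{l}}$ via the structure theorem, one has $\mathrm{Ext}_{\Z}(H_{j-2}(P;\Z),G)\cong\bigoplus_{l}G/n_{l}G$ and $\mathrm{Tor}_{\Z}(H_{j-2}(P;\Z),G)\cong\bigoplus_{l}G[n_{l}]$, where $G[n]:=\set{g\in G:ng=0}$, while $H_{j-2}(P;\Z)\otimes_{\Z}G\cong G^{b}\oplus\bigoplus_{l}G/n_{l}G$, which by~(\ref{eq:UCTH}) is a direct summand of $H_{j-2}(P;G)$. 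If $H_{j-2}(P;G)=0$ then each $G/n_{l}G$ is trivial, and, $G$ being finite, multiplication by $n_{l}$ is then injective as well, so $G[n_{l}]=0$ too; in the case $G=\Z$ the hypothesis that $H_{j-2}(P;G)$ be free simply says $H_{j-2}(P;\Z)$ is torsion-free, so there are no $\Z_{n_{l}}$ summands. (For a percolation subcomplex of a box and $j-2\in\set{d-2,d-1}$ this last holds automatically by Proposition~\ref{prop:torsion}.) In every case $\mathrm{Ext}_{\Z}(H_{j-2}(P;\Z),G)=0$, so the sequence collapses to $H^{j-1}(P;G)\cong\mathrm{Hom}_{\Z}(H_{j-1}(P;\Z),G)$, and $\mathrm{Tor}_{\Z}(H_{j-2}(P;\Z),G)=0$, so~(\ref{eq:UCTH}) gives $H_{j-1}(P;G)\cong H_{j-1}(P;\Z)\otimes_{\Z}G$.

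It then remains to identify $\mathrm{Hom}_{\Z}(H_{j-1}(P;\Z),G)$ with $\mathrm{Hom}_{\Z}(H_{j-1}(P;G),G)$. By the previous paragraph the latter equals $\mathrm{Hom}_{\Z}\paren{H_{j-1}(P;\Z)\otimes_{\Z}G,\,G}$, which by the tensor--hom adjunction is $\mathrm{Hom}_{\Z}\paren{H_{j-1}(P;\Z),\,\mathrm{Hom}_{\Z}(G,G)}$; since $\mathrm{Hom}_{\Z}(G,G)\cong G$ canonically (via $\phi\mapsto\phi(1)$) for $G=\Z$ and for $G=\Z_{q}$ --- the only groups we need this for --- this is $\mathrm{Hom}_{\Z}(H_{j-1}(P;\Z),G)$, yielding the first isomorphism $H^{j-1}(P;G)\cong\mathrm{Hom}_{\Z}(H_{j-1}(P;G),G)$. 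Finally, when $G=\Z_{q}$ the functor $\mathrm{Hom}_{\Z}(-,\Z_{q})$ restricts to a self-duality on finitely generated $\Z_{q}$-modules: each such module is a direct sum of cyclic groups $\Z_{m}$ with $m\mid q$, and $\mathrm{Hom}_{\Z}(\Z_{m},\Z_{q})\cong\Z_{q}[m]\cong\Z_{m}$. Applying this to $M=H_{j-1}(P;\Z_{q})$ gives $\mathrm{Hom}_{\Z}(H_{j-1}(P;\Z_{q}),\Z_{q})\cong H_{j-1}(P;\Z_{q})$, which combined with the first isomorphism is~(\ref{eq:UCTC}).

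The one genuine point in the argument is the reconciliation of $\mathrm{Hom}(-,G)$ of the integral homology with $\mathrm{Hom}(-,G)$ of the $G$-homology; everything else is routine bookkeeping with the universal coefficient theorems already quoted. I note that~(\ref{eq:UCTC}) can alternatively be obtained directly and without any hypothesis by observing that $C^{*}(P;\Z_{q})\cong\mathrm{Hom}_{\Z_{q}}\paren{C_{*}(P;\Z_{q}),\,\Z_{q}}$ with $C_{*}(P;\Z_{q})$ a complex of free $\Z_{q}$-modules, and that $\Z_{q}$ is a self-injective ring, so that $\mathrm{Hom}_{\Z_{q}}(-,\Z_{q})$ is exact and hence commutes with passage to homology; I would mention this but keep the universal-coefficients argument as the main line for uniformity with the rest of the appendix.
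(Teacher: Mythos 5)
Your main line is sound for the two cases the paper actually invokes --- $H_{j-2}\paren{P;\;G}=0$ with $G$ finite (used with $G=\Z_q$ in Propositions~\ref{prop:comparisonbody} and~\ref{prop:codim1topology}) and $G=\Z$ with $H_{j-2}\paren{P;\;\Z}$ torsion-free (used in Proposition~\ref{prop:link1}) --- and it follows the route the paper implicitly intends, namely Theorem 3.2 of~\cite{hatcher2002algebraic} combined with the universal coefficient theorem for homology; the paper records no further argument, so your derivation, including the tensor--hom step reconciling $\mathrm{Hom}\paren{H_{j-1}\paren{P;\;\Z},G}$ with $\mathrm{Hom}\paren{H_{j-1}\paren{P;\;G},G}$, is a correct fleshed-out version of it.

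There is, however, a gap between what you prove and the statement as written: your assertion ``in every case $\mathrm{Ext}_{\Z}\paren{H_{j-2}\paren{P;\;\Z},G}=0$'' does not cover the branch $G=\Z_q$ with $H_{j-2}\paren{P;\;\Z_q}$ a nonzero free $\Z_q$-module, and in that branch the Ext-vanishing strategy cannot work. For instance, if $q=6$ and $H_{j-2}\paren{P;\;\Z}\cong\Z_2\oplus\Z_3$ (with $H_{j-3}\paren{P;\;\Z}$ torsion-free), then $H_{j-2}\paren{P;\;\Z_6}\cong\Z_6$ is free of rank one, yet $\mathrm{Ext}_{\Z}\paren{\Z_2\oplus\Z_3,\Z_6}\cong\Z_6\neq 0$, and the Tor term is likewise nonzero, so your identification $H_{j-1}\paren{P;\;G}\cong H_{j-1}\paren{P;\;\Z}\otimes G$ also fails there. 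The conclusion is still true, and your closing remark already contains the repair: since $\Z_q$ is self-injective, $\mathrm{Hom}_{\Z_q}\paren{-,\Z_q}$ is exact and hence $H^{j-1}\paren{P;\;\Z_q}\cong\mathrm{Hom}\paren{H_{j-1}\paren{P;\;\Z_q},\Z_q}$ holds with no hypothesis, after which~(\ref{eq:UCTC}) follows exactly as in your last step. So either promote that remark from an aside to the proof of the $G=\Z_q$ case (keeping the universal-coefficients argument for $G=\Z$), or restrict the ``free $G$-module'' clause to the cases you treat; and note that the final self-duality $\mathrm{Hom}\paren{M,\Z_q}\cong M$ uses finite generation of $H_{j-1}\paren{P;\;\Z_q}$, which is harmless here because the corollary is only applied to finite complexes.
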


We can now compare the $(d-2)$-homology of $P$ across different coefficients.
\begin{Proposition}\label{prop:codim1topology}
Let $P$ be a $(d-1)$-dimensional percolation subcomplex of a box $r\subset \Z^d$.
Then 
\[H^{d-2}\paren{P;\;\Z_q}\cong \Z_q^{\mathbf{b}_{d-2}\paren{P;\;\Q}}\,.\]
\end{Proposition}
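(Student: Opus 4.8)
The plan is to chain together the structural facts about homology and cohomology of percolation subcomplexes that were assembled in Section~\ref{subsec:homologyprops}, using the torsion-freeness result (Proposition~\ref{prop:torsion}) as the crucial input. The target is $H^{d-2}\paren{P;\;\Z_q}\cong \Z_q^{\mathbf{b}_{d-2}\paren{P;\;\Q}}$ for a $(d-1)$-dimensional percolation subcomplex $P$ of a box $r\subset\Z^d$.

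First I would reduce the cohomological statement to a homological one. By Corollary~\ref{cor:UCTC}, since $H_{d-3}\paren{P;\;\Z_q}$ is a free $\Z_q$-module (indeed it vanishes: $P$ has dimension $d-1$, but the relevant vanishing is $H_{d-3}\paren{P;\;\Z}=0$ for percolation subcomplexes, hence $H_{d-3}\paren{P;\;\Z_q}=0$ too by the Universal Coefficients Theorem once we know the torsion in dimension $d-4$ vanishes as well — I should be careful here and instead just invoke that percolation subcomplexes of a box have trivial reduced homology below dimension $d-2$, which follows from the dual-graph description), we get $H^{d-2}\paren{P;\;\Z_q}\cong H_{d-2}\paren{P;\;\Z_q}$. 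So it suffices to show $H_{d-2}\paren{P;\;\Z_q}\cong\Z_q^{\mathbf{b}_{d-2}\paren{P;\;\Q}}$.

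Next I would compute $H_{d-2}\paren{P;\;\Z_q}$ via the Universal Coefficients Theorem for Homology, equation~(\ref{eq:UCTH}):
\[
H_{d-2}\paren{P;\;\Z_q}\cong\paren{H_{d-2}\paren{P;\;\Z}\otimes \Z_q}\oplus \mathrm{Tor}\paren{H_{d-3}\paren{P;\;\Z},\Z_q}.
\]
The $\mathrm{Tor}$ term vanishes because $H_{d-3}\paren{P;\;\Z}=0$ (a percolation subcomplex of a box, being of dimension $d-1$, has no reduced homology in dimensions $\le d-3$ — this is the part I'd want to state carefully, citing the dual-graph argument mentioned after Proposition~\ref{prop:torsion}, since the complement of $P$ deformation retracts onto a graph and graphs have homology concentrated in degrees $0$ and $1$, so by Alexander duality $H_j\paren{P;\;\Z}=0$ for $j\le d-3$), using $\mathrm{Tor}\paren{0,G}=0$. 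For the first term, by~(\ref{eqn:ZcoeffQ}) together with Proposition~\ref{prop:torsion} (which gives $T_{d-2}\paren{P;\;\Z}=0$), we have $H_{d-2}\paren{P;\;\Z}\cong\Z^{\mathbf{b}_{d-2}\paren{P;\;\Q}}$, so $H_{d-2}\paren{P;\;\Z}\otimes\Z_q\cong\Z_q^{\mathbf{b}_{d-2}\paren{P;\;\Q}}$. Combining gives the claim.

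The main obstacle — really the only non-bookkeeping point — is justifying that the relevant lower homology and torsion groups of $P$ vanish, i.e.\ marshalling Proposition~\ref{prop:torsion} and the vanishing of $H_{d-3}\paren{P;\;\Z}$ in the correct degrees so that both the $\mathrm{Tor}$ term in UCT-homology and the freeness hypothesis of Corollary~\ref{cor:UCTC} are satisfied. Everything else is a direct substitution into the two universal coefficients theorems. I would write the proof as: reduce to homology via Corollary~\ref{cor:UCTC}; apply~(\ref{eq:UCTH}); kill the $\mathrm{Tor}$ term using $H_{d-3}\paren{P;\;\Z}=0$; identify $H_{d-2}\paren{P;\;\Z}$ as free of rank $\mathbf{b}_{d-2}\paren{P;\;\Q}$ using Proposition~\ref{prop:torsion} and~(\ref{eqn:ZcoeffQ}); conclude.
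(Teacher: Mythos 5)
Your proposal is correct and follows essentially the same route as the paper's proof: identify $H_{d-2}\paren{P;\;\Z}$ as free of rank $\mathbf{b}_{d-2}\paren{P;\;\Q}$ via Proposition~\ref{prop:torsion} and~(\ref{eqn:ZcoeffQ}), pass to $\Z_q$ coefficients with~(\ref{eq:UCTH}) using $H_{d-3}\paren{P;\;\Z}=0$, and convert to cohomology with Corollary~\ref{cor:UCTC}. The only difference is cosmetic ordering plus your (welcome, and correct) extra care in justifying the vanishing of the lower homology needed for the $\mathrm{Tor}$ term and the hypothesis of Corollary~\ref{cor:UCTC}, which the paper simply asserts.
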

\begin{proof}
First, 
\[H_{d-2}\paren{P;\;\Z} \cong \Z^{\mathbf{b}_{d-2}\paren{P;\;\Q}}\]
by (\ref{eqn:ZcoeffQ}) and Proposition~\ref{prop:torsion}. It follows from (\ref{eq:UCTH}) and the fact that $H_{d-3}\paren{P;\;\Z}=0$  that 
\[H_{d-2}\paren{P;\;\Z_q} \cong \Z_q^{\mathbf{b}_{d-2}\paren{P;\;\Q}}\,.\]
Finally, by Corollary~\ref{cor:UCTC}, 
\[H^j\paren{P;\;\Z_q}\cong \mathrm{Hom}\paren{\Z_q^{\mathbf{b}_{d-2}\paren{P;\;\Q}},\Z_q}\cong \Z_q^{\mathbf{b}_{d-2}\paren{P;\;\Q}}\,.\]
\end{proof}

We will use this statement to work with $\Q$ coefficients instead of $\Z_q$ coefficients in codimension one. See Proposition~\ref{prop:codim1}.

\subsection{Duality and Percolation Subcomplexes}
\label{sec:dual}
Let $P$ be a percolation subcomplex of $\Z^d$ or of a box contained therein, and let $Q$ be the dual complex. The topological properties of $Q$ are closely related to those of the complement $\R^d\setminus P.$ Before providing a precise statement, we review the definition of a deformation retraction from a topological space $X$ to a subset $Y\subset X.$ We say that $X$ deformation retracts to $Y$ if there is a continuous function  $I : X \times \brac{0,1}\rightarrow Y$ so that $I\paren{x,0}=x$ for all $x \in X,$ $I\paren{X,1}=Y$ and   $I\paren{y,t} = y$ for all $y\in Y$ and all $t\in\brac{0,1}.$ It is a standard fact that a deformation retraction is a homotopy equivalence and therefore induces isomorphisms on homology and cohomology groups (see chapter 2.1 of~\cite{hatcher2002algebraic}).

\begin{figure}
    \centering
     \includegraphics[width=0.8\textwidth]{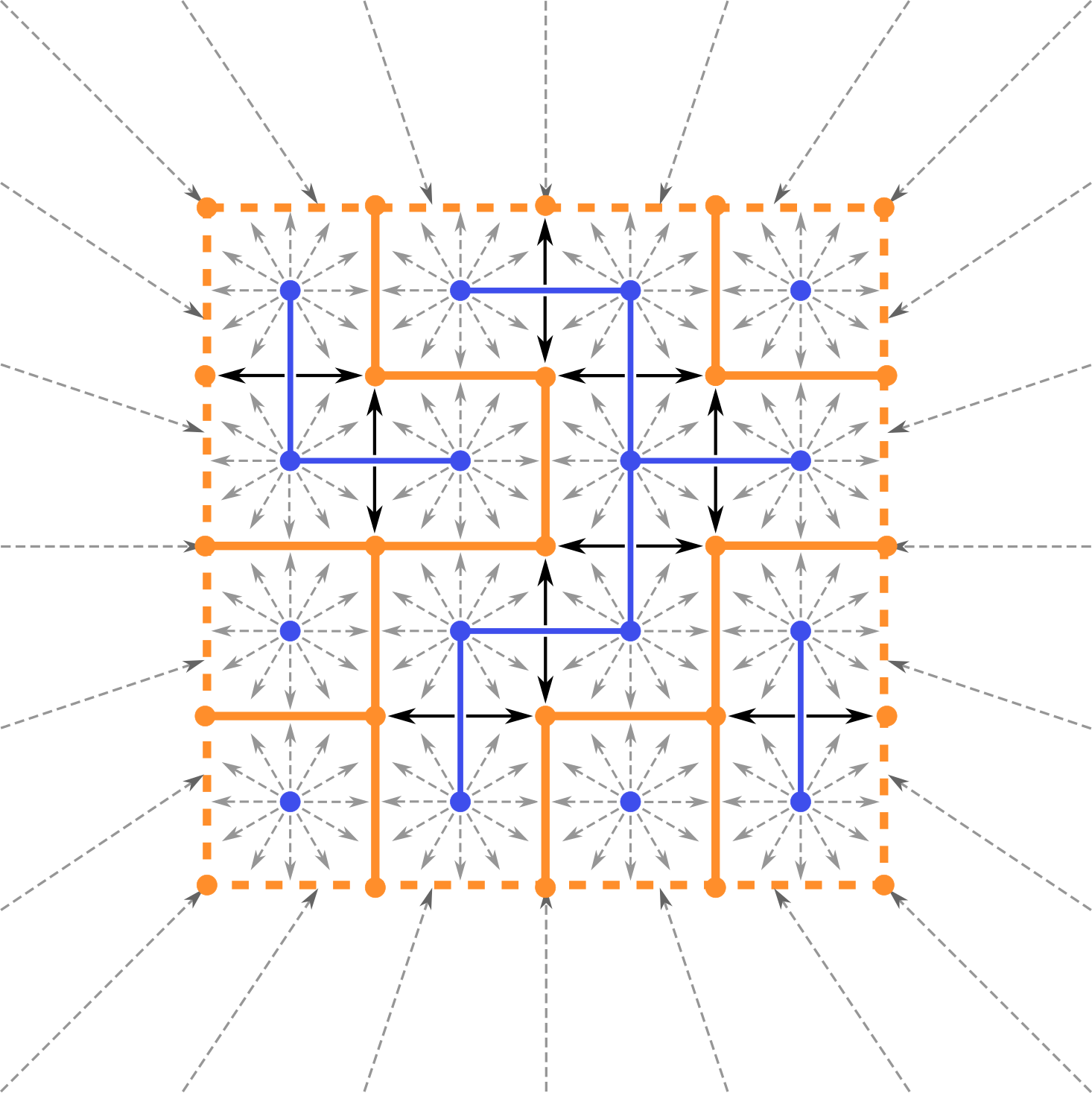}
         \caption{The deformation retract $I'$ defined in the proof of Lemma~\ref{prop:PtoQretract}. $P$ is shown in blue, $Q$ in orange, and $\partial r^{\bullet}$ by a dashed orange line. The deformation retract proceeds first along the gray arrows and then along the black ones.  Adapted from~\cite{duncan2020homological}, which includes details on the construction inside the inner square. Observe that in the case $i=1,$ the addition of the boundary of $r^{\bullet}\setminus r$ to $Q$ has the effect of merging all vertices in the boundary of the square. As we will see below, this is related to the duality between random-cluster models with free and wired boundary conditions.}
    \label{fig:defretract}
\end{figure}

\begin{Lemma}\label{prop:PtoQretract}
    Let $P$ be an $i$-dimensional percolation subcomplex of $X \subset \Z^d$ with dual $Q.$ Also, let $r$ be a box in $\Z^d.$
    \begin{enumerate}
        \item If $X = \Z^d,$ $\R^d \setminus P$ deformation retracts to $Q.$
        \item If $X = \overline{r},$ $\R^d\setminus P$ deformation retracts to $Q\cup \partial r^{\bullet}.$
        \item If $X = r,$ $P$ deformation retracts to a subcomplex $P'$ of $\overline{r^{-1}}$ whose dual complex $Q'$ satisfies $Q'\cup \partial \overline{r^{\bullet}}=\paren{Q\cap \overline{r^{\bullet}}}\cup \partial \overline{r^{\bullet}}.$ 
    \end{enumerate}
\end{Lemma}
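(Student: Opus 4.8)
The three statements are really the same geometric construction viewed in three settings, so the plan is to build one explicit deformation retraction in the ambient cube and then specialize. First I would set up coordinates: recall that the dual complex $(\Z^d)^{\bullet}$ is the shift of $\Z^d$ by $\tfrac12$ in every coordinate, so each $d$-cube $c$ of $\Z^d$ has a unique dual vertex $c^{\bullet}$ at its center, and more generally each open $j$-cell $\sigma$ of $\Z^d$ is met transversally at its barycenter by the open $(d-j)$-cell $\sigma^{\bullet}$ of $(\Z^d)^{\bullet}$. The set $\R^d\setminus P$, where $P$ is an $i$-dimensional percolation subcomplex, is the union of the \emph{open} cells of $\Z^d$ of dimension $>i$ together with the barycentric pieces needed to glue them — concretely, $\R^d \setminus P$ is a union of closed cells of the first barycentric subdivision of $\Z^d$, and it contains, for every $i$-cell $\sigma \notin P$, the dual $(d-i-1)$-skeleton cell $\sigma^{\bullet}$, as well as the full dual $(d-i-1)$-skeleton $Q$ by definition of the dual complex. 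This is the picture in Figure~\ref{fig:defretract}.

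For part (1), $X = \Z^d$: I would construct the retraction cell-by-cell on the barycentric subdivision, pushing $\R^d \setminus P$ radially onto $Q$. The model move is the following: in a single $d$-cube $c$ all of whose bounding $i$-cells lie outside $P$ (so the whole closed cube is in $\R^d\setminus P$), we deformation retract $c$ onto its dual vertex $c^{\bullet}$; if some bounding $i$-cells of $c$ lie in $P$, we retract $c$ minus those faces onto the part of $Q$ it contains, again radially from $c^{\bullet}$. These moves must be made compatibly across shared faces; the clean way to say this is to do it one dimension at a time, i.e.\ retract the complement of $P^{(i+1)}$-part first, then $P^{(i+2)}$-part, etc., or equivalently to observe that $\R^d\setminus P$ collapses onto $Q$ as a regular CW collapse (an elementary collapse of free faces). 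Either way the homotopy $I$ is piecewise-linear on the barycentric subdivision, restricts to the identity on $Q$, and at time $1$ lands in $Q$. I would cite that this construction is carried out in detail in~\cite{duncan2020homological} and sketch only the barycentric local picture here, as the figure already conveys it.

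Parts (2) and (3) are obtained by intersecting the same homotopy with a box and keeping track of the boundary. For part (2), $X = \overline{r}$: now $P$ may contain $i$-cells on $\partial r$, but the complement $\R^d\setminus P$ still retracts along the same cellwise moves; the difference is that the cells of $Q$ sitting in the unbounded region outside $r$ all get pushed together, and the bookkeeping shows the retract is $Q$ together with the dual boundary $\partial r^{\bullet}$ — the extra $\partial r^{\bullet}$ records exactly the dual cells of the "outside" that were collapsed onto the boundary of the shrunken/expanded box, which when $i=1$ amounts to identifying all boundary vertices (the remark in the figure caption). I would verify this by restricting the global homotopy $I$ from part (1) to $\R^d\setminus P$ for $P\subset\overline r$ and checking the image is $Q \cup \partial r^{\bullet}$. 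For part (3), $X = r$ (so $P$ is forbidden to contain $i$-cells of $\partial r$): here it is $P$ itself, not its complement, that we retract — $P$ is an $i$-complex living strictly inside $r$, and pushing it slightly inward gives a percolation subcomplex $P'$ of $\overline{r^{-1}}$; the identity $Q'\cup \partial\overline{r^{\bullet}} = (Q\cap \overline{r^{\bullet}})\cup \partial\overline{r^{\bullet}}$ is then a direct translation of "$P$ deformation retracts to $P'$" under dualization, since the dual of the retracted complex differs from the dual of the original only on the boundary shell $\overline{r^{\bullet}}\setminus \overline{r^{-1/2}}$, which is absorbed into $\partial\overline{r^{\bullet}}$.

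\textbf{Main obstacle.} The conceptual content is trivial — "the complement of a plaquette system deformation retracts onto its Poincaré dual skeleton" — but the genuinely fiddly point is producing a \emph{single, globally consistent} homotopy rather than a patchwork that fails to match on shared faces, and then correctly identifying what happens at the box boundary in parts (2) and (3) (the appearance of $\partial r^{\bullet}$ versus $\partial\overline{r^{\bullet}}$, and the $\overline{r^{-1}}$ in part (3)). I expect the write-up to lean on the barycentric-subdivision/regular-collapse formalism and on the detailed construction already in~\cite{duncan2020homological}, so that the only new work is the boundary bookkeeping, which I would organize as: (i) define the PL homotopy on one $d$-cube as a function of which of its $i$-faces are in $P$; (ii) check compatibility on $(d-1)$-faces; (iii) intersect with $r$ and read off the three claimed images.
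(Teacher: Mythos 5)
Your treatment of parts (1) and (3) is essentially the paper's: part (1) is delegated to the explicit construction in~\cite{duncan2020homological}, and part (3) collapses the $i$-cells meeting $\partial r$ (which protrude orthogonally into $r$) onto the shrunken box, which is also how the paper reduces (3) to (2). Your phrase ``pushing $P$ slightly inward'' should be stated as this straight-line collapse, and the identity for $Q'$ should be read off from the definition of $P'$ rather than asserted as an automatic consequence of the retraction, but those are presentational points.

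The genuine gap is in part (2). You propose to restrict the homotopy of part (1) to $\R^d\setminus P$ with $P\subset\overline r$ and ``check that the image is $Q\cup\partial r^{\bullet}$.'' This fails twice over. First, the part-(1) retraction exists only for percolation subcomplexes of $\Z^d$, which by definition contain the entire $(i-1)$-skeleton of $\Z^d$; for $P\subset\overline r$ the set $\R^d\setminus P$ contains the $(i-1)$-skeleton of $\Z^d$ outside $\overline r$, and on those points the homotopy you want to restrict is simply not defined. Second, if you repair this by passing to $\hat P=P\cup\paren{\Z^d}^{(i-1)}$ and using the part-(1) retraction for $\hat P$, its image is the dual complex of $\hat P$; since every $i$-plaquette outside $\overline r$ is vacant in $\hat P$, that dual complex contains the full unbounded $(d-i)$-skeleton of the dual lattice outside $r$, not just $\partial r^{\bullet}$ (note also that $Q$ is a percolation subcomplex of $r^{\bullet}$, so the ``cells of $Q$ in the unbounded region'' in your bookkeeping do not exist). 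Nothing in the cellwise moves collapses the exterior. The missing ingredient --- and the paper's actual argument --- is a second homotopy: the straight-line deformation retraction of $\R^d\setminus r^{\bullet}$ onto $\partial r^{\bullet}$, glued along $\partial r^{\bullet}$ to the part-(1)-type retraction for $\hat P$ used inside $r^{\bullet}$. It is this exterior collapse that produces the $\partial r^{\bullet}$ term and makes the combined homotopy defined on all of $\R^d\setminus P$ with image $Q\cup\partial r^{\bullet}$.
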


\begin{proof} In the first case, the same construction as in the proof of Lemma 8 of~\cite{duncan2020homological} yields a deformation retraction $I:\R^d\setminus P \rightarrow Q.$ In the second case, we combine two deformation retractions: the deformation retraction $I:\R^d\setminus \hat{P} \rightarrow Q$ defined for the subcomplex $\hat{P}$ of $\Z^d$ obtained by adding the $(i-1)$-skeleton of $\Z^d$ to $P,$ and the straight line deformation retraction $J:\R^d\setminus r^{\bullet}\times \brac{0,1}\rightarrow \partial r^{\bullet}.$ Then, we can define a deformation retraction $I': \R^d \setminus P \times \brac{0,1}\rightarrow Q$ by 
\[I'\paren{x,t}=\begin{cases}I\paren{x,t} & x\in r^{\bullet} \\ J\paren{x,t} & x\in \R^d \setminus r^{\bullet}\end{cases}\,.\]
See Figure~\ref{fig:defretract}. In the third case, all $i$-dimensional cells that share an $(i-1)$-face with $\partial r$ are orthogonal to a face of $\partial r$ and can be deformation retracted to $\partial r^{-1}$ via straight lines. The claim then follows from the second case.
\end{proof}

\begin{proof}[Proof of Proposition~\ref{cor:alexander}]
First, let $P$ be a percolation subcomplex of $\overline{r}.$ Compactify $\R^d$ to $S^d$ by adding a point at infinity. Since $i>0$ and $P$ is bounded, applying Alexander duality (Theorem~\ref{thm:adual}) gives
\[H_{i}\paren{P;\;\Z}\cong H_{i}\paren{P \cup \set{\infty};\;\Z} \cong H^{d-i-1}\paren{\R^d\setminus P;\;\Z}\,.\]
%By Alexander duality (Theorem~\ref{thm:adual}), $H_{i}\paren{P \cup \set{\infty};\;\Z}$ is isomorphic to  $H^{d-i-1}\paren{\R^d\setminus P;\;\Z}.$ 
The cohomology of $\R^d\setminus P$ is isomorphic to that of $Q\cup \partial r^{\bullet}$ by the previous proposition.

When $P$ is a subcomplex of $r,$ the existence of such an isomorphism follows because of the third item in the previous proposition.
\end{proof}

\section{Wired Boundary Conditions for PLGT}
\label{sec:wired}

Here, we detail the adjustments required to adapt the constructions of Section~\ref{sec:coupling} to wired boundary conditions. 

\subsection{Finite Volume Measures}
\label{subsec:couplingwired}
We begin by reviewing the definitions. The PRCM with wired boundary conditions on a box $r$ is the measure
\[\tilde{\mu}^{\mathbf{w}}_{r,p,G,i}\paren{P}\propto p^{\abs{P}}\paren{1-p}^{\abs{r^{\paren{i}}} - \abs{P}}\abs{H^{i-1}\paren{P^{\mathbf{w}};\;G}}\,,
\]
where $P^{\mathbf{w}}$ is the percolation subcomplex of $\overline{r}$ obtained by adding all $i$-plaquettes in $\partial r$ to $P.$

Let 
\[\psi:C^{i-1}\paren{r;\;\Z_q} \to C^{i-1}\paren{\partial r;\;\Z_q}\]
be the map which restricts a cochain on $r$ to one on $\partial r,$ and let 
\[D_{\eta}\paren{r;\;\Z_q}=\psi^{-1}\paren{\eta}\]
for some $\eta\in Z^{i-1}\paren{\partial r;\Z_q}.$ 
Also, set 
\[D_{\mathbf{w}}\paren{r;\;\Z_q}=\ker \delta \circ \psi\,.\] Then  $\nu^{\mathbf{w}}_{r,\beta,q,i-1}$ and $\nu^{\eta}_{r,\beta,q,i-1}$ are the Gibbs measures on  $D_{\mathbf{w}}\paren{r;\;\Z_q}$ and  $D_{\eta}\paren{r;\;\Z_q}$ induced by the Hamiltonian~(\ref{eq:hamiltonian_potts}), respectively. 

Our next goal is to demonstrate that the expectations of gauge invariant random variables coincide for wired and $\eta$ boundary conditions. Note that the sets $D_{\eta}\paren{r;\;\Z_q}$ are the cosets of $\ker\psi'$ in $D_{\mathbf{w}}\paren{r;\;\Z_q},$ where $\psi'$ is the restriction of $\psi$ to $D_{\mathbf{w}}\paren{r;\;\Z_q}.$  

The gauge action of $C^{k-2}\paren{r;\;\Z_q}$ on $C^{k-1}\paren{r;\;\Z_q}$ is defined by setting  $\phi_h:C^{k-1}\paren{r;\;\Z_q}\rightarrow  C^{k-1}\paren{r;\;\Z_q}$ to be the map
\[\phi_h\paren{f}= f+ \delta h\]
for $f\in C^{k-1}\paren{r;\;\Z_q}$ and $h\in C^{k-2}\paren{r;\;\Z_q}.$
\begin{Lemma}\label{lemma:gaugeaction}
The gauge action descends to a transitive group action on $D_{\mathbf{w}}\paren{r;\;\Z_q}/\ker\psi'.$ 
\end{Lemma}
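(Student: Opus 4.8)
The statement has two parts: first, that the gauge action $\phi_h$ for $h \in C^{k-2}(r;\Z_q)$ is well-defined as a map on $D_{\mathbf{w}}(r;\Z_q)/\ker\psi'$, and second, that the induced action is transitive. The plan is to check each in turn, reducing everything to linear algebra over $\Z_q$.

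\emph{Well-definedness.} First I would verify that $\phi_h$ preserves $D_{\mathbf{w}}(r;\Z_q) = \ker(\delta \circ \psi)$. If $f \in D_{\mathbf{w}}$, then $\delta \psi(f + \delta h) = \delta\psi(f) + \delta\psi(\delta h) = \delta\psi(f) + \delta\delta\psi(h) = 0$, using that $\psi$ commutes with $\delta$ (restriction is a chain map) and that $\delta \circ \delta = 0$. Here I am using $\psi(\delta h) = \delta \psi(h)$; since $h \in C^{k-2}(r;\Z_q)$ restricts to a $(k-2)$-cochain on $\partial r$, this is just naturality of the coboundary under restriction. So $\phi_h$ maps $D_{\mathbf{w}}$ to itself. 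Next, $\phi_h$ descends to the quotient by $\ker\psi'$: if $f \equiv f'$ mod $\ker\psi'$, i.e. $f - f' \in \ker\psi'$, then $(f+\delta h)-(f'+\delta h) = f - f' \in \ker\psi'$, so $\phi_h(f) \equiv \phi_h(f')$. Finally, the assignment $h \mapsto \bar\phi_h$ (the induced map on the quotient) is a group homomorphism from $C^{k-2}(r;\Z_q)$ into the symmetric group of the quotient set: $\phi_{h_1} \circ \phi_{h_2}(f) = f + \delta h_2 + \delta h_1 = \phi_{h_1 + h_2}(f)$, and $\phi_0 = \mathrm{id}$, so each $\bar\phi_h$ is a bijection. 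Hence we have a genuine group action.

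\emph{Transitivity.} This is the substantive point. Given two cosets $\bar f, \bar f'$ in $D_{\mathbf{w}}/\ker\psi'$, I must produce $h \in C^{k-2}(r;\Z_q)$ with $f' - f - \delta h \in \ker\psi'$, i.e. $\psi(f' - f) = \psi(\delta h) = \delta\psi(h)$. Set $g = f' - f \in C^{k-1}(r;\Z_q)$. Since $f, f' \in \ker(\delta\circ\psi)$, the restriction $\psi(g) \in C^{k-1}(\partial r;\Z_q)$ is a cocycle: $\delta\psi(g) = 0$. So I need $\psi(g)$ to be a \emph{coboundary} on $\partial r$ of the restriction of some cochain on $r$. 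The cleanest route: $\partial r$ is the boundary of a box, so topologically $\partial r \cong S^{d-1}$ (as a cubical complex), and the relevant reduced cohomology $H^{k-1}(\partial r;\Z_q) = \tilde H^{k-1}(S^{d-1};\Z_q)$ vanishes whenever $1 \le k-1 \le d-2$, i.e. whenever we are not in the top or bottom degree. In the range of interest here ($k = d-1$, so $k - 1 = d-2 \le d - 2$, and $k-1 \ge 1$ since $d \ge 3$) this reduced cohomology is zero, so every cocycle $\psi(g)$ on $\partial r$ is a coboundary: $\psi(g) = \delta h_\partial$ for some $h_\partial \in C^{k-2}(\partial r;\Z_q)$. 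Then extend $h_\partial$ arbitrarily to $h \in C^{k-2}(r;\Z_q)$ (e.g. by zero on interior cells); since $\psi(h) = h_\partial$, we get $\delta\psi(h) = \delta h_\partial = \psi(g)$, as required. Therefore $\phi_h$ carries $\bar f$ to $\bar f'$.

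\textbf{Main obstacle.} The delicate step is transitivity, and specifically making sure the reduced cohomology of $\partial r$ vanishes in the degree we need — this is where the hypothesis $k = d-1$ (equivalently $i = d-1$ in the PRCM language, via $i - 1 = k$... one must track the indexing carefully) and the use of \emph{reduced} (co)homology, as emphasized in the appendix, genuinely matter: in degree $0$ or degree $d-1$ on $S^{d-1}$ the reduced group is $\Z_q$, not $0$, and transitivity would fail. I would also double-check that extending $h_\partial$ to all of $r$ presents no difficulty — it does not, since $\psi$ is surjective (any cochain on $\partial r$ extends, e.g. by zero), and we only need $\psi(h) = h_\partial$, not any compatibility in the interior. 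The homomorphism and well-definedness verifications are routine diagram-chasing with $\delta^2 = 0$ and naturality of $\psi$.
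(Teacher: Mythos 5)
Your proof is correct and takes essentially the same route as the paper: well-definedness is the same routine check that $\phi_h$ translates cosets of $\ker\psi'$ (using $\delta\circ\delta=0$ and naturality of the restriction $\psi$), and transitivity is obtained exactly as in the paper's proof from the vanishing of the reduced cohomology $H^{k-1}\paren{\partial r;\;\Z_q}$ of $\partial r\cong S^{d-1}$ followed by extending the resulting $(k-2)$-cochain by zero into $r.$ One harmless inaccuracy in your closing aside: with the paper's reduced conventions $H^{0}\paren{S^{d-1};\;\Z_q}=0$ (only the top degree $d-1$ is exceptional), so the argument actually works for every $i<d,$ not only in codimension one.
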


\begin{proof}
First, we show that the induced action on  $D_{\mathbf{w}}\paren{r;\;\Z_q}/\ker\psi'$ is well-defined. Let $\eta\in  Z^{i-1}\paren{\partial r;\;\Z_q},$ $h\in C^{k-2}\paren{r;\;\Z_q},$ and  $f_1,f_2 \in D_{\eta}\paren{r;\;\Z_q}.$ Then
\[\psi'\paren{\phi_h\paren{f_1}}=\psi'\paren{\phi_h\paren{f_2}}=\eta+\psi'\paren{\delta h}\]
and
\[\delta\paren{\eta+\psi'\paren{\delta h}}=\delta\eta +\psi'\paren{\delta \circ \delta h}=0\]
so $\eta+\psi'\paren{\delta h} \in  Z^{i-1}\paren{\partial r;\;\Z_q}$ and $\phi_h$ maps $D_{\eta}\paren{r;\;\Z_q}$ to $C^{i-1}_{\eta+\psi\paren{\delta h}}\paren{r;\;\Z_q},$ bijectively.  Thus the action of $h$ on  $D_{\mathbf{w}}\paren{r;\;\Z_q}/\ker\psi'$ is well-defined.

Next, we prove transitivity. Let $\eta_1,\eta_2\in Z^{i-1}\paren{\partial r;\;\Z_q}$ and set $\eta=\eta_2-\eta_1.$ As $\partial r$ is homeomorphic to the $(d-1)$-dimensional sphere and $i<d,$ 
\[H^{i-1}\paren{\partial r;\;\Z_q}=Z^{i-1}\paren{\partial r;\;\Z_q}/B^{i-1}\paren{\partial r;\;\Z_q}=0\]
so $\eta=\delta h$ for some $h\in C^{i-2}\paren{\partial r;\;\Z_q}.$ We can extend $h$ to an element $\hat{h}$ of  $C^{i-2}\paren{r;\;\Z_q}$ by, for example, requiring it to vanish on all other $(i-2)$-faces. Then
\[C^{i-1}_{\eta_2}\paren{r;\;\Z_q}=\phi_{\hat{h}}\paren{C^{i-1}_{\eta_1}\paren{r;\;\Z_q}}\,,\]
as desired. 
\end{proof}

\begin{Corollary}
\label{cor:eta}
For $\eta_1,\eta_2\in Z^{i-1}\paren{\partial r;\;\Z_q}$ there exists an $h\in C^{i-2}\paren{r;\;\Z_q}$ so that 
\[\paren{\phi_h}_*\paren{\nu^{\eta_1}_{r,\beta,q,i-1}}=\nu^{\eta_2}_{r,\beta,q,i-1}.\]
In particular, if $\gamma \in Z_{i-1}\paren{r;\;\Z_q}$ then
\[\mathbb{E}_{\nu^{\eta_1}_{r,\beta,q,i-1}}\paren{W_{\gamma}}=\mathbb{E}_{\nu^{\eta_2}_{r,\beta,q,i-1}}\paren{W_{\gamma}}=\mathbb{E}_{\nu^{\mathbf{w}}_{r,\beta,q,i-1}}\paren{W_{\gamma}}\,.\]
\end{Corollary}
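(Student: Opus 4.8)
The plan is to prove Corollary~\ref{cor:eta} as a direct consequence of Lemma~\ref{lemma:gaugeaction}, using two facts: first, that the gauge action preserves the Hamiltonian, and second, that Wilson loop variables are gauge invariant. Let me organize the argument.

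\begin{proof}
By Lemma~\ref{lemma:gaugeaction}, the gauge action descends to a transitive action on $D_{\mathbf{w}}\paren{r;\;\Z_q}/\ker\psi',$ so for any $\eta_1,\eta_2\in Z^{i-1}\paren{\partial r;\;\Z_q}$ there is an $h\in C^{i-2}\paren{r;\;\Z_q}$ with $\phi_h\paren{D_{\eta_1}\paren{r;\;\Z_q}}=D_{\eta_2}\paren{r;\;\Z_q}$ (this is exactly the content of the proof of that lemma, where such an $\hat h$ is constructed explicitly). The map $\phi_h\paren{f}=f+\delta h$ is a bijection $D_{\eta_1}\paren{r;\;\Z_q}\to D_{\eta_2}\paren{r;\;\Z_q}.$ We claim it preserves the Hamiltonian~(\ref{eq:hamiltonian_potts}). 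Indeed,
\[\delta\paren{\phi_h\paren{f}}\paren{\sigma}=\delta f\paren{\sigma}+\delta\circ\delta h\paren{\sigma}=\delta f\paren{\sigma}\]
for every $i$-plaquette $\sigma,$ since $\delta\circ\delta=0.$ Hence $H\paren{\phi_h\paren{f}}=-\sum_\sigma K\paren{\delta\paren{\phi_h\paren{f}}\paren{\sigma},0}=-\sum_\sigma K\paren{\delta f\paren{\sigma},0}=H\paren{f}.$ Since $\nu^{\eta_j}_{r,\beta,q,i-1}$ is by definition the Gibbs measure on $D_{\eta_j}\paren{r;\;\Z_q}$ with weight proportional to $e^{-\beta H\paren{f}},$ and $\phi_h$ is a Hamiltonian-preserving bijection between the two supports, the pushforward satisfies $\paren{\phi_h}_*\paren{\nu^{\eta_1}_{r,\beta,q,i-1}}=\nu^{\eta_2}_{r,\beta,q,i-1}$ (the normalizing constants agree as well, being sums of $e^{-\beta H}$ over sets in bijection via $\phi_h$).

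For the statement on Wilson loops, fix $\gamma\in Z_{i-1}\paren{r;\;\Z_q}.$ For any $f,$
\[W_\gamma\paren{\phi_h\paren{f}}=\paren{\paren{f+\delta h}\paren{\gamma}}^{\C}=\paren{f\paren{\gamma}+\delta h\paren{\gamma}}^{\C}=\paren{f\paren{\gamma}+h\paren{\partial\gamma}}^{\C}=\paren{f\paren{\gamma}}^{\C}=W_\gamma\paren{f}\,,\]
using $\partial\gamma=0.$ Therefore $W_\gamma$ is invariant under $\phi_h,$ and
\[\mathbb{E}_{\nu^{\eta_2}_{r,\beta,q,i-1}}\paren{W_\gamma}=\mathbb{E}_{\paren{\phi_h}_*\nu^{\eta_1}_{r,\beta,q,i-1}}\paren{W_\gamma}=\mathbb{E}_{\nu^{\eta_1}_{r,\beta,q,i-1}}\paren{W_\gamma\circ\phi_h}=\mathbb{E}_{\nu^{\eta_1}_{r,\beta,q,i-1}}\paren{W_\gamma}\,.\]
This proves the first two expectations in the displayed chain are equal. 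For the equality with the wired expectation, recall that $D_{\mathbf{w}}\paren{r;\;\Z_q}=\ker\delta\circ\psi=\bigsqcup_{\eta\in Z^{i-1}\paren{\partial r;\;\Z_q}}D_\eta\paren{r;\;\Z_q},$ so $\nu^{\mathbf{w}}_{r,\beta,q,i-1}$ is a convex combination (with weights given by the relative partition functions) of the measures $\nu^{\eta}_{r,\beta,q,i-1}.$ Since $\mathbb{E}_{\nu^{\eta}_{r,\beta,q,i-1}}\paren{W_\gamma}$ takes the same value for every $\eta$ by what we just showed, the same value is attained by $\mathbb{E}_{\nu^{\mathbf{w}}_{r,\beta,q,i-1}}\paren{W_\gamma}.$
\end{proof}

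The main obstacle — already dispatched in Lemma~\ref{lemma:gaugeaction} — is transitivity of the gauge action on boundary-condition classes, which relies on $H^{i-1}\paren{\partial r;\;\Z_q}=0$ (true since $\partial r\cong S^{d-1}$ and $i-1<d-1$). Given that lemma, the present corollary is essentially bookkeeping: one checks that $\phi_h$ preserves $H$ (immediate from $\delta\circ\delta=0$) and that $W_\gamma$ is gauge invariant (immediate from $\partial\gamma=0$), then notes that the wired measure decomposes over $\eta$-sectors. I would expect the only subtlety worth stating carefully to be the claim that the normalizing constants match under $\phi_h$; this follows because $\phi_h$ restricts to a Hamiltonian-preserving bijection $D_{\eta_1}\to D_{\eta_2},$ so $\sum_{f\in D_{\eta_1}}e^{-\beta H\paren{f}}=\sum_{g\in D_{\eta_2}}e^{-\beta H\paren{g}}.$
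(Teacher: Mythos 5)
Your proof is correct and follows essentially the same route as the paper: transitivity of the gauge action (Lemma~\ref{lemma:gaugeaction}) provides $h$, preservation of the Hamiltonian by $\phi_h$ gives the pushforward identity, and gauge invariance of $W_\gamma$ together with the decomposition of $\nu^{\mathbf{w}}_{r,\beta,q,i-1}$ over the sectors $D_\eta\paren{r;\;\Z_q}$ (the paper's ``law of total conditional expectation'') gives the equality of expectations. Your write-up merely spells out details the paper leaves implicit, such as $\delta\circ\delta=0$ and $h\paren{\partial\gamma}=0$.
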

\begin{proof}
By transitivity, we may find an $h$ so that 
\[\phi_h\paren{C^{i-1}_{\eta_1}\paren{r;\;\Z_q}}=C^{i-1}_{\eta_2}\paren{r;\;\Z_q}\,.\]
The first claim then follows because $\phi_h$ preserves the Hamiltonian. The gauge-invariance of Wilson loop variables and the law of total conditional expectation implies the second as
\[\mathbb{E}_{\nu^{\mathbf{w}}_{r,\beta,q,i-1}}\paren{W_{\gamma}}=\sum_{\eta \in  Z^{i-1}\paren{\partial r;\;\Z_q}}\mathbb{E}_{\nu^{\eta}_{r,\beta,q,i-1}}\paren{W_{\gamma}}\nu^{\mathbf{w}}_{r,\beta,q,i-1}\paren{D_{\eta}\paren{r;\;\Z_q}}\,.\]
\end{proof}

Next, we couple $\nu^{\eta}_{r,\beta,q,i-1}$ with the wired PRCM measure. We perform a preliminary computation.
\begin{Lemma}\label{lemma:wiredcoupling}
Let $P$ be a percolation subcomplex of $r$ and $\eta\in Z^{i-1}\paren{\partial r;\;\Z_q}.$ Then
    \begin{equation*}
    \abs{Z^{i-1}\paren{P^{\mathbf{w}};\;\Z_q}\cap D_{\eta}\paren{r;\;\Z_q}}=\frac{\abs{Z^{i-1}\paren{P^{\mathbf{w}};\;\Z_q}}}{\abs{Z^{i-1}\paren{\partial r;\;\Z_q}}}\,.
\end{equation*}
\end{Lemma}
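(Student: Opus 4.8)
The plan is to reduce the count to a statement about the restriction map $\psi$ and then use the first isomorphism theorem for $\Z_q$-modules. First I would observe that the map $\psi$ restricts to a $\Z_q$-linear map
\[\psi_P : Z^{i-1}\paren{P^{\mathbf{w}};\;\Z_q}\to Z^{i-1}\paren{\partial r;\;\Z_q}\,,\]
where the codomain is correct because the coboundary commutes with restriction, so the restriction of a cocycle on $P^{\mathbf{w}}$ to $\partial r$ is again a cocycle. The kernel of $\psi_P$ is exactly $Z^{i-1}\paren{P^{\mathbf{w}};\;\Z_q}\cap \ker\psi$, and the fiber $\psi_P^{-1}\paren{\eta}$ is exactly $Z^{i-1}\paren{P^{\mathbf{w}};\;\Z_q}\cap D_{\eta}\paren{r;\;\Z_q}$ by the definition $D_{\eta}\paren{r;\;\Z_q}=\psi^{-1}\paren{\eta}$. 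Since all nonempty fibers of a group homomorphism are cosets of the kernel and hence equinumerous, it follows that whenever $\psi_P^{-1}\paren{\eta}$ is nonempty,
\[\abs{Z^{i-1}\paren{P^{\mathbf{w}};\;\Z_q}\cap D_{\eta}\paren{r;\;\Z_q}} = \abs{\ker \psi_P} = \frac{\abs{Z^{i-1}\paren{P^{\mathbf{w}};\;\Z_q}}}{\abs{\im \psi_P}}\,.\]

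The remaining point — and the only one requiring an idea — is to show that $\psi_P$ is surjective onto $Z^{i-1}\paren{\partial r;\;\Z_q}$, which simultaneously shows that the fibers are all nonempty (so the displayed identity holds for every $\eta$, as claimed) and identifies $\im\psi_P$ with the full cocycle group on $\partial r$. I would argue this exactly as in the transitivity proof of Lemma~\ref{lemma:gaugeaction}: given $\eta\in Z^{i-1}\paren{\partial r;\;\Z_q}$, I want to extend it to a cocycle on $P^{\mathbf{w}}$. Since $P^{\mathbf{w}}$ contains all $i$-plaquettes of $\partial r$, and $\partial r\cong S^{d-1}$ has $H^{i-1}\paren{\partial r;\;\Z_q}=0$ for $i<d$, the group-cohomology vanishing lets us control obstructions. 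Concretely, first extend $\eta$ arbitrarily to a cochain $\tilde\eta\in C^{i-1}\paren{P^{\mathbf{w}};\;\Z_q}$ by declaring it to vanish on $i$-plaquettes not in $\partial r$; its coboundary $\delta\tilde\eta$ is supported on $i$-plaquettes of $\partial r$ since $\eta$ is a cocycle there, and in fact $\delta\tilde\eta$ is a coboundary-in-$\partial r$ pushed forward, so it can be corrected. Alternatively, and more cleanly, I would note that the $(i-1)$-skeleton of $r$ contains that of $\partial r$, and since $H^{i}\paren{r,\partial r}$ or the relevant relative group vanishes in this range, the long exact sequence of the pair gives surjectivity of $H^{i-1}\paren{r;\;\Z_q}\to H^{i-1}\paren{\partial r;\;\Z_q}$ at the cocycle level after adjusting by a coboundary; since $P^{\mathbf{w}}$ contains the full $(i-1)$-skeleton and all boundary $i$-plaquettes, every cocycle on $\partial r$ lifts.

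I expect the surjectivity of $\psi_P$ to be the main obstacle, mostly a bookkeeping matter of checking that the extension-by-zero of a boundary cocycle, possibly modified by the coboundary of an extended $(i-2)$-cochain as in Lemma~\ref{lemma:gaugeaction}, actually lands in $Z^{i-1}\paren{P^{\mathbf{w}};\;\Z_q}$ and not merely in $C^{i-1}\paren{P^{\mathbf{w}};\;\Z_q}$. This requires using that $P^{\mathbf{w}}$ retains every $i$-plaquette touching $\partial r$ (so the coboundary condition near the boundary is genuinely tested) together with the vanishing $H^{i-1}\paren{\partial r;\;\Z_q}=0$ to kill the obstruction; the interior is unconstrained since $\tilde\eta$ vanishes there by construction and the $i$-plaquettes of the interior impose no condition once $\eta$ itself is a cocycle. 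Once surjectivity is in hand, substituting $\abs{\im\psi_P}=\abs{Z^{i-1}\paren{\partial r;\;\Z_q}}$ into the coset-counting identity above yields the claimed formula, with no dependence on $\eta$.
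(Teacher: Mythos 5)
Your proposal is correct and follows essentially the paper's route: the paper likewise reduces the count to surjectivity of the restriction map $Z^{i-1}\paren{P^{\mathbf{w}};\;\Z_q}\to Z^{i-1}\paren{\partial r;\;\Z_q}$ together with coset counting, and it obtains surjectivity by invoking the transitive gauge action of Lemma~\ref{lemma:gaugeaction}, i.e.\ exactly the ingredients you cite ($H^{i-1}\paren{\partial r;\;\Z_q}=0$ for $i<d$ and extension by zero of an $(i-2)$-cochain). The ``main obstacle'' you flag in fact dissolves immediately: writing $\eta=\delta h$ on $\partial r$ and extending $h$ by zero to $\hat h\in C^{i-2}\paren{r;\;\Z_q}$, the cochain $\delta\hat h$ is a global coboundary, hence automatically a cocycle on $P^{\mathbf{w}}$ for every $P$, and it restricts to $\eta$ on $\partial r$, so no correction or relative long exact sequence is needed.
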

\begin{proof}
By construction, $Z^{i-1}\paren{P^{\mathbf{w}};\;\Z_q}\subset  D_{\mathbf{w}}\paren{r;\;\Z_q}.$ The action of gauge transformations  sends  $Z^{i-1}\paren{P^{\mathbf{w}};\;\Z_q}$ to itself and it acts transitively on the sets $D_{\eta'}\paren{r;\;\Z_q},$ so we may find an $f\in Z^{i-1}\paren{P^{\mathbf{w}};\;\Z_q}$ so that $\psi\paren{f}=\eta.$ In other words, the restriction of $\psi$ to $Z^{i-1}\paren{P^{\mathbf{w}};\;\Z_q}$ surjects onto $Z^{i-1}\paren{\partial r;\;\Z_q}.$ Thus,
\[\frac{Z^{i-1}\paren{P^{\mathbf{w}};\;\Z_q}}{Z^{i-1}\paren{P^{\mathbf{w}};\;\Z_q} \cap \ker \psi}\cong  Z^{i-1}\paren{\partial r;\;\Z_q}\,,\]
and the statement follows.
\end{proof}

\begin{Proposition}\label{prop:couplewired}
Let $r\subset \Z^d$ be a box, $q\in \N+1,$ $\beta \in [0,\infty),$ and $p = 1-e^{-\beta}.$ Let $\#=\mathbf{w}$ or $\#=\eta$ for $\eta\in Z^{i-1}\paren{\partial r;\;\Z_q}.$ Define a coupling on $D^{i-1}_{\#}\paren{r;\;\Z_q}\times \set{0,1}^{r^{\paren{i}}}$ by
\[\kappa^\# \paren{f,P} \propto \prod_{\sigma \in r^{\paren{i}}}\brac{\paren{1-p}I_{\set{\sigma \notin P}} + p I_{\set{\sigma \in P,\delta f\paren{\sigma}=0}}}\,.\]
Then $\kappa^{\#}$ has the following marginals.
\begin{itemize}
    \item The first marginal is $\tilde{\mu}^{\#}_{X,p,\Z_q,i}.$ 
    \item The second marginal is $\nu^{\#}_{r,\beta,q,i-1}$
\end{itemize}
\end{Proposition}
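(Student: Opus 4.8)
The plan is to mirror the proof of Proposition~\ref{prop:couple}, since the coupling $\kappa^{\#}$ is defined by exactly the same local weight, only with the cochain variable restricted to the affine subspace $D_{\#}^{i-1}\paren{r;\;\Z_q}$ and with $\sigma$ ranging over the interior $i$-plaquettes of $r$. First I would handle the second marginal. Summing the weight over $P\subseteq r^{\paren{i}}$ factors plaquette-by-plaquette: each $\sigma$ contributes $\paren{1-p}$ if $\delta f\paren{\sigma}\neq 0$ and $\paren{1-p}+p=1$ if $\delta f\paren{\sigma}=0$. Hence $\sum_P \kappa^{\#}\paren{f,P} \propto \paren{1-p}^{\abs{\set{\sigma : \delta f\paren{\sigma}\neq 0}}} = e^{-\beta\,\abs{\set{\sigma : \delta f\paren{\sigma}\neq 0}}}$, which is $e^{-\beta H\paren{f}}$ up to a multiplicative constant depending only on the total number of interior $i$-plaquettes. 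Restricted to $D_{\#}^{i-1}\paren{r;\;\Z_q}$ this is precisely the Gibbs measure $\nu^{\#}_{r,\beta,q,i-1}$ defined by the Hamiltonian~(\ref{eq:hamiltonian_potts}) — for $\#=\eta$ directly by definition, and for $\#=\mathbf{w}$ because $D_{\mathbf{w}}\paren{r;\;\Z_q}=\ker\delta\circ\psi$ is the stated support.

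Next I would compute the first marginal, which is the step requiring the new input, namely Lemma~\ref{lemma:wiredcoupling}. Summing over $f\in D_{\#}^{i-1}\paren{r;\;\Z_q}$, the same factorization gives
\[\kappa^{\#}_1\paren{P} \propto \paren{1-p}^{\abs{r^{\paren{i}}}-\abs{P}} p^{\abs{P}} \cdot \abs{\set{f\in D_{\#}^{i-1}\paren{r;\;\Z_q} : \delta f\paren{\sigma}=0 \text{ for all } \sigma\in P}}\,.\]
The indicator set is exactly $Z^{i-1}\paren{P^{\mathbf{w}};\;\Z_q}\cap D_{\#}\paren{r;\;\Z_q}$: a cochain on $r$ that is killed by $\delta$ on every plaquette of $P$ and on every boundary plaquette of $r$ (which is automatic once we are inside $D_{\mathbf{w}}$, and for $D_\eta$ because $\eta$ is a cocycle) is precisely a cocycle of the wired complex $P^{\mathbf{w}}$. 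For $\#=\mathbf{w}$ this count is $\abs{Z^{i-1}\paren{P^{\mathbf{w}};\;\Z_q}}$, and since $B^{i-1}\paren{P^{\mathbf{w}};\;\Z_q}$ depends only on $r$ and not on $P$ (its size is $\abs{C^{i-2}}/\abs{Z^{i-2}}$, again independent of the $i$-cells of $P$), this is proportional to $\abs{H^{i-1}\paren{P^{\mathbf{w}};\;\Z_q}}$, yielding $\tilde{\mu}^{\mathbf{w}}_{X,p,\Z_q,i}$. For $\#=\eta$, Lemma~\ref{lemma:wiredcoupling} says this count equals $\abs{Z^{i-1}\paren{P^{\mathbf{w}};\;\Z_q}}/\abs{Z^{i-1}\paren{\partial r;\;\Z_q}}$, and the denominator is a constant independent of $P$, so we again get proportionality to $\abs{H^{i-1}\paren{P^{\mathbf{w}};\;\Z_q}}$, i.e. the wired PRCM $\tilde{\mu}^{\mathbf{w}}_{X,p,\Z_q,i}=\tilde{\mu}^{\eta}$ in the notation of the proposition.

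The main obstacle — really the only nonroutine point — is the identification of the set $\set{f\in D_{\#}^{i-1} : \delta f|_P = 0}$ with $Z^{i-1}\paren{P^{\mathbf{w}};\;\Z_q}$ (possibly intersected with an affine slice) and the verification that the normalizing denominators introduced along the way ($\abs{B^{i-1}\paren{P^{\mathbf{w}}}}$ and, in the $\eta$ case, $\abs{Z^{i-1}\paren{\partial r;\;\Z_q}}$) are genuinely independent of $P$; the first is standard, and the second is supplied by Lemma~\ref{lemma:wiredcoupling}. Everything else is the verbatim plaquette-factorization bookkeeping from Proposition~\ref{prop:couple}, so I would write the proof as: ``The computation is identical to that of Proposition~\ref{prop:couple}, with $Z^{i-1}\paren{P;\;\Z_q}$ replaced by $Z^{i-1}\paren{P^{\mathbf{w}};\;\Z_q}$ and using Lemma~\ref{lemma:wiredcoupling} in the case $\#=\eta$,'' spelling out only the two displayed sums above.
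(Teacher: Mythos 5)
Your proposal is correct and follows essentially the same route as the paper: the paper's proof likewise says the computation is the one from Proposition~\ref{prop:couple} with $Z^{i-1}\paren{P;\;\Z_q},$ $B^{i-1}\paren{P;\;\Z_q},$ $H^{i-1}\paren{P;\;\Z_q}$ replaced by the corresponding quantities for $P^{\mathbf{w}},$ invoking Lemma~\ref{lemma:wiredcoupling} in the $\#=\eta$ case to replace $\abs{Z^{i-1}\paren{P^{\mathbf{w}};\;\Z_q}\cap D_{\eta}\paren{r;\;\Z_q}}$ by a constant multiple of $\abs{Z^{i-1}\paren{P^{\mathbf{w}};\;\Z_q}}.$ Your spelled-out identification of $\set{f\in D_{\#}:\delta f\mid_P=0}$ with $Z^{i-1}\paren{P^{\mathbf{w}};\;\Z_q}$ (intersected with $D_{\eta}$) and the observation that the relevant normalizing factors are independent of $P$ are exactly the points the paper leaves implicit.
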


\begin{proof}
The computation of the first marginal is no different than before~\cite{hiraoka2016tutte}.

The derivation of the second marginal is nearly identical as that in the proof of Proposition~\ref{prop:couple}. For $\#=\mathbf{w},$ the only differences are that the terms $Z^{i-1}\paren{P;\;\Z_q}, B^{i-1}\paren{P;\;\Z_q},$ and  $H^{i-1}\paren{P;\;\Z_q}$ for $P$ are replaced by the corresponding quantities for $P^{\mathbf{w}}.$ For $\#=\eta,$ on the fourth line one obtains the quantity $\abs{Z^{i-1}\paren{P^{\mathbf{w}};\;\Z_q}\cap D_{\eta}\paren{r;\;\Z_q}}$ in place of $\abs{Z^{i-1}\paren{P;\;\Z_q}}$, but this is proportional to $\abs{Z^{i-1}\paren{P^{\mathbf{w}};\;\Z_q}}$ by Lemma~\ref{lemma:wiredcoupling}.

\end{proof}

\begin{Proposition}\label{prop:comparisonwired}
Let $r$ be a box in $\Z^d,$ $0<i<d-1,$  $q\in\N+1,$ $\eta\in Z^{i-1}\paren{\partial r ;\;\Z_q},$  and $\gamma\in Z_{i-1}\paren{r;\;\Z_q}.$ 
Denote by $V_{\gamma}^{\mathbf{w}}$ the event that $\brac{\gamma}=0$ in $H_{d-2}\paren{P\cup \partial r;\;\Z_q}.$ Then 
\[\tilde{\mu}\paren{V_{\gamma}^{\mathbf{w}}}=\mathbb{E}_{\nu^{\mathbf{w}}}\paren{W_{\gamma}}=\mathbb{E}_{\nu^{\eta}}\paren{W_{\gamma}}\]
where $\nu^{\eta}=\nu^{\#}_{r,\beta,q,i-1,d}$ is the PLGT with boundary conditions $\eta,$ $\tilde{\mu}=\tilde{\mu}^{\mathbf{w}}_{r,1-e^{-\beta},\Z_q,i}$ is the corresponding PRCM, and $V_{\gamma}^{\mathbf{w}}$ is the event that $\brac{\gamma}=0$ in $H_{i-1}\paren{P^{\mathbf{w}};\;\Z_q}.$ 
\end{Proposition}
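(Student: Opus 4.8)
The plan is to mirror the argument for free boundary conditions (Proposition~\ref{prop:comparisonbody}), using the wired coupling $\kappa^{\mathbf{w}}$ constructed in Proposition~\ref{prop:couplewired} in place of $\kappa$, and then to invoke Corollary~\ref{cor:eta} to pass from wired to $\eta$ boundary conditions. First I would establish the identity $\tilde{\mu}\paren{V_{\gamma}^{\mathbf{w}}}=\mathbb{E}_{\nu^{\mathbf{w}}}\paren{W_{\gamma}}$. By Proposition~\ref{prop:couplewired}, the coupling $\kappa^{\mathbf{w}}$ on $D_{\mathbf{w}}\paren{r;\;\Z_q}\times\set{0,1}^{r^{\paren{i}}}$ has first marginal $\tilde{\mu}^{\mathbf{w}}_{r,p,\Z_q,i}$ and second marginal $\nu^{\mathbf{w}}_{r,\beta,q,i-1}$, so it suffices to compute $\mathbb{E}_{\kappa^{\mathbf{w}}}\paren{W_{\gamma}}$ by conditioning on whether $V_{\gamma}^{\mathbf{w}}$ occurs, exactly as in Proposition~\ref{prop:comparisonbody}:
\[\mathbb{E}_{\nu^{\mathbf{w}}}\paren{W_{\gamma}}=\mathbb{E}_{\kappa^{\mathbf{w}}}\paren{W_{\gamma}\mid V_{\gamma}^{\mathbf{w}}}\kappa^{\mathbf{w}}\paren{V_{\gamma}^{\mathbf{w}}}+\mathbb{E}_{\kappa^{\mathbf{w}}}\paren{W_{\gamma}\mid \neg V_{\gamma}^{\mathbf{w}}}\kappa^{\mathbf{w}}\paren{\neg V_{\gamma}^{\mathbf{w}}}\,,\]
and the goal is to show the first conditional expectation is $1$ and the second is $0$.

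For the first conditional expectation, I would use the analogue of Corollary~\ref{cor:conditionalmeasures} for wired boundary conditions: conditioned on $P$, the measure $\kappa^{\mathbf{w}}\paren{\cdot\mid P}$ is uniform on $Z^{i-1}\paren{P^{\mathbf{w}};\;\Z_q}\cap D_{\mathbf{w}}\paren{r;\;\Z_q}=Z^{i-1}\paren{P^{\mathbf{w}};\;\Z_q}$ (this follows from the same computation as Proposition~\ref{prop:couplewired}, since given $P$ the weight factors through $I_{\set{\delta f(\sigma)=0\ \forall \sigma\in P^{\mathbf{w}}}}$). If $V_{\gamma}^{\mathbf{w}}$ occurs, then $\gamma=\partial\tau$ in $C_{i-1}\paren{P^{\mathbf{w}};\;\Z_q}$ for some $i$-chain $\tau$, so for every cocycle $f\in Z^{i-1}\paren{P^{\mathbf{w}};\;\Z_q}$ we have $f\paren{\gamma}=f\paren{\partial\tau}=\delta f\paren{\tau}=0$, hence $W_{\gamma}=1$ almost surely on this event. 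For the second conditional expectation, I would argue as in Proposition~\ref{prop:comparisonbody}: if $\brac{\gamma}\neq 0$ in $H_{i-1}\paren{P^{\mathbf{w}};\;\Z_q}$, then by Corollary~\ref{cor:UCTC} applied to $P^{\mathbf{w}}$ — here we use the hypothesis $H_{i-2}\paren{P^{\mathbf{w}};\;\Z_q}=0$, which holds since $P^{\mathbf{w}}$ contains the $(i-1)$-skeleton of $\overline{r}$ and $r$ is a box, so its low-dimensional reduced homology vanishes (one should check $i-2$ lies in the contractible range; for the relevant case $i-2\leq d-3$ this is fine) — there is an $f\in Z^{i-1}\paren{P^{\mathbf{w}};\;\Z_q}$ with $f\paren{\gamma}\neq 0$. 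By the symmetry of the uniform measure on the cocycle group, the conditional law of $W_{\gamma}=f\paren{\gamma}^{\mathbb{C}}$ given $P$ is uniform on a nontrivial subgroup $\Z\paren{m}$ of the $q$-th roots of unity, whose elements sum to zero, so $\mathbb{E}_{\kappa^{\mathbf{w}}}\paren{W_{\gamma}\mid P}=0$; averaging gives $\mathbb{E}_{\kappa^{\mathbf{w}}}\paren{W_{\gamma}\mid\neg V_{\gamma}^{\mathbf{w}}}=0$.

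Finally, combining these two facts yields $\mathbb{E}_{\nu^{\mathbf{w}}}\paren{W_{\gamma}}=\kappa^{\mathbf{w}}\paren{V_{\gamma}^{\mathbf{w}}}=\tilde{\mu}^{\mathbf{w}}_{r,p,\Z_q,i}\paren{V_{\gamma}^{\mathbf{w}}}$ since $V_{\gamma}^{\mathbf{w}}$ is an event depending only on $P$, and its $\tilde{\mu}$-probability equals its $\kappa^{\mathbf{w}}$-probability by the marginal identity. The remaining equality $\mathbb{E}_{\nu^{\mathbf{w}}}\paren{W_{\gamma}}=\mathbb{E}_{\nu^{\eta}}\paren{W_{\gamma}}$ is precisely the content of Corollary~\ref{cor:eta}, which states that gauge-invariant expectations agree across all $\eta\in Z^{i-1}\paren{\partial r;\;\Z_q}$ and with the wired measure. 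I expect the only genuinely delicate point to be the verification that Corollary~\ref{cor:UCTC} applies to $P^{\mathbf{w}}$, i.e.\ that $H_{i-2}\paren{P^{\mathbf{w}};\;\Z_q}=0$; this should follow because $P^{\mathbf{w}}\supseteq \overline{r}^{(i-1)}$ and, for a box $r$, the $(i-1)$-skeleton is simply connected and has vanishing reduced homology below dimension $i-1$ — one can cite the same topological facts about boxes used implicitly in Proposition~\ref{prop:comparisonbody}. Everything else is a routine transcription of the finite-volume free-boundary argument with $P$ replaced by $P^{\mathbf{w}}$ and the cocycle space correspondingly enlarged.
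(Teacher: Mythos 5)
Your proposal is correct and follows essentially the same route as the paper, which simply states that the first equality is the argument of Proposition~\ref{prop:comparisonbody} applied to $P\cup\partial r$ (via the wired coupling of Proposition~\ref{prop:couplewired}) and that the second equality is Corollary~\ref{cor:eta}; your write-up is a faithful expansion of that argument. Your one point of concern is also fine: since $P^{\mathbf{w}}$ contains the full $(i-1)$-skeleton of $\overline{r}$ and the box is contractible, $H_{i-2}\paren{P^{\mathbf{w}};\;\Z_q}=0$, so Corollary~\ref{cor:UCTC} applies exactly as in the free-boundary case.
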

\begin{proof}
The first equality follows from the argument in the proof of Proposition~\ref{prop:comparisonbody} applied to the complex $P \cup \partial r,$ where $P$ is distributed according to $\tilde{\mu}.$ The second is Corollary~\ref{cor:eta}.
\end{proof}

\subsection{The Infinite Volume Limit}
\label{sec:wiredinfinite}
The proofs in this section are similar to those in Section~\ref{subsec:comparisoninfinite}. Similarly to how the  measure $\nu_{r}^{\mathbf{w}}$ is defined on a subgroup $D_{\mathbf{w}}\paren{r;\;\Z_q}$ of  $Z^{i-1}\paren{r;\;\Z_q},$ the conditional measure of  $\nu_{\Z^d}^{\mathbf{w}}$ given $P$ will be defined on a subgroup $D_{\mathbf{w}}\paren{P;\;\Z_q}$ of  $Z^{i-1}\paren{P;\;\Z_q}.$ We would like to impose an additional constraint to require that cocycles vanish on cycles which are ``homologous to $\infty.$'' Recalling the definition $C^{i-1}\paren{P;\;\Z_q}=\mathrm{Hom}\paren{C_{i-1}\paren{P;\;\Z_q},\Z_q},$ note that we could equivalently define $Z^{i-1}\paren{P;\;\Z_q}$ to be the kernel of the restriction map
\[C^{i-1}\paren{P;\;\Z_q}\to  \mathrm{Hom}\paren{B_{i-1}\paren{P;\;\Z_q},\Z_q}\,.\]
We will enlarge $B_{i-1}\paren{P;\;\Z_q}$ to include finite $(i-1)$-cycles that are the boundaries of infinite $i$-cycles. Toward that end, let $C_j^{\mathrm{lf}}\paren{P;\;\Z_q}$ denote the group of (possibly infinite) formal sums of plaquettes in $P$ with coefficients in $\Z_q$ (here $\mathrm{lf}$ stands for ``locally finite''). Set
\[B_j^{\mathrm{lf}}\paren{P;\;\Z_q}=\ker\paren{\partial:C_{j+1}^{\mathrm{lf}}\paren{P;\;\Z_q}\to C_{j+1}^{\mathrm{lf}}\paren{P;\;\Z_q}}\,,\] 
let
\[\chi:C^{i-1}\paren{P;\;\Z_q}\to \mathrm{Hom}\paren{C_{i-1}\paren{P;\;\Z_q}\cap B_{i-1}^{\mathrm{lf}}\paren{P;\;\Z_q},\Z_q}\]
be the natural map, and define
\[D_{\mathbf{w}}\paren{P;\;\Z_q}=\ker\chi\,.\]

For percolation subcomplex $P$ of $\Z^d$ let $P_N^{\mathbf{w}}$ be the percolation subcomplex of $\overline{\Lambda_N}$ obtained by adding all $i$-plaquettes in $\partial \Lambda_N$ to $P_N.$ Now, we can replace the maps $\phi_{N,n}$ and $\phi_{\infty,n}$ with corresponding maps $\hat{\phi}_{N,n}:Z^{i-1}\paren{P_N^{\mathbf{w}};\;\Z_q}\to Z^{i-1}\paren{P_n;\;\Z_q}$ and $\hat{\phi}_{\infty,n} : D_{\mathbf{w}}\paren{P;\;\Z_q}\to Z^{i-1}\paren{P_n;\;\Z_q}.$ Set $\hat{Y}_{N,n}=\im\hat{\phi}_{N,n}$ and $\hat{Y}_{\infty,n}=\im\hat{\phi}_{\infty,n}.$ Before constructing the infinite volume coupling, we provide analogues for Lemmas~\ref{lemma:phiNM} and~\ref{lemma:cylinder}.

\begin{Lemma}
    \label{lemma:phiNMwired}
    Let $f\in Z^{i-1}\paren{P_n;\;\Z_q}$ and $N>n.$ Then  $f\notin \im\hat{\phi}_{N,n}$ if and only if there exists a cycle $\sigma\in Z_{i-1}\paren{P_n;\;\Z_q}\cap B_{i-1}\paren{P_{N}^{\mathbf{w}};\;\Z_q}$ so that $f\paren{\sigma}\neq 0.$ 

    Similarly  $f\notin \im\hat{\phi}_{\infty,n}$ if and only if there exists a cycle $\sigma\in Z_{i-1}\paren{P_n;\;\Z_q}\cap B_{i-1}^{\mathrm{lf}}\paren{P;\;\Z_q}$ so that $f\paren{\sigma}\neq 0.$ 
\end{Lemma}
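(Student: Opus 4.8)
The plan is to mirror the proof of Lemma~\ref{lemma:phiNM}, passing to cohomology classes and invoking the long exact sequence of an appropriate pair, with the only new ingredient being the bookkeeping needed to handle the wired caps and the locally finite chains. First I would observe, exactly as in the proof of Lemma~\ref{lemma:phiNM}, that the statement only depends on the cohomology class of $f$: if $\brac{f_n}=\brac{f_n'}$ in $H^{i-1}\paren{P_n;\;\Z_q}$, then $f_n-f_n'=\delta g_n$ for some $g_n\in C^{i-2}\paren{P_n;\;\Z_q}$, and extending $g_n$ by zero on $(i-2)$-plaquettes outside $\Lambda_n$ produces a cochain witnessing $f_n-f_n'=\hat{\phi}_{N,n}\paren{\delta g}$ (and likewise for $\hat{\phi}_{\infty,n}$, noting that a coboundary automatically lies in $D_{\mathbf{w}}\paren{P;\;\Z_q}$ since it vanishes on all cycles). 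Hence $f_n\in\im\hat\phi_{\#,n}$ depends only on $\brac{f_n}$, and it suffices to characterize the image of the induced map on cohomology.

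For the finite-$N$ statement I would use that $H^{i-1}\paren{P_N^{\mathbf{w}};\;\Z_q}$ is the cohomology of the pair/space obtained by coning off $\partial\Lambda_N$, so that the relevant long exact sequence is that of the pair $\paren{P_N^{\mathbf{w}},P_n}$; the image of the restriction map $\hat\phi_{N,n}^*$ is precisely the kernel of the connecting homomorphism $H^{i-1}\paren{P_n;\;\Z_q}\to H^i\paren{P_N^{\mathbf{w}},P_n;\;\Z_q}$, and by the standard cap-product/evaluation description of that connecting map (see page~199 of~\cite{hatcher2002algebraic}, with the homological analogue on page~115), a class $\brac{f_n}$ lies in the kernel if and only if $f_n$ annihilates every $(i-1)$-cycle of $P_n$ that bounds in $P_N^{\mathbf{w}}$, i.e. every $\sigma\in Z_{i-1}\paren{P_n;\;\Z_q}\cap B_{i-1}\paren{P_N^{\mathbf{w}};\;\Z_q}$. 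This gives the first equivalence. For the infinite statement I would take a direct (or inverse, depending on variance) limit: since $D_{\mathbf{w}}\paren{P;\;\Z_q}=\ker\chi$ is by definition the group of cochains vanishing on $C_{i-1}\paren{P;\;\Z_q}\cap B_{i-1}^{\mathrm{lf}}\paren{P;\;\Z_q}$, and since $B_{i-1}^{\mathrm{lf}}\paren{P;\;\Z_q}$ restricted to finite $(i-1)$-chains of $P_n$ is exactly $\bigcup_{N>n}\paren{Z_{i-1}\paren{P_n;\;\Z_q}\cap B_{i-1}\paren{P_N^{\mathbf{w}};\;\Z_q}}$ — a locally finite $i$-cycle of $P$ is a possibly infinite sum of $i$-plaquettes, and its boundary, if supported in $\Lambda_n$, can be truncated/modified outside a large $\Lambda_N$ using $\partial\Lambda_N$ plaquettes to yield a finite $i$-chain in $P_N^{\mathbf{w}}$ with the same boundary — the kernel of $\chi$ restricted to $P_n$ is the intersection over $N$ of the images $\im\hat\phi_{N,n}$. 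Then $f_n\notin\im\hat\phi_{\infty,n}$ iff $f_n\notin\im\hat\phi_{N,n}$ for some $N$ iff there is a cycle $\sigma\in Z_{i-1}\paren{P_n;\;\Z_q}\cap B_{i-1}^{\mathrm{lf}}\paren{P;\;\Z_q}$ with $f_n\paren{\sigma}\neq 0$.

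The main obstacle I anticipate is the last point: carefully justifying that a finite $(i-1)$-cycle supported in $\Lambda_n$ bounds a locally finite $i$-cycle in $P$ if and only if it bounds a finite $i$-chain in some $P_N^{\mathbf{w}}$. The forward direction requires showing that an infinite bounding $i$-cycle can be ``capped off'' by plaquettes of $\partial\Lambda_N$ for large $N$ — one truncates the locally finite $i$-cycle to $\Lambda_N$ and corrects the resulting boundary using the fact that $\partial\Lambda_N$ is a full $i$-skeleton there, which is where the wired boundary condition is doing real work. The reverse direction is easier: a finite $i$-chain $\tau$ in $P_N^{\mathbf{w}}$ with $\partial\tau=\sigma$ can be turned into a locally finite $i$-cycle of $P$ by replacing the portion of $\tau$ on $\partial\Lambda_N$ with a semi-infinite locally finite $i$-cycle of the $i$-skeleton of $\Z^d\setminus\Lambda_N$ (which exists since $\Z^d\setminus\Lambda_N$ is, up to the point at infinity, an $i$-skeleton of a contractible region and such cappings are standard). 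I would phrase this carefully but without excessive detail, citing the homological machinery in Section~\ref{subsec:homologyprops} and~\cite{hatcher2002algebraic} as in the proof of Lemma~\ref{lemma:phiNM}.
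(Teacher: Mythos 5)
Your handling of the finite-$N$ claim is fine and is exactly the paper's route (the long exact sequence argument of Lemma~\ref{lemma:phiNM} applied to the pair $\paren{P_N^{\mathbf{w}},P_n}$). The gap is in the infinite-volume claim, whose proof you base on two identities: $\im\hat{\phi}_{\infty,n}=\bigcap_{N>n}\im\hat{\phi}_{N,n}$, and $Z_{i-1}\paren{P_n;\;\Z_q}\cap B_{i-1}^{\mathrm{lf}}\paren{P;\;\Z_q}=\bigcup_{N>n}\paren{Z_{i-1}\paren{P_n;\;\Z_q}\cap B_{i-1}\paren{P_N^{\mathbf{w}};\;\Z_q}}$. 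Both are false in general, for the same reason: bounding in $P_N^{\mathbf{w}}$ may use the artificially added plaquettes of $\partial\Lambda_N$, which need not belong to $P$, so membership in $B_{i-1}\paren{P_N^{\mathbf{w}};\;\Z_q}$ is not monotone in $N$ and does not imply membership in $B_{i-1}^{\mathrm{lf}}\paren{P;\;\Z_q}$. Concretely, for $i=1$ take $v,w\in\Lambda_n$ whose $P$-clusters are finite, disjoint, and both happen to touch $\partial\Lambda_N$ for one particular $N$: then $v-w\in B_0\paren{P_N^{\mathbf{w}};\;\Z_q}$ but $v-w\notin B_0^{\mathrm{lf}}\paren{P;\;\Z_q}$, and the cocycle $g$ equal to $1$ on the cluster of $v$ and $0$ elsewhere lies in $D_{\mathbf{w}}\paren{P;\;\Z_q}$, so its restriction $f$ is in $\im\hat{\phi}_{\infty,n}$ yet not in $\im\hat{\phi}_{N,n}$ for that $N$; a finite tube of plaquettes of $P$ running from a small $(i-1)$-cycle out to $\partial\Lambda_N$ and stopping there gives the analogous example for general $i$. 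This is precisely where your ``easier'' direction breaks: replacing the portion of $\tau$ lying on $\partial\Lambda_N$ by a semi-infinite chain requires $i$-plaquettes of $\Z^d\setminus\Lambda_N$ that $P$ need not contain, whereas $B_{i-1}^{\mathrm{lf}}\paren{P;\;\Z_q}$ only allows chains supported on plaquettes of $P$. (Your ``harder'' truncation direction is correct: the error created by truncating a locally finite bounding chain is a cycle supported on $\partial\Lambda_N$, which bounds inside $\partial\Lambda_N$ since $H_{i-1}\paren{\partial\Lambda_N;\;\Z_q}=0$.)

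Consequently the hard ``only if'' half of the second claim is not established by your argument. The paper proves it by direct construction rather than by a limiting identity: assuming $f$ vanishes on $Z_{i-1}\paren{P_n;\;\Z_q}\cap B_{i-1}^{\mathrm{lf}}\paren{P;\;\Z_q}$, it picks $N$ so large that the obstruction in the first claim vanishes (every cycle of the finite group $Z_{i-1}\paren{P_n;\;\Z_q}$ that still bounds in $P_N^{\mathbf{w}}$ already bounds locally finitely in $P$), lifts $f$ to some $f'\in Z^{i-1}\paren{P_N^{\mathbf{w}};\;\Z_q}$, writes $f'\mid_{\partial\Lambda_N}=\delta h'$ using $H^{i-1}\paren{\partial\Lambda_N;\;\Z_q}=0$, extends $h'$ by zero, and splices $f'$ inside $\Lambda_N$ with $\delta h$ outside to obtain an element of $D_{\mathbf{w}}\paren{P;\;\Z_q}$ restricting to $f$. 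Some such extension argument is unavoidable: the true relation between the finite and infinite images is only the eventual equality $\hat{Y}_{N,n}=\hat{Y}_{\infty,n}$ for all sufficiently large $N$ (Lemma~\ref{lemma:cylinderwired}), not an intersection over all $N$.
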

\begin{proof}
The proof of the first claim is exactly the same as that of Lemma~\ref{lemma:phiNM}. The ``if'' direction of the second claim is immediate.

Assume that $f\in Z^{i-1}\paren{P_n;\;\Z_q}$ satisfies $f\paren{\sigma}=0$ for all $\sigma\in   Z_{i-1}\paren{P_n;\;\Z_q}\cap B_{i-1}^{\mathrm{lf}}\paren{P;\;\Z_q}.$ We will show that $f$ can be extended to obtain an element of $D_{\mathbf{w}}\paren{P;\;\Z_q}.$ In this case, there exists an $N>n$ so that no element in $Z_{i-1}\paren{P_n;\;\Z_q}$ is homologous to a cycle supported on $\partial \Lambda_N.$ By the first statement, we may find an $f'\in  Z^{i-1}\paren{P_{N}^{\mathbf{w}};\;\Z_q}$ so that $\hat{\phi}_{N,n}\paren{f'}=f.$  As $H^{i-1}\paren{\partial \Lambda_{N};\;\Z_q}=0,$ there exists an $h'\in C^{i-2}\paren{\partial \Lambda_{N};\;\Z_q}$ so that $\delta h'=f'\mid_{\partial \Lambda_{N}}.$ We can extend $h'$ to a cochain $h\in C^{i-2}\paren{\Z^d;\;\Z_q}$ by setting it to equal $0$ on all $(i-2)$-plaquettes outside of $\partial\Lambda_{N}.$ Let $g\in C^{i-1}\paren{P;\;\Z_q}$ be the cochain
\[g\paren{\sigma}=\begin{cases} f'\paren{\sigma}& \sigma \in \Lambda_{N} \\ \delta h\paren{\sigma} & \sigma \notin \Lambda_{N}  \end{cases}\,.\]
Then $g$ is a cocycle and $\phi_{\infty,n}\paren{g}=f.$ 
\end{proof}

\begin{Lemma}
Fix $n.$ 
    \label{lemma:cylinderwired}
    \begin{itemize}
    \item $\hat{Y}_{N,n}=\hat{Y}_{\infty,n}$ for all sufficiently large $N.$
    \item For $N$ sufficiently large as in the previous statement, the pushforward by $\hat{\phi}_{N,n}$ of the uniform measure on $Z^{i-1}\paren{P_N^{\mathbf{w}};\;\Z_q}$ is the uniform measure on $\hat{Y}_{\infty,n}.$ 
    \item For all $N>n,$ $\phi_{N,n}\paren{\hat{Y}_{\infty,N}}=\hat{Y}_{\infty,n}$ and the pushforward of the uniform measure on $\hat{Y}_{\infty,N}$ by $\phi_{N,n}$ is the uniform measure on $\hat{Y}_{\infty,n}.$ 
    \end{itemize}
\end{Lemma}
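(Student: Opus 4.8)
The plan is to mirror the proof of Lemma~\ref{lemma:cylinder} almost verbatim, substituting the wired analogues of the relevant objects and invoking Lemma~\ref{lemma:phiNMwired} in place of Lemma~\ref{lemma:phiNM}. Concretely, for the first bullet point I would argue as follows. By the second statement of Lemma~\ref{lemma:phiNMwired}, for each $f\in Z^{i-1}\paren{P_n;\;\Z_q}\setminus \hat{Y}_{\infty,n}$ there is a cycle $\sigma\in Z_{i-1}\paren{P_n;\;\Z_q}\cap B_{i-1}^{\mathrm{lf}}\paren{P;\;\Z_q}$ with $f\paren{\sigma}\neq 0$; that is, $\sigma=\partial\rho$ for some locally finite chain $\rho\in C_i^{\mathrm{lf}}\paren{P;\;\Z_q}$. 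The point is that $\rho$ is supported inside $\overline{\Lambda_N}$ for $N$ large, so that its image in $C_i\paren{P_N^{\mathbf{w}};\;\Z_q}$ (after adding the boundary plaquettes of $\partial\Lambda_N$, which absorb the part of $\rho$ that would otherwise escape) still has boundary $\sigma$, certifying by the first statement of Lemma~\ref{lemma:phiNMwired} that $f\notin \im\hat{\phi}_{N,n}$. Since $Z^{i-1}\paren{P_n;\;\Z_q}$ is finite, one $N$ works for all such $f$ simultaneously, giving $\hat{Y}_{N,n}=\hat{Y}_{\infty,n}$ for all $N$ large; the reverse inclusion $\hat{Y}_{\infty,n}\subseteq \hat{Y}_{N,n}$ is automatic because $\hat{\phi}_{\infty,n}$ factors through $\hat{\phi}_{N,n}$ via the restriction $D_{\mathbf{w}}\paren{P;\;\Z_q}\to Z^{i-1}\paren{P_N^{\mathbf{w}};\;\Z_q}$.

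For the second and third bullets, the argument is the purely algebraic one used in Lemma~\ref{lemma:cylinder}: over $\Z_q$ (or more precisely since these are finite abelian groups and the maps are $\Z_q$-module homomorphisms between finite $\Z_q$-modules) any surjection splits, so one has direct sum decompositions
\[Z^{i-1}\paren{P_N^{\mathbf{w}};\;\Z_q}\cong \ker\hat{\phi}_{N,n}\oplus \hat{Y}_{N,n}\]
and
\[\hat{Y}_{\infty,N}=\ker\paren{\phi_{N,n}\mid_{\hat{Y}_{\infty,N}}}\oplus \im\paren{\phi_{N,n}\mid_{\hat{Y}_{\infty,N}}}\,.\]
A uniformly random element of a direct sum of finite groups restricts to a uniformly random element of each summand, and a surjective homomorphism of finite groups pushes the uniform measure forward to the uniform measure on the image; applying this to the first decomposition (and using $\hat{Y}_{N,n}=\hat{Y}_{\infty,n}$ from the first bullet) gives the second statement, and applying it to the second decomposition gives the third, once we check $\phi_{N,n}\paren{\hat{Y}_{\infty,N}}=\hat{Y}_{\infty,n}$. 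That last equality itself follows from the first bullet applied at level $N$ and the factorization $\hat{\phi}_{\infty,n}=\phi_{N,n}\circ\hat{\phi}_{\infty,N}$ (valid once $N$ is large enough that $\hat Y_{N,N}$ has stabilized), exactly as in the unwired case.

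The only genuinely new ingredient compared to Lemma~\ref{lemma:cylinder} is the presence of the locally finite chain groups and the subgroup $D_{\mathbf{w}}\paren{P;\;\Z_q}$, so the one step deserving care — and the main potential obstacle — is verifying that the ``if a finite cycle bounds a locally finite chain then it bounds a finite chain once the boundary plaquettes of a large enough box are added'' phenomenon is correctly packaged by Lemma~\ref{lemma:phiNMwired}, i.e. that the $N$ can be chosen uniformly over the finite set $Z^{i-1}\paren{P_n;\;\Z_q}$ and that the compatibility of the maps $\hat\phi_{\infty,n}$, $\hat\phi_{\infty,N}$, $\phi_{N,n}$ (with the right domains $D_{\mathbf w}(P;\Z_q)$ versus $Z^{i-1}(P_N^{\mathbf w};\Z_q)$) is handled so that the splitting argument goes through. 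Once Lemma~\ref{lemma:phiNMwired} is in hand this is bookkeeping rather than a real difficulty, and the proof can be written in a couple of short paragraphs with a pointer back to the proof of Lemma~\ref{lemma:cylinder} for the repeated algebra.
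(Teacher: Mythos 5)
The main gap is your claim that the inclusion $\hat{Y}_{\infty,n}\subseteq \hat{Y}_{N,n}$ is ``automatic'' because $\hat{\phi}_{\infty,n}$ factors through $\hat{\phi}_{N,n}$ via a restriction map $D_{\mathbf{w}}\paren{P;\;\Z_q}\to Z^{i-1}\paren{P_N^{\mathbf{w}};\;\Z_q}$. No such map exists: $P_N^{\mathbf{w}}$ contains every $i$-plaquette of $\partial\Lambda_N$, including plaquettes not in $P$, and an element $g\in D_{\mathbf{w}}\paren{P;\;\Z_q}$ has no reason to satisfy $\delta g\paren{\sigma}=0$ on those extra plaquettes. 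Indeed the inclusion can fail for a given $N$: take $i=1$, $d=3$, and let $P$ consist of the full $0$-skeleton together with two disjoint paths inside $\Lambda_N$ joining vertices $v,w\in\Lambda_n$ to $\partial\Lambda_N$ and nothing else. The cochain $g$ equal to $1$ on the cluster of $v$ and $0$ elsewhere lies in $D_{\mathbf{w}}\paren{P;\;\Z_q}$, and its restriction $f$ satisfies $f\paren{v-w}\neq 0$; but $v-w\in Z_{0}\paren{P_n;\;\Z_q}\cap B_{0}\paren{P_N^{\mathbf{w}};\;\Z_q}$ since $v$ and $w$ are joined through the wired boundary, so $f\notin\im\hat{\phi}_{N,n}$ by Lemma~\ref{lemma:phiNMwired}. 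Thus this inclusion is exactly where ``sufficiently large $N$'' is needed, and it requires an argument you have not supplied: for instance, for each of the finitely many $\sigma\in Z_{i-1}\paren{P_n;\;\Z_q}$, show that if $\sigma\in B_{i-1}\paren{P_N^{\mathbf{w}};\;\Z_q}$ for arbitrarily large $N$ then $\sigma\in B^{\mathrm{lf}}_{i-1}\paren{P;\;\Z_q}$ (keep only the $P_N$-part of a witnessing chain, note the boundary error is supported on $\partial\Lambda_N$, and pass to a limit in the compact space $\Z_q^{P^{(i)}}$), so that $g\in D_{\mathbf{w}}$ kills it and Lemma~\ref{lemma:phiNMwired} applies. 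Note the situation is reversed relative to Lemma~\ref{lemma:cylinder}: there the restriction direction is trivial and the obstruction direction needs large $N$; here it is the opposite.

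Two smaller points. Your handling of the other inclusion begins with the false statement that the locally finite chain $\rho$ is supported in $\overline{\Lambda_N}$ for large $N$ --- locally finite chains may be infinite, which is the whole reason $B^{\mathrm{lf}}_{i-1}$ was introduced; your parenthetical is the correct mechanism (truncate $\rho$ to $\Lambda_N$, observe the boundary error is a cycle on $\partial\Lambda_N$, and cap it off within $\partial\Lambda_N$ using $H_{i-1}\paren{\partial\Lambda_N;\;\Z_q}=0$ and the wired plaquettes), and once written this way it works for every $N>n$, so largeness of $N$ plays no role in that direction. Finally, surjections of finite $\Z_q$-modules need not split (consider $\Z_4\to\Z_2$), so the direct-sum decompositions cannot be invoked as stated; fortunately the measure statements only need that the fibers of a surjective homomorphism of finite groups are cosets of the kernel and hence equinumerous, which you also record, and with that the second and third bullets go through --- in particular $\phi_{N,n}\paren{\hat{Y}_{\infty,N}}=\hat{Y}_{\infty,n}$ follows for every $N>n$ directly from $\hat{\phi}_{\infty,n}=\phi_{N,n}\circ\hat{\phi}_{\infty,N}$, both being plain restrictions, with no stabilization needed.
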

Note that the third item is for the map $\phi_{N,n}$ rather than $\hat{\phi}_{N,n}.$ 
\begin{proof}
The proof is no different than that for  Lemma~\ref{lemma:cylinder}. 
\end{proof}

We can now construct the infinite volume measures with wired boundary conditions, and prove Theorem~\ref{thm:comparison} for wired boundary conditions. As a consequence, Theorem~\ref{thm:sharpnessRCM} implies Theorem~\ref{thm:sharpness} for both wired boundary conditions and for any infinite volume limit of PLGTS constructed using $\eta$ boundary conditions. 

\begin{Proposition}\label{prop:comparisoninfinitewired}
Let $0<i<d-1,$ $q\in\N+1,$ $\beta \in \paren{0,\infty}$ and $p = 1-e^{-\beta}.$  
The weak limits
\[\mu_{\Z^d,p}^{\mathbf{w}}=\lim_{N\rightarrow \infty} \mu^{\mathbf{w}}_{\Lambda_N,p,q,d-1}\]
and
\[\nu_{\Z^d}^{\mathbf{w}}=\lim_{N\rightarrow \infty} \nu_{\Lambda_N,\beta,q,d-1}^{\mathbf{w}}\]
exist and are translation invariant.
Moreover, if $\gamma$ is a $(i-1)$-cycle in $\Z^d$ then
\[\mathbb{E}_{\nu_{\Z^d}^{\mathbf{w}}}\paren{W_{\gamma}}=\mu_{\Z^d,p}^{\mathbf{w}}\paren{V_{\gamma}^{\mathrm{inf}}}\,.\]
\end{Proposition}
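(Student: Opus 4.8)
The plan is to mirror the free boundary condition proof of Proposition~\ref{prop:comparisoninfinite}, substituting the wired objects constructed in Section~\ref{sec:wiredinfinite} at each step. First I would note that the weak limit $\mu_{\Z^d,p}^{\mathbf{w}}=\lim_N \mu^{\mathbf{w}}_{\Lambda_N,p,q,d-1}$ exists and is translation invariant by the same monotonicity argument as for $i=1$ (Theorem 4.19 of~\cite{grimmett1999percolation}), except that wired boundary conditions give a \emph{decreasing} sequence under inclusion, so one couples the PRCMs with $P(1)\supset P(2)\supset\cdots$ and sets $P=\bigcap_N P(N)\sim\mu^{\mathbf{w}}_{\Z^d,p}$. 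The FKG inequality from Theorem~\ref{thm:FKG} (or Proposition~\ref{prop:FKGboundary}) is what makes this monotone limit legitimate.

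Next I would build the coupling $\kappa_{\beta,q}^{\mathbf{w}}$ on $\Omega\times\Sigma$ exactly as in the free case, but using $\hat{Y}_{\infty,n}=\im\hat{\phi}_{\infty,n}\subseteq Z^{i-1}(P_n;\Z_q)$ in place of $Y_{\infty,n}$: given $P_n$, sample $f_n$ by revealing $\hat{Y}_{\infty,n}$ and choosing a uniform element therein. Measurability of $\{\hat{Y}_{\infty,n}=H\}$ follows because $\hat{Y}_{\infty,n}=\bigcap_{N>n}\hat{Y}_{N,n}$ by the first bullet of Lemma~\ref{lemma:cylinderwired}. The Carath\'eodory extension/consistency computation proving countable (equivalently finite) additivity on cylinder events is formally identical to the displayed chain of equalities in the proof of Proposition~\ref{prop:comparisoninfinite}; the one input that changes is that one invokes the third bullet of Lemma~\ref{lemma:cylinderwired} (which concerns the ordinary restriction $\phi_{N,n}$ applied to $\hat{Y}_{\infty,N}$) to pass from $\hat{Y}_{\infty,N}$ down to $\hat{Y}_{\infty,n}$, together with the wired analogue of Corollary~\ref{cor:boxcylinder} for translation invariance. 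Verifying the marginals is then the same bounded-convergence argument: the first marginal is $\mu^{\mathbf{w}}_{\Z^d,p}$ by construction, and for the second I would use that the restriction of $\nu^{\mathbf{w}}_{\Lambda_N,\beta,q,d-1}$ to $\Lambda_n$ assigns $f_n$ the probability $\mathbb{E}_{\mu^{\mathbf{w}}_{\Z^d,p}}\big[\sum_{H\subseteq Z^{i-1}(P(N)^{\mathbf{w}}\cap\Lambda_n;\Z_q)} I_{\{f_n\in H,\ \hat{Y}_{N,n}=H\}}/|H|\big]$ — this follows from the wired version of Corollary~\ref{cor:conditionalmeasures}, i.e. from Proposition~\ref{prop:couplewired} — and that for fixed $P$ one may choose $N$ large enough that $P(N)\cap\Lambda_n=P_n$ (since $P(N)\searrow P$) and $\hat{Y}_{N,n}=\hat{Y}_{\infty,n}$ (first bullet of Lemma~\ref{lemma:cylinderwired}), so the inner terms converge pointwise in $P$; bounded convergence identifies the second marginal as $\nu^{\mathbf{w}}_{\Z^d}=\lim_N\nu^{\mathbf{w}}_{\Lambda_N}$.

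Finally, for the Wilson loop identity I would let $V(N)$ be the event that $\gamma$ is a boundary in $P(N)\cup\partial\Lambda_N$ (equivalently $[\gamma]=0$ in $H_{d-2}(P(N)^{\mathbf{w}};\Z_q)$), apply Proposition~\ref{prop:comparisonwired} to get $\mathbb{E}_{\nu^{\mathbf{w}}_{\Lambda_N}}(W_\gamma)=\mu^{\mathbf{w}}_{\Lambda_N,p}(V(N))$, and then pass to the limit. Here is the one genuinely new wrinkle, and I expect it to be the main obstacle: because $P(N)\searrow P$ the events $V(N)$ are \emph{not} monotone increasing in the naive way the free-case events $V(N)\nearrow V^{\mathrm{fin}}_\gamma$ were, so one must argue carefully that $\mu^{\mathbf{w}}_{\Lambda_N,p}(V(N))\to\mu^{\mathbf{w}}_{\Z^d,p}(V^{\mathrm{inf}}_\gamma)$. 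The right framework is the locally finite chains $B_{i-1}^{\mathrm{lf}}(P;\Z_q)$ and the subgroup $D_{\mathbf{w}}(P;\Z_q)$ from Section~\ref{sec:wiredinfinite}: one shows $[\gamma]=0$ in $H_{d-2}(P(N)^{\mathbf{w}};\Z_q)$ for some arbitrarily large $N$ if and only if $\gamma\in B_{i-1}^{\mathrm{lf}}(P;\Z_q)$ — i.e. $\gamma$ bounds a locally finite $i$-chain in $P$, which is precisely $V^{\mathrm{inf}}_\gamma(q)$ — using Lemma~\ref{lemma:phiNMwired} to translate between the finite wired obstructions and the locally finite ones, plus compactness (a locally finite $i$-chain witnessing the bound, intersected with $\Lambda_N$ and closed up using plaquettes of $\partial\Lambda_N$, gives a witness in $P(N)^{\mathbf{w}}$ for $N$ large, and conversely a convergent subsequence of finite witnesses produces a locally finite one). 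Once this set-theoretic identity $\limsup_N V(N) = V^{\mathrm{inf}}_\gamma$ is in place, weak convergence of $\mu^{\mathbf{w}}_{\Lambda_N,p}\to\mu^{\mathbf{w}}_{\Z^d,p}$ together with the weak convergence of the PLGT marginals yields
\[
\mathbb{E}_{\nu^{\mathbf{w}}_{\Z^d}}(W_\gamma)=\lim_{N\to\infty}\mathbb{E}_{\nu^{\mathbf{w}}_{\Lambda_N}}(W_\gamma)=\lim_{N\to\infty}\mu^{\mathbf{w}}_{\Lambda_N,p}(V(N))=\mu^{\mathbf{w}}_{\Z^d,p}(V^{\mathrm{inf}}_\gamma)\,,
\]
which is Theorem~\ref{thm:comparison} for wired boundary conditions.
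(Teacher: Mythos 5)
Your construction of the infinite volume measures and the identification of the marginals is the same as the paper's: the decreasing coupling $\overline{P}\paren{1}\supset\overline{P}\paren{2}\supset\cdots$ with $P=\bigcap_N\overline{P}\paren{N}$, the coupling $\kappa^{\mathbf{w}}_{\beta,q}$ built from $\hat{Y}_{\infty,n}$, consistency via the third bullet of Lemma~\ref{lemma:cylinderwired}, and bounded convergence together with Proposition~\ref{prop:couplewired} and the first bullet of Lemma~\ref{lemma:cylinderwired} (choosing $N$ so large that $P\paren{N}\cap\Lambda_n=P_n$ and $\hat{Y}_{N,n}=\hat{Y}_{\infty,n}$). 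Up to that point your proposal matches the paper step for step.

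The one place your writeup does not close the argument is the final limit $\mu^{\mathbf{w}}_{\Lambda_N,p}\paren{V\paren{N}}\to\mu^{\mathbf{w}}_{\Z^d,p}\paren{V^{\mathrm{inf}}_{\gamma}}$. A set-theoretic identity $\limsup_N V\paren{N}=V^{\mathrm{inf}}_{\gamma}$ plus weak convergence is not enough: $V\paren{N}$ is a different event for each $N$ and $V^{\mathrm{inf}}_{\gamma}$ is not a cylinder event, so weak convergence of the measures gives you no control here. What makes the limit work is exactly the monotone coupling you introduced in your first paragraph: under it, $V\paren{N}$ becomes the event $V''\paren{N}$ that $\brac{\gamma}=0$ in $H_{i-1}\paren{\overline{P}\paren{N};\;\Z_q}$, and since $\overline{P}\paren{N}$ is decreasing and ``bounding'' is monotone in the complex, the events $V''\paren{N}$ decrease; continuity from above then gives $\lim_N\mu^{\mathbf{w}}_{\Lambda_N,p}\paren{V\paren{N}}=\mathbb{P}\paren{\bigcap_N V''\paren{N}}$, and $\bigcap_N V''\paren{N}=V^{\mathrm{inf}}_{\gamma}$ essentially by definition of $V^{\mathrm{inf}}_{\gamma}$ (a witness in $\paren{P\cap\Lambda_n}\cup\partial\Lambda_n$ for large $n$ sits inside $\overline{P}\paren{N}$, and conversely restricting a witness in $\overline{P}\paren{N}$ to $\Lambda_n$ leaves a defect cycle on $\partial\Lambda_n$, which bounds in the wired complex because $H_{i-1}\paren{\partial\Lambda_n;\;\Z_q}=0$). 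This is the route the paper takes, and it makes your detour through $B^{\mathrm{lf}}_{i-1}\paren{P;\;\Z_q}$ and the compactness/diagonal extraction unnecessary: that equivalence can indeed be proved along the lines you sketch, but it is extra machinery, and as written it still leaves the convergence of probabilities resting on ``weak convergence,'' which is the wrong tool. State the monotonicity of $V''\paren{N}$ under the coupling and use continuity from above, and your argument is complete.
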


\begin{proof}

 The weak limit $\mu^{\mathbf{w}}_{\Lambda_N,p,q,d-1}$ exists and is translation invariant by the same argument as in the proof of Theorem 4.19 of~\cite{grimmett1999percolation}.  In fact, we may couple the PRCMs with $\overline{P}\paren{1}\supset \overline{P}\paren{2}\supset \ldots$ so that:
 \begin{itemize}
    \item $\overline{P}\paren{N}\cap \Lambda_N=P\paren{N}$ and $\overline{P}\paren{N}\cap \Lambda_n^c$ contains all possible $i$-plaquettes.
     \item  $P\paren{N} \sim \mu^{\mathbf{w}}_{\Lambda_n,p,q,d-1}.$
     \item  $P=\cap_{N} P\paren{N} \sim \mu^{\mathbf{w}}_{\Z^d,p}.$
 \end{itemize}

We will construct a coupling on $\Sigma\times \Omega$ whose first marginal is $\mu^{\mathbf{w}}_{\Z^d,p}$ and whose second marginal is the weak limit of $\nu_{\Lambda_N,\beta,q,d-1}^{\mathbf{w}}$ as $N\to\infty.$ As before, set:
\begin{align*}
    \kappa_{\beta,q}^{\mathbf{w}}\paren{\mathcal{K}\paren{P_n}\times \mathcal{L}\paren{f_n}} = \sum_{H\subseteq Z^{i-1}\paren{P_n;\;\Z_q}}\frac{I_{\set{f_N \in H}}}{\abs{H}}\mu_{\Z^d,p}^{\mathbf{w}}\paren{\set{\hat{Y}_{\infty,n}=H} \cap \mathcal{K}\paren{P_n}}\,.
\end{align*}

The proof that this extends to a translation-invariant measure on  $\Omega \times \Sigma$ proceeds exactly the same as that for free boundary conditions. Similarly, the argument as to why the marginals are as claimed is nearly identical. The only difference is the justification for why we can choose $N$ large enough so that $P\paren{N}\cap \Lambda_n=P_n$: it is because  $\overline{P}\paren{N}\searrow P$ and $\overline{P}\paren{N}\cap\Lambda_n=P\paren{n}$ for $N>n.$ 

We now prove the statement relating Wilson loop variables to $V_{\gamma}^{\mathrm{inf}}.$  Fix $\gamma\in Z_{i-1}\paren{\Z^d;\;\Z_q}.$  Let $V'\paren{n}$ be the event that $\brac{\gamma}=0$ in $H_{i-1}\paren{P\paren{N}\cup \partial \Lambda_N;\;\Z_q}$ and let $V''\paren{N}$ be the event that $\brac{\gamma}=0$ in $H_{i-1}\paren{\overline{P}\paren{n};\;\Z_q}.$  Note that if $\gamma$ is supported on $\Lambda_N,$ then  $V''\paren{N} \iff V'\paren{N}.$ This, together with the fact that  
\[{V_{\gamma}^{\mathrm{inf}}}=\cap_{N\in\N }V''\paren{N}\]
gives the first inequality below:
\[\mu_{\Z^d,p}^{\mathbf{w}}\paren{V_{\gamma}^{\mathrm{inf}}} = \lim_{N \to \infty} \mu_{\Lambda_N}^{\mathbf{w}}\paren{V'\paren{N}} = \lim_{N \to \infty} \mathbb{E}_{\nu_{\Lambda_N}^{\mathbf{w}}}\paren{W_{\gamma}} = \mathbb{E}_{\nu_{\Z^d}^{\mathbf{w}}}\paren{W_{\gamma}}\,,\]
where the second equality is Proposition~\ref{prop:comparisonwired} and the third follows by weak convergence.
\end{proof}

\section*{Index of Notation}

%\begin{center}
%\begin{tabular}{||m{10em} | m{16em} | m{4em}||} 
\begin{longtable}{||m{10em} | m{16em} | m{4em}||}
 \hline
 Notation & Description & Section  \\ [0.5ex] 
 \hline\hline
 %$\alpha$ & Chain \\
 %\hline
$\beta, \beta_c, \beta_{\mathrm{slab}}$ & Inverse temperature, critical points in the lattice and slab & \ref{sec:background}\\ \hline
%$\beta$ & Cycle in homology\\ \hline
$\gamma$ & Loop or $(i-1)$-chain & \ref{sec:background}\\ \hline
%$\gamma_n$ & growing family of loops
%$\epsilon$ & Map used to define reduced homology\\ \hline
%$\chi, \phi, \psi$ & Algebraic maps\\ \hline
%$\eta$& Area law constants\\ \hline
$\partial$ & Boundary operator & \ref{subsec:homology}\\ \hline
$\delta$ & Coboundary operator & \ref{subsec:homology}\\ \hline
%$\theta(p)$& Percolation probability for the classical RCM at the parameter $p$ & \ref{sec:perimeterlaw}\\ \hline
%$\kappa$ & Coupling\\ \hline
$\mu^{\xi}_{X,p,q,i}$ & $i$-dimensional PRCM on a complex $X$ with parameters $p,q$ coefficients in $\Q,$ and boundary conditions $\xi$  & \ref{sec:background}\\ \hline
$\mu^{\bullet}_{X,p^*},\mu^{\bullet,\mathbf{w}}_{X,p^*}$ & Dual classical RCM on a complex $X$ with parameter $p^*$ with free or wired boundary conditions & \ref{sec:background}\\ \hline
$\tilde{\mu}_{X,p,G,i}$ & PRCM on $X$ with coefficients in $G$ and parameter $p$  & \ref{sec:background}\\ \hline
$\nu^{\xi}_{X,\beta,q,i-1}$ & $(i-1)$-dimensional PLGT on a complex $X$ with states in $\Z_q,$ inverse temperature $\beta,$ and boundary conditions $\xi$  & \ref{sec:background}\\ \hline
%$\sigma$ & Plaquette\\ \hline
$\rho_Y$ & Chain associated with a subspace $Y$ & \ref{sec:perimeterlaw}\\ \hline
$\tau_{p,q}$ & Surface tension constant & \ref{sec:background}\\ \hline
%$\xi$ & Boundary condition\\ \hline

%$\Gamma\paren{v,w}, \Gamma_{\mathbb{V}}\paren{\mathbb{S},\mathbb{S}'}$ & Event that two vertices of $\Z^2$ are separated by a loop & \ref{subsec:arealawproof}\\ \hline
$\Lambda_n$ & Cube $[-n,n]^d$ & \ref{subsec:comparisoninfinite}\\ \hline
$\Xi_{r,L}$ & Event that r is separated from $\partial r^L$ by a hypersurface of plaquettes & \ref{subsec:interpolate}\\ \hline

$\mathbf{b}_k$ & $k$-th Betti number & \ref{sec:background}\\ \hline

%\mathbb{M}^S_p(A) & the minimal \mu_p-probability of A over boundary conditions in S\\ \hline
%\mathbb{P} & probability in with unnamed measures\\ \hline
%$\mathbb{S}$ & Renormalized site & \ref{sec:renorm}\\ \hline
%$\mathbb{V}$ & Renormalized lattice & \ref{sec:renorm}\\ \hline
$\Z\paren{q}$ & Multiplicative group of $q$-th roots of unity & \ref{sec:background}\\ \hline

%\mathcal{E} & the set of loops with diverging dimensions\\ \hline
%$\mathcal{F}(\mathbb{S})$ & Event that $\mathbb{S}$ is ``full,'' or has a connected set of bonds containing crossings in both directions & \ref{sec:renorm}\\ \hline
%$\mathcal{H}(A)$ & Hamming distance from an event A & \ref{sec:renorm}\\ \hline
%$\mathcal{I}_j$ & intervals used in the construction of plaquettes\\ \hline
%\mathcal{K},\mathcal{L} & cylinder events for the coupled PRCM and PLGT\\ \hline
%$\mathcal{O}\paren{\mathbb{S}}$ & Event that $\mathbb{S}$ is open, meaning that it is full and connected to its full neighbors & \ref{sec:renorm}\\ \hline
$\mathcal{S}_M$ & Slab of thickness M & \ref{sec:arealaw}\\ \hline

%$x^{\C}$ & Complex root of unity corresponding to an element $x$ of $\Z_q$\\ \hline

%$\brac{\alpha}^*$ & Dual of a cycle $\beta$ in cohomology\\ \hline
%$c_k$ & Various constants\\ \hline

% \end{longtable}

% \begin{center}
% \begin{tabular}{||m{10em} | m{16em} | m{4em}||} 
%  \hline
%  Notation & Description & Section  \\ [0.5ex] 
%  \hline\hline
%$y_i$ & Rectangles in the geometric arguments\\ \hline

%$A,B$ & Events\\ \hline
%A & union of faces of a rectangle\\ \hline
$B_i\paren{X;\;G}, B^i\paren{X;\;G}$& Boundary and coboundary groups & \ref{subsec:homology}\\ \hline
$\mathcal{C}_v$ & Connected component of the vertex $v$ & \ref{sec:perimeterlaw}\\ \hline
$C_i\paren{X,G}, C^i\paren{X,G}$ & Chain and cochain groups & \ref{subsec:homology}\\ \hline
$C_t$ & Event that $s$ is homologous to $\partial t$ in $P$ & \ref{subsec:geometric}\\ \hline

$D_t$ & Event that there are plaquette crossings forming a ``tube'' around $s$ & \ref{subsec:geometric}\\ \hline

$\overline{C}_t$ & $C_t \cap D_t$ & \ref{subsec:geometric} \\ \hline
%$D^+, D^-$ & Faces of a rectangle\\ \hline
%E & an edge set\\ \hline
%E(A) & the minimal set of bonds for which A is measurable\\ \hline
$E_{u,L}$ & Event that all plaquettes within distance $L$ from a box $u$ are open & \ref{subsec:geometric}\\ \hline
$F_h$ & Event that a ray is separated from an orthogonal plane in the dual bond percolation & \ref{subsec:geometric}\\ \hline
$G$ & Abelian coefficient group  & \ref{sec:background}\\ \hline
%$H$ & Hamiltonian\\ \hline
$H_i\paren{X,G}, H^i\paren{X,G}$ & Homology and cohomology groups & \ref{subsec:homology}\\ \hline
$I_A$ & Indicator function for the event $A$ & \ref{subsec:couplingfree}\\ \hline
%K & Kronecker delta\\ \hline
%$K,K_h$ & positive axes in the perimeter law proof\\ \hline
%$L$ & parameter for the thickening of a rectangle\\ \hline
$M$ & Slab thickness & \ref{sec:arealaw}\\ \hline
$M(r), m(r)$ & Maximum and minimum nonzero dimensions of a box $r$  & \ref{sec:background}\\ \hline
%$N$ & Renormalized site side length & \ref{sec:renorm}\\ \hline
%$O$ & Set of open sites of the renormalized system & \ref{sec:renorm}\\ \hline
$P$ & PRCM plaquette subcomplex  & \ref{sec:background}\\ \hline
$Q$ & Dual RCM bond set  & \ref{sec:background}\\ \hline
%$Q'$ & Dual RCM bond set at a distinct parameter $p'$$\\ \hline
%$R(r), R_j(r)$ & Bond crossing events for $r$ in the longest direction or the $j$-th direction & \ref{sec:renorm}\\ \hline
$R^{\square}_j\paren{r}$& $(d-1$)-plaquette crossing events for $r$ orthogonal to the $j-$th direction & \ref{sec:sharpnessperimeter}\\ \hline
%S & set of plaquettes\\ \hline
%$T_j$ & j-th torsion group & \ref{subsec:homologyprops}\\ \hline
%$U_i$ & the renormalized site \mathbb{S}_i together with its full neighbors\\ \hline
$V_{\gamma}^{\mathrm{fin}},V_{\gamma}^{\mathrm{inf}}$ & Events for $\gamma$ being a boundary of plaquettes  & \ref{sec:background}\\ \hline
$W_{\gamma}$ & Wilson loop variable & \ref{sec:background}\\ \hline
%$W_i$ & components in the sharp perimeter constant proof\\ \hline
%W & hyperplane in the perimeter law proof
$X$ & General cell complex & \ref{subsec:homology}\\ \hline
$X^{(k)}$ & $k$-skeleton of $X$ & \ref{subsec:homology}\\ \hline
$\abs{X}$ & Number of top dimensional cells of a complex $X$ & \ref{sec:background}\\ \hline
$Z_i\paren{X,G}, Z^i\paren{X,G}$& Cycle and cocycle groups & \ref{subsec:homology}\\ \hline

% \end{tabular}
% \end{center}

% \begin{center}
% \begin{tabular}{||m{10em} | m{16em} | m{4em}||} 
%  \hline
%  Notation & Description & Section  \\ [0.5ex] 
%  \hline\hline

$d$ & Dimension of the lattice $\Z^d$ & \ref{sec:intro}\\ \hline
$f$ & PLGT state or $(i-1)$-cochain   & \ref{sec:background}\\ \hline
$l(\gamma_1,\gamma_2)$ & Linking number of $\gamma_1$ and $\gamma_2$ & \ref{sec:dualityvgamma}\\ \hline
%$m(r)$ & Minimum dimension of a box r\\ \hline
$p, p_c, p_{\mathrm{slab}}$ & Percolation parameter, critical points in the lattice and slab  & \ref{sec:background}\\ \hline
$p^*(p) $& Dual parameter to $p$ with respect to $q$  & \ref{sec:background}\\ \hline
$q$ & PRCM/PLGT parameter  & \ref{sec:background}\\ \hline
$r$ & Box in $\Z^d$ without boundary cells  & \ref{sec:background}\\ \hline
$\overline{r}$ & Union of $r$ with its boundary cells & \ref{sec:background}\\ \hline
%$\hat{r}$ & Shift of $r$ into the upper slab & \ref{sec:renorm}\\ \hline
%$\tilde{r}$ & Union $r \cup \hat{r}$ & \ref{sec:renorm}\\ \hline
%$r^{L}$ & Thickening of the box $r$ by $L$ & \ref{sec:renorm}\\ \hline
%$r_l$ & growing family of rectangles\\ \hline
$s$ & $(d-2)-$face of the support of $\gamma$ & \ref{subsec:geometric}\\ \hline
$t\paren{s,L}$ & Thickening of $s$ by $L$ in the remaining $2$ directions & \ref{subsec:geometric}\\ \hline

\end{longtable}

\newpage
\bibliographystyle{alpha}
\bibliography{bibliography}

\end{document}